\documentclass[12pt,reqno]{amsart}
\input{preamble}
\usepackage[margin=1in,marginpar=.8in]{geometry}
 
\DeclareMathOperator*{\ucolim}{colim}
\newtheorem{specialcase}[thm]{Special case}
\usepackage{caption}
\usepackage{subcaption}
\newcommand{\cp}[2]{\left( \begin{matrix} #1 \\ \odot \\ #2 \end{matrix} \right)}

\newcommand*{\bigboxtimes}{\mathchoice{\mathop{\raisebox{-0.25em}{\scalebox{1.35}{$\boxtimes$}}}}{\boxtimes}{{\textstyle \boxtimes}}{\scriptstyle \boxtimes}}
\DeclareMathOperator*{\dbigboxtimes}{{\raisebox{-0.25em}{\scalebox{1.35}{$\boxtimes$}}}}

\newcommand{\Glue}{\mathrm{Glue}}
\DeclareMathOperator{\Aut}{Aut}
\DeclareMathOperator{\mVert}{Vert}
\DeclareMathOperator{\Inv}{Inv}
\DeclareMathOperator{\Pair}{Pair}

\newcommand{\rop}{\mathrm{op}}
\newcommand{\rCospan}{\mathrm{Cospan}}
\renewcommand*{\to}{\mathchoice{\longrightarrow}{\rightarrow}{\rightarrow}{\rightarrow}}
\newcommand*{\To}{\mathchoice{\Longrightarrow}{\Rightarrow}{\Rightarrow}{\Rightarrow}}
\let\oldmapsto\mapsto
\renewcommand*{\mapsto}{\mathchoice{\longmapsto}{\oldmapsto}{\oldmapsto}{\oldmapsto}}
\newcommand{\simTo}{\overset{\smash{\raisebox{-0.25em}{$\sim$}}\,}{\To}}

\title{Plethysm Products, Element-- and  Plus Constructions}
\date{}

\author{Ralph M. Kaufmann}
\address{Purdue University Department of Mathematics, and Department of Physics \& Astronomy, West Lafayette, IN 47907, USA}
\email{rkaufman@purdue.edu}

\author{Michael Monaco}
\address{Purdue University Department of Mathematics, West Lafayette, IN 47907, USA}
\email{monacom@purdue.edu}

\begin{document}

\maketitle

\begin{abstract}
Motivated by viewing categories as bimodule monoids over their isomorphism groupoids,
we construct monoidal structures called plethysm products on three levels: that is for bimodules, relative bimodules and factorizable bimodules. For the bimodules we work in the general setting of actions by categories. We give a comprehensive theory linking these levels to each other as well as to Grothendieck element constructions, indexed enrichments, decorations and algebras.

Specializing to groupoid actions leads to applications including the plus construction. In this setting, the third level encompasses the known constructions of Baez-Dolan and its generalizations, as we prove. One new result is that the plus construction can also be realized as an element construction compatible with monoidal structures that we define.
This allows us to prove a commutativity between element and plus constructions, a special case of which was announced earlier. 
Specializing the results on the third level yields a criterion for when a  
definition of operad--like structure as a plethysm monoid ---as exemplified by operads--- is possible.
\end{abstract}


\section{Introduction and Overview}

\subsection{Introduction}
Joyal introduced a monoidal product for species in \cite{species}. This type of product for $\Sigma-$modules  was used by Smirnov in  \cite{Smirnov} to give a  definition of operads as a monoid as an alternative to that of May \cite{Mayoperad}.
Under the name plethysm product, it was used in \cite{GKmodular} to study characteristics for cyclic and modular operads. The neologism plethysm goes back to the 1950 edition of \cite{Littlewood} where it was used for symmetric functions.
A plethysm definition is also available for props and properads through the work of Vallette \cite{Vallette} who used it in the theory of Koszul duality. 
The framework we consider is that of bi--modules for categorical actions as a specialization of the profunctors of B\'enabou \cite{Benabou67} where the plethysm product is the relative product defined by a coend.

This approach furthermore allows us to link our construction to other categorical constructions in particular to the plus construction and Grothendieck element constructions.
The element construction is part of Grothendieck's construction described in \cite{groupe-fondamental} which establishes an equivalence between fibrations $\mathcal E \to \arbcat$ over a category $\arbcat$ and pseudo-functors $\arbcat \to \mathcal Cat$. This construction can be seen as part of a comprehension scheme providing  covering morphisms \cite{BergerKaufmann}.
The plus construction under the name of opetopes goes back to Baez and Dolan \cite{BaezDolan} and was designed to make definitions of algebras over operads and higher analogs. The lower analogue being modules over algebras. The twistings of \cite{GKmodular} that are needed to define bar/cobar and dual, aka.\ Feynman transforms, implicitly utilize the plus-construction. It is thus essential to the theory of operads and their generalizations.

The motivation for this work was fourfold ---see 
\S\ref{par:applications} for more technical details of this summary. 
The first aim was to answer the question of when a plethysm-monoid definition for operad--like theories is possible. The answer to this is that if the operad--like object comes from a Feynman category (FC) \cite{feynman} then it has such a definition if and only if the category in question is the plus construction of a Unique Factorization Category (UFC). 
The second motivation is the observation that the two constructions of decoration \cite{decorated,BergerKaufmann} and enrichment \cite{feynman,feynmanrep} seem to be of a similar nature. If the Feynman categories are graphical, then one decorates the vertices and the other decorates the edges of the graphs underlying the morphisms. The fact that we show that both of the constructions can be viewed as element constructions gives a rigorous framework for this observation.
The third motivation is to consider the most general framework by starting just with categories and then adding natural extra structures in parallel of the treatment of plus constructions in \cite{KMoManin}. Our new perspective followed in the current article is to treat categories as bimodules over their underlying groupoids and consider the monoidal categories of such bimodules and their element constructions.
This approach also makes the treatment of units, which are more subtle that one would think at first glance, natural and explains the different versions of plus constructions of \cite{KMoManin,feynmanrep}.
This ties to the forth and original motivation by Baez and Dolan, namely to define algebras. There is a straightforward notion for algebras for bimodules, and we relate this to more general construction that appear in operad theory notably cyclic and modular operads. We emphasize that all structures now are derived from monoidal categories with special monoidal categories, such as FCs or UFCs, entering the theory later. Formulated in this generality, our results should translate readily to related theories, see \S\ref{par:other}.

We treat three levels of bimodules: the bimodules, relative bimodules and factorizable bimodules.  Here the bimodules are taken in the general setting of actions by categories. Monoidal structures on each of these levels and link them by monoidal functors and monoidal equivalences. Using the element construction, we define element representations as functors from the category of elements of a given bimodule and show that these again form a monoidal category. We explain the interrelations between these categories by providing strong monoidal functors and equivalences between them.
This establishes basic facts and constructions for bimodules. Table \ref{table:reference} contains a list of the objects and monoidal products.
\begin{table}
  \begin{tabular}{llc}
    Name & Objects & Product \\
    \hline
    Bimodules & Functors $\rho: \act^{op} \times \act \to \C$ & $\pl$ \\
    Relative bimodules & Bimodules $\xi$ over $\rho$ & $\pl_{\rho}$ \\
    Element representation & Functors $D: \el(\rho) \to \C$ & $\epl$ \\
    Decorated element rep. & Functor $E: \el(D) \to \C$ & $\epl_D$ \\
    Basic element representation & Functors $D: \el(\nu) \to \C$ & $\bepl$ \\
    Basic relative bimodules & Bimodules $\xi$ over $\nu$ & $\pl_{(\nu)}$ \\
    Basic dec. element rep. & Functor $E: \el(D) \to \C$ & $\bepl_D$
  \end{tabular}
  \caption{\label{table:reference} Reference table for  standard notations and
  monoidal products.}
\end{table}

In the special case of a groupoid bimodule,
we establish a new connection between the element construction and the plus construction for a category as defined in \cite{KMoManin}. Unital bimodule monoids can be viewed as categories with additional structure. This yields an interpretation of the constructions for unital bimodule monoids as constructions for categories. 
 The plus construction of a category corepresents relative bimodules. 
 As is shown in the current article, these relative bimodules can  be identified with  element representations, that is functors from a category of elements. 
 There is also a plus construction for the unital version that corepresents indexed enrichments for categories. These are shown to correspond to the unital monoidal bimodules. There are extra technical complications in the case of bimodules taking values in a non--Cartesian monoidal category. In this case, there is extra data that is needed for an element representation to yield a relative monoid. This is identified 
as a system of weights which translates to a counit for a respective monoidal functor.

Considering these as special cases, bimodules yield a broader purely categorical and algebraic framework for these constructions using only functors, monoidal structures, element constructions, coends and Kan extensions.
 The connection to classical theory is via hereditary unique factorization categories \cite{KMoManin} and
 Feynman categories \cite{feynman}.
 The construction for bimodules is more general, as  neither a groupoid, nor a  factorization, nor a monoid structure, nor a unit is presupposed. The compatibles arising when  adding such structures are proven at each step. Under the interpretation of a unitary bimodule monoid as a category,  an element representation is a decoration or a covering. As an application, using the decorated element representations which can be seen as a double application of the element construction, we obtain a commutativity result: the plus construction of an indexed enrichment is equivalent to the decoration of the plus construction. As an added benefit, we obtain new natural definitions for being hereditary and having mergers.

\subsection{Results and constructions for bimodules} 
Motivated by the applications that appear on the third level, the article provides the following standalone theory of general basic constructions and results for bimodules.

\subsubsection{First level: bimodules}

Bimodules are the  fundamental structure and all of the other structures in this paper depend on it. For a fixed category $\act$, a bimodule is a functor $\rho: \act^{op} \times \act \to \Set$ or more generally a functor $\rho: \act^{op} \times \act \to \C$ to a target category $\C$ which is usually closed monoidal, so that it can serve as an enrichment category.
These and the  structure of the plethysm product $\pl$, are introduced in paragraph~\ref{sec:bimodules}. The fact that $\pl$ is a monoidal structure is established in the current setting in Proposition~\ref{square-mon-cat}. The unit is the bimodule $\tilde\rho_\act = \Hom_\act(-,-)$. 

A bimodule monoid is given by an associative natural transformation $\gamma: \rho \pl \rho \To \rho$. The connection to categories is summarized by Corollary~\ref{cor:bimodcat} which establishes a bijection between unital bimodule monoids and categories $\arbcat$ equipped with a functor of the action category into the category $\arbcat$.

\subsubsection{Second level: relative bimodules and element representations}

Element representations for a fixed bimodule $\rho$ are functors $D:\el(\rho)\to \C$, where $\el(\rho)$ is the category of elements, cf.\ Section~\ref{sec:element-rep}. If $\rho$ is a unital monoid for $\pl$ then the element representations are a monoidal category whose monoidal structure $\epl$ is called the element plethysm product by Proposition~\ref{prop:diamondproduct}. There is a strong monoidal functor $\chi$ from element representations with $\epl$ as a monoidal product back to bimodules with $\pl$ as a monoidal product, see Theorem~\ref{thm:strongmonoidal}. 

We also consider relative bimodules over a fixed bimodule $\rho$, which are bimodules $\xi$ equipped with a natural transformation to $\rho$. If $\rho$ is a unital monoid, there is a monoidal structure
  $\pl_\rho$ of relative plethysm for relative bimodules, Proposition \ref{prop:relative-plethysm}.
In the Cartesian case, the relative bimodules are exactly the bimodules in the image of $\chi$, cf.\ Theorem~\ref{thm:elementrelative}.
In the non--Cartesian case, the situation is more complicated as monoidal units need not be terminal objects and additional data is needed to glean a relative bimodule from an element representation cf.\ Corollaries~\ref{cor:weighttorelative}.
 A relative bimodule gives rise to a functor between the two categories defined by the base and the over-object. This is the definition of indexed enrichment, cf.\ Proposition \ref{prop:indexingfunctor}.

 \subsubsection{Third level: factorizable bimodules}

If the category $\act$ has a monoidal structure, there is a natural notion of essentially unique factorization $\rho \overset{\sim}{\Rightarrow} \nu^{\otimes}$ for the base bimodule $\rho$. This is   formalized by a horizontal extension construction using a Kan extension, Definition \ref{eq:hor-ext-def}. We drop the adjective essential in the following.
The correct notion of a morphism between such uniquely factorizable bimodules is a hereditary morphism, Definition~\ref{def:hereditary}.  
This identifies and characterizes the hereditary condition in the theory of bimodules.
The factorizable bimodules with hereditary morphisms form a monoidal subcategory, cf.\ Proposition \ref{prop:monoialsubcat}.
This naturally leads to the notions of basic element representations and basic relative bimodules based on $\nu$. 
These again form monoidal categories which are compatible with the structures for $\rho$; see Propositions \ref{elem-element-rep-mon-cat} and \ref{prop:basic-elt-mon}. 

\subsubsection{Decorations and Algebras}
The element construction can be repeated for element representations,  both basic and regular.
These again form monoidal categories.
The main result is Theorem~\ref{thm:pluselement}, which establishes an equivalence between the monoidal category of decorated element representations of an element representation $D$ and the monoidal category of element representations of $\chi_D$, and Theorem~\ref{thm:pluseltbasic} which is the analogous result for decorated basic element representations.
This is the previously mentioned ``commutativity'' of element constructions.

Representations of monoidal bimodules are most naturally viewed as natural transformations to a reference functor. This specializes to the fact that algebras of general operad--like structures are  defined relative to some other structure, namely a Hom functor, see Example~\ref{ex:referenceindex}.
For element representations there are two possible versions of bialgebras, which we prove to be equivalent
in Theorem~\ref{thm:algebras-el-rep}. The two constructions correspond to either 
lifting the reference functor to the category of elements or pushing the element representation $D$ to $\chi_D$. The former corresponds to algebras on the first level and the second interpreted in the relative situation can be seen in the situation of a Hom functor a functor out of the indexed enrichment.
The monoidal situation is analogous, Theorem~\ref{thm:algebras-monoids}. Here in the aforementioned special case, if the target category is monoidal, this notion is equivalent to the algebraic notion of a module for the plethysm product for the monoid $\rho$; cf.\ \ref{ex:algindexenrich}.
In the factorizable case, one has an additional monoidal product and the behaviour of the algebras can be lax or strong with respect to this product. This is formalized in the definition of lax and strong algebras
Definitions \ref{def:laxalgebra} and \ref{def:strongalgebra} which each have two equivalent definitions, see Propositions \ref{prop:laxequiv} and \ref{prop:strongequiv}. There is one more possible approach in this case, which just utilizes basic element representations. This presupposes the existence of an additional structure that we call a merger.
In the presence of these, one again has two definitions for algebras and their equivalence, cf.\ Proposition \ref{prop:laxequiv}. 
Finally, we can describe mergers naturally as an algebra over a particular monad, Theorem~\ref{thm:mergermonad}.
This is a novel general description of mergers which appear in graphs and PROPs, cf.\ \cite[Section 2, 3.2.1.]{feynman}.

\subsection{Application to classical theory}\label{par:applications}
The current paper generalizes the classical constructions of \cite{BaezDolan} in the following sense:
Factorizable groupoid bimodules correspond
to  special monoidal categories called hereditary unique factorization categories (UFCs) \cite{KMoManin}, see Proposition~\ref{prop:UFC}.  Here the monoidal structure is an additional structure provided by the factorization ---not that of the bimodules. Basic examples are spans and cospans.
It was shown in {\it loc.\ cit.} that the plus construction for such a hereditary UFC,  Definition~\ref{def:UFC}  yields a Feynman category (FC), Definition \ref{def:FC},  and that this is a necessary condition. 
FCs \cite{feynman} corepresent operad--like theories via strong monoidal functors $\F \mddash \ops_\C=[\F,\C]_\ot$, among them are operads, algebras, props, properads and the whole list of known operad--like theories. 
The correspondences between constructions in bimodules and categories is summarized in Table~\ref{table:reference2}. With the view toward the application to operad--like theories, typical examples are listed as well.  The  plus construction for FCs was given in \cite{feynman} and generalized in \cite{KMoManin} to categories and monoidal categories.
The opetopes of \cite{BaezDolan} which are a plus construction, were announced for operads and worked out for non--Sigma operads.

\begin{table}
  \scalebox{0.9}{
  \begin{tabular}{lll}
    $\act$--bimodule structure
    & Categorical structure
    & Typical factorizable example \\
    \hline
    unital bimodule
    & Category $\arbcat(\rho,\gamma)$\\
    \quad monoid$(\rho,\gamma)$&\\
    Element representations $D$
    & Functor $D\in[\catplus{\arbcat},\C]$
    &\\
    Factorizable  unital
    & UFC $\M$
    &$\Cospan$, any FC \\
    \quad bimodule monoid $\rho_\M\simeq \nu^\ot$\\

    Basic element representation
    & $D\in \M^+\mdash\ops_\C$
    & Operad, properad, hyper--   \\
                              &&\quad operad as functors\\
    Chi construction for a unital
    &Indexed enrichment
    &Operad as plethysm monoid\\
    \quad relative bimodule monoid
    &$\arbcat_D=\arbcat(\chi_D,\gamma)$
    & \\
    Decorated element representation $E$
    & Functor $E\in[\catplus{\arbcat_D}, \C]$
    &Non--Sigma operad as functor\\
    Algebra as element representation &$N:D \Rightarrow E_{\alpha_\O}$
    & Algebra over operad as  \\
    \quad $N:D \Rightarrow E_{\alpha_\O}$&&\quad morphism of operads\\
    Algebra as bimodule
    & Functor $\catplus{\arbcat} \to \alpha_\O$
    & Algebra over operad \\
    \quad $\chi_D\rightarrow \alpha_\O$&& \quad as plethysm module
  \end{tabular}}
  \caption{\label{table:reference2} Basic reference table for interpretation of bimodules in categorical terms with further technical conditions omitted. The plus constructions $\catplus{\arbcat}$ and $\M^+$ are defined in \protect{\cite{KMoManin}}}

\end{table}

The  main new result  from the current framework is Theorem~\ref{main-thm} which  states that if a Feynman category $\F$ can be written as a plus construction $\F \simeq \M^+$ for some UFC $\M$ then the monoidal category of strong monoidal functors $[\F,\C]_\ot$ is equivalent to the category of monoids in the monoidal category of  basic element representations described in Section~\ref{sec:basic-el-rep}, see Remark \ref{rmk:reduced} for technical details.
This establishes that the plus construction corepresents a type of element construction. This is also true for the categorical plus construction $\catplus{\arbcat}$ defined in \cite{KMoManin} which concerns level 2.
The bimodule approach also naturally produces the condition of being hereditary as well as the existence of mergers as natural compatibilities.

Specializing to FCs gives a criterion for when a monoid definition of the corepresented structures is possible. In particular, there is a plethysm definition of the corepresented structures precisely when  the Feynman category is the plus construction of some  hereditary UFC $\M$. In the situation of operads, this corresponds to the fourth definition of an operad as a plethysm monoid.  The three other definitions are
(1) being able to compose along rooted trees, (2) being an algebra over the monad of trees, and (3)  being defined via May's  $\gamma$-operations   \cite{Mayoperad} or the $\circ_i$ of Markl \cite{Markl}, cf.\ \cite{MSS}. The first three are also available for modular and cyclic operads, but not the fourth.
Taking the point of view of  FCs, more generally, the first is the definition of operad--like objects, the second is a consequence of a monadicity theorem and the third is available if there is a generator and relation description of the morphisms of the FC. 
 The list of examples for the plethysm construction for operads and properads stemming from the FC of finite sets and the hereditary UFC of reduced cospans.  
 New consequences are that hyper (modular) operads do have a plethysm definition with respect to an underlying groupoid, see Section~\ref{sec:hyper-modular-special-case} while for instance modular or cyclic operads now necessarily do not.

The element construction for a functor $D:\F\to \C$ out of an FC is again an FC and is called a decoration \cite{decorated}, and denoted by $\F_{\dec D}$.  The forgetful functor from the decorations to the original category furnish coverings which are part of a factorization system \cite{BergerKaufmann}. If $\F\simeq \M^+$, then 
the ``commutativity''  yields 
Theorem \ref{thm:feynmandecplus} which was conjectured in \cite{feynmanrep}. It states that the plus construction of the indexed enriched category is the decoration of the plus construction of the decoration $(\F_D)^+\simeq (\F_{dec D})^+$.
A particular example of this  commutativity is that the category that corepresents non--Sigma operads of the plus construction on planar corollas as was noticed in \cite{KMoManin}. This applies now to any FC which is a plus construction as conjectured in \cite{KMoManin}, see Theorem \ref{thm:feynmandecplus}.

In FCs, a morphism in $\F\mdash\ops$ is a natural transformation of functors.  
If an operad is thought of as a functor, then  an algebra over a functor is a natural transformation to a reference functor.
In the classical case this is a morphism of operads to an endomorphism operad.
The plus construction is related as follows, algebras over a functor out of an FC $\F$ can be corepresented by another FC, if the Feynman category $\F$ is the plus construction of another Feynman category $\M$. The corepresenting FC $M_\O$ is an indexed enrichment \cite[Section 4]{feynman}. 
The ``commutativity'' in this situation yields 
Theorem~\ref{thm:feynmandecplus} which was conjectured in \cite{feynmanrep}. It states that the plus construction of the indexed enriched category is the decoration of the plus construction of the decoration $(\F_D)^+\simeq (\F_{\dec D})^+$.

\subsubsection{Connection to other points of view on operad--like objects.}
 \label{par:other}
The connection to other presentations of operad--like theories is as follows.
There are equivalent ways to think about  FCs in terms of the  groupoid colored operads of \cite{PetersenOperad} or as  regular patterns of \cite{Getzler}, see \cite[Section 1.11]{feynman}. The property of a pattern is exactly what is needed for a monadicity theory. The details of these connections in a two--categorical setting were later worked out in \cite{BKW}. The idea of using corepresenting monoidal categories goes back to \cite{BoVo,BV,Costelloenvelope,Getzler}. The hereditary condition in the graphical context appears in different guises in \cite{Markl} and \cite{BorMan}. These connections allow us to export the results of this paper to these theories.
  In other contexts, the plus construction can be found in \cite{opetope}, \cite{BMplus}, and \cite{Bergermoment}. The exact relationships between operadic categories and FCs is again given through a plus construction: there are functors from FCs to operadic categories and back whose composition is a plus construction. The plus construction of a moment category is an FC. 
 Independent approaches to decorations of cospans are in \cite{Steinebrunner,PhilCospan}. 

\section*{Acknowledgements}
We wish to thank Yuri I.\ Manin, Clemens Berger, Alexander Voronov, Jim Stasheff, and Phil Hackney for discussions, and in particular Jan Steinebrunner  for pointing out technicalities about cospans and UFCs.
RK thanks the Simons foundation for their support.

\section{Bimodules}\label{sec:bimodules}

\subsection{Bimodules and Representations}\label{sec:bimod-repr}

Roughly speaking, a bimodule is something with a two-sided action. We will devote this section to making this statement precise.
As motivation, we consider group actions in both a classical and categorical formalism. We then show that these ideas transfer to a more general setting which will ultimately lead us to the definition of a bimodule.

\subsubsection{Group actions}\label{sec:group-actions}

Recall that a group action for a group $G$ is classically defined as a set $X$ equipped with a map $(- \cdot -): G \times X \to X$ such that $gh \cdot x = g \cdot (h \cdot x)$.
Taking the adjunction, we get maps of the form $G \to X^X$. This means that instead of thinking of $G$ as ``acting'' on elements of a set $X$, we can think of ``representing'' elements $g \in G$ as endofunctions on $X$.
This point of view allows us to define actions in a more categorical manner.

First recall that any group $G$ determines a category $\underline{G}$ as follows:
\begin{enumerate}
  \item There is a single object called $\bullet$.
  \item There is morphism $\bullet \overset{g}{\to} \bullet$ for each element $g \in G$.
  \item Composition is defined by group multiplication: $g \circ h = gh$.
\end{enumerate}

\begin{definition}
  We will call a functor of the form $\rho: \underline{G} \to \Set$ a \defn{representation} of $G$.
  Such a functor constitutes the following data:
  \begin{enumerate}
  \item The functor determines an object $X := \rho(\bullet)$.
  \item Each element $g \in G$ of the group determines an endofunction $\rho(g): X \to X$.
  \end{enumerate}
\end{definition}

\begin{prop}
  There is a one-to-one correspondence between $G$-actions and functors $\rho: \underline{G} \to \Set$.
\end{prop}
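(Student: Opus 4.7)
The plan is to verify the correspondence by constructing both directions explicitly and checking they are mutually inverse. This is essentially an unpacking of definitions combined with the standard curry/uncurry adjunction between $\mathrm{Hom}(G \times X, X)$ and $\mathrm{Hom}(G, X^X)$ restricted to the respective axioms.

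First, given a $G$-action $(-\cdot -) \colon G \times X \to X$, I would define a functor $\rho \colon \underline{G} \to \Set$ by setting $\rho(\bullet) := X$ and $\rho(g) := (g \cdot -) \colon X \to X$ for each morphism $g$ of $\underline{G}$. Functoriality requires $\rho(gh) = \rho(g) \circ \rho(h)$ and $\rho(e) = \id_X$; the first is exactly the associativity axiom $gh \cdot x = g \cdot (h \cdot x)$ of a group action, and the second is the unit axiom $e \cdot x = x$. So $\rho$ is a well-defined functor.

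Conversely, given $\rho \colon \underline{G} \to \Set$, set $X := \rho(\bullet)$ and define $g \cdot x := \rho(g)(x)$. Functoriality of $\rho$ translates back, via the same two equations, into the associativity and unit axioms for a $G$-action on $X$.

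Finally, I would observe that the two assignments are mutually inverse on the nose: starting from an action and producing $\rho$ then reading off the action again returns the same map $G \times X \to X$, and vice versa, since in both directions the formula $g \cdot x \leftrightarrow \rho(g)(x)$ is used. There is no real obstacle here; the only thing worth being careful about is that a ``$G$-action'' is assumed to include the unit axiom (which is implicit in the classical definition given above, since $e \cdot x = e e \cdot x = e \cdot (e \cdot x)$ together with invertibility of $\rho(e)$ forces $\rho(e) = \id_X$, but it is cleaner to cite it as part of the action axioms). This gives the claimed bijection.
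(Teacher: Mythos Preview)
Your proof is correct and takes essentially the same approach as the paper: both identify the correspondence with the curry/uncurry adjunction $\Hom(G\times X,X)\cong\Hom(G,X^X)$ and check that the action axioms match the functoriality axioms. The only difference is that the paper states the adjunction abstractly while you unpack it explicitly as $\rho(g)=(g\cdot-)$ and $g\cdot x=\rho(g)(x)$.
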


\begin{proof}
  Let $X := \rho(\bullet)$ denote the image of a functor $\rho: \underline{G} \to \Set$ and observe that $\rho$ is determined by a map $G \to X^X$.
  By adjunction, there is a one-to-one correspondence between maps of the form $G \times X \to X$ and maps of the form $G \to X^X$.
  Moreover, a map of the form $G \times X \to X$ satisfies the compatibility condition for a group action if and only if its adjoint $G \to X^X$ satisfies the composition condition for a functor.
  Likewise, a map of the form $G \times X \to X$ satisfies the identity condition for a group action if and only if its adjoint $G \to X^X$ satisfies the identity condition for a functor.
\end{proof}

\subsubsection{General actions}\label{sec:general-actions}

A natural next step would be to replace the group $G$ with a groupoid $\Gpd$.
In fact, it is just as easy to generalize it to a category $\act$, which we will do.
We can also replace the target category $\Set$ with any category $\C$ copowered over $\Set$.

\begin{definition}
  \label{copower}
    The \defn{copower} of a set $I$ with an object $X \in \Obj(\C)$ is an object $I \odot X \in \Obj(\C)$ with the following property:
    \begin{equation}
        \Hom_{\C}(I \odot X, Y) \cong \Hom_{\Set}(I, \Hom_{\C}(X,Y))
    \end{equation}
    In Kelly's book \cite{kellybook}, this is called the \emph{tensor} in the enriched case.
    However, we prefer to use the term ``copower'' in both the enriched and unenriched settings.
\end{definition}

\begin{example}
    The category $\Vect$ of vector spaces over a fixed field has a copower $I \odot V = \bigoplus_{i \in I} V$.
    By the universal property of the coproduct, a linear map $\bigoplus_{i \in I} V \to U$ is the same thing as an $I$-indexed collection of linear maps $V \to U$.
\end{example}

\begin{definition}
  Define a \defn{left $\act$-module} in $\C$ to be a class of objects $\M = \{ M_X \}$ in $\C$ indexed by the objects of $\act$ and a family of morphisms
  \begin{equation}
    l_{X,Y}: \Hom_{\act}(X,Y) \odot M_X \to M_Y
  \end{equation}
  with the following properties:
  \begin{enumerate}
    \item The action is compatible with composition:
    \begin{equation}
    \begin{tikzcd}
    {\Hom_{\act}(Y,Z) \odot (\Hom_{\act}(X,Y) \odot M_X)}
    \arrow[r, "{\id \odot l_{X,Y}}"]
    \arrow[d, "\sim"]
    & {\Hom_{\act}(Y,Z) \odot M_Y}
    \arrow[dd, "{l_{Y,Z}}"] \\
    {(\Hom_{\act}(Y,Z) \times \Hom_{\act}(X,Y)) \odot M_X}
    \arrow[d, "\circ \odot \id"]
    & \\
    {\Hom_{\act}(X,Z) \odot M_X}
    \arrow[r, "{l_{X,Z}}"]
    & M_Z                                               
    \end{tikzcd}
    \end{equation}
    \item Let $\eta_X: pt \to \Hom_{\act}(X,X)$ be the map that selects the identity. We then require the action to be compatible with identities:
    \begin{equation}
        \begin{tikzcd}
        pt \odot M_X
        \arrow[r, "\eta_X \odot \id"]
        \arrow[rd, "\sim"'] & {\Hom_{\act}(X,X) \odot M_X}
        \arrow[d, "{l_{X,X}}"] \\
        & M_X 
        \end{tikzcd}
    \end{equation}
  \end{enumerate}
\end{definition}

Like before, the ``action point of view'' where we consider a compatible family of maps $\Hom_{\act}(X,Y) \odot M_X \to M_Y$ can be complemented by a ``representation point of view'' where we consider functors $\rho: \act \to \C$.

\begin{prop}
  There is a one-to-one correspondence between $\act$-modules in $\C$ and functors $\rho: \act \to \C$.
\end{prop}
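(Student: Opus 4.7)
The plan is to mimic the proof in the group case, replacing the hom-tensor adjunction for sets with the defining adjunction of the copower. Given a functor $\rho : \act \to \C$, I would set $M_X := \rho(X)$ for every object $X$ of $\act$ and observe that the action of $\rho$ on morphisms gives a map of sets
\[
\rho_{X,Y} : \Hom_{\act}(X,Y) \to \Hom_{\C}(\rho(X),\rho(Y)) = \Hom_{\C}(M_X,M_Y).
\]
By Definition~\ref{copower}, this corresponds under the copower adjunction to a morphism $l_{X,Y} : \Hom_{\act}(X,Y) \odot M_X \to M_Y$ in $\C$. Conversely, starting from an $\act$-module $(\{M_X\},\{l_{X,Y}\})$, apply the inverse adjunction to obtain a set map $\Hom_{\act}(X,Y) \to \Hom_{\C}(M_X,M_Y)$, and declare this to be the action of a putative functor $\rho$ on morphisms, with $\rho(X):=M_X$ on objects.

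Next I would verify that these two assignments are mutually inverse and that they match the respective axioms. Naturality of the copower adjunction in both arguments implies that the two constructions are bijective on underlying data; what remains is to compare axioms. The axiom for functoriality with respect to composition, $\rho(g\circ f)=\rho(g)\circ\rho(f)$, is a statement about a pair of morphisms in $\act$ and a map $M_X\to M_Z$, and I would unpack its adjoint across the copower to show it is literally the commutativity of the first pentagon in the definition of an $\act$-module (the isomorphism on the left edge of that diagram encodes the identification $(S\times T)\odot X\cong S\odot(T\odot X)$ coming from iterating the copower adjunction). Similarly, the unit axiom $\rho(\id_X)=\id_{M_X}$ is the adjoint form of the second diagram, where $\eta_X$ picks out the identity morphism.

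The main thing to be careful about is the iterated copower identification $(\Hom_\act(Y,Z)\times \Hom_\act(X,Y))\odot M_X \cong \Hom_\act(Y,Z)\odot(\Hom_\act(X,Y)\odot M_X)$ appearing in the associativity pentagon: one should check that this canonical isomorphism, together with the adjunction, sends the action $l_{X,Y}$ followed by $l_{Y,Z}$ to the image under copower of the set map $(g,f)\mapsto \rho(g)\circ\rho(f)$, while precomposition with $\circ \odot \id$ corresponds to $(g,f)\mapsto \rho(g\circ f)$. Once this translation is in place, the diagrams commute if and only if $\rho$ is functorial, so the bijection is established. Thus the proof is really just two applications of the copower adjunction plus a bookkeeping check that axioms correspond; I expect no genuine obstacle beyond the careful diagram-chase needed to identify the iterated copower with the product in the associativity axiom.
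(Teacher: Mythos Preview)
Your proposal is correct and follows essentially the same approach as the paper: use the copower adjunction from Definition~\ref{copower} to pass between the action maps $l_{X,Y}$ and the hom-set maps $\rho_{X,Y}$, then observe that the module axioms (1) and (2) correspond precisely to functoriality. The paper's proof is terser---it simply asserts that functoriality follows from conditions (1) and (2) and that the converse is similar---whereas you spell out the iterated-copower bookkeeping, but the underlying argument is identical.
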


\begin{proof}
  Suppose we have the module $\M = \{M_X\}$ with an action $l_{X,Y}: \Hom_{\act}(X,Y) \odot M_X \to M_Y$.
  Set $\rho(X) = M_X$, then define the correspondence $\rho_{X,Y}: \Hom_{\act}(X,Y) \to \Hom_{\C}(M_X, M_Y)$ from $l_{X,Y}$ using the property of copowers described in Definition~\ref{copower}.
  Functorality of $\rho$ follows from conditions (1) and (2).
  The converse is similar.
\end{proof}

\begin{definition}
  With group actions, there is a distinction between left and right action.
  This carries over to groupoid and category actions.
  To be consistent with traditional notation for right actions, let $\boxdot$ denote the ``flipped'' version of the copower.
  
  Define a \defn{right $\act$-action} in $\C$ to be a class of objects $\M = \{ M_X \}$ in $\C$ indexed by $\act$ and a family of morphisms $r_{Y,Z}: M_Y \boxdot \Hom(X, Y) \to M_X$ such that:
  \begin{enumerate}
    \item The action is compatible with composition:
    \begin{equation}
    \begin{tikzcd}
    {(M_Z \boxdot \Hom_{\act}(Y,Z)) \boxdot \Hom_{\act}(X,Y)}
    \arrow[r, "{r_{Y,Z} \boxdot \id}"]
    \arrow[d, "\sim"]
    & {M_Y \boxdot \Hom_{\act}(X,Y)}
    \arrow[dd, "{r_{X,Y}}"] \\
    {M_Z \boxdot (\Hom_{\act}(Y,Z) \times \Hom_{\act}(X,Y))}
    \arrow[d, "\id \boxdot \circ"]
    & \\
    {M_Z \boxdot \Hom_{\act}(X,Z)}
    \arrow[r, "{r_{X,Z}}"]
    & M_X
    \end{tikzcd}
    \end{equation}
    \item The action is compatible with identities:
    \begin{equation}
        \begin{tikzcd}
        M_X \boxdot pt
        \arrow[r, "\id \boxdot \eta_X"]
        \arrow[rd, "\sim"']
        & {M_X \boxdot \Hom_{\act}(X,X)}
        \arrow[d, "{r_{X,X}}"] \\
        & M_X 
        \end{tikzcd}
    \end{equation}
  \end{enumerate}
\end{definition}

\begin{prop}
  There is a one-to-one correspondence between right $\act$-actions in $\C$ and functors $\rho: \act^{op} \to \C$.
\end{prop}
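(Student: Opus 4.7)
The plan is to mirror the argument for left $\act$-modules, replacing covariance with contravariance throughout. The main idea is that a right action is an adjoint presentation of a contravariant representation, where the flipped copower $\boxdot$ encodes the reversal of composition.

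First I would observe that, by analogy with Definition~\ref{copower}, the flipped copower $M \boxdot I$ satisfies a universal property of the form
\[
\Hom_{\C}(M \boxdot I, N) \cong \Hom_{\Set}(I, \Hom_{\C}(M,N)).
\]
Given a right $\act$-action $\M = \{M_X\}$ with structure maps $r_{Y,Z}: M_Y \boxdot \Hom_{\act}(X,Y) \to M_X$, I would set $\rho(X) := M_X$ and use this universal property to convert each $r$ into a map of sets
\[
\rho_{X,Y}: \Hom_{\act}(X,Y) \to \Hom_{\C}(M_Y, M_X),
\]
which is precisely the shape required for a functor $\rho:\act^{op}\to \C$.

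Next I would verify functoriality. Compatibility with composition (condition (1) of the right-action) translates, under the copower adjunction, to the statement $\rho_{X,Y}(g)\circ \rho_{Y,Z}(f) = \rho_{X,Z}(f\circ g)$, which is exactly the contravariant functoriality equation. Similarly, compatibility with identities (condition (2)) rephrases as $\rho_{X,X}(\id_X) = \id_{M_X}$. Both checks are routine diagram chases using the naturality of the copower adjunction.

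For the converse, starting from a functor $\rho:\act^{op}\to\C$, I would define $M_X := \rho(X)$ and take the structure maps $r_{Y,Z}$ to be the adjoints of $\rho_{Y,Z}$ under the same isomorphism. The two conditions on $r$ then follow from the functoriality of $\rho$, and one checks that the two assignments are mutually inverse, giving the desired bijection. The only mild obstacle is keeping the variance bookkeeping straight, since the contravariance sits entirely on the $\act$-factor of the copower rather than on $M_X$; but once the flipped universal property is in place, the argument proceeds exactly as in the left-module case.
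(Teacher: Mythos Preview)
Your proposal is correct and takes essentially the same approach as the paper: the paper's proof is simply ``Similar,'' referring back to the left-module case, which is exactly the adjunction argument you outline with variance reversed.
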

\begin{proof}
  Similar.
\end{proof}

\subsubsection{Bimodules}

We will be interested in things that simultaneously have a left and a right action.
Unfortunately, the terminology ``left'' and ``right'' for modules clashes with the tendency to read from left to right when composing arrows in a category.
This will become more confusing later on, so we will drop these terms.
Instead, we will say that the module $\rho: \act \to \C$ has a \defn{covariant action} and the module $\rho: \act^{\rop} \to \C$ has a \defn{contravariant action}.

\begin{definition}
  An \defn{$\act \mddash \act$ bimodule} is a functor of the following form:
  \begin{equation}
    \rho: \act^{\rop} \times \act \to \C
  \end{equation}
  We will call $\act$ the \defn{action category}.
  In many of our examples, $\act$ is a groupoid, but we generally do not need that assumption in our proofs.

  For notation, we will write the morphisms of $\act^{\rop} \times \act$ as $(\sigma \biact \tau)$ with the contravariant part on the left and the covariant part on the right.
  In situations where the action point of view is appropriate, we will write $(\sigma \biact \tau) \cdot x$ so that the element $x$ is receiving the contravariant action of $\sigma$ and the covariant action of $\tau$.
\end{definition}

\begin{example}
  Let $\act = \SS$ denote the groupoid whose objects are the natural numbers and whose morphisms are given by $\Hom(n,n) = \SS_n$ and $\Hom(n,m) = \varnothing$ if $n \neq m$.
  The composition in $\SS$ is given by group multiplication.
  From the action point of view, an $\SS \mddash \SS$ bimodule is a sequence of objects $\{ \P(n,m) \}_{n,m \in \N}$ with a contravariant $\SS_n$-action and a covariant $\SS_m$-action.
\end{example}

\begin{example}
  \label{ex:cospan}
  Define a bimodule $\rho_{\rCospan}: \Iso(\FinSet)^{\rop} \times \Iso(\FinSet) \to \Set$ so that $\rho_{\rCospan}(S,T)$ is the set of diagrams $S \rightarrow V \leftarrow T$ called \defn{cospans} modulo isomorphisms.
  The bimodule action $\rho_{\rCospan}(\sigma, \tau): \rho_{\rCospan}(S, T) \to \rho_{\rCospan}(S', T')$ is defined by sending the cospan $S \overset{l}{\rightarrow} V \overset{r}{\leftarrow} T$ to the cospan $S' \overset{l \circ \sigma}{\rightarrow} V \overset{r \circ \tau^{-1}}{\leftarrow} T$.

  Graphical, we can think of $\rho_{\rCospan}(S, T)$ as the set of graphs with $S \amalg T$ as the flags, unlabeled vertices, and no edges.
  We will think of $S$ as the set of in-tails and $T$ as the set of out-tails.
  For example, an element $x \in \rho_{\rCospan}(\{s_1, s_2, s_3, s_4\}, \{t_1, t_2, t_3, t_4\})$ might look like this:
  \begin{equation}
    \begin{tikzpicture}[baseline={(current bounding box.center)}]
      \node [style=none] (0) at (-1, 0.75) {$s_1$};
      \node [style=none] (1) at (-0.5, 0.75) {$s_2$};
      \node [style=none] (2) at (0, 0.75) {$s_3$};
      \node [style=none] (3) at (-0.75, -0.75) {$t_1$};
      \node [style=none] (4) at (-0.25, -0.75) {$t_2$};
      \node [style=White] (5) at (-0.5, 0) {};
      \node [style=White] (6) at (0.75, 0) {};
      \node [style=none] (7) at (0.75, 0.75) {$s_4$};
      \node [style=none] (8) at (0.5, -0.75) {$t_3$};
      \node [style=none] (9) at (1, -0.75) {$t_4$};
      \draw (0) to (5);
      \draw (1) to (5);
      \draw (2) to (5);
      \draw (5) to (3);
      \draw (5) to (4);
      \draw (6) to (8);
      \draw (6) to (9);
      \draw (7) to (6);
    \end{tikzpicture}
  \end{equation}
  Under this interpretation, the groupoid $\Iso(\FinSet)$ acts by relabeling the tails.
\end{example}

\begin{example}
  \label{can-bimod}
  Any category $\arbcat$ induces a canonical $\Iso(\arbcat) \mddash \Iso(\arbcat)$ bimodule where $\Iso(\arbcat)$ is the subcategory of $\arbcat$ consisting of all objects and all isomorphisms.
  
  \begin{description}
  \item[Module point of view] Let $\M = \Mor(\arbcat)$ be the collection of morphisms.
    Then the groupoid $\Iso(\arbcat)$ acts both contravariantly and covariantly by composition:
    \begin{equation}
      (\sigma \biact \tau) \cdot \phi
      = \tau \circ \phi \circ \sigma
    \end{equation}
  \item [Representation point of view] Define the functor $\rho_\arbcat:\Iso(\arbcat)^{\rop}\times \Iso(\arbcat)\to \Set$ on objects by $\rho_\arbcat(X,Y) = \Hom(X,Y)$.
    For a pair of morphisms $\sigma: X' \to X$ and $\tau: Y \to Y'$ in $\Iso(\arbcat)$, define $\rho_\arbcat(\sigma \biact \tau): \Hom(X,Y) \to \Hom(X',Y')$ by sending $\phi$ to $\tau \circ \phi \circ \sigma$.
  \end{description}
\end{example}

\subsection{Plethysm}\label{sec:plethysm}

The $\act \mddash \act'$ bimodules naturally constitute a functor category.
We will see that when $\act = \act'$, this category can be equipped with a monoidal product which we will call the \emph{plethysm product}.

\subsubsection{Categories as a motivation}\label{sec:cat-motivation}

To motivate the definition of the plethysm product, we will take a close look at composition in a category.
Recall from Example~\ref{can-bimod}, that for any category $\arbcat$ we can consider $\M = \Mor(\arbcat)$ as a bimodule with an $\Iso(\arbcat) \mddash \Iso(\arbcat)$ action.
Let $\Mor(\arbcat) \leftsub{t}{\times_s} \Mor(\arbcat)$ denote the collection of ordered pairs $(\phi, \psi)$ of morphisms such that the target of $\phi$ agrees with the source of $\psi$:
\begin{equation}
  \begin{tikzcd}
    A
    \arrow[r, "\phi"]
    & B=B
    \arrow[r, "\psi"]
    & C
  \end{tikzcd}
\end{equation}
Note that $\Mor(\arbcat) \leftsub{t}{\times_s} \Mor(\arbcat)$ can be given either an $(\Iso(\arbcat)^{\rop} \times \Iso(\arbcat)) \times (\Iso(\arbcat)^{\rop} \times \Iso(\arbcat))$--action or we can ignore the inner variables and give it an $\Iso(\arbcat)^{\rop} \times \Iso(\arbcat)$--action on the ``outer'' variables.
If we give $\Mor(\arbcat) \leftsub{t}{\times_s} \Mor(\arbcat)$ an $\Iso(\arbcat)^{\rop} \times \Iso(\arbcat)$--action on the ``outer'' variables, then we can define composition in $\arbcat$ as an $\Iso(\arbcat) \mddash \Iso(\arbcat)$ bimodule map:
\begin{align*}
  \bar \gamma: \Mor(\arbcat) \leftsub{t}{\times_s} \Mor(\arbcat) &\to \Mor(\arbcat) \\
  (\phi, \psi) &\mapsto \psi \circ \phi
\end{align*}
Observe that the bimodule map $\bar \gamma$ is then equivariant with respect to the ``inner'' variable since the following compositions are always equal:
\begin{equation}
  \begin{tikzcd}
    X
    \arrow[rr, "\phi"]
    & & Y
    \arrow[rr, "\psi"]
    & & Z \\
    X
    \arrow[rd, "\phi"']
    \arrow[rr, "(\id \biact \sigma) \cdot \phi"]
    & & Y'
    \arrow[rd, "\sigma^{-1}"']
    \arrow[rr, "(\sigma^{-1} \biact \id) \cdot \psi"]
    & & Z \\
    & Y
    \arrow[ru, "\sigma"']
    & & Y
    \arrow[ru, "\psi"'] &
  \end{tikzcd}
\end{equation}
Hence the composition map factors through the coinvariants of the diagonal action:
\begin{equation}
  \label{eq:factor-coinvar}
  \begin{tikzcd}[column sep = tiny] \Mor(\arbcat)\leftsub{t}{\times_s}\Mor(\arbcat)
    \arrow[rr, "\bar \gamma"]
    \arrow[rd, "\pi"']
    & & \Mor(\arbcat) \\
    & \Mor(\arbcat) (\leftsub{t}{\times_s})_{\Iso(\arbcat)} \Mor(\arbcat)
    \arrow[ru, "\gamma"'] &
  \end{tikzcd}
\end{equation}

\subsubsection{Formal definition of plethysm}
\label{sec:formal-def-plethysm}

When we view a category $\arbcat$ as an $\Iso(\arbcat) \mdash \Iso(\arbcat)$ bimodule, we relied on the monoidal structure $(\Set, \times)$ to define composition.
More generally, in order for the category of bimodules $\Fun(\act^{op} \times \act, \C)$ to have a monoidal structure, we need the target category $\C$ to also have a monoidal structure.
We also claim that any reasonable notion of composition should satisfy an equivariance property like the one seen in the last section.
This will lead us to consider a ``tensored'' version of the monoidal product.
We will formalize this as a coend.

\begin{definition}
  Let $\rho: \act \to \C$ and $\rho': \act' \to \C$ be two modules such that the target category $\C$ is monoidal.
  Define the \defn{exterior product} $\rho \boxtimes \rho: \act \times \act' \to \C$ on objects by $(\rho\boxtimes \rho')(X,Y) = \rho(X)\otimes_\C\rho(Y)$ and similarly on morphisms.
\end{definition}

\begin{example}
  Recall from Section~\ref{sec:bimod-repr} that an action of a group $G$ on a set $X$ can be represented as a functor $\rho: \underline{G} \to \Set$ such that $X = \rho(\bullet)$.
  Let $\rho': \underline{G}' \to \Set$ represent an action of the group $G'$ on the set $X' = \rho'(\bullet)$.
  Then $\rho \boxtimes \rho'$ represents the action of the group $G \times G'$ on the set $X \times X'$ defined by $(g,g') \cdot (x,x') := (g \cdot x, g' \cdot x')$.
\end{example}

\begin{definition}
  Given two $\act \mddash \act$ bimodules $\rho_1$ and $\rho_2$, define the \defn{plethysm product} $\rho_1 \pl \rho_2: \act^{op} \times \act \to \C$ as follows:
  \begin{equation}
    [\rho_1 \pl \rho_2](A,C)
    := \int^{B : \act} [\rho_1 \boxtimes \rho_2]((A,B), (B,C))
  \end{equation}
\end{definition}

\begin{example}
  In the case of $\act=\underline{G}$, this is just the relative product for two $G\mddash G$ bimodules $A$ and $B$.
  This is the object $A \otimes_G B$ considered as a $G\mddash G$ bimodule with the left action being that on $A$ and the right action being that on $B$.
\end{example}

\begin{rmk}\label{cocone-remark}
To see the connection with the coinvariants discussed in Section~\ref{sec:cat-motivation}, consider the co--wedge for the morphism $\sigma: B \to B'$ in the plethysm product:
\begin{equation}
  \begin{tikzcd}
    {\rho_1(A,B) \otimes \rho_2(B', C)}
    \arrow[d, "{\rho_1(\id,\id) \otimes \rho_2(\sigma,\id)}"']
    \arrow[rrr, "{\rho_1(\id,\sigma) \otimes \rho_2(\id,\id)}"]
    & & & {\rho_1(A,B') \otimes \rho_2(B',C)}
    \arrow[d, dashed] \\
    {\rho_1(A,B) \otimes \rho_2(B,C)}
    \arrow[rrr, dashed]
    && & {(\rho_1 \pl \rho_2)(A,C)}
  \end{tikzcd}
\end{equation}
In the case where $\act = \Gpd$ is a groupoid, we can use the invertibility of the morphisms to replace the co--wedge with a cocone:
\begin{equation}
  \begin{tikzcd}
    {\rho_1(A,B) \otimes \rho_2(B, C)}
    \arrow[rr, "{\rho_1(\id,\sigma) \otimes \rho_2(\sigma^{-1},\id)}"]
    \arrow[rd, dashed]
    & & {\rho_1(A,B') \otimes \rho_2(B',C)}
    \arrow[ld, dashed] \\
    & {(\rho_1 \pl \rho_2)(A,C)} &
  \end{tikzcd}
\end{equation}
Hence a bimodule map $\alpha: \rho_1 \pl \rho_2 \To \xi$ is precisely a family of maps which are invariant under the diagonal action on the middle variables similar to \eqref{eq:factor-coinvar}.
\begin{equation}
  \rho_1(A,B) \otimes \rho_2(B,C) \to \xi(A,C)
\end{equation}
\end{rmk}

\begin{definition}
  Recall that any category $\arbcat$ has an associated $\Iso(\arbcat) \mddash \Iso(\arbcat)$ bimodule in $\Set$ through $\rho_\act$ as in Example~\ref{can-bimod}.
  Similarly, define the \defn{action bimodule} $\tilde\rho_\act: \act^{\rop} \times \act \to \Set$ by
  \begin{equation}
    \tilde\rho_{\act}(A,B) = \Hom_{\act}(A,B)
  \end{equation}
  Like the canonical bimodule, define the action by precomposition and postcomposition.
  Moreover, assuming $\C$ has a free functor $F: \Set \to \C$, define $\tilde\rho_{\act}^{\C} := F \circ \tilde\rho_\act$.
\end{definition}

\begin{example}
  If $\act = \underline{G}$ and $\C = \Vect_k$, then the $\act \mddash \act$ bimodule $\tilde\rho_{\act}^{\Vect} = F \circ \tilde\rho_\act$ is $k[G]$ with the left and right action of multiplication.
\end{example}

Recall, that in the case $\C = \Set$, by what is known as the co--Yoneda lemma or density formula, for any functor $X: \C^{op} \to \Set$ there is an isomorphism $X \simeq \int^{c:\C}  \Hom(-, c) \times X(c)$, with an explicit isomorphism coming from the Yoneda lemma, see e.g.\ \cite{MacLane}. Generalizing this one obtains:

\begin{prop}
  \label{square-mon-cat}
  $\act\mddash\act$ bimodules with values in a monoidal category $\C$ with enough colimits form a monoidal category with monoidal product $\pl$.
  The unit is given by the functor $\tilde\rho_\act^{\C}$ and the unit constraints and associativity constraints are those induced by the respective constraints on $\C$.
\end{prop}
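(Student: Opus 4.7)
The plan is to supply the three pieces of monoidal data — associator, left unitor, right unitor — and then to check pentagon and triangle coherence. Throughout, the key tool is that $\pl$ is built from a coend of tensor products in $\C$, so universal properties of coends combined with the monoidal coherence already present in $\C$ do most of the work. I will assume throughout that $\otimes$ preserves colimits in each variable; this is implicit in ``enough colimits'' being the right hypothesis for the coend formula, and will be used repeatedly to slide tensor factors past coends.

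For the associator, I would compute
\[
[(\rho_1 \pl \rho_2) \pl \rho_3](A,D) \;=\; \int^{C}\!\left(\int^{B}\rho_1(A,B)\otimes\rho_2(B,C)\right)\otimes\rho_3(C,D),
\]
pull $\rho_3(C,D)$ inside the inner coend using cocontinuity of $-\otimes\rho_3(C,D)$, apply the associator of $\C$ pointwise, and swap the two coends by Fubini for coends. The result is $\int^{B}\rho_1(A,B)\otimes\int^{C}\rho_2(B,C)\otimes\rho_3(C,D) = [\rho_1 \pl (\rho_2 \pl \rho_3)](A,D)$. Naturality in $(A,D)$ and in the $\rho_i$ is built into the construction, and one gets a natural isomorphism $\alpha:(\rho_1\pl\rho_2)\pl\rho_3 \Rightarrow \rho_1\pl(\rho_2\pl\rho_3)$.

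For the unit constraints I would use the co-Yoneda lemma. Since $F:\Set\to\C$ is left adjoint to the forgetful (or hom-set) functor, one has $F(S)\otimes X \cong S\odot X$ naturally in $S$ and $X$, where $\odot$ is the copower of Definition \ref{copower}. Hence
\[
[\tilde\rho_\act^{\C}\pl\rho](A,C) \;=\; \int^{B} F(\Hom_\act(A,B))\otimes\rho(B,C) \;\cong\; \int^{B}\Hom_\act(A,B)\odot\rho(B,C),
\]
and the co-Yoneda lemma identifies this last coend with $\rho(A,C)$, giving the left unitor $\lambda_\rho$. The right unitor $\rho_\rho$ is entirely symmetric, using the contravariant slot of $\tilde\rho_\act^{\C}$ in place of the covariant one.

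Finally, pentagon and triangle reduce to the corresponding axioms in $\C$. For the pentagon, both composites expressing the two ways around the diagram exhibit $((\rho_1\pl\rho_2)\pl\rho_3)\pl\rho_4$ and $\rho_1\pl(\rho_2\pl(\rho_3\pl\rho_4))$ as coends of the same four-fold tensor product $\rho_1(A,B)\otimes\rho_2(B,C)\otimes\rho_3(C,D)\otimes\rho_4(D,E)$; by the universal property of coends it suffices to check agreement after precomposition with the universal cowedge, where the statement is exactly the pentagon in $\C$. Triangle is identical in spirit, using the universal property together with the triangle in $\C$ and the definition of the unitors via co-Yoneda. The main subtlety I anticipate is purely bookkeeping: keeping track of which Fubini and cocontinuity isomorphisms are invoked so the coherence diagrams really do reduce pointwise to those in $\C$; the cocontinuity hypothesis on $\otimes$ is essential here, and should be recorded explicitly (it is automatic when $\C$ is closed monoidal, as is typical in later applications).
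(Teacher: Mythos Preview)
Your proposal is correct and follows essentially the same strategy as the paper: Fubini for coends gives the associator, and the co-Yoneda (density) lemma gives the unitors. The only minor difference is organizational: the paper first treats the case $\C=\Set$ and then transfers the isomorphisms to general $\C$ by postcomposing with the left adjoint $F:\Set\to\C$, whereas you work directly in $\C$ via the identification $F(S)\otimes X\cong S\odot X$ and the copowered co-Yoneda lemma; your version is more explicit about the pentagon and triangle coherences, which the paper leaves implicit.
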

\begin{proof}
  The left unit constraint directly follows from the density formula.
  For the right unit constraint, the density formula also provides a natural isomorphism $Y \simeq \int^{c:C} Y(c) \times \Hom(c, -)$ for any functor $Y: \C \to \Set$.
  The associativity constraints follow from the so--called Fubini Theorem for co--ends, see \cite{MacLane}.

  For the other cases of $\C$ as above, the functor is given by post-composing with $F$, which is a left adjoint, to the functor $U: \C \to \Set$ and hence the isomorphisms transfer.
\end{proof}

\subsection{Bimodule Monoids}\label{sec:bimodule-monoids}

In the last section, we showed that modules form a monoidal category with plethysm as the monoidal product. In this section, we will discuss what monoids in this category look like.

\subsubsection{Definition and examples}\label{sec:Bimod-monoid-def-examples}

\begin{definition}
  A \defn{bimodule monoid} is a monoid in the monoidal category $(\act\mddash\act\text{-modules}, \pl, \tilde\rho_\act)$ meaning it is a $\act\mddash\act$-bimodule $\rho$ together with a natural transformation $\gamma: \rho \pl \rho \Rightarrow \rho$.
  It is \defn{unital} if there is a natural transformation $\eta: \tilde\rho_\act \Rightarrow \rho$, making the following diagram commute:
  \begin{equation}
    \begin{tikzcd}
      \rho \pl \tilde\rho_{\act}
      \arrow[r, "\id \pl \eta"]
      \arrow[rd, "\sim"']
      & \rho \pl \rho
      \arrow[d, "\gamma"']
      & \tilde\rho_{\act} \pl \rho
      \arrow[l, "\eta \pl \id"']
      \arrow[ld, "\sim"] \\
      & \rho &
    \end{tikzcd}
  \end{equation}
\end{definition}

Both unital and non-unital monoids occur naturally.
We will give examples of both.

\begin{example}[non-unital]
  \label{ex:glue}
  Let $\xi_{Glue}: \Iso(\FinSet)^{op} \times \Iso(\FinSet) \to \Set$ be the bimodule where $\xi_{Glue}(S,T)$ is the set of graphs with $S$ as the in-tails, $T$ as the out-tails, and unlabeled vertices.
  A typical element might look something like this:
  \begin{equation}
    \begin{tikzpicture}[baseline={(current bounding box.center)}]
      \node [style=none] (0) at (-1.25, 0.5) {$s_1$};
      \node [style=none] (1) at (-0.75, 0.5) {$s_2$};
      \node [style=none] (2) at (-0.25, 0.5) {$s_3$};
      \node [style=none] (3) at (-1, -2) {$t_1$};
      \node [style=none] (4) at (-0.5, -2) {$t_2$};
      \node [style=White] (5) at (-1, -0.25) {};
      \node [style=White] (6) at (-0.25, -0.75) {};
      \node [style=none] (7) at (0.5, 0.5) {$s_4$};
      \node [style=none] (8) at (0, -2) {$t_3$};
      \node [style=none] (9) at (1, -2) {$t_4$};
      \node [style=White] (10) at (-1, -1.25) {};
      \node [style=White] (12) at (0.5, -0.75) {};
      \node [style=White] (13) at (1.5, -0.25) {};
      \node [style=White] (14) at (1, -1.25) {};
      \draw (5) to (10);
      \draw [in=-150, out=135, loop] (10) to ();
      \draw (5) to (6);
      \draw (10) to (3);
      \draw (10) to (6);
      \draw (0) to (5);
      \draw (1) to (5);
      \draw (2) to (6);
      \draw (12) to (6);
      \draw (14) to (9);
      \draw (12) to (14);
      \draw (12) to (13);
      \draw (12) to (7);
      \draw (6) to (4);
      \draw (6) to (8);
    \end{tikzpicture}
  \end{equation}
  
  There is a natural monoid structure $\gamma_{Glue}: \xi_{Glue} \pl \xi_{Glue} \Rightarrow \xi_{Glue}$ given by gluing the out-tails of one graph to the in-tails of the other graph.
  Note that gluing can only lengthen the graph, so it is not possible to define a unit.
\end{example}

\begin{example}[unital]
\label{ex:cospan-monoid}
  Let $\rho_{Cospan}: \Iso(\FinSet)^{op} \times \Iso(\FinSet) \to \Set$ be the bimodule of cospans defined in Example~\ref{ex:cospan}.
  Define multiplication $\gamma: \rho_{Cospan} \pl \rho_{Cospan} \Rightarrow \rho_{Cospan}$ by composing two cospans by taking a pushout.
  Graphically, this corresponds to gluing two graphs together and then contracting edges.
  
  Define a unit $\eta: \tilde\rho_{\Iso(\FinSet)} \Rightarrow \rho_{Cospan}$ by sending an isomorphism $A \to A'$ to the equivalence class containing the cospan $A \rightarrow A' \stackrel{=}{\leftarrow} A'$.
  Graphically, this corresponds to depicting a bijection $b: X \to Y$ as a graph $\del: F \to V$ with flags $F = X \amalg Y$, vertices $V = Y$, and $\del = b \amalg \id_Y$:
  \begin{equation}
    \begin{tikzpicture}[baseline={(current bounding box.center)}]
      \node [fill=white, draw=black, shape=circle]
      (0) at (3, 0) {};
      \node [fill=white, draw=black, shape=circle]
      (1) at (4, 0) {};
      \node [fill=white, draw=black, shape=circle]
      (2) at (5, 0) {};
      \node [] (3) at (3, 1) {$a$};
      \node [] (4) at (4, 1) {$b$};
      \node [] (5) at (5, 1) {$c$};
      \node [] (6) at (3, -1) {$2$};
      \node [] (7) at (4, -1) {$3$};
      \node [] (8) at (5, -1) {$1$};
      \node [] (9) at (1, 0) {};
      \node [] (10) at (2, 0) {};
      \node [] (11) at (-0.5, 1) {$a \mapsto 2$};
      \node [] (12) at (-0.5, 0) {$b \mapsto 3$};
      \node [] (13) at (-0.5, -1) {$c \mapsto 1$};
      \draw (3) to (0);
      \draw (0) to (6);
      \draw (4) to (1);
      \draw (1) to (7);
      \draw (5) to (2);
      \draw (2) to (8);
      \draw [{|->}] (9.center) to (10.center);
    \end{tikzpicture}
  \end{equation}
\end{example}

\subsubsection{Categories and unital monoids}
\label{sec:cat-unital-monoids}

In Section~\ref{sec:cat-motivation}, we used categories as one of our motivating examples.
There, the groupoid $\Iso(\arbcat)$ ``acted'' on the morphisms of $\arbcat$ by precomposition and postcomposition.
In other words, the action came from the inclusion $\imath: \Iso(\arbcat) \to \arbcat$.
With a proper definition of plethysm at hand, we will write all of this down formally and generalize it by allowing $\imath$ to be arbitrary.
We will then show that these ``category-like'' monoids are precisely unital monoids.

\begin{prop}
  \label{cat-to-mon}
  Let $\arbcat$ be a category equipped with an identity on objects functor $\imath: \act \to \arbcat$.
  Then there is a canonical unital monoid structure for the bimodule $\rho_{\arbcat}: \act^{\rop} \times \act \to \Set$ defined by:
  \begin{equation}
    \rho_{\arbcat}(X,Y) := \Hom_\arbcat(X,Y)
  \end{equation}
  with the action given by $(\sigma \biact \tau) \cdot \phi = \imath(\tau) \circ \phi \circ \imath(\sigma)$.
\end{prop}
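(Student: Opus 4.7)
The plan is to take the multiplication $\gamma$ from composition in $\arbcat$ and the unit $\eta$ from the functor $\imath$ itself, then verify the monoid axioms by reducing them to the associativity and unit laws in $\arbcat$. First I would check that the formula $(\sigma \biact \tau) \cdot \phi = \imath(\tau) \circ \phi \circ \imath(\sigma)$ really defines a functor $\rho_\arbcat: \act^{op} \times \act \to \Set$: both compatibility with composition in $\act^{op} \times \act$ and preservation of identities follow immediately from the functoriality of $\imath$ and the associativity/unitality of composition in $\arbcat$.

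Next I would construct $\gamma$ via the universal property of the coend $(\rho_\arbcat \pl \rho_\arbcat)(A,C) = \int^{B:\act} \Hom_\arbcat(A,B) \times \Hom_\arbcat(B,C)$. The family
\[
\bar\gamma_B: \Hom_\arbcat(A,B) \times \Hom_\arbcat(B,C) \to \Hom_\arbcat(A,C), \qquad (\phi,\psi) \mapsto \psi \circ \phi
\]
is a cowedge: the equivariance condition from Remark~\ref{cocone-remark}, applied to an arrow $\sigma: B \to B'$ in $\act$, demands that the two composites $\psi \circ \imath(\sigma) \circ \phi$ coming from first acting on $\phi$ by $\imath(\sigma)$ versus first acting on $\psi$ by $\imath(\sigma)$ agree; they do, tautologically, by associativity in $\arbcat$. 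The induced map $\gamma: \rho_\arbcat \pl \rho_\arbcat \Rightarrow \rho_\arbcat$ is then automatically a bimodule map because composition in $\arbcat$ is natural in the outer arguments with respect to pre- and post-composition by $\imath(\sigma)$, $\imath(\tau)$.

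For the unit I would define $\eta_{A,B}: \Hom_\act(A,B) \to \Hom_\arbcat(A,B)$ to be the functor $\imath$ on hom-sets. Naturality of $\eta$ as a morphism of bimodules is a direct consequence of $\imath$ being a functor. To verify the unit triangle, recall from the proof of Proposition~\ref{square-mon-cat} that the left unit constraint $\tilde\rho_\act \pl \rho_\arbcat \Rightarrow \rho_\arbcat$ is the density isomorphism, whose inverse at the representative $(\sigma, \psi) \in \Hom_\act(A,B) \times \Hom_\arbcat(B,C)$ sends this pair to $\psi \circ \imath(\sigma)$. Post-composing $\eta \pl \id$ with $\gamma$ sends $(\sigma, \psi)$ to $\psi \circ \imath(\sigma)$ as well, so the triangle commutes; the right unit triangle is symmetric. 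Associativity of $\gamma$ then reduces, after unwinding the iterated coend $(\rho_\arbcat \pl \rho_\arbcat) \pl \rho_\arbcat$ using the Fubini theorem, to the identity $(\chi \circ \psi) \circ \phi = \chi \circ (\psi \circ \phi)$ in $\arbcat$.

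The main obstacle, such as it is, is bookkeeping the coend universal property and checking that the cowedge descends to a well-defined natural transformation of bimodules, rather than any substantive computation: every nontrivial equation in this proposition is essentially the associativity or unit law of $\arbcat$ translated through $\imath$. A subtle point worth flagging is that the ``identity on objects'' hypothesis on $\imath$ is precisely what lets us define $\rho_\arbcat(X,Y) = \Hom_\arbcat(X,Y)$ without an additional object-level relabeling in the bimodule structure, and ensures that the action bimodule $\tilde\rho_\act$ and $\rho_\arbcat$ share the same indexing category for the unit map $\eta$.
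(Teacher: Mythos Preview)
Your proposal is correct and follows the same approach as the paper: define $\gamma$ from composition in $\arbcat$ via the universal property of the coend (the paper phrases this as factoring through coinvariants in the middle slot), and define $\eta$ as $\imath$ on hom-sets, with unitality reducing to the formula $(\sigma \biact \tau)\cdot\phi = \imath(\tau)\circ\phi\circ\imath(\sigma)$. Your write-up is in fact more detailed than the paper's, which omits the explicit associativity check and the verification of the cowedge condition that you spell out.
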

\begin{proof}
  The functorality of $\rho_{\arbcat}$ is clear.
  For the monoid structure, we first notice that the composition map induces a family of morphisms
  \begin{equation}
    \circ_{XYZ}: \rho(X,Y) \times \rho(Y,Z) \to \rho(X,Z)
  \end{equation}
  These maps induce a morphism $\gamma_{XZ}$ from the coinvariants with respect to the $\act^{\rop} \times \act$ action acting on the ``$Y$-slot'' to $\rho(X,Z)$.
  Hence the totality of $\{\gamma_{XZ}\}$ constitutes a natural transformation $\gamma: \rho \pl \rho \To \rho$ giving us our multiplication.

  Recall that the unit for this monoidal category of bimodules is $\tilde\rho_{\act}$.
  Unpacking the definition of the functor $\imath: \act \to \arbcat$, we have a family of maps $\tilde\rho_{\act}(A,B) \to \rho_{\arbcat}(A,B)$ which forms a natural transformation $\eta: \tilde\rho_{\act} \To \rho_{\arbcat}$.
  The left and right unitality diagrams will commute because we defined the action as $(\sigma \biact \tau) \cdot \phi = \imath(\tau) \circ \phi \circ \imath(\sigma)$.
\end{proof}

\begin{example}\label{can-monoid}
  The situation where $\imath: \Iso(\arbcat) \to \arbcat$ is the inclusion corresponds to the canonical bimodule defined in Example~\ref{can-bimod} equipped with a monoid structure coming from the composition of $\arbcat$.
  The unit $\eta: \tilde\rho_{\Iso(\arbcat)} \To \rho_{\arbcat}$ is just the inclusion.
  We will call this the \defn{canonical monoid} for the category $\arbcat$.
\end{example}

\begin{example}
  Take $\arbcat$ to be any locally small category and let $\act = \arbcat_{disc}$ be the discrete subcategory of $\arbcat$.
  Let $\imath: \arbcat_{disc} \to \arbcat$ be the unique identity on objects functor of that form.
  In this case, the image of $\eta: \tilde\rho_{\Gpd} \Rightarrow \rho$ only contains the identities.
\end{example}

\begin{example}
  If a category $\arbcat$ has an underlying groupoid $\Iso(\arbcat) \simeq \V^{\boxtimes}$ which is a free symmetric monoidal category, then there is a functor $\SS \to \Iso(\arbcat)$ coming from the commutativity constraints.
  By composing with the inclusion, we get a functor $\SS \to \arbcat$.
  Hence such a category $\arbcat$ gives rise to an $\SS \mddash \SS$ bimodule.
\end{example}

The existence of a unit $\eta: \tilde\rho_{\act} \Rightarrow \rho$ for a $\act \mddash \act$ bimodule monoid $(\rho, \gamma)$ is essentially the statement that the multiplication $\gamma$ can implement the $\act \mdash \act$ bi-action.
We will see that this condition is precisely what makes unital bimodules resemble the ``category-like'' bimodules.

\begin{prop}
  \label{mon-to-cat}
  A unital $\act \mddash \act$ bimodule monoid $(\rho, \gamma, \eta)$ determines a category $\arbcat := \arbcat(\rho, \gamma)$ with the same objects as $\act$ with morphisms defined by:
  \begin{equation}
    \Hom_{\arbcat}(A,B) := \rho(A,B)
  \end{equation}
  Moreover, the unit $\eta$ induces an identity on objects functor $\imath: \act \to \arbcat$.
\end{prop}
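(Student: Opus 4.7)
The plan is to construct the composition in $\arbcat$ from $\gamma$, the identities from $\eta$, and then verify the category axioms by a direct translation of the monoid axioms of $(\rho,\gamma,\eta)$ through the coend description of $\pl$.

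First I would describe composition. For objects $A,B,C$ of $\act$, the coend formula from Proposition~\ref{square-mon-cat} gives a canonical map $\iota_B : \rho(A,B) \times \rho(B,C) \to [\rho\pl\rho](A,C)$ in $\Set$. Composing with $\gamma_{A,C}:[\rho\pl\rho](A,C)\to\rho(A,C)$ defines
\begin{equation}
 \circ_\arbcat : \Hom_\arbcat(B,C)\times\Hom_\arbcat(A,B) \to \Hom_\arbcat(A,C), \qquad (\psi,\phi)\mapsto \gamma_{A,C}(\iota_B(\phi,\psi)).
\end{equation}
For identities, I would use the components of the unit: since $\id_A \in \Hom_\act(A,A) = \tilde\rho_\act(A,A)$, set $\id^\arbcat_A := \eta_{A,A}(\id_A) \in \rho(A,A) = \Hom_\arbcat(A,A)$.

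Next I would verify associativity: the associativity diagram $\gamma\circ(\gamma\pl\id) = \gamma\circ(\id\pl\gamma)$, evaluated at $(A,D)$ and tested against a triple $(\phi,\psi,\chi)\in\rho(A,B)\times\rho(B,C)\times\rho(C,D)$ through the iterated coend maps, yields $\chi\circ_\arbcat(\psi\circ_\arbcat\phi) = (\chi\circ_\arbcat\psi)\circ_\arbcat\phi$. For unitality, I would trace the commutativity of the two triangles in the definition of a unital bimodule monoid. The left unit constraint $\rho\pl\tilde\rho_\act \overset{\sim}{\Rightarrow} \rho$ from Proposition~\ref{square-mon-cat}, evaluated at $(\phi,\id_B)\in\rho(A,B)\times\tilde\rho_\act(B,B)$, together with the triangle condition, gives $\id^\arbcat_B\circ_\arbcat\phi = \phi$; the right constraint analogously yields $\phi\circ_\arbcat\id^\arbcat_A = \phi$.

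For the functor $\imath:\act\to\arbcat$, I would let $\imath$ be the identity on objects and $\imath(\sigma) := \eta_{A,B}(\sigma)$ on a morphism $\sigma:A\to B$ of $\act$. Identity preservation is immediate from the definition of $\id^\arbcat_A$. Functoriality, i.e.\ $\imath(\tau)\circ_\arbcat\imath(\sigma) = \imath(\tau\circ\sigma)$ for $\sigma:A\to B$, $\tau:B\to C$, follows by combining two facts: first, $\eta:\tilde\rho_\act\Rightarrow\rho$ is a map of bimodule monoids when $\tilde\rho_\act$ carries its own canonical monoid structure (where multiplication is composition in $\act$, via the coend/Yoneda identification $\tilde\rho_\act\pl\tilde\rho_\act\cong\tilde\rho_\act$); second, the unital monoid triangles are exactly what force this compatibility, as already used to verify left/right unitality above.

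The one subtle step is the last one: phrasing $\eta$ as a monoid map and matching the composition on $\tilde\rho_\act$ induced by the coend with ordinary composition in $\act$. I would handle this by a direct coend computation in $\Set$, writing the class of $(\sigma,\tau)\in\tilde\rho_\act(A,B)\times\tilde\rho_\act(B,C)$ inside the coend and applying the unit triangles to reduce the resulting expression $\gamma(\eta(\sigma),\eta(\tau))$ to $\eta(\tau\circ\sigma)$. Everything else is formal manipulation of the coend description of $\pl$ and the monoid axioms, essentially inverse to Proposition~\ref{cat-to-mon}.
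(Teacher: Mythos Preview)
Your proposal is correct and follows essentially the same approach as the paper: define composition via the coend inclusion followed by $\gamma$, derive associativity from the monoid associativity, and obtain the functor $\imath$ from $\eta$. The only cosmetic difference is in how functoriality of $\imath$ is argued: the paper packages this into a single commutative rectangle
\[
\begin{tikzcd}
  \tilde\rho_{\act} \pl \tilde\rho_{\act}
  \arrow[d, "\sim"]
  \arrow[r, "\id \pl \eta"]
  & \tilde\rho_{\act} \pl \rho
  \arrow[r, "\eta \pl \id"]
  \arrow[d, "\sim"]
  & \rho \pl \rho
  \arrow[d, "\gamma"] \\
  \tilde\rho_{\act}
  \arrow[r, "\eta"']
  & \rho
  \arrow[r, equal]
  & \rho
\end{tikzcd}
\]
whose left square commutes by naturality of the unitor $\tilde\rho_\act \pl (-)\overset{\sim}{\Rightarrow}(-)$ and whose right square is the unit triangle---whereas you phrase the same content as ``$\eta$ is a monoid map.'' These are the same argument.
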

\begin{proof}
  Define composition by the following diagram:
  \begin{equation}
    \begin{tikzcd}[column sep = small]
      {\Hom(X,Y) \times \Hom(Y,Z)}
      \arrow[d, equal]
      \arrow[rr, "\circ", dashed]
      & & {\Hom(X,Z)} \\
      {\rho(X,Y) \times \rho(Y,Z)}
      \arrow[r]
      & {[\rho \pl \rho](X,Z)}
      \arrow[r, "{\gamma_{X,Z}}"']
      & {\rho(X,Z)} \arrow[u, equal]
    \end{tikzcd}
  \end{equation}
  The associativity axiom for a category follows from the associativity of the monoid.

  For the unit, consider the following diagram:
  \begin{equation}
    \label{eq:G-mult-F-mult-commute}
    \begin{tikzcd}
      \tilde\rho_{\act} \pl \tilde\rho_{\act}
      \arrow[d, "\sim"]
      \arrow[r, "\id \pl \eta"]
      & \tilde\rho_{\act} \pl \rho
      \arrow[r, "\eta \pl \id"]
      \arrow[d, "\sim"]
      & \rho \pl \rho
      \arrow[d, "\gamma"] \\
      \tilde\rho_{\act}
      \arrow[r, "\eta"']
      & \rho
      \arrow[r, equal]
      & \rho
    \end{tikzcd}
  \end{equation}
  The left square commutes because the transformation $\tilde\rho_{\act} \pl (-) \Rightarrow (-)$ constitutes a natural isomorphism.
  The right square commutes because $\eta$ is a unit.
  Hence the outer rectangle commutes, which amounts to the existence of a functor $\act \to \arbcat$.
\end{proof}

\begin{cor}
\label{cor:bimodcat}
  There is a one-to-one correspondence between unital monoids of bimodules $\rho: \act^{op} \times \act \to \Set$ and categories $\arbcat$ equipped with a functor $\imath: \act \to \arbcat$.
\end{cor}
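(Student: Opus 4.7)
The plan is to treat the corollary as a straightforward packaging of Propositions \ref{cat-to-mon} and \ref{mon-to-cat}: these two results already supply the two directions of the correspondence, so the task reduces to verifying that the constructions are mutually inverse. Let $\Phi$ denote the assignment of Proposition \ref{cat-to-mon}, sending a pair $(\arbcat, \imath\colon \act\to \arbcat)$ (with $\imath$ identity on objects, which is implicit here since $\arbcat(\rho,\gamma)$ is defined to share objects with $\act$) to the unital bimodule monoid $(\rho_\arbcat, \gamma, \eta)$. Let $\Psi$ denote the assignment of Proposition \ref{mon-to-cat}, sending $(\rho,\gamma,\eta)$ to the pair $(\arbcat(\rho,\gamma), \imath_\eta)$.

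First I would show $\Psi\circ\Phi = \id$. Starting from $(\arbcat,\imath)$, the bimodule $\rho_\arbcat$ has value $\Hom_\arbcat(X,Y)$ on objects; then $\arbcat(\rho_\arbcat,\gamma)$ has the same objects as $\act$ (hence as $\arbcat$, since $\imath$ is identity on objects) and morphism sets $\rho_\arbcat(X,Y)=\Hom_\arbcat(X,Y)$. The induced composition is defined by the natural transformation $\gamma$, which in turn was built by passing the composition map $\Hom_\arbcat(X,Y)\times \Hom_\arbcat(Y,Z)\to \Hom_\arbcat(X,Z)$ through the coend defining $\rho_\arbcat\pl\rho_\arbcat$; so the composition of $\arbcat(\rho_\arbcat,\gamma)$ is literally that of $\arbcat$. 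The induced functor $\imath_\eta$ then sends $\sigma\in \Hom_\act(X,Y)$ to $\eta(\sigma) = \imath(\sigma)$, by construction of $\eta$ in Proposition \ref{cat-to-mon}.

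Next I would check $\Phi\circ\Psi = \id$. Starting from $(\rho,\gamma,\eta)$, the category $\arbcat(\rho,\gamma)$ has morphism sets $\rho(X,Y)$, so the bimodule $\rho_{\arbcat(\rho,\gamma)}$ recovers $\rho$ on objects. For the actions, one uses the definition $(\sigma\biact\tau)\cdot\phi = \imath_\eta(\tau)\circ\phi\circ\imath_\eta(\sigma)$ together with the fact that, by the unital diagrams in Proposition \ref{mon-to-cat}, composing via $\gamma$ with an element of the image of $\eta$ realises precisely the original $\act$-biaction on $\rho$; concretely this is diagram \eqref{eq:G-mult-F-mult-commute}, which identifies $\gamma\circ(\eta\pl\id)$ with the left unit constraint, and dually on the right. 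The multiplication $\gamma'$ produced by $\Phi$ is then the unique natural transformation $\rho\pl\rho\Rightarrow\rho$ that factors the strict composition of $\arbcat(\rho,\gamma)$, which coincides with $\gamma$; and the unit produced by $\Phi$ is the inclusion $\tilde\rho_\act\Rightarrow\rho$ sending $\sigma$ to $\imath_\eta(\sigma)=\eta(\sigma)$, recovering $\eta$.

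The only genuinely non-tautological step is the one I expect to be the mild obstacle: confirming that the $\act^{op}\times\act$-action on $\rho$ really is recovered from the composition in $\arbcat(\rho,\gamma)$ together with the unit $\eta$, so that $\Phi(\Psi(\rho,\gamma,\eta))$ has the same underlying bimodule as $\rho$ and not merely the same morphism-sets. This is exactly what the commutativity of \eqref{eq:G-mult-F-mult-commute} establishes, so once that is invoked, the remainder is bookkeeping.
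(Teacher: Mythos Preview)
Your proposal is correct and follows exactly the approach of the paper, which simply states that the constructions of Propositions~\ref{cat-to-mon} and~\ref{mon-to-cat} are inverse to each other. You have merely spelled out the verification of mutual inversity in more detail than the paper does, including the one non-tautological point about recovering the biaction from the unit via diagram~\eqref{eq:G-mult-F-mult-commute}.
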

\begin{proof}
  The constructions described in Proposition~\ref{cat-to-mon} and Proposition~\ref{mon-to-cat} are inverses to each other.
\end{proof}

\section{Element representations}
\label{sec:element-rep}
In practice, many bimodules $\xi: \act^{op} \times \act \to \C$ seem to be ``structured'' by the elements of another bimodule $\rho: \act^{op} \times \act \to \Set$.
This section is about one way of making this explicit.
Another way will be discussed in Section \ref{sec:relative-bimodules}.

We will consider the category of elements $\el(\rho)$ of a bimodule $\rho: \act^{op} \times \act \to \Set$ and show how a ``representation'' $D: \el(\rho) \to \C$ of the element category naturally determines a bimodule $\chi_D: \act^{op} \times \act \to \C$.
Furthermore, the category of functors $D: \el(\rho) \to \C$ can be equipped with a monoidal structure which is compatible with the plethysm product.
Hence studying these functors will be a useful way of deducing properties of bimodules of the form $\rho \cong \chi_{D}$.

\subsection{Category of elements}
\label{sec:category-of-elements}

Classically, a group action $\rho: G \times X \to X$ induces a category called the \defn{action groupoid}.
The objects are elements $x \in X$.
The morphisms are pairs $(g,x) \in G \times X$ where the source is given by projecting to $X$ and the target is given by the group action:
\begin{equation}
  \begin{tikzcd}
    x \arrow[r,"{(g,x)}"]
    & g \cdot x
  \end{tikzcd}
\end{equation}
In our setting, the corresponding construction is the category of elements.
We will give this in two versions.

\subsubsection{Set version}

Let $\rho: \act \to \Set$ be a $\Set$-valued module, then define the \defn{category of elements} $\el(\rho)$ as follows:
\begin{enumerate}
  \item The objects are pairs $(A, x)$ where $A$ is an object of $\act$ and $x \in \rho(A)$.
  \item The morphisms $(A, x) \to (B, y)$ are given by morphisms $f: A \to B$ of $\act$ such that\linebreak $\rho(f)(x) = y$.
\end{enumerate}

\begin{example}
  Consider the canonical $\Iso(\arbcat)\mddash\Iso(\arbcat)$-bimodule $\rho_\arbcat$ for a category $\arbcat$.
  If $\phi \in \rho_{\arbcat}(X,Y)$ and $\phi' \in \rho_{\arbcat}(X',Y')$ are elements, then a morphism $(\sigma \biact \tau): \phi \to \phi'$ is the same data as the following diagram:
  \begin{equation}
    \begin{tikzcd}
      X
      \arrow[d, "\sigma^{-1}"']
      \arrow[r, "\phi"]
      & Y
      \arrow[d, "\tau"] \\
      X'
      \arrow[r, "\phi'"]
      & Y'
    \end{tikzcd}
  \end{equation}
  Define the involutive functor $\Inv: \Iso(\arbcat)^{\rop} \to \Iso(\arbcat)$ by $\sigma \mapsto \sigma^{-1}$.
  Also let $\imath: \Iso(\arbcat) \to \arbcat$ denote the inclusion.
  Then $\el(\rho_{\arbcat})$ is equivalent to the comma category $(Inv \circ \imath \downarrow \imath)$.
\end{example}

\subsubsection{Monoidal version}

When the target category $\C$ is monoidal, the correct notion of an element of an object $X$ is a morphism $x: 1_{\C} \to X$.
This observation gives a convenient definition of an element category as a category under the monoidal unit $1_{\C}$.
If $\rho: \act \to \C$ is a module such that $\C$ is a monoidal category with unit $1_{\C}$, then define the \defn{category of elements} $\el(\rho)$ as the comma category $(1_{\C} \downarrow \rho)$.
This is given explicitly as follows:
\begin{enumerate}
  \item The objects are morphisms $x: 1_\C \to \rho(A)$ in the target category $\C$ such that $A$ is an object of $\act$.
  \item A morphism from $x: 1_\C \to \rho(A)$ to $y: 1_\C \to \rho(B)$ is given by a morphism $f: A \to B$ such that the following commutes:
  \begin{equation}
    \begin{tikzcd}
      & 1_{\C}
      \arrow[ld, "x"']
      \arrow[rd, "y"]
      & \\
      \rho(A)
      \arrow[rr, "\rho(f)"']
      & & \rho(B)
    \end{tikzcd}
  \end{equation}
\end{enumerate}

\begin{example}
  Let $G$ be a group and let $\rho: \underline{G} \to \kVect$ be a group representation.
  Write $V = \rho(\bullet)$.
  The objects of $\el(\rho)$ are pointings and the morphisms are commuting diagrams:
  \begin{equation}
    \begin{tikzcd}[column sep = small]
      \text{Objects}:
      & k
      \arrow[d, "x"]
      & \text{Morphisms}:
      & & k
      \arrow[ld, "x"']
      \arrow[rd, "y"] & \\
      & V
      & &
      V \arrow[rr, "\rho(g)"]
      & & V
    \end{tikzcd}
  \end{equation}
  To put this in a more familiar form, identify a pointing $x: k \to V$ with the vector $v := x(1)$.           
  The objects become elements $v \in V$ and the morphisms become arrows like the ones in the action groupoid.  For clarity, we write $\rho_{g} := \rho(g)$.
  \begin{equation}
    \begin{tikzcd}[column sep = small]
      \text{Objects}:
      & v \in V
      & \text{Morphisms}:
      & v
      \arrow[r, "{(v,g)}"]
      & {\rho_g(v)}
    \end{tikzcd}
  \end{equation}
\end{example}

\subsection{Element representations}

Motivated by Section~\ref{sec:bimod-repr}, it is natural to ask what information is encoded by the ``representations'' of the category $\el(\rho)$.
We will show that these representations encode a particular kind of bimodule.

\subsubsection{Definition and examples}

\begin{definition}
  An \defn{element representation} $D$ of a $\Set$-valued bimodule $\rho: \act^{\rop} \times \act \to \Set$ is a functor of the form
  \begin{equation}
    D: \el(\rho) \to \C
  \end{equation}
  These naturally assemble into a functor category $\Fun(\el(\rho), \C)$.
\end{definition}

\begin{example}\label{ex:graphs-from-cospans}
  Recall the bimodule of cospans defined in Example~\ref{ex:cospan}:
  \begin{equation}
    \rho_{Cospan}: \Iso(\FinSet)^{\rop} \times \Iso(\FinSet) \to \Set
  \end{equation}
  The objects of $\el(\rho_{\rCospan})$ can be visualized as graphs $x$ with labeled tails, unlabeled vertices and no edges.
  The morphisms of $\el(\rho_{\rCospan})$ are then tail relabelings.
  Define an element representation $D: \el(\rho_{\rCospan}) \to \Set$ so that $D(x)$ is the set of all graphs with labeled tails and unlabeled vertices which become $x$ after contracting edges.
\end{example}

\subsubsection{Characteristic bimodule}

In this section, we will clarify how an element representation canonically determines a bimodule.
This is closely related to what is often called the Grothendieck construction described in \cite{groupe-fondamental} which establishes an equivalence between fibrations $\mathcal E \to \arbcat$ over $\arbcat$ and pseudo-functors $\arbcat \to \mathcal Cat$.

\begin{definition}
  Let $\rho: \act^{\rop} \times \act \to \C$ be a given bimodule.
  Define the functor $\Sigma: \el(\rho) \to \act^{\rop} \times \act$ by sending a pointing $x: 1_\C \to \rho(X,Y)$ to the pair $(X,Y)$.
\end{definition}

\begin{rmk}
  Observe that for each morphism $(\sigma \biact \tau): (A,B) \to (A',B')$ in $\act^{\rop} \times \act$ and each element $x \in \rho(A,B)$, there is a unique ``lift'' $(\sigma \biact \tau): x \to x'$ in $\el(\rho)$.
  In other words, the groupoid $\act^{\rop} \times \act$ ``acts'' on the elements.
\end{rmk}

\begin{definition}
  The \defn{characteristic bimodule} $\chi_D$ of an element representation $D: \el(\rho) \to \C$ is the following left Kan extension:
  \begin{equation}
    \begin{tikzcd}
      \el(\rho)
      \arrow[rd, "\Sigma"']
      \arrow[rr, "D"]
      & {}
      \arrow[d, "\,\lambda", shorten >=0.1cm, shorten <=0.1cm, Rightarrow]
      & \C \\
      & \act^{\rop} \times \act
      \arrow[ru, "\chi_D"']
      &
    \end{tikzcd}
  \end{equation}
  We will call the canonical natural transformation $\lambda: D \Rightarrow \chi_D \circ \Sigma$ the \defn{component} for $\chi_D$.
\end{definition}

\begin{const}
  Applying a general formula for a left Kan extension, the value of $\chi_D(A, B)$ can be given explicitly as the colimit of the following diagram:
  \begin{equation}
    (\Sigma \downarrow (A,B)) \to \el(\rho) \overset{D}{\to} \C
  \end{equation}
  Formally, an object of $(\Sigma \downarrow (A,B))$ is an element $x' \in \rho(A', B')$ together with a morphism $(A', B') \to (A, B)$.
  In many of our examples, $\act = \Gpd$ is a groupoid.
  In this case, there is no harm in working directly with elements $x \in \rho(A, B)$.
\end{const}

\begin{cor}
  \label{cor:chi-is-index-enrich}
  When $\act$ is discrete, the characteristic bimodule reduces to the following coproduct:
  \begin{equation}
    \chi_D(A,B) := \coprod_{x \in \rho(A,B)} D(x)
  \end{equation}
\end{cor}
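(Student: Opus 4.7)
The plan is to invoke the explicit formula for the left Kan extension provided by the preceding Construction, which writes $\chi_D(A,B)$ as the colimit of the composite $(\Sigma \downarrow (A,B)) \to \el(\rho) \overset{D}{\to} \C$, and then observe that discreteness of $\act$ forces the indexing category $(\Sigma \downarrow (A,B))$ to itself be discrete, whence the colimit degenerates to a coproduct.

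Concretely, I would first unpack the comma category $(\Sigma \downarrow (A,B))$: its objects are pairs $(x', f)$ consisting of an object $x' \in \el(\rho)$ lying over some $(A',B') \in \act^{op} \times \act$ together with a morphism $f: (A',B') \to (A,B)$ in $\act^{op} \times \act$. Since $\act$ is discrete, so is $\act^{op} \times \act$, and therefore $f$ must be an identity. This forces $(A',B') = (A,B)$ and identifies the objects of $(\Sigma \downarrow (A,B))$ with pointings $x: 1_\C \to \rho(A,B)$, i.e.\ with elements $x \in \rho(A,B)$.

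Next I would check that the only morphisms in $(\Sigma \downarrow (A,B))$ are identities. A morphism $x \to y$ in the comma category is a morphism in $\el(\rho)$ covering a morphism $(A,B) \to (A,B)$ in $\act^{op} \times \act$. The latter must be the identity, and the corresponding lift in $\el(\rho)$ is a morphism of $\act^{op} \times \act$ (necessarily the identity again) such that $\rho(\id)(x) = y$, forcing $x = y$. Hence $(\Sigma \downarrow (A,B))$ is the discrete category on the set $\rho(A,B)$.

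Finally, since a colimit over a discrete indexing category is simply the coproduct of the values, we conclude
\[
\chi_D(A,B) \;=\; \colim\bigl((\Sigma \downarrow (A,B)) \to \el(\rho) \overset{D}{\to} \C\bigr) \;\cong\; \coprod_{x \in \rho(A,B)} D(x),
\]
as claimed. There is no real obstacle here; the only point demanding minor care is verifying that both the objects and the morphisms of the comma category become trivial when $\act$ is discrete, so that ``colimit'' genuinely reduces to ``coproduct'' rather than to some non-trivial coequalizer quotient.
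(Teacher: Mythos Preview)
Your proposal is correct and takes essentially the same approach as the paper: both invoke the colimit formula over $(\Sigma \downarrow (A,B))$ from the preceding Construction and observe that discreteness of $\act$ renders this comma category discrete on the set $\rho(A,B)$, so that the colimit collapses to a coproduct. Your version simply makes explicit the verification that both objects and morphisms of the comma category trivialize, which the paper leaves implicit in a single sentence.
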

\begin{proof}
  In this case, the comma category $(\Sigma \downarrow (A,B))$ is itself a discrete category consisting of the elements of $\rho(A,B)$ as objects.
  Hence the colimit degenerates to a coproduct.
\end{proof}

\begin{example}
  Let $D$ be the element representation of graphs defined in Example~\ref{ex:graphs-from-cospans}.
  Although $\act$ is not discrete, $D(\sigma)$ happens to be the identity for each automorphism $\sigma \in Aut(x)$, so there are no further identifications.
  Hence $\chi_D(A, B)$ also happens to be a coproduct $\coprod_{x \in \rho(A, B)} D(x)$ in this case.
  In words, $\chi_D(A,B)$ is the set of graphs whose in-tails are labeled by $A$ and whose out-tails are labeled by $B$.

  Unlike in the discrete case, the action of the element representation $D$ induces a bimodule action for $\chi_D$:
  \begin{equation}
    \begin{tikzcd}
      D(x)
      \arrow[d]
      \arrow[r, "D(\sigma \biact \tau)"]
      & D(y)
      \arrow[d] \\
      {\chi_D(A,B)}
      \arrow[r, "\chi_D(\sigma \biact \tau)"', dashed]
      & {\chi_D(A', B')}
    \end{tikzcd}
  \end{equation}
\end{example}

\subsection{Element Plethysm}
\label{sec:element-plethysm}

We can understand $\chi_{(-)}$ as a functor between element representations and bimodules.
This allows us to think of a particular element representation as conveying structural information about a bimodule.
We carry this further by defining the appropriate notion of a plethysm product for an element representation.

Before giving details, let us outline what this plethysm should be conceptually.
Suppose we are given element representations $D_1, D_2: \el(\rho) \to \C$.
Our goal is to obtain a new element representation $D_1 \epl D_2: \el(\rho) \to \C$.
Loosely speaking, the action on an element $z \in \el(\rho)$ should be obtained by considering factorizations $z = \gamma(x,y)$ then applying $D_1$ on the $x$ factor and $D_2$ on the $y$ factor.
We will make this explicit in 3 steps.

\subsubsection{Step 1: Composable pairs}

Given a bimodule $\rho: \act^{\rop} \times \act \to \Set$, consider the elements of the plethysm product:
\begin{equation}
  z \in (\rho \pl \rho)(A,C) = \int^{B} \rho(A,B) \times \rho(B,C)
\end{equation}
Hence, the element $z$ can be represented as a composable pair $(x,y)$.
Moreover, two pairs $(x,y)$ and $(x',y')$ represent the same element in $(\rho \pl \rho)(A, B)$ if and only if there exists a morphism $\sigma$ in the action category $\act$ such that in $\el(\rho) \times \el(\rho)$ we can write:
\begin{equation}
  \label{eq:dia}
  \begin{tikzcd}
    (x,y')
    \arrow[d, "\id \times (\sigma \biact \id)"']
    \arrow[r, "(\id \biact \sigma) \times \id"]
    & (x', y') \\
    (x,y)
    &
  \end{tikzcd}
\end{equation}

We will use the notation $[x,y] \in (\rho \pl \rho)(A,C)$ to denote the element represented by $(x,y)$ and call $\el(\rho \pl \rho)$ the \defn{category of composable pairs}.

\subsubsection{Step 2: External tensor product \texorpdfstring{$\hat \otimes$}{(x) }}

For each element $z \in (\rho \pl \rho)(A,B)$, the diagrams of \eqref{eq:dia} forms a subcategory $\Pair(z)$ of $\el(\rho) \times \el(\rho)$.
Given two element representations $D_1, D_2: \el(\rho) \to \C$, define the \defn{external tensor} $(D_1 \hat \otimes D_2): \el(\rho \pl \rho) \to \C$ to be the following colimit:
\begin{equation}
  (D_1 \hat \otimes D_2)(z) = \colim(\Pair(z) \hookrightarrow \el(\rho) \times \el(\rho) \overset{D_1 \times D_2}{\to} \C \times \C \overset{\otimes}{\to} \C)
\end{equation}
An action $z \overset{(\sigma \biact \tau)}{\to} z'$ in the bimodule of basic pairs naturally induces a morphism $(D_1 \hat{\otimes} D_2)(z) \to (D_1 \hat{\otimes} D_2)(z')$ in $\C$, so this construction constitutes a functor $D_1 \hat{\otimes} D_2: \el(\rho \pl \rho) \to \C$.

\subsubsection{Step 3: Factorization}

We now have an object $(D_1 \hat \otimes D_2)[x,y]$ for each composable pair $[x,y]$.
To obtain an object for each element $z \in \el(\rho)$, we need to ``factorize'' $z$ into composable pairs.
To do this, observe that a monoid structure $\gamma: \rho \pl \rho \To \rho$ induces a functor $\el(\gamma): \el(\rho \pl \rho) \to \el(\rho)$ which sends a composable pair $[x,y]$ to its composition.
The ``factorization'' is achieved by extending along $\el(\gamma)$ as a left Kan extension.

\begin{definition}
  Given two element representations $D_1, D_2: \el(\rho) \to \C$, define the \defn{element plethysm} $D_1 \epl D_2$ as the following left Kan extension:
  \begin{equation}
    \begin{tikzcd}
      \el(\rho \pl \rho)
      \arrow[rr, "D_1 \hat \otimes D_2"]
      \arrow[rd, "\el(\gamma)"']
      & & \C \\
      & \el(\rho)
      \arrow[ru, "D_1 \epl D_2 := Lan_{\el(\gamma)}(D_1 \hat \otimes D_2)"', dashed] &
    \end{tikzcd}
  \end{equation}
\end{definition}

\subsection{The unit}

Just as the plethysm product depends on the multiplication of the structure bimodule, the monoidal unit for element representations will depend on the unit $\eta: \tilde\rho_{\act} \To \rho$ of its ``base'' bimodule.

\begin{definition}\label{el-rep-unit}
  For a bimodule $\zeta$, define the \defn{trivial element representation} $T: \el(\zeta) \to \C$ to be constantly the monoidal unit $1_{\C}$.
  Then define the \defn{plethysm unit} $U_{\eta}: \el(\rho) \to \C$ as the following left Kan extension:
  \begin{equation}
    \begin{tikzcd}
      \el(\tilde\rho_{\act}) \arrow[rd, "\el(\eta)"'] \arrow[rr, "T"] & & \C \\
      & \el(\rho) \arrow[ru, "U_{\eta} := \Lan_{\el(\eta)}(T)"', dashed] &
    \end{tikzcd}
  \end{equation}
\end{definition}

\begin{lemma}
  \label{kan-lemma}
  If the outer and upper triangles form a left Kan extension, then the lower triangle is also a left Kan extension.
  \begin{equation}
    \begin{tikzcd}
      \bullet \\
      \bullet && \bullet \\
      \bullet
      \arrow["A"', from=1-1, to=2-1]
      \arrow["F", from=1-1, to=2-3]
      \arrow["G"{description}, from=2-1, to=2-3]
      \arrow["B"', from=2-1, to=3-1]
      \arrow["H"', from=3-1, to=2-3]
    \end{tikzcd}
  \end{equation}
\end{lemma}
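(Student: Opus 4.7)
The plan is to deduce this as a direct consequence of the universal property of left Kan extensions. Write $\alpha \colon F \Rightarrow G \circ A$ for the unit of the upper triangle and $\gamma \colon F \Rightarrow H \circ B \circ A$ for the unit of the outer triangle. The goal is to exhibit a natural transformation $\beta \colon G \Rightarrow H \circ B$ filling the lower triangle and to verify that the pair $(H, \beta)$ satisfies the universal property of $\Lan_B G$.

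First I would construct $\beta$ by applying the universal property of the upper Kan extension $(G, \alpha) = \Lan_A F$ to the pair $(H \circ B, \gamma)$. Since $\gamma$ is a natural transformation from $F$ to $(H \circ B) \circ A$, there is a unique $\beta \colon G \Rightarrow H \circ B$ with $(\beta A) \circ \alpha = \gamma$, where $\beta A$ denotes whiskering on the right with $A$. This is the $2$-cell that fills the lower triangle.

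Next I would verify the universal property of $(H, \beta)$ as $\Lan_B G$. Given any functor $K$ equipped with a natural transformation $\delta \colon G \Rightarrow K \circ B$, form the composite $(\delta A) \circ \alpha \colon F \Rightarrow (K \circ B) \circ A$. The universal property of the outer Kan extension $(H, \gamma) = \Lan_{B \circ A} F$ yields a unique $\epsilon \colon H \Rightarrow K$ with $(\epsilon (B A)) \circ \gamma = (\delta A) \circ \alpha$. To check that $(\epsilon B) \circ \beta = \delta$, whisker both sides with $A$ and precompose with $\alpha$: the left-hand side becomes $(\epsilon(BA)) \circ (\beta A) \circ \alpha = (\epsilon(BA)) \circ \gamma = (\delta A) \circ \alpha$ by the defining properties of $\beta$ and $\epsilon$. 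The uniqueness clause of the universal property of $(G, \alpha)$ then forces $(\epsilon B) \circ \beta = \delta$. Uniqueness of $\epsilon$ is symmetric: any alternative $\epsilon' \colon H \Rightarrow K$ satisfying $(\epsilon' B) \circ \beta = \delta$ yields $(\epsilon'(BA)) \circ \gamma = (\delta A) \circ \alpha$ upon whiskering and precomposing with $\alpha$, hence coincides with $\epsilon$ by uniqueness in the outer Kan extension.

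There is no real obstacle here; the content of the lemma is the familiar ``cancellation'' property for left Kan extensions, and the proof is pure diagram chasing. The only care required is bookkeeping of the whiskerings $\beta A$, $\epsilon(BA)$, and $\epsilon B$, and remembering that existence and uniqueness of the mediating $\epsilon$ come from the outer universal property, while the identity $(\epsilon B) \circ \beta = \delta$ is pinned down by the upper one.
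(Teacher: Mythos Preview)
Your proof is correct and is essentially the same argument as the paper's, just spelled out in full detail. The paper compresses the whole thing into the chain of natural bijections $\mathrm{Nat}(H,Y)\cong\mathrm{Nat}(F,Y\circ B\circ A)\cong\mathrm{Nat}(G,Y\circ B)$ (using the outer and then the upper Kan extension), which is exactly the adjunction-level packaging of the explicit $2$-cell chase you wrote down.
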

\begin{proof}
  This follows from a routine adjunction argument:
  \begin{align*}
    H &\Rightarrow Y \\
    F &\Rightarrow Y \circ B \circ A \\
    G &\Rightarrow Y \circ B \qedhere
  \end{align*}
\end{proof}

\begin{lemma}
  \label{epl-unit}
  For any element representation $D: \el(\rho) \to \C$ such that $\C$
  is closed monoidal, there are natural isomorphisms
  \begin{align*}
    L: U_{\eta} \epl D \overset{\sim}{\Rightarrow} D \\
    R: D \epl U_{\eta} \overset{\sim}{\Rightarrow} D
  \end{align*}
\end{lemma}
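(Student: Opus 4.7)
The plan is to prove both isomorphisms by rewriting $U_\eta \epl D$ as a left Kan extension of the trivially doubled functor $T \,\hat\otimes\, D$ along a composite that, by the unital axiom, is an equivalence of categories. We focus on $L$; the argument for $R$ is entirely symmetric, using the right unital axiom and the right unit constraint of $\C$.

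The first step is an auxiliary identification
\[ U_\eta \,\hat\otimes\, D \;\cong\; \mathrm{Lan}_{\el(\eta \pl \id_\rho)}\bigl(T \,\hat\otimes\, D\bigr) \]
of functors $\el(\rho \pl \rho) \to \C$. Since $\C$ is closed monoidal, $(-)\otimes Y$ preserves colimits for every $Y\in\C$, so the pointwise colimit formula for $U_\eta = \mathrm{Lan}_{\el(\eta)}T$ can be pulled through the colimit defining $\hat\otimes$ in its first slot. Writing out both sides and matching the index categories term by term, this is a Fubini-type interchange of iterated colimits. This is the technical heart of the proof; it is precisely where the closed monoidal assumption is used.

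Once this is established, the standard pasting of left Kan extensions applied to the composable pair
\[ \el(\tilde\rho_\act \pl \rho) \xrightarrow{\el(\eta \pl \id_\rho)} \el(\rho \pl \rho) \xrightarrow{\el(\gamma)} \el(\rho) \]
yields
\[ U_\eta \epl D \;=\; \mathrm{Lan}_{\el(\gamma)}\bigl(U_\eta \,\hat\otimes\, D\bigr) \;\cong\; \mathrm{Lan}_{\el(\gamma)\circ\el(\eta \pl \id)}\bigl(T \,\hat\otimes\, D\bigr). \]
By the left unital axiom for the monoid $(\rho,\gamma,\eta)$, the composite $\gamma\circ(\eta\pl\id)$ equals the left unit isomorphism $\ell\colon \tilde\rho_\act\pl\rho \overset{\sim}{\Rightarrow}\rho$ of the plethysm product. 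Applying $\el$, which carries natural isomorphisms of bimodules to equivalences of element categories, $\el(\gamma)\circ\el(\eta\pl\id) = \el(\ell)$ is an equivalence.

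Finally, a left Kan extension along an equivalence is naturally isomorphic to restriction along its quasi-inverse, so $U_\eta \epl D \cong (T \,\hat\otimes\, D)\circ \el(\ell)^{-1}$. For a composable pair $[f,y] \in \el(\tilde\rho_\act \pl \rho)$ one computes $(T\,\hat\otimes\, D)[f,y] = 1_\C \otimes D(y) \cong D(y)$ via the left unit constraint of $\C$, with naturality in $f$ coming from functoriality of $D$ under the induced $\act$-action. Transporting along $\el(\ell)^{-1}$ turns this family into $D$ itself on $\el(\rho)$, producing the natural isomorphism $L$. The same recipe with the roles of $U_\eta$ and $D$ swapped, and with the right unit constraints in place of the left ones, delivers $R$.
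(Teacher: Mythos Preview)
Your proof is correct and follows essentially the same route as the paper. Both arguments hinge on (i) identifying $U_\eta \,\hat\otimes\, D$ as $\mathrm{Lan}_{\el(\eta\pl\id)}(T\,\hat\otimes\,D)$ using the closed monoidal hypothesis, and (ii) exploiting that $\el(\gamma)\circ\el(\eta\pl\id)$ is an equivalence by the unital axiom; the paper packages step (ii) via its Lemma~\ref{kan-lemma} (outer and upper Kan extensions force the lower one), while you instead paste the Kan extensions and then compute directly along the resulting equivalence, which amounts to the same thing.
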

\begin{proof}
  We will only consider the left version since the right version will be similar.
  Consider the following diagram:
  \begin{equation}
    \begin{tikzcd}
      \el(\tilde\rho_{\act} \pl \rho)
      \arrow[rrd, "T \hat \otimes D"]
      \arrow[d, "\el(\eta \pl \id)"'] & & \\
      \el(\rho \pl \rho)
      \arrow[rr, "U_{\eta} \hat \otimes D" description]
      \arrow[d, "\el(\gamma)"'] &  & \C \\
      \el(\rho) \arrow[rru, "D"'] & &
    \end{tikzcd}
  \end{equation}

  We claim that the top triangle is a left Kan extension.
  Let $P: \el(\rho \pl \rho) \to \C$ be any functor and consider an arbitrary natural transformation:
  \begin{equation}
    T \hat \otimes D \Rightarrow P \circ \el(\eta \pl \id)
  \end{equation}
  By the definition of $\hat \otimes$, this is the same data as a family of morphisms $T(\sigma) \otimes D(y) \to P[\eta(\sigma), y]$.
  For a fixed element $y \in \el(\rho)$, we can take the adjunction to get a family $T(\sigma) \to (P[\eta(\sigma), y])^{D(y)}$.
  By the definition of $U_\eta$ as a left Kan extension, this is in bijection with a family $U_\eta(x) \to (P[x,y])^{D(y)}$.
  Taking the adjunction gives us a family $U_\eta(x) \otimes D(y) \to (P[x,y])$.
  These constitute a natural transformation $U_\eta \hat\otimes D \Rightarrow P$.
  Thus the top triangle is indeed a left Kan extension.

  Since the larger diagram commutes up to isomorphism, it is trivially a left Kan extension.
  Applying Lemma~\ref{kan-lemma}, we conclude that the bottom triangle is a left Kan extension.
  Therefore there is an isomorphism $L: U_\eta \epl D \to D$.
\end{proof}

\begin{example}
  \label{ex:faithful-unit}
  Consider the unital bimodule of cospans described in Example~\ref{ex:cospan-monoid}.
  Taking the graphical point of view, the unit $\eta: \tilde\rho_{\Iso(\FinSet)} \Rightarrow \rho_{Cospan}$ was defined by sending a bijection to its two-line graphical notation.
  Since $\el(\eta): \el(\tilde\rho_{\Iso(\FinSet)}) \to \el(\rho_{Cospan})$ is an embedding onto the bijection graphs, the unit $U_\eta$ will just be:
  \begin{equation}
    U_{\eta}(x) =
    \begin{cases}
      1_{\C}, &x \text{ is a bijection graph} \\
      0_{\C}, &x \text{ is not a bijection graph}
    \end{cases}
  \end{equation}
  Here we can see that $\chi_{U_\eta}$ is isomorphic to $\tilde\rho_{\act}^{\C}$, but this is not always the case.
  We will encounter a counterexample in Example~\ref{ex:not-faithful-unit}.
\end{example}

\subsection{Monoidal structure}

With the definition of the element plethysm $\epl$ and the unit $U_{\eta}$, it is now straightforward to show that the category of element representations is a monoidal category.
We will also show that the element plethysm $\epl$ is compatible with the ordinary plethysm $\pl$ in the sense that $\chi_{(-)}$ is a strong monoidal functor.

\begin{prop}
\label{prop:diamondproduct}
  Fix a bimodule $\rho: \act^{\rop} \times \act \to \C$ with a monoid structure $\gamma$ and a unit $\eta$.
  Then the category of element representations $D: \el(\rho) \to \C$ forms a monoidal category with $\epl$ as the monoidal product and $U_\eta$ as the unit.
\end{prop}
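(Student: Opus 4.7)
The plan is as follows. The left and right unit isomorphisms for $\epl$ are already provided by Lemma~\ref{epl-unit}, so the substantive task is to construct a natural associator for $\epl$ and then verify the pentagon and triangle coherence axioms. My strategy is to reduce all three to the universal property of left Kan extensions, leveraging the associativity of the monoid $(\rho,\gamma)$, the associativity of $\pl$ established in Proposition~\ref{square-mon-cat}, and the associativity of $\otimes$ on $\C$.

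First I would introduce a ternary version of the external tensor. For element representations $D_1,D_2,D_3\colon \el(\rho)\to\C$, define a functor
\begin{equation}
D_1\mathbin{\hat\otimes} D_2\mathbin{\hat\otimes} D_3\colon \el(\rho\pl\rho\pl\rho)\to\C
\end{equation}
by the colimit formula analogous to the binary case, with integrand the triple tensor $D_1(x)\otimes D_2(y)\otimes D_3(z)$. The associator of $(\C,\otimes)$ combined with the associator of $\pl$ then yields canonical natural isomorphisms
\begin{equation}
(D_1\mathbin{\hat\otimes} D_2)\mathbin{\hat\otimes} D_3 \;\cong\; D_1\mathbin{\hat\otimes} D_2\mathbin{\hat\otimes} D_3 \;\cong\; D_1\mathbin{\hat\otimes}(D_2\mathbin{\hat\otimes} D_3)
\end{equation}
after identifying the domains via $\el$ applied to the associator of $\pl$.

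Next, associativity of $\gamma$ produces a well-defined triple composite $\gamma^{(3)}\colon \rho\pl\rho\pl\rho\Rightarrow\rho$, which factors in two ways through $\rho\pl\rho$. Iterating the definition of $\epl$, the product $(D_1\epl D_2)\epl D_3$ is built as two nested Kan extensions: first $\Lan_{\el(\gamma)}(D_1\mathbin{\hat\otimes} D_2)$ is taken and then pasted with $D_3$ via $\mathbin{\hat\otimes}$ and extended along $\el(\gamma)$ again. Applying Lemma~\ref{kan-lemma} in succession identifies this iterated extension with $\Lan_{\el(\gamma^{(3)})}(D_1\mathbin{\hat\otimes} D_2\mathbin{\hat\otimes} D_3)$. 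The same argument applied in the opposite bracketing yields the same Kan extension, so the associator of $\epl$ arises as the unique natural isomorphism between the two constructions, and its naturality in each $D_i$ is automatic from the universal property.

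Finally, the pentagon and triangle axioms reduce to uniqueness: each axiom equates two natural transformations between Kan extensions of the same diagram along the same functor, which therefore coincide. The pentagon requires extending the same construction to a quaternary external tensor and invoking the pentagon already satisfied by $\otimes$ and by $\pl$; the triangle uses that $U_\eta$ is itself a left Kan extension along $\el(\eta)$, so that the two composite isomorphisms in the triangle descend from the corresponding triangle for the ternary $\mathbin{\hat\otimes}$ with a trivial middle factor. I expect the main obstacle to be the coherent bookkeeping between the two monoidal structures in play --- the $\otimes$ on $\C$ defining $\mathbin{\hat\otimes}$, and the $\pl$ on bimodules used to form $\rho\pl\rho\pl\rho$ --- and in particular ensuring that the colimit defining the ternary $\mathbin{\hat\otimes}$ respects the associator of $\pl$. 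The cleanest resolution is to express the defining colimits as coends and interchange them via the Fubini theorem, in the same spirit as the proof of Proposition~\ref{square-mon-cat}.
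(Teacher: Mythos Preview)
Your proposal is correct and follows essentially the same strategy as the paper's proof: reduce the iterated element plethysm to a single ternary construction (via Fubini, equivalently via composing left Kan extensions using Lemma~\ref{kan-lemma}), then transport the associator of $(\C,\otimes)$ componentwise. The paper's argument is terser---it passes directly to co--wedge components $(D_1(x_1)\otimes D_2(x_2))\otimes D_3(x_3)$ versus $D_1(x_1)\otimes(D_2(x_2)\otimes D_3(x_3))$ and invokes the coherences of $\C$---whereas you phrase the same reduction more abstractly through the universal property of Kan extensions; but these are two presentations of the same idea, and your closing remark about Fubini is precisely what the paper does.
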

\begin{proof}
  We just need to check associativity.
  Start by defining the associativity constraints:
  \begin{equation}
    a_{D_1D_2D_3}:
    (D_1 \epl D_2) \epl D_3
    \To
    D_1 \epl (D_2 \epl D_3)
  \end{equation}
  To do this, apply Fubini so that the left hand side can be rendered as a single co--wedge consisting of components of the form $(D_1(x_1) \otimes D_2(x_2)) \otimes D_3(x_3)$.
  Likewise, the right hand side can be rendered as a single co--wedge with components of the form $D_1(x_1) \otimes (D_2(x_2) \otimes D_3(x_3))$.
  Then apply the associativity constraints of $\otimes$ to each component to get $a_{D_1D_2D_3}$.
  The coherences for $a$ follow from the coherences of associativity constraints for the monoidal category $\C$.
\end{proof}

\begin{as}\label{commutes-with-coproduct}
  The monoidal product $\otimes$ of the target category $\C$ commutes with coproducts.
\end{as}

\begin{lemma}
  \label{pl-epl-iso}
  There is an isomorphism of bimodules $\mu_{D_1,D_2}: \chi_{D_1} \pl \chi_{D_2} \simTo  \chi_{D_1 \epl D_2}$ natural in $D_1$ and $D_2$.
\end{lemma}
\begin{proof}
  We start by showing that the following diagram is a left Kan extension:
  \begin{equation}
    \begin{tikzcd}
      \el(\rho \pl \rho) \arrow[d, "\el(\gamma)"'] \arrow[rd, "D_1 \hat \otimes D_2"] &    \\
      \el(\rho) \arrow[d, "\Sigma"'] & \C \\
      \act^{op} \times \act \arrow[ru, "\chi_{D_1} \pl \chi_{D_2}"'] &
    \end{tikzcd}
  \end{equation}

  Suppose we have a bimodule $\xi: \act^{op} \times \act \to \C$ and a bimodule morphism $D_1 \hat \otimes D_2 \Rightarrow \xi \circ \Sigma \circ \el(\gamma)$.
  By definition, this natural transformation is a family of morphisms $(D_1 \hat \otimes D_2)[x,y] \to \xi(A,C)$ running over classes of composable pairs $[x,y]$.
  Unpacking further, this gives a family of morphisms running over the composable pairs $(x,y) \in \rho(A,B) \times \rho(B,C)$:
  \begin{equation}
    D_1(x) \otimes D_2(y) \to \xi(A,C)
  \end{equation}
  The members of this family are invariant in the ``$B$ variable'' while still respecting the actions on $A$ and $C$.

  We can then use the universal property of a coproduct to collect these maps.
  Next, we use Assumption~\ref{commutes-with-coproduct} to commute the coproduct:
  \begin{align}
    \coprod_{(x,y) \in \rho(A,B) \times \rho(B,C)} D_1(x) \otimes D_2(y) &\to \xi(A,C) \\
    \left(
      \coprod_{x \in \rho(A,B)} D_1(x)
    \right)
    \otimes
    \left(
      \coprod_{y \in \rho(B,C)} D_2(y)
    \right) &\to \xi(A,C)
  \end{align}
  Since this family respects the actions on $A$ and $C$, we can replace the coproducts with $\chi_{D_1}$ and $\chi_{D_2}$.
  Moreover, since these are invariant in the middle variable, we can take a coend in the middle.
  The net result is a family of morphisms $(\chi_{D_1} \pl \chi_{D_2})(A,C) \to \xi(A,C)$.
  Thus we have the desired a natural transformation $\chi_{D_1} \pl \chi_{D_2} \Rightarrow \xi$.

  Define $D_1 \hat \otimes D_2 \Rightarrow (\chi_{D_1} \pl \chi_{D_2}) \circ \el(\gamma)$ in the natural way.
  Whiskering with the natural transformation $\chi_{D_1} \pl \chi_{D_2} \Rightarrow \xi$ recovers the original  $D_1 \hat \otimes D_2 \Rightarrow \xi \circ \Sigma \circ \el(\gamma)$.
  Therefore this diagram is a left Kan extension, so there is a natural isomorphism $\mu: \chi_{D_1} \pl \chi_{D_2} \Rightarrow \chi_{D_1 \epl D_2}$.
\end{proof}

\begin{prop}
  If $\eta$ is a faithful unit, then there is a canonical isomorphism $\epsilon: \tilde\rho_{\act}^\C \simTo \chi_{U_{\eta}}$ of bimodules.
\end{prop}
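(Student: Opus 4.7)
The plan is to recognize $\chi_{U_\eta}$ as a single iterated left Kan extension and then compute it pointwise via the comma-category colimit formula. By the standard composability of left Kan extensions (compare Lemma \ref{kan-lemma}),
\[
\chi_{U_\eta} \;=\; \Lan_\Sigma\bigl(\Lan_{\el(\eta)} T\bigr) \;\cong\; \Lan_{\Sigma \circ \el(\eta)}\, T,
\]
where $T: \el(\tilde\rho_\act) \to \C$ is the trivial element representation constant at $1_\C$ from Definition \ref{el-rep-unit}. Writing $P := \Sigma \circ \el(\eta)$, unwinding definitions shows that $P$ is simply the forgetful projection sending $(A_0, B_0, f_0)$ to $(A_0, B_0)$ and $(\sigma,\tau)$ to $(\sigma,\tau)$.

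The crucial step is then to compute $\pi_0\bigl(P \downarrow (A,B)\bigr)$. An object here is a tuple $\bigl((A_0, B_0, f_0),\,(\sigma,\tau)\bigr)$ with $f_0:A_0\to B_0$, $\sigma:A\to A_0$, and $\tau:B_0\to B$ in $\act$; a morphism $(u,v)$ between two such tuples must satisfy $v f_0 u = f_0'$, $u\sigma' = \sigma$, and $\tau' v = \tau$. I claim that for each $h \in \Hom_\act(A,B)$ the object $\bigl((A,B,h),(\id_A,\id_B)\bigr)$ is terminal in its connected component. Indeed, the constraints $\sigma = u\cdot\id$ and $\tau = \id\cdot v$ force $u=\sigma$ and $v=\tau$, and then the identity $v f_0 u = \tau f_0 \sigma$ automatically holds with $h:=\tau f_0 \sigma$; conversely, any morphism between two such terminals $\bigl((A,B,h),(\id,\id)\bigr)$ and $\bigl((A,B,h'),(\id,\id)\bigr)$ forces $h=h'$. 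Hence $\pi_0(P \downarrow (A,B))$ is canonically identified with $\Hom_\act(A,B)=\tilde\rho_\act(A,B)$.

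Because $T$ is constant at $1_\C$ and each component of $P \downarrow (A,B)$ has a terminal object, the colimit collapses to a coproduct of $1_\C$'s indexed by components:
\[
\chi_{U_\eta}(A,B) \;\cong\; \coprod_{h\in \Hom_\act(A,B)} 1_\C \;=\; F\bigl(\Hom_\act(A,B)\bigr) \;=\; \tilde\rho_\act^\C(A,B).
\]
Naturality in $(A,B)$ and compatibility with the $\act\mddash\act$ bi-action are verified by tracking the terminals: a morphism $(\alpha,\beta):(A,B)\to(A',B')$ in $\act^{op}\times\act$ sends $\bigl((A,B,h),(\id,\id)\bigr)$ to $\bigl((A,B,h),(\alpha,\beta)\bigr)$, which lies in the component of the new terminal $\bigl((A',B',\beta h \alpha),(\id,\id)\bigr)$, thus reproducing the expected bi-action $h \mapsto \beta h\alpha$ on $\tilde\rho_\act^\C$.

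The hardest part is the component count in the second step, and this is precisely where the faithfulness of $\eta$ enters: the isomorphism in the statement is meant to be the canonical one induced by $\eta$ (applying $F$ to $\eta$ and extending through the element construction), and faithfulness guarantees that this canonical map identifies each $h\in\Hom_\act(A,B)$ with a distinct component of $P\downarrow(A,B)$ rather than collapsing several of them together (which would then be visible in $\chi_{U_\eta}$ through the additional identifications present in $\el(\rho)$). Without faithfulness one still obtains a canonical map $\tilde\rho_\act^\C \Rightarrow \chi_{U_\eta}$, but it may fail to be injective; this is the content of the counterexample in Example \ref{ex:not-faithful-unit}.
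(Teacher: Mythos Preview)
Your core argument is correct and follows essentially the same strategy as the paper. Both proofs hinge on the identification $\Sigma_\rho \circ \el(\eta) = \Sigma_{\tilde\rho_\act}$ and the resulting composite Kan extension. The paper packages this via the cancellation Lemma~\ref{kan-lemma} (outer and upper Kan $\Rightarrow$ lower Kan) and asserts $\Lan_{\Sigma_{\tilde\rho_\act}} T \cong \tilde\rho_\act^\C$ ``by definition''; you instead use the composition formula $\Lan_\Sigma \circ \Lan_{\el(\eta)} \cong \Lan_{\Sigma\circ\el(\eta)}$ and then compute $\Lan_{\Sigma_{\tilde\rho_\act}} T$ explicitly through the comma category. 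Your explicit computation is a genuine improvement in clarity: it makes precise what the paper leaves implicit in ``by definition.'' One small inaccuracy: the formula you invoke is the \emph{composition} direction, whereas Lemma~\ref{kan-lemma} is the \emph{cancellation} direction; both are standard, but they are not the same statement.

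Your final paragraph, however, misidentifies the role of faithfulness. Your own computation shows that $P=\Sigma_\rho\circ\el(\eta)$ equals $\Sigma_{\tilde\rho_\act}$ \emph{regardless} of $\eta$: the functor $\el(\eta)$ only alters the element coordinate, which $\Sigma_\rho$ immediately discards. Consequently the comma category $P\downarrow(A,B)$ and its component count depend only on $\act$, not on $\eta$ at all, so there is nothing for faithfulness to prevent from ``collapsing.'' Your steps (1)--(4) therefore establish the isomorphism with no hypothesis on $\eta$ beyond its being a natural transformation. (The paper likewise invokes faithfulness only to justify $\Sigma_\rho\circ\el(\eta)=\Sigma_{\tilde\rho_\act}$, which as you observe holds unconditionally.) So rather than locating where the hypothesis is used, your argument suggests it may not be needed for this particular isomorphism; you should either drop the last paragraph or reframe it as an observation that the hypothesis appears superfluous here.
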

\begin{proof}
  For clarity of notation, let $\Sigma_{\zeta}: \el(\zeta) \to \act^{\rop} \times \act$ denote the fibration for a bimodule $\zeta: \act^{\rop} \times \act \to \Set$.
  Consider the following diagram where $T$ is the trivial element representation described in Definition~\ref{el-rep-unit}:
  \begin{equation}
    \begin{tikzcd}
      \el(\tilde\rho_{\act}) \arrow[rrd, "T"] \arrow[d, "\el(\eta)"'] & & \\
      \el(\rho) \arrow[rr, "U_{\eta}" description] \arrow[d, "\Sigma_{\rho}"'] &  & \C \\
      \act^{\rop} \times \act \arrow[rru, "\tilde\rho_{\act}^{\C}"']
      & &
    \end{tikzcd}
  \end{equation}
  Since $\eta$ is faithful, the composition $\el(\tilde\rho_\act) \overset{\el(\eta)}{\to} \el(\rho) \overset{\Sigma_{\rho}}{\to} \act^{\rop} \times \act$ is equal to $\Sigma_{\tilde\rho_{\act}}: \el(\tilde\rho_{\act}) \to \act^{\rop} \times \act$.
  Hence the outer-most triangle is a left Kan extension by the definition of $\tilde\rho_{\act}^{\C}$.
  The top triangle is a left Kan extension by definition.
  By Lemma~\ref{kan-lemma}, the bottom triangle is another left Kan extension.
  By uniqueness of Kan extensions, there is a unique natural isomorphism $\epsilon: \tilde\rho_{\act}^\C \simTo \chi_{U_{\eta}}$.
\end{proof}

\begin{example}[Non-example]
  \label{ex:not-faithful-unit}
  We will modify Example~\ref{ex:faithful-unit} so that $\zeta_{\rCospan}: \SS^{\rop} \times \SS \to \Set$ is a bimodule of cospans with a \emph{trivial action} and define the unit $\eta: \tilde\rho_{\SS} \To \zeta_{\rCospan}$ so that everything in $\tilde\rho_{\SS}(n,n)$ is sent to the two-line graphical notation of $\id_{n}$.
  Now we compute:
\begin{equation}
    U_{\eta}(x) = \colim( \eta \downarrow x \to \el(\tilde\rho_{\SS}) \overset{T}{\to} \C)
  \end{equation}
  The category $\eta \downarrow x$ is empty unless $x$ is an identity graph.
  If $x$ is the identity graph with $n$ vertices, then $\eta \downarrow x$ is essentially the connected component of $\el(\tilde\rho_{\SS})$ of all order $n$ permutations.
  In particular, this means that $\chi_{U_{\eta}}(2,2) \cong U_{\eta}(\id_2) \cong 1_{\C}$.
  However $\tilde\rho_{\SS}^{\C}(2,2) \cong T(\id_2) \oplus T(\tau) \cong 1_{\C} \oplus 1_{\C}$ where $\tau$ is transposition.
  Hence the two bimodules are not isomorphic.
\end{example}

\begin{lemma}
   If the unit $\eta$ is faithful, then the bimodule unit and the element representation unit are compatible in the sense that the following diagram commutes:
\begin{equation}
  \begin{tikzcd}
    {\tilde\rho_{\act}^{\C} \pl \chi_D}
    & {\chi_{U_\eta} \pl \chi_D} \\
    {\chi_D}
    & {\chi_{U_\eta \epl D}}
    \arrow["\sim"', Rightarrow, from=1-1, to=2-1]
    \arrow["{\epsilon \pl \id}", Rightarrow, from=1-1, to=1-2]
    \arrow["\mu", Rightarrow, from=1-2, to=2-2]
    \arrow["\chi_L", Rightarrow, from=2-2, to=2-1]
  \end{tikzcd}
\end{equation}
\end{lemma}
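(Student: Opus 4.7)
My plan is to view the claimed commutativity as the left-unit coherence for the composite structure on $\chi$ given by $\epsilon$ and $\mu$, and to verify it by reducing to an element-level check on the defining coends. All four bimodules in the square are presented via coends or left Kan extensions, and each of $\epsilon$, $\mu$, and $\chi_L$ was constructed through a universal property: $\epsilon$ by uniqueness of the Kan extension producing $\tilde\rho_\act^\C$, $\mu$ by the Kan extension argument in Lemma~\ref{pl-epl-iso}, and $\chi_L$ by 2-functoriality of $\chi$ applied to the isomorphism $L$ of Lemma~\ref{epl-unit}. Consequently it suffices to check that both composites $\tilde\rho_\act^\C \pl \chi_D \Rightarrow \chi_D$ agree after precomposition with an arbitrary co-wedge inclusion $\tilde\rho_\act^\C(A,B) \otimes \chi_D(B,C) \to (\tilde\rho_\act^\C \pl \chi_D)(A,C)$.

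First I would unpack the left unit of $\pl$: by the co-Yoneda lemma it is the action map $\sigma \otimes d \mapsto \chi_D(\sigma \biact \id_C)(d)$ on each co-wedge component. Next I would trace the other composite $\chi_L \circ \mu \circ (\epsilon \pl \id)$ on the same component. The map $\epsilon \pl \id$ sends $\sigma \otimes d$ to $[\eta(\sigma)] \otimes d \in \chi_{U_\eta}(A,B) \otimes \chi_D(B,C)$; the isomorphism $\mu$ then collects this into $\chi_{U_\eta \epl D}(A,C)$ as the class of a composable pair $[\eta(\sigma), y]$, where $y$ is any representative of $d$; finally $\chi_L$ applies the bimodule morphism induced by $L$ through the functoriality of left Kan extension. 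The goal is to show that the overall result is again $\chi_D(\sigma \biact \id_C)(d)$.

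The hard part, and the main obstacle, is identifying the action of $L$ on the class $[\eta(\sigma), y]$ explicitly in terms of the bimodule action of $\act$. Since $L$ is defined purely by the universal property of a Kan extension in the proof of Lemma~\ref{epl-unit}, I would walk back through the adjunction computation given there, applied with $P := D \circ \el(\gamma)$. Under the chain of adjunctions, the natural transformation $T \hat\otimes D \Rightarrow P \circ \el(\eta \pl \id)$ corresponding to $\id_D$ unwinds to the family of maps $D(y) \to D((\sigma \biact \id_C) \cdot y)$ given by the covariant $\el(\rho)$-action of the lift of $(\sigma \biact \id_C)$. This identifies $L[\eta(\sigma), y]$ with $D(\sigma \biact \id_C)(y)$; pushing forward along $\chi$ then yields exactly $\chi_D(\sigma \biact \id_C)(d)$, matching the left unit.

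Once this identification of $L$ is in place, closing the argument is routine: the family of maps so produced satisfies the co-wedge relations for $\tilde\rho_\act^\C \pl \chi_D$ by bimodule functoriality of $\chi_D$, and so factors uniquely through the coend presentation, giving equality of the two natural transformations on the nose. Naturality of $\mu$ and of $\epsilon$, together with the coherence of the unit constraints in the monoidal category $\C$, ensures the family is well-defined on classes and compatible with the action of $\act$ on both variables.
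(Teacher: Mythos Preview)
Your proposal is correct and follows essentially the same approach as the paper: reduce to a fixed object $(A,C)$, pass to co-wedge components, identify the left unit constraint $\sim$ as the bimodule action $\chi_D(\sigma \biact \id)$, and then compare with the composite $\chi_L \circ \mu \circ (\epsilon \pl \id)$. The paper's proof is extremely terse at the last step, simply asserting that ``comparing this to the definition of $L$ makes it clear that the diagram commutes,'' whereas you actually carry out that comparison by unwinding the Kan-extension adjunction defining $L$ to identify $L[\eta(\sigma),y]$ with $D(\sigma \biact \id)(y)$; this is exactly the content the paper leaves implicit.
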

\begin{proof}
  Apply these functors to a fixed object $(A,C) \in Ob(\act^{\rop} \times \act)$ and write out the coends on the top:
  \begin{equation}
  \begin{tikzcd}
    {\int^B \tilde\rho_{\act}^{\C}(A,B) \otimes \chi_D(B,C)}
    & {\int^B \chi_{U_\eta}(A,B) \otimes \chi_D(B,C)} \\
    {\chi_D(A,C)}
    & {\chi_{U_\eta \epl D}(A,C)}
    \arrow["\sim"', from=1-1, to=2-1]
    \arrow["{\int^B \epsilon \otimes \id}", from=1-1, to=1-2]
    \arrow["\mu", from=1-2, to=2-2]
    \arrow["\chi_L", from=2-2, to=2-1]
  \end{tikzcd}
\end{equation}
Moreover, since the top row consists of coends, it is enough to just work with the co--wedges:
 \begin{equation}
  \begin{tikzcd}
    {\tilde\rho_{\act}^{\C}(A,B) \otimes \chi_D(B,C)}
    & {\chi_{U_\eta}(A,B) \otimes \chi_D(B,C)} \\
    {\chi_D(A,C)}
    & {\chi_{U_\eta \epl D}(A,C)}
    \arrow["\sim"', from=1-1, to=2-1]
    \arrow["{\epsilon \otimes \id}", from=1-1, to=1-2]
    \arrow["\mu", from=1-2, to=2-2]
    \arrow["\chi_L", from=2-2, to=2-1]
  \end{tikzcd}
\end{equation}
The $\sim$ morphism is essentially the application of the bimodule action.
Comparing this to the definition of $L$ makes it clear that the diagram commutes.
\end{proof}

\begin{thm}\label{thm:strongmonoidal}
  If the unit of a bimodule monoid has a faithful unit, then $(\chi, \mu, \epsilon)$ is a strong monoidal functor. \qed
\end{thm}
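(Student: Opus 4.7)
The plan is to verify the remaining strong monoidal coherence axioms for $(\chi,\mu,\epsilon)$. Naturality of $\mu$ and $\epsilon$ and the fact that they are isomorphisms were already established in Lemma~\ref{pl-epl-iso} and the proposition preceding the theorem, so what remains are the right unitor compatibility and the associativity hexagon.

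For the right unitor, I would mimic the argument of the preceding lemma verbatim, using the right unit isomorphism $R : D \epl U_\eta \overset{\sim}{\Rightarrow} D$ from Lemma~\ref{epl-unit} in place of $L$. The proof boils down to the same observation: evaluating at $(A,C)$ reduces the question to the co--wedge level, where $\mu \circ (\id \pl \epsilon)$ is by construction the morphism induced by the right $\act$-action on $\chi_D$, which is precisely how $\chi_R$ is defined.

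For the associativity hexagon, I would compare the two composites
\begin{equation*}
(\chi_{D_1}\pl \chi_{D_2})\pl \chi_{D_3} \xrightarrow{\mu \pl \id} \chi_{D_1\epl D_2}\pl \chi_{D_3} \xrightarrow{\mu} \chi_{(D_1\epl D_2)\epl D_3} \xrightarrow{\chi(a)} \chi_{D_1\epl(D_2\epl D_3)}
\end{equation*}
and
\begin{equation*}
(\chi_{D_1}\pl \chi_{D_2})\pl \chi_{D_3} \xrightarrow{a_{\pl}} \chi_{D_1}\pl (\chi_{D_2}\pl \chi_{D_3}) \xrightarrow{\id \pl \mu} \chi_{D_1}\pl \chi_{D_2\epl D_3} \xrightarrow{\mu} \chi_{D_1\epl(D_2\epl D_3)}.
\end{equation*}
Both $\mu$ and $a_\pl$ are defined via universal properties of coends and Kan extensions, and ultimately both associators ($a_\pl$ on bimodules in Proposition~\ref{square-mon-cat} and $a$ on element representations in Proposition~\ref{prop:diamondproduct}) are induced from the associator of $\otimes$ in $\C$ via Fubini. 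I would therefore trace both composites through the universal property down to the level of a generic triple of elements $(x_1,x_2,x_3) \in \rho(A,B)\times \rho(B,B')\times \rho(B',C)$. On both sides the resulting wedge component is exactly the associator $\alpha_{D_1(x_1),D_2(x_2),D_3(x_3)}$ of $\C$, so the two composites agree.

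The main obstacle is bookkeeping rather than conceptual: one must carefully commute the various coends (Fubini) with the colimits defining $\hat\otimes$ and the Kan extensions defining $\chi_{(-)}$ and $\epl$, which requires Assumption~\ref{commutes-with-coproduct} and the fact that $\otimes$ preserves the relevant colimits in each variable. Once this is set up, the coherence reduces to the pentagon for $\otimes$ in $\C$, which holds by assumption. This completes the verification that $(\chi,\mu,\epsilon)$ is strong monoidal. \qed
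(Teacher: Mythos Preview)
Your proposal is correct and takes essentially the same approach as the paper. The paper in fact gives no proof at all for this theorem---it is marked with a \qed{} immediately after the statement, treating it as an immediate consequence of the preceding Lemma~\ref{pl-epl-iso}, the proposition on the faithful unit, and the left unitor lemma; you have simply written out the right unitor and associativity hexagon checks that the paper leaves implicit, and your reduction of both to the constraints of $\otimes$ in $\C$ via Fubini is exactly the intended argument.
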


\section{Relative bimodules}\label{sec:relative-bimodules}

\subsection{Relative bimodules}

In the previous sections, we saw how one bimodule $\xi$ can be ``based'' off of another bimodule $\rho$ in the sense that $\xi = \chi_D$ for some element representation $D: \el(\rho) \to \C$.
Another variation on this theme is what we call relative bimodules.
We will describe the relation between these two approaches.

\begin{definition}
  A \defn{relative bimodule} $\xi \overset{\pi}{\To} \rho$ of a fixed bimodule $\rho$ is a pair $(\xi, \pi)$ such that $\xi : \act^{\rop} \times \act \to \C$ is a bimodule and $\pi: \xi \To \rho$ is a natural transformation.
  These naturally assemble into a slice category over a fixed bimodule $\rho$.
\end{definition}

The term ``relative bimodule'' matches Joyal's terminology of ``relative species'' defined in \cite{species}.
We will discuss this in more detail in Example~\ref{Joyal-species}.
This terminology also evokes the notion of relative schemes in algebraic geometry.

\begin{example}\label{ex:glue-pi}
  Recall that we defined $\xi_{Glue}$ as the bimodule of graphs and $\rho_{\rCospan}$ was the bimodule of corollas.
  The bimodule $\xi_{\Glue}$ is naturally a bimodule relative to $\rho_{\rCospan}$ with the projection $\pi: \xi_{Glue} \To \rho_{\rCospan}$ defined by contracting all of the edges of an element and sending the result to the associated cospan.
\end{example}

\subsection{Relative plethysm}
\label{sec:relative-plethysm}

If the base bimodule $\rho$ has a monoid structure $\gamma: \rho \pl \rho \To \rho$ and $\eta: \tilde\rho_{\act} \To \rho$, then there is a natural notion of a plethysm for a relative bimodule.
Given two relative bimodules $(\xi_1 \overset{\pi_1}{\To} \rho)$ and $(\xi_2 \overset{\pi_2}{\To} \rho)$ over $\rho$, we can define the \defn{relative plethysm} for $\rho$ as follows:
\begin{equation}
  (\xi_1 \overset{\pi_1}{\To} \rho)
  \pl_\rho
  (\xi_2 \overset{\pi_2}{\To} \rho)
  := (\xi_1 \pl \xi_2 \overset{\pi}{\To} \rho)
\end{equation}
Where $\pi$ is the composition: $\xi_1 \pl \xi_2 \overset{\pi_1 \pl \pi_2}{\To} \rho \pl \rho \overset{\gamma}{\To} \rho$.

\begin{prop} \label{prop:relative-plethysm}
  The bimodules relative to $\rho$ with a fixed unital monoid $\gamma: \rho \pl \rho \To \rho$ form a monoidal category with $\pl_\rho$ as the monoidal product and $\tilde\rho_{\act} \overset{\eta}{\To} \rho$ as the unit.
\end{prop}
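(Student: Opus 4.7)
The plan is to bootstrap the monoidal structure on bimodules from Proposition~\ref{square-mon-cat} to the slice category, using only the monoid structure $(\gamma,\eta)$ on $\rho$ to guarantee that the bimodule structure maps (unitors, associator) lift to the slice.

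First I would check that $\pl_\rho$ is well-defined as a bifunctor on the slice. Given morphisms of relative bimodules $f_i\colon (\xi_i\xRightarrow{\pi_i}\rho)\to(\xi_i'\xRightarrow{\pi_i'}\rho)$ for $i=1,2$, the bifunctoriality of $\pl$ produces $f_1\pl f_2\colon \xi_1\pl\xi_2\to\xi_1'\pl\xi_2'$, and the required compatibility with projections follows from the naturality of $\pi_1'\pl\pi_2'$ and the commutativity of $\gamma\circ(\pi_1\pl\pi_2)=\gamma\circ(\pi_1'\pl\pi_2')\circ(f_1\pl f_2)$, which is just functoriality of $\pl$ applied to the commuting squares defining $f_i$.

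Next I would define the associator and unitors in the slice as the underlying associator and unitors on bimodules from Proposition~\ref{square-mon-cat}, and verify that each one is a morphism in the slice, i.e.\ commutes with the projection to $\rho$. For the associator $a_{\xi_1,\xi_2,\xi_3}\colon (\xi_1\pl\xi_2)\pl\xi_3\Rightarrow\xi_1\pl(\xi_2\pl\xi_3)$, the required compatibility is that
\begin{equation}
\gamma\circ(\gamma\pl\id)\circ((\pi_1\pl\pi_2)\pl\pi_3)=\gamma\circ(\id\pl\gamma)\circ(\pi_1\pl(\pi_2\pl\pi_3))\circ a_{\xi_1,\xi_2,\xi_3},
\end{equation}
which follows from naturality of $a$ in the base monoidal category together with the associativity axiom for the monoid $(\rho,\gamma)$. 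For the left unitor, the projection of $(\tilde\rho_\act\xRightarrow{\eta}\rho)\pl_\rho(\xi\xRightarrow{\pi}\rho)$ to $\rho$ is by construction $\gamma\circ(\eta\pl\pi)$, and compatibility with the underlying unitor $\tilde\rho_\act\pl\xi\xrightarrow{\sim}\xi$ is exactly the left unitality axiom for $\eta$. The right unitor case is symmetric.

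Finally, the pentagon and triangle coherence diagrams in the slice commute because their underlying diagrams of bimodules commute (by Proposition~\ref{square-mon-cat}) and morphisms in the slice are determined by their underlying bimodule morphisms, so coherence transfers for free. The main point requiring care is the unitor step: it is precisely here that we use that $(\rho,\gamma,\eta)$ is unital and not just a monoid, so I would treat that verification most explicitly and leave associativity and coherence as transfers from the ambient monoidal category.
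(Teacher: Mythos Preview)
Your proposal is correct and follows essentially the same approach as the paper: lift the associator and unitors from the ambient monoidal category of bimodules (Proposition~\ref{square-mon-cat}) and verify they are morphisms in the slice using, respectively, the associativity of $\gamma$ and the unitality of $\eta$. The paper's proof is slightly more terse (it omits the bifunctoriality check and the explicit coherence remark), but the substance is the same.
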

\begin{proof}
  To show associativity, consider the following relative bimodules:
  \begin{equation}
    \xi_1 \overset{\pi_1}{\To} \rho, \quad \xi_2 \overset{\pi_2}{\To} \rho,\quad \xi_3 \overset{\pi_3}{\To} \rho
  \end{equation}
  By the Fubini theorem for coends, we have $(\xi_1 \pl \xi_2) \pl \xi_3 \cong \xi_1 \pl (\xi_2 \pl \xi_3)$.
  Moreover, the two $\pi$-maps coincide because $\gamma$ is associative.

  Next, we will show that $\tilde\rho_{\act} \overset{\eta}{\To} \rho$ is the unit relative bimodule.
  By Proposition~\ref{square-mon-cat}, we have a natural isomorphism $\tilde\rho_{\act} \pl \xi \simTo \xi$.
  To show that this is an isomorphism \emph{over} $\rho$, consider the following diagram:
  \begin{equation}
    \begin{tikzcd}
      \tilde\rho_{\act} \pl \xi
      \arrow[r, "\sim", Rightarrow]
      \arrow[d, "\id \pl \pi"', Rightarrow]
      & \xi \arrow[d, "\pi", Rightarrow] \\
      \tilde\rho_{\act} \pl \rho
      \arrow[d, "\eta \pl \id"', Rightarrow]
      \arrow[r, "\sim", Rightarrow]
      & \rho \\
      \rho \pl \rho
      \arrow[ru, "\gamma"', Rightarrow]
      &
    \end{tikzcd}
  \end{equation}
  By definition, the projection map for the plethysm is the sequence of maps going counter-clockwise on the outside.
  By a routine argument involving the properties of the unit, the above diagram commutes.
  Thus $\tilde\rho_{\act} \pl \xi \simTo \xi$ induces an isomorphism $\tilde\rho_{\act} \pl \xi \to \xi$ in the category of relative bimodules over $\rho$.
  Likewise, we have an isomorphism $\xi \pl \tilde\rho_{\act} \to \xi$.
\end{proof}

\begin{definition}
  A \defn{relative-monoid} is a monoid object in the monoidal category of relative bimodules over $\rho$.
  More explicitly, a relative-monoid is the following data:
  \begin{enumerate}
    \item A relative bimodule $\xi \overset{\pi}{\To} \rho$.
    \item A multiplication $\gamma_{\xi}: \xi \pl \xi \To \xi$ making the following diagram commute:
      \begin{equation}\label{eq:rel-multiply}
        \begin{tikzcd}
          \xi \pl \xi
          \arrow[d, "\pi \pl \pi"', Rightarrow]
          \arrow[r, "\gamma_{\xi}", Rightarrow]
          & \xi \arrow[d, "\pi", Rightarrow] \\
          \rho \pl \rho
          \arrow[r, "\gamma"', Rightarrow]
          & \rho
        \end{tikzcd}
      \end{equation}
    \item A relative monoid is \defn{unital} if there exists a natural transformation $\tilde \eta: \tilde\rho_{\act} \To \xi$ such that $\pi \circ \tilde \eta = \eta$ and the obvious unitality diagram commutes.
  \end{enumerate}
\end{definition}

\begin{example}
  Using the relative bimodule $\xi_{\Glue} \overset{\pi}{\To} \rho_{\rCospan}$ of example~\ref{ex:glue-pi}, we can define a canonical monoid structure $\gamma_{\Glue}: \xi_{\Glue} \pl_{\rho} \xi_{\Glue} \to \xi_{\Glue}$ by gluing the out-tails on one graph to the in-tails of the other graph.

  To better understand why the projection is defined the way it is, consider the diagram~\eqref{eq:rel-multiply} applied to this particular situation:
  \begin{equation}
    \begin{tikzcd}
      \xi_{\Glue} \pl \xi_{\Glue}
      \arrow[d, "\pi \pl \pi"', Rightarrow]
      \arrow[r, "\gamma_{\Glue}", Rightarrow]
      & \xi_{Glue}
      \arrow[d, "\pi", Rightarrow] \\
      \rho_{\rCospan} \pl \rho_{\rCospan}
      \arrow[r, "\gamma"', Rightarrow]
      & \rho_{\rCospan}
    \end{tikzcd}
  \end{equation}
  In other words, this gluing operation $\gamma_{\Glue}$ is coherent with $\gamma$ because doing the graph gluing operation first then contracting is the same as contracting first and then composing cospans.
\end{example}

These relative monoids have an interpretation in our category construction discussed in Proposition~\ref{mon-to-cat}.
\begin{prop}
\label{prop:indexingfunctor}
  An associative unital relative-monoid $\gamma: \xi \pl_\rho \xi \to \xi$ of a relative bimodule $\xi \overset{\pi}{\To} \rho$ naturally determines a functor $F_{\pi}: \arbcat(\gamma_{\xi}) \to \arbcat(\gamma)$.
\end{prop}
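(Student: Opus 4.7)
The plan is to unpack the two categories $\arbcat(\gamma_\xi)$ and $\arbcat(\gamma)$ via Proposition~\ref{mon-to-cat} and show that the components of $\pi$ assemble into a functor between them. Both categories share the object set of $\act$, with hom-sets
\[
\Hom_{\arbcat(\gamma_\xi)}(A,B) = \xi(A,B), \qquad \Hom_{\arbcat(\gamma)}(A,B) = \rho(A,B),
\]
and composition induced by the monoid structures $\gamma_\xi$ and $\gamma$ respectively, via the canonical maps $\xi(A,B)\otimes\xi(B,C)\to (\xi\pl\xi)(A,C)$ and similarly for $\rho$. I will define $F_\pi$ to be the identity on objects and to act on morphisms via the components $\pi_{A,B}\colon\xi(A,B)\to\rho(A,B)$.

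The main point is to verify the two functoriality axioms. For compatibility with composition, the defining compatibility square~\eqref{eq:rel-multiply} of a relative monoid,
\[
\pi\circ\gamma_\xi = \gamma\circ(\pi\pl\pi),
\]
is exactly the statement that $\pi$ intertwines the two multiplications. Precomposing with the universal maps $\xi(A,B)\otimes\xi(B,C)\to(\xi\pl\xi)(A,C)$ and using naturality of $\pi\pl\pi$ with respect to this cowedge yields the commuting square
\[
\begin{tikzcd}
\xi(A,B)\otimes\xi(B,C)\arrow[r,"\circ_\xi"]\arrow[d,"\pi_{A,B}\otimes\pi_{B,C}"'] & \xi(A,C)\arrow[d,"\pi_{A,C}"] \\
\rho(A,B)\otimes\rho(B,C)\arrow[r,"\circ_\rho"'] & \rho(A,C)
\end{tikzcd}
\]
which is the preservation of composition.

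For preservation of identities, recall from the proof of Proposition~\ref{mon-to-cat} that the identity arrow $\id_A\in\arbcat(\gamma)(A,A)$ is obtained by applying $\eta_{A,A}\colon\tilde\rho_\act(A,A)\to\rho(A,A)$ to the identity element of $\Hom_\act(A,A)$; similarly the identity in $\arbcat(\gamma_\xi)$ is obtained via $\tilde\eta_{A,A}\colon\tilde\rho_\act(A,A)\to\xi(A,A)$. The unitality condition $\pi\circ\tilde\eta=\eta$ for a unital relative monoid immediately implies $\pi_{A,A}(\tilde\eta(\id_A))=\eta(\id_A)$, i.e.\ $F_\pi(\id_A)=\id_A$. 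Naturality of $F_\pi$ as a construction in $\pi$ is automatic.

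The main obstacle is not conceptual but notational: one must be careful to distinguish the coend-level natural transformation $\gamma_\xi\colon \xi\pl\xi\Rightarrow\xi$ from the composition operation $\circ_\xi\colon\xi(A,B)\otimes\xi(B,C)\to\xi(A,C)$ that it induces, and to check that \eqref{eq:rel-multiply} really does descend to a commuting square at the level of composition maps rather than just at the level of coends. Since the map $\xi(A,B)\otimes\xi(B,C)\to(\xi\pl\xi)(A,C)$ is the universal cowedge component and $\pi\pl\pi$ is defined componentwise, this check is routine.
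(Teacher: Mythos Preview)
Your proposal is correct and follows essentially the same approach as the paper: define $F_\pi$ as the identity on objects and as $\pi_{A,B}$ on hom-sets, then verify functoriality by appealing to the relative-monoid square~\eqref{eq:rel-multiply}. Your version is in fact more thorough than the paper's, since you explicitly check preservation of identities via $\pi\circ\tilde\eta=\eta$, whereas the paper's proof only addresses composition.
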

\begin{proof}
  Define $\arbcat(\gamma_{\xi})$ just like we did in Proposition~\ref{mon-to-cat} by treating $\gamma_{\xi}$ as a monoid without a relative structure.
  Define the functor $F_{\pi}: \arbcat(\gamma_{\xi}) \to \arbcat(\gamma)$ to be the identity on objects.
  On morphisms, use the natural transformation $\pi: \xi \To \rho$ to define a correspondence between the hom-sets $\Hom_{\arbcat(\gamma_{\xi})}(X,Y) := \xi(X,Y)$ and $\Hom_{\arbcat(\gamma)}(X,Y) := \rho(X,Y)$.
  The functorality of $F_{\pi}$ reduces to the commutativity of the following diagram:
  \begin{equation}
    \label{eq:functorality}
    \begin{tikzcd}
      \xi \pl \xi
      \arrow[r, Rightarrow]
      \arrow[d, Rightarrow]
      & \rho \pl \rho
      \arrow[d, Rightarrow] \\
      \xi
      \arrow[r, Rightarrow]
      & \rho
    \end{tikzcd}
  \end{equation}
  However, the commutativity of \eqref{eq:functorality} is immediate from the monoid structure $\gamma_{\xi}: \xi \pl_\rho \xi \to \xi$.
\end{proof}

\subsection{Relation to element representations}
To establish the connection between element representations and relative bimodules, we first observe that the $\chi$ functor is closely related to the free construction.

\begin{as}
  For our target category $\C$, we assume that the forgetful functor $U: \C \to \Set$ defined by $X \mapsto \Hom(1_\C, X)$ has a left adjoint $F: \Set \to \C$, the free functor.
\end{as}

\begin{lemma}
  For a bimodule $\rho: \act^{op} \times \act \to \Set$, there is a canonical natural isomorphism $\chi_{T} \overset{\sim}{\Rightarrow} F\rho$.
\end{lemma}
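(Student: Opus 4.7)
The plan is to establish the isomorphism at the level of universal properties, so that no explicit computation of the pointwise left Kan extension is needed. The target is to produce, for every functor $G : \act^{op} \times \act \to \C$, a natural bijection $\Hom(\chi_T, G) \cong \Hom(F\rho, G)$; the Yoneda lemma will then provide the desired canonical isomorphism $\chi_T \overset{\sim}{\Rightarrow} F\rho$.

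First, by the universal property of the left Kan extension defining $\chi_T = \Lan_\Sigma T$, there is a natural bijection $\Hom(\chi_T, G) \cong \Hom(T, G \circ \Sigma)$ in the respective functor categories. Since $T$ is constant at the monoidal unit $1_\C$, an element of the right-hand side is a family of morphisms $\alpha_y : 1_\C \to G(\Sigma(y))$ indexed by $y \in \el(\rho)$, subject to the naturality condition $G(\Sigma(g)) \circ \alpha_y = \alpha_{y'}$ for every morphism $g : y \to y'$ in $\el(\rho)$ (the constancy of $T$ makes this a one-sided constraint).

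Using the adjunction $F \dashv U$ with $U(X) = \Hom_\C(1_\C, X)$, each $\alpha_y$ transposes to an element $g_y \in UG(\Sigma(y))$. Writing $y = (A, B, x)$ with $x \in \rho(A,B)$ and unpacking the naturality constraint using that a morphism $g : y \to y'$ in $\el(\rho)$ is exactly a morphism $(\sigma \biact \tau)$ in $\act^{op} \times \act$ with $\rho(\sigma \biact \tau)(x) = x'$, the data become a family of set maps $\beta_{(A,B)} : \rho(A,B) \to UG(A,B)$ satisfying
\begin{equation*}
UG(\sigma \biact \tau) \circ \beta_{(A,B)} = \beta_{(A',B')} \circ \rho(\sigma \biact \tau).
\end{equation*}
This is exactly a natural transformation $\rho \Rightarrow UG$ of $\Set$-valued bimodules. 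A second application of the adjunction converts $\Hom(\rho, UG)$ into $\Hom(F\rho, G)$, and composing the three bijections yields the required isomorphism, plainly natural in $G$.

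The only delicate step is the middle one: confirming that naturality across the category of elements $\el(\rho)$ translates exactly to naturality of the induced $\Set$-valued transformation across $\act^{op} \times \act$. This is essentially bookkeeping, but one must be attentive to the reversal in the contravariant slot of $\act^{op} \times \act$ and to the way the $\rho$-action reindexes the family. Once this is verified, the proof is a chain of standard adjunction manipulations. As a sanity check one could alternatively compute the pointwise Kan formula directly, using that $T$ is constant at $1_\C$ to reduce $\chi_T(A,B)$ to $\pi_0(\Sigma \downarrow (A,B)) \odot 1_\C$, and then verify that $\pi_0(\Sigma \downarrow (A,B)) \cong \rho(A,B)$ naturally in $(A,B)$ — the key point being that every object of $(\Sigma \downarrow (A,B))$ maps to an essentially unique representative $(x'', \id_{(A,B)})$ via the $\rho$-action — but I find the Yoneda route cleaner.
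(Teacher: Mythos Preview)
Your proof is correct and follows essentially the same approach as the paper: both establish the chain of bijections $\Hom(\chi_T, G) \cong \Hom(T, G\Sigma) \cong \Hom(\rho, UG) \cong \Hom(F\rho, G)$ via the Kan extension universal property and the $F \dashv U$ adjunction, then conclude by Yoneda (phrased in the paper as ``uniqueness of Kan extensions''). Your version is more explicit about the naturality bookkeeping in the middle step, but the argument is the same.
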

\begin{proof}
  Consider a bimodule morphism $\chi_T \Rightarrow \xi$.
  By the definition of the chi construction, this yields a morphism $T \Rightarrow \xi \circ \Sigma$ of element representations.
  Unpacking the definition, this is a family $\{1_\C \to \xi(A, B)\}_{x \in \rho(A,B)}$.
  This is the same thing as a bimodule morphism $\rho \Rightarrow U\xi$.
  By the free/forgetful adjunction, this becomes $F \rho \Rightarrow \xi$.
  By uniqueness of Kan extensions, there is a natural isomorphism $\chi_{T} \Rightarrow F\rho$.
\end{proof}

\begin{cor}
\label{cor:weighttorelative}
  A natural transformation $\epsilon: D \Rightarrow T$ induces a relative bimodule $\chi_{D} \overset{\pi_{\epsilon}}{\Rightarrow} F\rho$ with $\pi_{\epsilon}$ defined as the composition $\chi_D \overset{\chi_{\epsilon}}{\Rightarrow} \chi_{T} \overset{\sim}{\Rightarrow} F\rho$.
  \qed
\end{cor}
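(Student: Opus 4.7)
The proof is essentially a one-line composition, so the plan is to verify that each arrow in the composition exists and is natural.

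First, I would observe that the $\chi$ construction is functorial in the element representation variable. Given a natural transformation $\epsilon: D \Rightarrow T$ of element representations $\el(\rho) \to \C$, the universal property of the left Kan extension defining $\chi_D$ (namely, $\chi_D = \Lan_\Sigma D$) produces a canonical natural transformation $\chi_\epsilon: \chi_D \Rightarrow \chi_T$: explicitly, $\chi_\epsilon$ is obtained as the Kan-extension adjoint of the composite $D \Rightarrow T \Rightarrow \chi_T \circ \Sigma$, where the second arrow is the component for $\chi_T$. This step is completely formal and uses only the $2$-functoriality of $\Lan_\Sigma$.

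Second, I would invoke the preceding lemma to get a natural isomorphism $\chi_T \overset{\sim}{\Rightarrow} F\rho$ of bimodules. Composing $\chi_\epsilon$ with this isomorphism yields the natural transformation
\begin{equation}
  \pi_\epsilon : \chi_D \overset{\chi_\epsilon}{\Longrightarrow} \chi_T \overset{\sim}{\Longrightarrow} F\rho,
\end{equation}
which by definition realizes $\chi_D$ as a bimodule over $F\rho$, i.e.\ as an object of the slice category of relative bimodules defined in Section~\ref{sec:relative-bimodules}.

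The only thing to check beyond this is that $\pi_\epsilon$ is a bimodule morphism (i.e.\ a natural transformation of functors $\act^{op} \times \act \to \C$), but this is automatic since both $\chi_\epsilon$ and the lemma's isomorphism are bimodule morphisms and the slice category is closed under composition. There is no essential obstacle here; the content of the corollary is really just the functoriality of $\chi$ combined with the identification $\chi_T \cong F\rho$ established in the lemma above. Hence I would simply write the composition and note that no further verification is needed. \qed
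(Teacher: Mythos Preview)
Your proposal is correct and matches the paper's approach exactly: the paper gives no proof at all (the corollary is stated with a \qed), since the content is entirely in the definition of $\pi_\epsilon$ as the composite $\chi_D \overset{\chi_\epsilon}{\Rightarrow} \chi_T \overset{\sim}{\Rightarrow} F\rho$, and your write-up simply spells out the functoriality of $\chi$ and the invocation of the preceding lemma that make this composite well-defined.
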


\begin{example}
  The natural transformation $\epsilon: D \Rightarrow T$ is more interesting in cases like $\C = \Vect$ where $1_\C$ is not terminal.
  In the vector space case, an evaluation $\epsilon: D \Rightarrow T$ is a family of linear maps $\epsilon_x: D(x) \to k$.
  In other words, $\epsilon$ is a family of dual vectors $\{ \epsilon_x \in D(x)^* \}_{x \in \el(\rho)}$.
  We should then think of $\epsilon$ as a chosen way to ``weight'' or ``integrate'' the element representation $D$.
\end{example}

\begin{cor}
\label{cor:weightsmon}
  These ``weights'' have a natural monoidal product:
  \begin{equation}
    \begin{tikzcd}[row sep = small]
      {D_1 \epl D_2} & {T \epl T} & T \\
      {\chi_{D_1} \pl \chi_{D_2}} & {\chi_{T} \pl \chi_{T}} & {\chi_{T}}
      \arrow["{\epsilon_1 \epl\epsilon_2}", Rightarrow, from=1-1, to=1-2]
      \arrow[Rightarrow, from=1-2, to=1-3]
      \arrow["{\chi_{\epsilon_1} \pl \chi_{\epsilon_2}}"', Rightarrow, from=2-1, to=2-2]
      \arrow[Rightarrow, from=2-2, to=2-3]
    \end{tikzcd}
  \end{equation}
  Noting that $\rho \cong \chi_T$, we see that they ultimately lead to the same definition as the relative plethysm. \qed
\end{cor}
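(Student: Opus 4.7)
The plan is to produce the upper natural transformation $T \epl T \Rightarrow T$, promote it to a weight on $D_1 \epl D_2$, and then verify that passing through $\chi$ recovers the relative plethysm projection from Corollary~\ref{cor:weighttorelative}. Throughout, I would use that $\chi$ is strong monoidal (Theorem~\ref{thm:strongmonoidal}) and that the weight construction of Corollary~\ref{cor:weighttorelative} is functorial in $\epsilon$.

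First I would equip the trivial element representation $T:\el(\rho)\to \C$ with a canonical monoid structure $m_T:T \epl T \Rightarrow T$. By definition $T$ is constant at $1_\C$, so $T \hat\otimes T$ is constant at $1_\C \otimes 1_\C$; composing with the unit isomorphism $1_\C \otimes 1_\C \cong 1_\C$ and invoking the universal property of the left Kan extension defining $\epl$ gives the required map (and in fact isomorphism when $1_\C$ is sufficiently rigid). Applying $\chi$ and using Lemma~\ref{pl-epl-iso}, this $m_T$ corresponds to a map $\chi_T \pl \chi_T \Rightarrow \chi_T$, which under $\chi_T \cong F\rho$ is precisely the free extension of $\gamma:\rho \pl \rho \Rightarrow \rho$.

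Second, given weights $\epsilon_1:D_1 \Rightarrow T$ and $\epsilon_2:D_2 \Rightarrow T$, I would define their monoidal product as the composite
\[
\epsilon_1 \epl \epsilon_2 \;:\; D_1 \epl D_2 \;\xRightarrow{\epsilon_1 \epl \epsilon_2}\; T \epl T \;\xRightarrow{m_T}\; T,
\]
using functoriality of $\epl$ on morphisms of element representations. This is the top row of the displayed diagram and defines the monoidal product on weights. Applying Corollary~\ref{cor:weighttorelative} then produces a relative bimodule $\chi_{D_1 \epl D_2} \Rightarrow F\rho$.

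Finally, I would check that the two routes around the displayed diagram agree. Naturality of the strong monoidal structure $(\chi,\mu,\epsilon)$ gives a commuting square relating $\chi_{\epsilon_1 \epl \epsilon_2}$ to $\chi_{\epsilon_1} \pl \chi_{\epsilon_2}$ via the coherence isomorphism $\mu_{D_1,D_2}:\chi_{D_1}\pl\chi_{D_2} \overset{\sim}{\Rightarrow} \chi_{D_1 \epl D_2}$. Postcomposing with the identification $\chi_T \cong F\rho$ and the previous step, the bottom row becomes $\chi_{D_1}\pl\chi_{D_2} \Rightarrow F\rho\pl F\rho \Rightarrow F\rho$, which is exactly the projection used to define $\pl_{F\rho}$ in Section~\ref{sec:relative-plethysm}. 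The expected obstacle is purely bookkeeping: tracking the associator and unit isomorphisms so that the monoid structure on $T$, the strong monoidal coherences of $\chi$, and the projection formula of Corollary~\ref{cor:weighttorelative} all line up on the nose rather than merely up to isomorphism; this is a diagram chase using Lemma~\ref{kan-lemma} and the universal properties of the Kan extensions involved.
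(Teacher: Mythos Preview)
Your proposal is correct and in fact more detailed than the paper's own treatment: the paper marks this corollary with a bare \qed, treating it as an immediate consequence of Corollary~\ref{cor:weighttorelative} together with the functoriality of $\epl$ and the strong monoidal structure of $\chi$ established in Lemma~\ref{pl-epl-iso} and Theorem~\ref{thm:strongmonoidal}. Your three-step unpacking is exactly the intended reasoning behind that \qed.
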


\subsection{Equivalence in the Cartesian closed case}
\label{sec:equiv-cart-closed}

There is an analog of the Grothendieck construction $\Sigma: \el(\rho) \to \act^{op} \times \act$ in any context admitting a consistent comprehension scheme, see \cite{BergerKaufmann}.
In such a setting, there is an equivalence between ``$\el(X)$-structures'' and ``structures over $X$''.
This idea will apply in our setting if the target category $\C$ is suitably $\Set$-like.
We will work out the details of our special case explicitly.

\begin{thm}
  \label{thm:elementrelative}
  If $\C$ is Cartesian closed, then the notions of element representations $D: \el(\rho) \to \C$ and relative bimodules $\xi \overset{\pi}{\Rightarrow} F\rho$ coincide for a fixed base bimodule $\rho$.
\end{thm}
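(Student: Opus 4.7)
The plan is to exhibit mutually inverse constructions between element representations $D \colon \el(\rho) \to \C$ and relative bimodules $\xi \overset{\pi}{\Rightarrow} F\rho$, and then to check that in a Cartesian closed $\C$ both round trips yield natural isomorphisms.

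In the forward direction I would invoke Corollary~\ref{cor:weighttorelative}. Because $\C$ is Cartesian closed, the monoidal unit $1_\C$ is terminal, so the trivial element representation $T$ of Definition~\ref{el-rep-unit} is a terminal object in the functor category $\Fun(\el(\rho),\C)$. Hence every $D$ carries a unique natural transformation $\epsilon_D \colon D \Rightarrow T$, which produces a canonical relative bimodule $\chi_D \overset{\pi_{\epsilon_D}}{\Rightarrow} F\rho$, functorially in $D$. For the backward direction, given $\xi \overset{\pi}{\Rightarrow} F\rho$, I define $D_\xi \colon \el(\rho) \to \C$ by taking fibers: on an object $x \colon 1_\C \to F\rho(A,B)$ of $\el(\rho)$, set
\begin{equation}
D_\xi(x) \;:=\; \xi(A,B) \times_{F\rho(A,B)} 1_\C.
\end{equation}
A morphism $(\sigma \biact \tau) \colon x \to x'$ in $\el(\rho)$ induces a map of cospans in $\C$ and therefore a canonical morphism $D_\xi(x) \to D_\xi(x')$ between the pullbacks, making $D_\xi$ a functor. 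The assignment $(\xi,\pi) \mapsto D_\xi$ is visibly functorial in morphisms of relative bimodules over $F\rho$.

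To see that these constructions are mutually inverse, I would first treat the case in which $\act$ is discrete. Here Corollary~\ref{cor:chi-is-index-enrich} gives $\chi_D(A,B) = \coprod_{x \in \rho(A,B)} D(x)$ and $F\rho(A,B) = \coprod_{x \in \rho(A,B)} 1_\C$, with $\pi_{\epsilon_D}$ the coproduct of the unique maps $D(x) \to 1_\C$. Because $\C$ is Cartesian closed, the functors $(-)\times Z$ are left adjoints and therefore preserve colimits, which makes coproducts in $\C$ extensive; pullback along the coproduct inclusion $x \colon 1_\C \hookrightarrow \coprod_{x'} 1_\C$ then isolates exactly the summand $D(x)$, so $D_{\chi_D}(x) \cong D(x)$ naturally. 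Conversely, for any $\xi \Rightarrow F\rho$ with $\act$ discrete, extensivity produces a canonical comparison $\coprod_{x} D_\xi(x) \to \xi(A,B)$ and identifies it as the natural decomposition of $\xi(A,B)$ along the coproduct decomposition of $F\rho(A,B)$, so $\chi_{D_\xi} \cong \xi$ over $F\rho$. The general case with arbitrary $\act$ follows by transporting these fiberwise isomorphisms along the $\act^{op}\times\act$-actions: the bimodule action on $\chi_D$ is induced from the action of $\el(\rho)$ on the summands $D(x)$, and pullback along bimodule morphisms is natural, so both round trips extend to isomorphisms of bimodules (respectively of element representations) without further input.

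The main obstacle is the isomorphism $\chi_{D_\xi} \cong \xi$ for the round trip beginning with a relative bimodule: this genuinely requires that an object $\xi(A,B)$ equipped with a map to $F\rho(A,B) = \coprod_x 1_\C$ decomposes canonically as the coproduct of its pullback summands. This is precisely the extensivity property of the coproducts of $\C$ that is guaranteed by Cartesian closure once coproducts exist, and it is what forces the Cartesian closed hypothesis to appear in the statement. Once this is in hand, the compatibility of the resulting isomorphisms with the $\act^{op}\times\act$-action, and with morphisms of element representations respectively of relative bimodules, is routine naturality of pullbacks and of the chi construction.
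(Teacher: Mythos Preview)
Your proposal is correct and follows the same two constructions as the paper: forward via the unique $\epsilon\colon D\Rightarrow T$ supplied by terminality of $1_\C$, backward via the fiberwise pullback $D_\xi(x)=\xi(A,B)\times_{F\rho(A,B)}1_\C$. The only substantive difference is in how you verify the round trip $\chi_{D_\xi}\cong\xi$.

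The paper does not pass through the discrete case. It simply observes that each $D(x)$ sits in a pullback square over $\xi(A,B)\to F\rho(A,B)$, takes the colimit of these squares over the indexing category for $\chi$, and invokes pullback stability of colimits in $\C$ to conclude that the resulting square with $\chi_D(A,B)\to\chi_T(A,B)\cong F\rho(A,B)$ is again a pullback; since one leg is an isomorphism, so is $\chi_D(A,B)\to\xi(A,B)$. Your route instead treats the discrete case first via Corollary~\ref{cor:chi-is-index-enrich} and extensivity of coproducts, then asserts that the general case follows ``by transporting along the action''. That last step is the soft spot: for non-discrete $\act$ the characteristic bimodule $\chi_D(A,B)$ is a genuine colimit over $(\Sigma\downarrow(A,B))$, not merely a coproduct, so the extensivity-of-coproducts argument does not literally cover it, and ``transport along the action'' does not by itself upgrade the discrete statement. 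The paper's formulation via pullback-stable colimits handles this uniformly in one line. If you want to keep your presentation, either restrict explicitly to the groupoid case (where the comma category collapses and $\chi_D(A,B)$ really is a coproduct) or replace the discrete-case detour with the paper's direct stability argument.
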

\begin{proof}
  First consider an element representation $D: \el(\rho) \to \C$.
  Since $\C$ is Cartesian closed, the unit is terminal so there is just one choice of evaluation data $\epsilon: D \Rightarrow T$.
  Hence any element representation $D$ induces a canonical relative bimodule $\xi_D \overset{\pi_D}{\Rightarrow} F\rho$ as follows:
  \begin{equation}
    \begin{tikzcd}
      \xi_D
      \arrow[d, "\pi_D"', dashed]
      \arrow[r, equal]
      & \chi_D
      \arrow[d, "\chi_{\epsilon}"] \\
      F\rho
      \arrow[r, "\sim"']
      & \chi_T
    \end{tikzcd}
  \end{equation}

  Conversely, consider a relative bimodule $\xi \overset{\pi}{\Rightarrow} F\rho$.
  Each element $x \in \rho(A,B)$ is formally a pointing $x: pt \to \rho(A,B)$, so we can define $D(x)$ as the following pullback:
  \begin{equation}
    \begin{tikzcd}
      D(x)
      \arrow[d, dashed]
      \arrow[r, dashed]
      & F(pt)
      \arrow[d, "Fx"] \\
      {\xi(A,B)}
      \arrow[r, "\pi"']
      & {F\rho(A,B)}
    \end{tikzcd}
  \end{equation}
  The objects $D(x)$ assemble into a functor $D: \el(\rho) \to \C$.
  Taking colimits, we get
  \begin{equation}
    \begin{tikzcd}
      {\chi_D(A,B)}
      \arrow[d]
      \arrow[r]
      & {\chi_T(A,B)}
      \arrow[d, "\sim"] \\
      {\xi(A,B)}
      \arrow[r, "\pi"']
      & {F\rho(A,B)}
    \end{tikzcd}
  \end{equation}
  By pullback stability of $\C$, this is also a pullback.
  Hence $\chi_D(A,B) \to \xi(A,B)$ is an isomorphism.
\end{proof}

\begin{example}
  \label{Joyal-species}
  Recall that Joyal defines a \defn{species} to be a functor $M:\Iso(\FinSet) \to \Set$.
  An example would be a functor $G$ that sends a finite set of labels $L$ to the collection of all graphs labeled by $L$.
  In \cite{species}, Joyal calls a functor of the form $T_M: \el(M) \to \Set$ a \defn{relative species} to the species $M$ (note this differs from our terminology, we would call this an element representation).
  An example might be the relative species $O: \el(G) \to \Set$ that sends a graph $\G \in \el(G)$ to the set $O(\G)$ of all orientations of $\G$.
  In other words, $O(\G)$ is the set of all directed graphs whose underlying graph is $\G$.
  Joyal proves that the category of relative species $T_M: \el(M) \to \Set$ is equivalent to the category of species over $M$.
  In our framework, $O$ would induce a species $D = \chi_O$ of directed graphs.
  Moreover, the canonical $O \to T$ induces the natural transformation $\pi: D \Rightarrow G$ sending each directed graph to its underlying graph.
\end{example}

\begin{example}
  Recall the bimodule $\xi_{Glue}: \Iso(\FinSet)^{op} \times \Iso(\FinSet) \to \Set$ described in Example~\ref{ex:glue} such that $\xi_{Glue}(A, B)$ is the set of graphs with in-tails labeled by $A$ and out-tails labeled by $B$.
  Also recall the canonical projection $\pi: \xi_{Glue} \Rightarrow \rho_{Cospan}$ which contracts the edges of a graph and send the result to the corresponding cospan described in Example~\ref{ex:glue-pi}.
  We can define an element representation $D: \el(\rho_{Cospan}) \to \Set$ by taking $D(x) := \pi^{-1}(x)$ for each cospan $x \in \el(\rho_{Cospan})$.
  This recovers the element representation in Example~\ref{ex:graphs-from-cospans}.
  Conversely, starting with $D$, there is a unique evaluation $D \Rightarrow T$ which induces the relative bimodule $\xi_{Glue} \Rightarrow \rho_{Cospan}$.
\end{example}

\section{Unique factorization}
 
A good example of unique factorization is the bimodule of cospans $\rho_{\rCospan}$.
Intuitively, we can say that an element of $\rho_{\rCospan}$ is assembled from a collection of connected cospans.
If we collect the connected cospans into a submodule $\nu_{\rCospan}$, there is a certain sense in which the sub-bimodule $\nu_{\rCospan}$ ``generates'' the bimodule $\rho_{\rCospan}$.
In this section, we will formalize this process by introducing the notion of a horizontal extension of a functor which is defined as a left Kan extension.
We will then show how to adapt this idea to monoids.

\subsection{Symmetric monoidal categories}

For the horizontal extension construction, the relationship between symmetric monoidal categories and free symmetric monoidal categories will be important.
We recall a few relevant facts.

\begin{definition}
  A \defn{symmetric monoidal category} $(\C, \otimes)$ is a category $\C$ equipped with the following structures:
  \begin{enumerate}
  \item A functor $\otimes: \C \times \C \to \C$ called the \defn{monoidal product}.
    Hence, for any pair of objects $X$ and $Y$, there is a new object $X \otimes Y$.
    Likewise, for each pair of morphisms $\phi: X \to Y$ and $\psi: X' \to Y'$ there is a new morphism $\phi \otimes \psi: X \otimes X' \to Y \otimes Y'$.
  \item A \defn{unit object} $1_\C$ together with isomorphisms called \defn{unit constraints}: $\lambda: X \otimes 1_{\C} \overset{\sim}{\to} X$ and $\rho: 1_{\C} \otimes X \overset{\sim}{\to} X$.
  \item A collection of isomorphisms called \defn{associativity constraints}:
    \begin{equation*}
      A_{XYX}: (X \otimes Y) \otimes Z \overset{\sim}{\to} X \otimes (Y \otimes Z)
    \end{equation*}
  \item A collection of isomorphisms called \defn{commutativity constraints}:
    \begin{equation*}
      C_{XY}: X \otimes Y \overset{\sim}{\to} Y \otimes X
    \end{equation*}
  \end{enumerate}
  These must satisfy certain conditions.
\end{definition}

\begin{ex}
  The category $\Set$ can be equipped with the Cartesian product as a monoidal product.
  The unit is a fixed singleton set $pt$.
\end{ex}

\begin{definition}
  Given any category $\arbcat$, define the \defn{free symmetric monoidal construction} $\arbcat^{\boxtimes}$ as follows:
  \begin{enumerate}
    \item The objects are bracketed words of objects in $\arbcat$. As notation, we separate the letters by $\boxtimes$:
    \begin{equation}
      (X_1 \boxtimes (X_2 \boxtimes X_3)) \boxtimes X_4
    \end{equation}
    \item The morphisms are likewise generated by bracketed words of morphisms in $\arbcat$ together with the naturally defined associativity and commutativity constraints.
    \item We denote the monoidal product by $\boxtimes$ and define it as concatenation of words.
  \end{enumerate}
\end{definition}

\begin{lemma}
  \label{lemma:structure-functor}
  For a symmetric monoidal category $\M$, there is a canonical functor $\mu_{\M}: \M^{\boxtimes} \to \M$ which turns a formal tensor product into a tensor product in $\M$:
  \begin{equation}
    (X_1 \boxtimes (X_2 \boxtimes X_3)) \boxtimes X_4
    \mapsto
    (X_1 \otimes (X_2 \otimes X_3)) \otimes X_4
  \end{equation}
  We will call $\mu_{\M}: \M^{\boxtimes} \to \M$ the \defn{structure functor} of the monoidal category $\M$. \qed
\end{lemma}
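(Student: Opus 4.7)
The plan is to invoke the universal property of the free symmetric monoidal construction. The construction $(-)^{\boxtimes}: \cat \to \mathrm{SymMonCat}$ is the left adjoint to the forgetful functor to $\cat$, so for any symmetric monoidal category $\M$ and any functor $F: \arbcat \to U\M$ out of a plain category $\arbcat$, there is a unique (up to coherent isomorphism) strong symmetric monoidal functor $\arbcat^{\boxtimes} \to \M$ extending $F$. In our situation I would apply this universal property to the identity functor $\mathrm{id}_{\M}: U\M \to U\M$, and define $\mu_{\M}$ as the resulting strong monoidal extension.

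Concretely, I would construct $\mu_{\M}$ by recursion on the bracketed word structure. On objects, map the empty word to the unit $1_{\M}$, a single letter $X$ to $X$, and a bracketed juxtaposition $(W \boxtimes W')$ to $\mu_{\M}(W) \otimes \mu_{\M}(W')$. On morphisms, I would define $\mu_{\M}$ on each of the generating classes: a bracketed word of morphisms $\phi_1 \boxtimes \cdots \boxtimes \phi_n$ (in the given bracketing) goes to the corresponding iterated $\otimes$ of the $\phi_i$ in $\M$; the formal associator $A_{X,Y,Z}$ of $\M^{\boxtimes}$ goes to the associator of $\M$ on the images, and similarly for the commutativity constraints $C_{X,Y}$ and for the unit constraints $\lambda, \rho$.

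The only thing to check is that this assignment descends to the quotient by the defining relations of $\M^{\boxtimes}$, i.e., that it respects the coherence axioms that the formal constraints satisfy. This is automatic: every relation imposed in $\M^{\boxtimes}$ is a pentagon, triangle, hexagon, or symmetry axiom in the symmetric monoidal coherence package, and these axioms hold in $\M$ by hypothesis. Functoriality of $\mu_{\M}$ is built into the generator-and-relation presentation. Strong monoidality, with structure isomorphisms constructed from identities on the image side, then follows from the fact that concatenation $\boxtimes$ on $\M^{\boxtimes}$ was defined precisely to match $\otimes$ under this rewriting.

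I expect no serious obstacle: the whole content is the coherence theorem for symmetric monoidal categories, which guarantees that any two parallel morphisms in $\M^{\boxtimes}$ built out of constraints are sent to equal morphisms in $\M$. Alternatively, if one prefers not to invoke coherence explicitly, one can present $\M^{\boxtimes}$ as a strict symmetric monoidal category whose hom-sets are computed as colimits of diagrams of formal rebracketings and permutations, and use the universal property of these colimits together with the actual associators and symmetries of $\M$ to define $\mu_{\M}$ on morphisms in one step. The uniqueness statement that justifies calling $\mu_{\M}$ \emph{the} structure functor follows either from the adjunction or from the fact that $\mu_{\M}$ is determined on the generators of $\M^{\boxtimes}$.
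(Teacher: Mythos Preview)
Your proposal is correct and in fact supplies far more detail than the paper does: the paper states this lemma with a \qed\ immediately after the display and offers no argument at all, treating the existence of $\mu_{\M}$ as evident from the definition of $\M^{\boxtimes}$. Your appeal to the free--forgetful adjunction (or, equivalently, the explicit recursive definition checked against the coherence axioms) is the standard justification and is exactly what one would write if asked to unpack that \qed.
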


\subsection{Horizontal extension}

Motivated by the graph example, one might start with a bimodule $\rho$ and look for a sub-bimodule $\nu$ such that the ``internal tensor product'' of $\nu$ is the whole bimodule $\rho$.
However, ultimately we are more interested in the case where $\rho$ is ``freely generated'' so an ``external'' construction will be more natural for our purposes.
\label{lemma-structure-functor}

\begin{definition}
  Suppose $F: \M \to \C$ is a functor such that both $\M$ and $\C$ are monoidal categories.
  Let $\mu_{\M}$ and $\mu_{\C}$ be the structure functors described in Lemma~\ref{lemma:structure-functor}.
  Then define the \defn{horizontal extension} $F^{\otimes}: \M \to \C$ as the following left Kan extension, if it exists:
  \begin{equation}
    \label{eq:hor-ext-def}
    \begin{tikzcd}
      \M^\boxtimes
      \arrow[r, "F^{\boxtimes}"]
      \arrow[d, "\mu_{\M}"']
      & \C^\boxtimes
      \arrow[d, "\mu_{\C}"]
      \arrow[ld, Rightarrow, shorten >=0.3cm,shorten <=0.4cm, "\eta"'] \\
      \M
      \arrow[r, "F^{\otimes}"', dashed]
      & \C
    \end{tikzcd}
  \end{equation}
\end{definition}

\begin{const}
  \label{coend-formula}
  The left Kan extension in Equation~\eqref{eq:hor-ext-def} can be computed using a standard coend formula:
  \begin{equation}
    F^{\otimes}(x) =
    \int^{{\bigboxtimes}_i x_i : \M^{\boxtimes}}
    \Hom\left(\bigotimes_i x_i, x\right)
    \odot \left( F(x_1) \otimes \ldots \otimes F(x_k) \right)
  \end{equation}
  Where $\odot$ is the copower mentioned in Definition~\ref{copower}.
\end{const}

\begin{const}
  \label{coeq-formula}
  The horizontal extension of $F: \M \to \C$ can be computed as the following coequalizer:
  \begin{equation}
    \begin{tikzcd}
      {
        \coprod_{(f_i: x_i \to y_i)}
        \Hom(\bigotimes_i y_i, z)
        \odot
        (F(x_1) \otimes \ldots \otimes F(x_k))
      }
      \arrow[d, "(\bigboxtimes_i f_i)^*", shift left]
      \arrow[d, "(\bigboxtimes_i f_i)_*"', shift right] \\
      {
        \coprod_{\bigboxtimes_i z_i}
        \Hom(\bigotimes_i z_i, z)
        \odot
        (F(z_1) \otimes \ldots \otimes F(z_k))
      }
      \arrow[d, dashed] \\
      F^{\otimes}(z)
    \end{tikzcd}
  \end{equation}
\end{const}

\subsection{Examples of horizontal extensions}

We will compute several horizontal extensions starting with the trivial bimodule:
\begin{align*}
  \tau_\act: \act^{\rop} \times \act &\to \Set \\
  (A,B) &\mapsto pt
\end{align*}

\begin{ex}[Non-symmetric]
  \label{non-symmetric-ext-example}
  For the purpose of demonstration, we will first consider the discrete category of natural numbers $\N$ with addition as a monoidal product.
  This requires us to make a straightforward modification to \eqref{eq:hor-ext-def}, Construction~\ref{coend-formula}, and Construction~\ref{coeq-formula} so they use the free monoidal category rather than the free symmetric monoidal category.

  To start the computation, we use the coend formulation given by Construction~\ref{coend-formula}:
  \begin{equation}
    \label{eq:coend-pt-disc}
    \tau_{\N}^{\otimes}(n,m) \cong
    \int^{\bigboxtimes_i (n_i, m_i) }
    \Hom_{\N^{\rop} \times \N} \left( \sum_i(n_i, m_i), (n,m) \right)
    \odot (pt \times \ldots \times pt)
  \end{equation}
  Note that the set $\Hom_{\N^{\rop} \times \N} \left( \sum_i(n_i, m_i), (n,m) \right)$ is a singleton if both $\sum n_i = n$ and $\sum m_i = m$ and it is empty otherwise.

  Since we are considering the non-symmetric version of this construction, we do not need to consider co--wedges coming from commutativity constraints because there are none.
  Because $\N$ is discrete, we are reduced to the following:
  \begin{equation}
    \tau_{\N}^{\otimes}(n,m) \cong \coprod_{\bigboxtimes_i(n_i, m_i)} \Hom_{\N^{\rop} \times \N} \left( \sum_i(n_i, m_i), (n,m) \right)
    \odot pt
  \end{equation}

  Hence $\tau_{\N}^{\otimes}(n,m)$ can be understood as the set of partitions $\sum_i (n_i, m_i) = (n,m)$.
  We can visualize an element of $\tau_{\N}^{\otimes}(n,m)$ as vertices with unmarked flags:
  \begin{equation}
    \begin{tikzpicture}[baseline={(current bounding box.center)}]
      \node [style=White] (0) at (-4, 0) {};
      \node [style=White] (1) at (-4, -1) {};
      \node [style=White] (2) at (-4, -2) {};
      \node [style=none] (3) at (-5, 0.25) {};
      \node [style=none] (4) at (-5, -0.25) {};
      \node [style=none] (5) at (-3, 0) {};
      \node [style=none] (6) at (-3, -2) {};
      \node [style=none] (7) at (-3, -0.75) {};
      \node [style=none] (8) at (-3, -1.25) {};
      \node [style=none] (9) at (-5, -1) {};
      \draw (3.center) to (0);
      \draw (4.center) to (0);
      \draw (0) to (5.center);
      \draw (1) to (7.center);
      \draw (1) to (8.center);
      \draw (9.center) to (1);
      \draw (2) to (6.center);
    \end{tikzpicture}
  \end{equation}
\end{ex}

\begin{ex}[Symmetric]\label{symmetric-ext-example}
  Now consider the symmetric monoidal category $\SS$ where the objects are the natural numbers and addition is the monoidal product.
  The coend formula of Construction~\ref{coend-formula} gives us the following:
  \begin{align*}
    \tau_\SS^{\otimes}(n,m)
    &\cong \int^{\bigboxtimes_i (n_i, m_i) }
      \Hom_{\SS^{\rop} \times \SS} \left( \sum_i(n_i, m_i), (n,m) \right)
      \odot (pt \times \ldots \times pt) \\
    &\cong \int^{\bigboxtimes_i (n_i, m_i) }
      \Hom_{\SS^{\rop} \times \SS} \left( \sum_i(n_i, m_i), (n,m) \right)
      \odot pt
  \end{align*}
  Using the coequalizer formula of Construction~\ref{coeq-formula}, we can further unpack this:
  \begin{equation}
    \begin{tikzcd}
      {
        \coprod_{(\sigma_i \in Aut(n_i, m_i))}
        \Hom_{\SS^{\rop} \times \SS}(\sum_i (n_i, m_i), (n,m))
        \odot
        pt
      }
      \arrow[d, "(\bigboxtimes_i \sigma_i)_* = Id", shift left]
      \arrow[d, "(\bigboxtimes_i \sigma_i)^*"', shift right] \\
      {
        \coprod_{\boxtimes_i (n_i,m_i)}
        \Hom_{\SS^{\rop} \times \SS}(\sum_i (n_i, m_i), (n,m))
        \odot
        pt
      }
      \arrow[d, dashed] \\
      \tau_{\SS}^{\otimes}(n,m)
    \end{tikzcd}
  \end{equation}
  Formally speaking, this means that an element $x \in \tau_{\SS}^{\otimes}(n,m)$ can be represented as two pieces of data: a partition $\sum_i (n_i, m_i) = (n,m)$ and a pair of permutations $\lambda \in \SS_n$ and $\rho \in \SS_m$.

  Building upon the graphical notation of Example~\ref{non-symmetric-ext-example}, we can incorporate the permutation data in this representation as the ``wires'' crossing:
  \begin{equation}
    \label{eq:corolla-crossed}
    \begin{tikzpicture}[baseline={(current bounding box.center)}]
      \node [style=White] (0) at (-4, 0) {};
      \node [style=White] (1) at (-4, -1) {};
      \node [style=White] (2) at (-4, -2) {};
      \node [style=none] (3) at (-5, 0.25) {};
      \node [style=none] (4) at (-5, -0.25) {};
      \node [style=none] (5) at (-3, 0) {};
      \node [style=none] (6) at (-3, -2) {};
      \node [style=none] (7) at (-3, -0.75) {};
      \node [style=none] (8) at (-3, -1.25) {};
      \node [style=none] (9) at (-5, -1) {};
      \node [style=none] (10) at (-6, 0.25) {};
      \node [style=none] (11) at (-6, -0.25) {};
      \node [style=none] (12) at (-6, -1) {};
      \node [style=none] (13) at (-2, 0) {};
      \node [style=none] (14) at (-2, -0.75) {};
      \node [style=none] (15) at (-2, -1.25) {};
      \node [style=none] (16) at (-2, -2) {};
      \draw (3.center) to (0);
      \draw (4.center) to (0);
      \draw (0) to (5.center);
      \draw (1) to (7.center);
      \draw (1) to (8.center);
      \draw (9.center) to (1);
      \draw (2) to (6.center);
      \draw (4.center) to (12.center);
      \draw (11.center) to (9.center);
      \draw (10.center) to (3.center);
      \draw (8.center) to (14.center);
      \draw (7.center) to (15.center);
      \draw (6.center) to (16.center);
      \draw (5.center) to (13.center);
    \end{tikzpicture}
  \end{equation}
  The coequalizer tells us that two representations are equivalent if they differ by permutations on the flags of individual corollas.
  For example, pictures \eqref{eq:corolla-crossed} above and \eqref{eq:corolla-uncrossed} below are two representations of the same element:
  \begin{equation}\label{eq:corolla-uncrossed}
    \begin{tikzpicture}[baseline={(current bounding box.center)}]
      \node [style=White] (0) at (-4, 0) {};
      \node [style=White] (1) at (-4, -1) {};
      \node [style=White] (2) at (-4, -2) {};
      \node [style=none] (3) at (-5, 0.25) {};
      \node [style=none] (4) at (-5, -0.25) {};
      \node [style=none] (5) at (-3, 0) {};
      \node [style=none] (6) at (-3, -2) {};
      \node [style=none] (7) at (-3, -0.75) {};
      \node [style=none] (8) at (-3, -1.25) {};
      \node [style=none] (9) at (-5, -1) {};
      \node [style=none] (10) at (-6, 0.25) {};
      \node [style=none] (11) at (-6, -0.25) {};
      \node [style=none] (12) at (-6, -1) {};
      \node [style=none] (13) at (-2, 0) {};
      \node [style=none] (14) at (-2, -0.75) {};
      \node [style=none] (15) at (-2, -1.25) {};
      \node [style=none] (16) at (-2, -2) {};
      \draw (3.center) to (0);
      \draw (4.center) to (0);
      \draw (0) to (5.center);
      \draw (1) to (7.center);
      \draw (1) to (8.center);
      \draw (9.center) to (1);
      \draw (2) to (6.center);
      \draw (4.center) to (12.center);
      \draw (11.center) to (9.center);
      \draw (10.center) to (3.center);
      \draw (8.center) to (15.center);
      \draw (7.center) to (14.center);
      \draw (6.center) to (16.center);
      \draw (5.center) to (13.center);
    \end{tikzpicture}
  \end{equation}
  On the other hand, the co--wedges for the commutativity constraints tell us that we can permute the vertices as long as we do the appropriate block permutation which preserves the connectivity of the wires.
  \begin{equation}
    \begin{tikzpicture}[baseline={(current bounding box.center)}]
      \node [style=White] (0) at (-4, 0) {};
      \node [style=White] (1) at (-4, -2) {};
      \node [style=White] (2) at (-4, -1) {};
      \node [style=none] (3) at (-5, 0.25) {};
      \node [style=none] (4) at (-5, -0.25) {};
      \node [style=none] (5) at (-3, 0) {};
      \node [style=none] (6) at (-3, -1) {};
      \node [style=none] (7) at (-3, -1.75) {};
      \node [style=none] (8) at (-3, -2.25) {};
      \node [style=none] (9) at (-5, -1) {};
      \node [style=none] (10) at (-6, 0.25) {};
      \node [style=none] (11) at (-6, -0.25) {};
      \node [style=none] (12) at (-6, -1) {};
      \node [style=none] (13) at (-2, 0) {};
      \node [style=none] (14) at (-2, -1) {};
      \node [style=none] (15) at (-2, -1.5) {};
      \node [style=none] (16) at (-2, -2.25) {};
      \draw (3.center) to (0);
      \draw (4.center) to (0);
      \draw (0) to (5.center);
      \draw (1) to (7.center);
      \draw (1) to (8.center);
      \draw (9.center) to (1);
      \draw (2) to (6.center);
      \draw (4.center) to (12.center);
      \draw (11.center) to (9.center);
      \draw (10.center) to (3.center);
      \draw (8.center) to (15.center);
      \draw (7.center) to (14.center);
      \draw (6.center) to (16.center);
      \draw (5.center) to (13.center);
    \end{tikzpicture}
  \end{equation}
\end{ex}

\begin{ex}[Boxed corollas]
  \label{boxed-example}
  By iterating this construction, we will see that $(\tau_{\SS}^\otimes)^{\otimes}$ is the bimodule of boxed corollas.
  Again, we start with the coend formulation:
  \begin{equation}
    (\tau_\SS^{\otimes})^\otimes(n,m) =
    \int^{\bigboxtimes_i (n_i, m_i) }
    \Hom \left( \sum_i(n_i, m_i), (n,m) \right)
    \odot (\tau_{\SS}^\otimes(n_1, m_1) \times \ldots \times \tau_{\SS}^\otimes(n_k, m_k))
  \end{equation}
  Using the coequalizer formula~\ref{coeq-formula}, we can unpack this:
  \begin{equation}
    \begin{tikzcd}
      {
        \coprod_{(\sigma_i \in \Aut(n_i, m_i))}
        \Hom(\sum_i (n_i, m_i), (n,m))
        \odot
        (\tau_{\SS}^\otimes(n_1, m_1) \times \ldots \times \tau_{\SS}^\otimes(n_k, m_k))
      }
      \arrow[d, "(\bigboxtimes_i \sigma_i)^*", shift left]
      \arrow[d, "(\bigboxtimes_i \sigma_i)_*"', shift right] \\
      {
        \coprod_{\bigboxtimes_i (n_i,m_i)}
        \Hom(\sum_i (n_i, m_i), (n,m))
        \odot
        (\tau_{\SS}^\otimes(n_1, m_1) \times \ldots \times \tau_{\SS}^\otimes(n_k, m_k))
      }
      \arrow[d, dashed] \\
      \tau_{\SS}^{\otimes}(n,m)
    \end{tikzcd}
  \end{equation}
  We are now in a situation similar to Example~\ref{symmetric-ext-example}.
  Here, each element $B \in (\tau_{\SS}^\otimes)^{\otimes}(n,m)$ corresponds to a sequence of elements $x_i \in \tau_{\SS}^\otimes$ and a pair of permutations $\lambda \in \SS_n$ and $\rho \in \SS_m$.
  Graphically, we can think of $B$ as a collection of boxed corollas:
  \begin{equation}
\begin{tikzpicture}[baseline={(current bounding box.center)}]
		\node [style=White] (0) at (-4, -2) {};
		\node [style=White] (1) at (-4, -1) {};
		\node [style=none] (2) at (-3, -2.5) {};
		\node [style=none] (3) at (-3, -1.5) {};
		\node [style=none] (4) at (-3, -0.5) {};
		\node [style=none] (5) at (-5, -0.5) {};
		\node [style=none] (6) at (-5, -1.5) {};
		\node [style=none] (7) at (-5, -2.5) {};
		\node [style=none] (8) at (-5, 0) {};
		\node [style=none] (9) at (-3, 0) {};
		\node [style=none] (10) at (-3, -3) {};
		\node [style=none] (11) at (-5, -3) {};
		\node [style=White] (12) at (-4, 1) {};
		\node [style=White] (13) at (-4, 2) {};
		\node [style=none] (14) at (-3, 1) {};
		\node [style=none] (17) at (-5, 2) {};
		\node [style=none] (20) at (-5, 2.5) {};
		\node [style=none] (21) at (-3, 2.5) {};
		\node [style=none] (22) at (-3, 0.5) {};
		\node [style=none] (23) at (-5, 0.5) {};
		\node [style=none] (24) at (-1.5, -0.5) {};
		\node [style=none] (25) at (-1.5, 1) {};
		\node [style=none] (26) at (-1.5, -1.5) {};
		\node [style=none] (27) at (-1.5, -2.5) {};
		\node [style=none] (28) at (-6.5, -0.5) {};
		\node [style=none] (29) at (-6.5, 2) {};
		\node [style=none] (30) at (-6.5, -1.5) {};
		\node [style=none] (31) at (-6.5, -2.5) {};
		\draw (7.center) to (0);
		\draw (6.center) to (0);
		\draw (5.center) to (1);
		\draw (0) to (2.center);
		\draw (1) to (3.center);
		\draw (1) to (4.center);
		\draw [style=Dash] (11.center) to (10.center);
		\draw [style=Dash] (10.center) to (2.center);
		\draw [style=Dash] (2.center) to (3.center);
		\draw [style=Dash] (3.center) to (4.center);
		\draw [style=Dash] (4.center) to (9.center);
		\draw [style=Dash] (9.center) to (8.center);
		\draw [style=Dash] (8.center) to (5.center);
		\draw [style=Dash] (5.center) to (6.center);
		\draw [style=Dash] (6.center) to (7.center);
		\draw [style=Dash] (7.center) to (11.center);
		\draw (17.center) to (13);
		\draw (12) to (14.center);
		\draw [style=Dash] (23.center) to (22.center);
		\draw [style=Dash] (22.center) to (14.center);
		\draw [style=Dash] (21.center) to (20.center);
		\draw [style=Dash] (20.center) to (17.center);
		\draw [style=Dash] (21.center) to (14.center);
		\draw [style=Dash] (23.center) to (17.center);
		\draw (14.center) to (24.center);
		\draw (4.center) to (25.center);
		\draw (3.center) to (26.center);
		\draw (2.center) to (27.center);
		\draw (6.center) to (28.center);
		\draw (5.center) to (30.center);
		\draw (29.center) to (17.center);
		\draw (31.center) to (7.center);
    \end{tikzpicture}
  \end{equation}
\end{ex}

\subsection{Heredity and unique factorization monoids}\label{sec:heredity}

Given a bimodule $\rho$, we are interested in looking for smaller bimodules $\nu$ such that $\rho \cong \nu^{\otimes}$.
Motivated by that point of view, we make the following definition:

\begin{definition}
  If $\rho$ and $\nu$ are bimodules such that $\rho \cong \nu^{\otimes}$, then we say $\rho$ has the \defn{unique factorization property} with respect to $\nu$.
  We will call $\nu$ the \defn{basic bimodule} of $\rho$.
\end{definition}

\begin{ex}
  The bimodule of cospans $\rho_{\rCospan}$ has the unique factorization property with respect to the bimodule $\nu_{\rCospan}$ of corollas.
\end{ex}

\begin{definition}\label{def:hereditary}
  A bimodule morphism $\Phi: \xi \to \zeta$ between bimodules $\xi \cong \nu_{\xi}^{\otimes}$ and $\zeta \cong \nu_{\zeta}^{\otimes}$ with the unique factorization property is \defn{hereditary} if the following is a pullback which extends when we take horizontal extensions:
  \begin{equation}
  \begin{tikzcd}
      \nu_{\xi} \arrow[d, tail] \arrow[r, "\phi", dashed] & \nu_{\zeta} \arrow[d, tail] \\
      \xi \arrow[r, "\Phi"']& \zeta
    \end{tikzcd}
  \end{equation}
\end{definition}

\subsubsection{Multiplication}
In this section, we will work out the consequences of hereditary morphisms for monoids.
Later in Section~\ref{sec:ufcs-bimodules}, we will compare this to the unique factorization categories of \cite{KMoManin}.

Consider a multiplication $\gamma: \rho \pl \rho \To \rho$.
If $\rho$ has the unique factorization property with respect to $\nu$, then we can define the \defn{bimodule of basic pairs} $\beta$ as the pullback of $\nu$ along $\gamma$.
Intuitively, the pull-back condition on $\beta$ says that if a composition yields a basic element, then it must come from $\beta$.
We know that the sub-bimodule $\beta^{\otimes 2}$ of $\rho \pl \rho$ maps into the sub-bimodule $\nu^{\otimes 2}$.
However, generally speaking, it is possible for there to be a larger bimodule that still maps into $\nu^{\otimes 2}$.
If $\gamma$ is hereditary, then this cannot happen by definition.

\subsubsection{Unit}

To uniquely factorize the unit $\eta: \tilde\rho_{\act} \To \rho$, we will need a basic bimodule for $\tilde\rho_{\act}$.
Such a bimodule exists if the category $\act$ is \defn{factorizable} in the sense that there is an inclusion functor $\imath: \V \to \act$ such that $\act \cong \V^{\boxtimes}$.

\begin{definition}
  \label{def:nu-G}
  Suppose the action category factorizes as $\act \cong \V^{\boxtimes}$.
  Then define a set-valued bimodule $\tilde \nu_{\act}: \act^{\rop} \times \act \to \Set$ as the following left Kan extension:
  \begin{equation}
    \begin{tikzcd}
      \V^{\op} \times \V
      \arrow[dr, "\imath^{\rop} \times \imath"']
      \arrow[rr, "\tilde\rho_{\V}"]
      && \Set \\
      &\act^{\op} \times \act
      \arrow[ru, "\tilde\nu_{\act} := \Lan_{\imath^{\rop} \times \imath}(\rho_{\V})"', dashed]
      &
    \end{tikzcd}
  \end{equation}
\end{definition}

\begin{lemma}\label{unique-fac-for-rho-G}
  If the category $\act$ factorizes as $\act \cong \V^{\boxtimes}$, then $\tilde\rho_{\act}^{\C}$ has the unique factorization property with respect to $\tilde\nu_{\act}^{\C}$.
\end{lemma}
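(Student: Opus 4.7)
The plan is to show $\tilde\rho_{\act}^{\C} \cong (\tilde\nu_{\act}^{\C})^{\otimes}$ by first reducing to the $\Set$-valued case and then computing the horizontal extension via Construction~\ref{coend-formula}. Since $F \colon \Set \to \C$ is a left adjoint, it preserves the colimits that define both the Kan extension $\tilde\nu_{\act}$ and the horizontal extension $(-)^{\otimes}$. Hence it suffices to establish $\tilde\rho_{\act} \cong \tilde\nu_{\act}^{\otimes}$ as $\Set$-valued bimodules, and the general case follows by applying $F$.

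First I would compute $\tilde\nu_{\act}$ pointwise. Since the inclusion $\imath \colon \V \hookrightarrow \V^{\boxtimes} = \act$ is fully faithful, so is $\imath^{op} \times \imath$; consequently, when $(A, B) \in \V^{op} \times \V$ the comma category used for the Kan extension of Definition~\ref{def:nu-G} has $((A, B), \id_A, \id_B)$ as a terminal object, so $\tilde\nu_{\act}(A, B) = \Hom_{\V}(A, B)$. On the other hand, using the characterization of hom-sets in a free symmetric monoidal category, there are no morphisms in $\V^{\boxtimes}$ between a single basic object and a tensor of length $\neq 1$; hence $\tilde\nu_{\act}(A, B) = \varnothing$ unless both $A$ and $B$ are basic.

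Next I would apply Construction~\ref{coend-formula} to get
\[
  \tilde\nu_{\act}^{\otimes}(A, B) = \int^{\boxtimes_i (A_i, B_i)} \Hom_{\act}(A, \textstyle\bigotimes_i A_i) \times \Hom_{\act}(\bigotimes_i B_i, B) \times \prod_i \tilde\nu_{\act}(A_i, B_i),
\]
where by the support result above only tuples with $A_i, B_i \in \V$ contribute. Composing a choice $\phi_i \in \Hom_{\V}(A_i, B_i)$ through the monoidal product of $\act$ and sandwiching with the outer morphisms defines a natural evaluation map $\tilde\nu_{\act}^{\otimes}(A, B) \to \Hom_{\act}(A, B)$. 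The claim reduces to showing this map is a bijection; by the standard decomposition $\Hom_{\V^{\boxtimes}}(\bigotimes_i A_i, \bigotimes_j B_j) \cong \coprod_{\sigma} \prod_i \Hom_{\V}(A_i, B_{\sigma(i)})$ for free symmetric monoidal categories (with $\sigma$ ranging over bijections of index sets), every morphism $A \to B$ in $\act$ arises from some such factorization, giving surjectivity.

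The main obstacle will be verifying injectivity, which amounts to a careful bookkeeping of the coend relations. These relations come from two sources: morphisms of $\V^{op} \times \V$ acting on each factor $(A_i, B_i)$, which let one transport $\phi_i$ across different basic choices, and the commutativity/permutation morphisms of $(\act^{op} \times \act)^{\boxtimes}$ coming from the free symmetric monoidal structure, which identify reorderings of the factors. Together these must precisely match the equivalence relation that collapses the coproduct $\coprod_{\sigma} \prod_i \Hom_{\V}(A_i, B_{\sigma(i)})$ onto a single hom-set in $\act$. Once this combinatorial identification is in place, naturality in $(A, B)$ is automatic from the construction, and the desired isomorphism $\tilde\rho_{\act}^{\C} \cong (\tilde\nu_{\act}^{\C})^{\otimes}$ follows by reapplying $F$.
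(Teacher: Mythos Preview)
Your proposal is correct and follows essentially the same approach as the paper: reduce to the $\Set$-valued case, establish $\tilde\rho_{\act} \cong \tilde\nu_{\act}^{\otimes}$, and then transport along the free functor $F$ using that left adjoints preserve left Kan extensions. The paper dispatches the $\Set$-valued isomorphism with ``it is easy to see,'' whereas you actually carry out the computation via Construction~\ref{coend-formula} and the standard hom-set decomposition in $\V^{\boxtimes}$; your version supplies the details the paper omits.
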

\begin{proof}
  It is easy to see that $\tilde\rho_{\act} \cong \tilde\nu_{\act}^{\otimes}$ as set-valued bimodules.
  Since left adjoints like the free functor $F$ preserve left Kan extensions, we get the desired isomorphism:
  \begin{equation*}
    \tilde\rho_{\act}^{\C}
    = F \circ \tilde\rho_{\act}
    \cong F \circ \tilde\nu_{\act}^{\otimes}
    \cong (F \circ \tilde\nu_{\act})^{\otimes}
    = (\tilde\nu_{\act}^{\C})^\otimes \qedhere
  \end{equation*}
\end{proof}

\subsubsection{Hereditary subcategory}

We will show that the hereditary morphisms of Definition~\ref{def:hereditary} form a monoidal subcategory under certain conditions.

\begin{lemma}
  \label{lem:brick-wall}
    Assuming $\act \cong \V^\boxtimes$, the bimodule $H_\mu: \Gpd^{\boxtimes op} \times \Gpd^\boxtimes \to \Set$ defined by $H_\mu(\bigboxtimes_i X_i, \bigboxtimes_j Y_j) = \Hom_{\Gpd}(\bigotimes_i X_i, \bigotimes_j Y_j)$ is factorizable.
\end{lemma}
\begin{proof}
  The elements of $H_\mu(\bigboxtimes_{i \in I} B_i, \bigboxtimes_{j \in J} B'_j )$ can be depicted as colored span diagrams like the one in Figure~\ref{fig:H_mu-picture}.
  Let $h$ be the sub-bimodule of connected colored spans.
  Then $H_\mu$ factorizes as $H_\mu \cong h^\otimes$ for essentially the same reason the bimodule of spans factorizes.
\end{proof}

\begin{figure}
  \begin{subfigure}{0.45\linewidth}\label{fig:factoring-span}
    \centering
    \begin{tikzcd}[sep = small]
      & B \arrow[ld, no head] & B \arrow[ld, no head] \arrow[d, no head] & B \arrow[d, no head]  & B \arrow[d, no head] & \\
      b \arrow[d]  & b \arrow[rd] & b \arrow[ld] & b \arrow[d]  & b \arrow[d] \\
      b' \arrow[rd, no head] & b' \arrow[d, no head]  & b' \arrow[d, no head] & b' \arrow[d, no head] & b' \arrow[dl, no head] \\
      & B' & B' & B' &
    \end{tikzcd}
    \caption{}
  \end{subfigure}
  \begin{subfigure}{0.45\linewidth}
    \label{fig:brick-wall}
    \centering
    \begin{tikzpicture}
      \draw[thick] (0,1.5) rectangle (1,0);
      \draw[thick] (1,1.5) rectangle (3,0);
      \draw[thick] (3,1.5) rectangle (4,0);
      \draw[thick] (4,1.5) rectangle (5,0);
      \draw[thick] (0,0) rectangle (2,-1.5);
      \draw[thick] (2,0) rectangle (3,-1.5);
      \draw[thick] (3,0) rectangle (5,-1.5);
    \end{tikzpicture}
    \caption{}
  \end{subfigure}
  \caption{
    In (A), the top and bottom rows are objects of $\act^\boxtimes$ with each individual $B$ and $B'$ an object of $\act$.
    The two middle rows are objects of $\act$ with each individual $b$ and $b'$ an object of $\V$.
    The collection of line segments connecting a $B$-object to several $b$-objects represents an isomorphism from $B$ to the monoidal product of those $b$-objects.
    The arrows in the middle are morphisms in $\V$.
    These can also be pictured as ``brick wall'' diagrams (B) like those found in \cite{KMoManin}.
  }
  \label{fig:H_mu-picture}
\end{figure}

\begin{nota}
  \label{nota-hereditary-tensor}
  To save space, we will write $H^A_B = \Hom_{\act}(A,B)$ and denote the Cartesian product by juxtaposition, so $H^A_BH^B_C = H^A_B \times H^B_C$.
\end{nota}

\begin{lemma}
  \label{hereditary-tensor}
  Suppose the action category factorizes as $\act \cong \V^{\boxtimes}$.
  If $\rho_1 \cong \nu_1^{\ot}$ and $\rho_2 \cong \nu_2^{\ot}$ are two bimodules with the unique factorization property, then $\rho_1 \pl \rho_2$ also has the unique factorization property with a basic bimodule $\beta$ defined by
  \begin{equation}
    \beta(\bar A, \bar C) \cong
    \int\limits^{A_i B_i}
    \int\limits^{B'_j C_j}
    \cp{
      h^{(B_i)}_{(B'_j)}
      H^{\bar A}_{(A_i)}
      H^{(C_j)}_{\bar C}
    }{
      \bigotimes_i \nu_1(A_i,B_i)
      \otimes
      \bigotimes_j \nu_2(B'_j,C_j)
    }
  \end{equation}
\end{lemma}
\begin{proof}
  We will define an isomorphism $\beta^\otimes \simTo \nu_1^\otimes \pl \nu_2^\otimes$.
  Start by using the coend formulation of horizontal extension to write out $\beta^\otimes(A,C)$ explicitly.
  \begin{equation}
    \int\limits^{\bar A_k \bar C_k}
    H^A_{(\bar A_k)} H^{(\bar C_k)}_C
    \odot
    \bigotimes_k
    \int\limits^{A_{i} B_{i}}
    \int\limits^{B'_{j} C_{j}}
    \cp{
      h^{(B_i)}_{(B'_j)}
      H^{\bar A_k}_{(A_{i})}
      H^{(C_{j})}_{\bar C_k}
    }{
      \bigotimes_{i} \nu_1(A_{i},B_{i})
      \otimes
      \bigotimes_{j} \nu_2(B'_{j},C_{j})
    }
  \end{equation}
  Let $K = \{1, \ldots, n\}$ denote the indexing set for the outermost coend.
  Let $I_k$ denote the index set of the $\int^{A_iB_i}$ coend in the $k^{th}$ part of the monoidal product.
  Likewise, let $J_k$ denote the index set of the $\int^{B'_jC_j}$ coend in the $k^{th}$ part of the monoidal product.
  Apply Fubini and commutativity of colimits and monoidal products to get the following:
  \begin{multline}
    \int^{\bar A_k \bar C_k}
    \int^{(A_{i} B_{i})_{i \in I_1}}
    \cdots
    \int^{(A_{i} B_{i})_{i \in I_n}}
    \int^{(B'_{j} C_{j})_{j \in J_1}}
    \cdots
    \int^{(B'_{j} C_{j})_{j \in J_n}} H^A_{(\bar A_k)} H^{(\bar C_k)}_C\\
    \odot
    \bigotimes_k
    \cp{ h^{(B_i)}_{(B'_j)} H^{\bar A_k}_{(A_{i})} H^{(C_{j})}_{\bar C_k} }{
      \bigotimes_{i} \nu_1(A_{i},B_{i})
      \otimes
      \bigotimes_{j} \nu_2(B'_{j},C_{j}) }
  \end{multline}
  Then apply interchange and collect the $A$-variables and $C$-variables.
  \begin{align*}
    &\int\limits^{\bar A_k \bar C_k}
    \int\limits
    \cdots
    \int\limits
    H^A_{(\bar A_k)} H^{(\bar C_k)}_C
    \odot
    \cp{
      \prod_k
      \left(
      h^{(B_i)}_{(B'_j)}
      H^{\bar A_k}_{(A_{i})}
      H^{(C_{j})}_{\bar C_k}
      \right)
    }{
      \bigotimes_k
      \left(
      \bigotimes_{i} \nu_1(A_{i},B_{i})
      \otimes
      \bigotimes_{j} \nu_2(B'_{j},C_{j})
      \right)
    } \\
    &\int\limits^{\bar A_k \bar C_k}
    \int\limits
    \cdots
    \int\limits
    \left(
    H^A_{(\bar A_k)}
    \prod_k
    H^{\bar A_k}_{(A_{i})}
    \right)
    \left(
    H^{(\bar C_k)}_C
    \prod_k
    H^{(C_{j})}_{\bar C_k}
    \right)
    \left(
    \prod_k
    h^{(B_i)}_{(B'_j)}
    \right)
    \odot
    \bigotimes_k
    \left(
      \bigotimes_{i} \nu_1(A_{i},B_{i})
      \otimes
      \bigotimes_{j} \nu_2(B'_{j},C_{j})
    \right)
  \end{align*}

  Let $H^A_{(\bar A_k)} \prod_k H^{\bar A_k}_{(A_i)_{i \in I_k}} \to H^A_{(A_i)}$ and $H_C^{(C_k)} \prod_k H_{C_k}^{(C_j)_{j \in J_k}} \to H_C^{(C_j)}$ be the maps defined by monoidal products and composition in the category $\act$.
  For each $k \in K$, write $D_k = (\bigboxtimes_{i \in I_k} B_i, \bigboxtimes_{j \in J_k} B'_j)$ and define $D \in (\Gpd^{op \boxtimes} \times \Gpd^{\boxtimes})^\boxtimes$ by $D = \bigboxtimes_{k \in K} D_k$.
  Let $pt \to H^{\mu D}_{([B_i],[B'_j])}$ be the pointing for the identity morphism.
  For reasons of notation, let $\prod_k h^{(B_i)}_{(B'_j)} \to \mu h^\boxtimes(D)$ be the identity.
  Collecting these together gives a map of copowers:
  \begin{equation}
    \label{eq:copower-part}
    \left(
    H^A_{(\bar A_k)}
    \prod_k
    H^{\bar A_k}_{(A_{i})}
    \right)
    \left(
    H^{(\bar C_k)}_C
    \prod_k
    H^{(C_{j})}_{\bar C_k}
    \right)
    \left(
    \prod_k
    h^{(B_i)}_{(B'_j)}
    \right)
    \to
    H^A_{(A_i)} H_C^{(C_j)} H^{\mu D}_{([B_i],[B'_j])} \mu h^\boxtimes(D)
  \end{equation}
  Similarly, there is a isomorphism for the right part given by applying commutativity constraints and associativity:
  \begin{align}
    \label{eq:object-part}
    \bigotimes_{k \in K} \left( \bigotimes_{i \in I_k} \nu_1(A_i,B_i) \otimes \bigotimes_{j \in J_k} \nu_2(B'_j,C_j) \right)
    &\to
      \left(
      \bigotimes_{k \in K}  \bigotimes_{i \in I_k} \nu_1(A_i,B_i)
      \right)
      \otimes
      \left(
      \bigotimes_{k \in K} \bigotimes_{j \in J_k} \nu_2(B'_j,C_j)
      \right) \\
    &\cong \bigotimes_{i \in I} \nu_1(A_i,B_i)
      \otimes \bigotimes_{j \in J} \nu_2(B'_j,C_j)
  \end{align}

  We will show that the copower of \eqref{eq:copower-part} and \eqref{eq:object-part} induce a morphism from the previous coend into the following target coend:
  \begin{equation}
    \label{eq:target-coend}
    \int\limits^{(A_i,B_i)}
    \int\limits^{(B'_j,C_j)}
    \int\limits^D
    H^A_{(A_i)}
    H^{\mu D}_{([B_i],[B'_j])}
    \mu h^\boxtimes(D)
    H^{(C_j)}_C
    \odot
    \left(
    \bigotimes_i \nu_1(A_i,B_i)
    \otimes
    \bigotimes_j \nu_2(B'_j,C_j)
    \right)
  \end{equation}
  The necessary compatibility for the $\bar A_k$ and $\bar C_k$ variables in the source coend is straight forward since the map \eqref{eq:copower-part} is defined by composition.
  The necessary compatibility for the $A_i$, $B_i$, $B'_j$, and $C_j$ variables in the source coend follows from the defining properties of the target coend~\eqref{eq:target-coend}.
  
  We can always choose a representation of the target coend \eqref{eq:target-coend} where the $H^{\mu D}_{([B_i],[B'_j])}$ component is the identity.
  This gives a well-defined morphism going the opposite way since if there are isomorphisms for the $A_i$, $B_i$, $B'_j$, and $C_j$ variables that realize that two representations of \eqref{eq:target-coend} are equivalent, then the same isomorphism will realize the two representations on the other side are also equivalent.
  This new morphism is inverse to the previous one, so this is an isomorphism.

  Now use Fubini to consolidate the part involving the $D$ variable and apply the definition of horizontal extension.
  \begin{align*}
    \int\limits^{(A_i,B_i)}
      \int\limits^{(B'_j,C_j)}
      H^A_{(A_i)}
      \left(
      \int\limits^{D: (\Gpd^{op\boxtimes} \times \Gpd^{\boxtimes})^\boxtimes}
      \!\!\! H^{\mu D}_{([B_i],[B'_j])}
      \odot
      \mu h^\boxtimes(D)
      \right)
      H^{(C_j)}_C
      \odot
      \left(
      \bigotimes_i \nu_1(A_i,B_i)
      \otimes
      \bigotimes_j \nu_2(B'_j,C_j)
    \right) \\
    = \int\limits^{(B'_j,C_j)}
      H^A_{(A_i)} h^{\otimes}\left(\dbigboxtimes_i B_i, \dbigboxtimes_j B'_j \right) H^{(C_j)}_C
      \odot
      \left(
      \bigotimes_i \nu_1(A_i,B_i)
      \otimes
      \bigotimes_j \nu_2(B'_j,C_j)
      \right)
  \end{align*}
  Since $\Gpd \cong \V^\otimes$, Lemma~\ref{lem:brick-wall} implies that $H_\mu \cong h^\otimes$.
  Continuing our conventions, we write $H^{(B_i)}_{(B'_j)} = H_\mu(\dbigboxtimes_i B_i, \dbigboxtimes_j B'_j)$.

  Apply Yoneda to introduce the $B$-variable.
  Then use Fubini, the assumption that the monoidal product commutes with colimits, and interchange to obtain the desired plethysm:
  \begin{align*}
    &\int\limits^{(A_i,B_i)}
      \int\limits^{(B'_j,C_j)}
      \int\limits^B
      H^A_{(A_i)} H^{(B_i)}_B H^B_{(B'_j)} H^{(C_j)}_C
      \odot
      \left(
      \bigotimes_i \nu_1(A_i,B_i)
      \otimes
      \bigotimes_j \nu_2(B'_j,C_j)
      \right) \\
    &\cong \int\limits^B
      \left(
      \int\limits^{(A_i,B_i)}
      H^A_{(A_i)} H^{(B_i)}_B
      \odot
      \bigotimes_i \nu_1(A_i,B_i)
      \right)
      \otimes
      \left(
      \int\limits^{(B'_j,C_j)}
      H^B_{(B'_j)} H^{(C_j)}_C
      \odot
      \bigotimes_j \nu_2(B'_j,C_j)
      \right) \\  
    &= (\nu_1^\otimes \pl \nu_2^\otimes)(A,C)
      \qedhere
  \end{align*}
\end{proof}

\begin{prop}
\label{prop:monoialsubcat}
  For bimodules with a factorizable action category and a Cartesian closed target category, the uniquely factorizable bimodules and hereditary morphisms form a monoidal subcategory under plethysm.
\end{prop}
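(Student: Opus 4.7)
The plan is to verify the three defining axioms of a monoidal subcategory: closure of the object class (uniquely factorizable bimodules) under $\pl$ together with containment of the monoidal unit, and closure of the morphism class (hereditary morphisms) under both composition and $\pl$. Two of the axioms are immediate from prior work: Lemma~\ref{hereditary-tensor}(1) gives closure of uniquely factorizable bimodules under $\pl$, and Lemma~\ref{hereditary-tensor}(2) gives closure of hereditary morphisms under $\pl$ (the Cartesian closedness hypothesis supplies the pullback-stability required in that lemma). What remains are the unit axiom and the subcategory axioms on morphisms.

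For the unit, I would cite Lemma~\ref{unique-fac-for-rho-G}, which under the factorizability hypothesis $\act \cong \V^{\boxtimes}$ in force throughout this section exhibits $\tilde\rho_\act^{\C}$ as the horizontal extension of its basic bimodule $\tilde\nu_\act^{\C}$. Identity morphisms $\id_\xi$ are trivially hereditary: the square in Definition~\ref{def:hereditary} has identities on its horizontal sides and the inclusions $\nu_\xi \hookrightarrow \xi$ on its vertical sides, so it is a pullback on the nose, and since horizontal extension preserves identities it remains a pullback after horizontal extension. For composition, given hereditary morphisms $\Phi : \xi \to \zeta$ and $\Psi : \zeta \to \omega$ with basic restrictions $\phi$ and $\psi$, I would stack the two hereditary pullback squares vertically and invoke the pasting lemma for pullbacks to conclude that the outer rectangle with top edge $\psi \circ \phi$ and bottom edge $\Psi \circ \Phi$ is again a pullback; the same pasting argument applied to the horizontal extensions yields the second half of the hereditary condition.

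The main obstacle is the interplay between horizontal extensions and pullbacks along composites. Concretely, one must check that $(\Psi \circ \Phi)^{\otimes}$ fits into a pullback square with $(\psi \circ \phi)^{\otimes}$, $\nu_\xi^{\otimes}$, and $\nu_\omega^{\otimes}$. This reduces to the fact that horizontal extension, defined via the coend and copower formula of Construction~\ref{coend-formula}, commutes with pullbacks in a Cartesian closed target — the very interchange of colimits with products exploited in the proof of Lemma~\ref{hereditary-tensor}(2). No essentially new input is required beyond this observation and the pasting lemma, so the proof runs by a brief combination of pasting with citations of the already-established lemmas.
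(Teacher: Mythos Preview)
Your proposal is correct and takes essentially the same approach as the paper. The paper's proof is terser --- it cites the gluing (pasting) property of pullbacks for closure under composition and Lemma~\ref{hereditary-tensor} for closure under plethysm --- but your more careful treatment of the unit axiom via Lemma~\ref{unique-fac-for-rho-G}, of identity morphisms, and of the horizontal-extension compatibility in the composed square simply fills in details the paper leaves implicit.
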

\begin{proof}
  By the gluing property of pullbacks, hereditary morphisms are closed under composition.
  By Lemma~\ref{hereditary-tensor}, the plethysm product of two factorizable bimodules is factorizable, so this category is closed under taking monoidal products of two objects.
  Consider the plethysm product $\Psi = \Phi_1 \pl \Phi_2$ of a pair of hereditary morphisms $\Phi_1: \xi_1 \To \zeta_1$ and $\Phi_2: \xi_2 \To \zeta_2$ between factorizable bimodules.
  Using the explicit factorization given in Lemma~\ref{hereditary-tensor}, we obtain a morphism $\psi: \beta_1 \To \beta_2$.
  Since the target category is Cartesian closed, it satisfies pullback stability.
  A routine argument shows that $\psi$ gives a pullback which extends when we take horizontal extensions.
  Hence $\Phi_1 \pl \Phi_2$ is hereditary.
\end{proof}

\section{Basic element representations and relative bimodules}
\subsection{Basic element representations}\label{sec:basic-el-rep}

In this section, we will show that if a bimodule $\rho$ has the unique factorization property, then there is a complementary notion of basic element representations with their own notion of a plethysm product.

\subsubsection{Basic actions and extensions}

Throughout this section, we will fix a bimodule $\rho: \act^{\rop} \times \act \to \Set$ with the unique factorization property with respect to $\nu: \act^{\rop} \times \act \to \Set$.
This naturally leads us to consider the following type of representation:

\begin{definition}
  \defn{Basic element representations} are functors of the form $\el(\nu) \to \C$.
\end{definition}

\begin{const}
  Recall that the objects of $\el(\nu)^\boxtimes$ are words of elements of $\nu$.
  Intuitively, one would expect there to be a way of taking a word of elements and ``joining'' them to obtain an element of $\nu^\otimes$.
  To achieves this, we will define a functor $\mu: \el(\nu)^\boxtimes \to \el(\nu^\otimes)$ as a composition of 3 intermediate functors.
  \begin{enumerate}
    \item First, recall elements of $\nu$ are formally morphisms $1_{\C} \to \nu(A,B)$.
    Hence objects of $\el(\nu)^\boxtimes$ are represented by sequences $(x_i: 1_{\C} \to \nu(A_i,B_i))_{i=1}^n$ of pointings.
    Observe that the monoidal category structure of $\C$ readily induces a new pointing $x$ from such a sequence:
    \begin{equation}
      \begin{tikzcd}
        1_{\C}
        \arrow[rr, "x"]
        \arrow[rd, "\sim"']
        & & {\bigotimes_{i=1}^n \nu(A_i, B_i)} \\
        & \bigotimes_{i=1}^n 1_{\C}
        \arrow[ru, "\bigotimes_i x_i"'] &
      \end{tikzcd}
    \end{equation}
    In other words, the monoidal structure of $\C$ provides a functor:
    \begin{equation}
      \el(\nu)^\boxtimes \to \el(\mu_{\C} \circ \nu^{\boxtimes})
    \end{equation}
    \item There is another functor induced by the natural transformation $\eta$ occurring in Definition~\eqref{eq:hor-ext-def} of the horizontal extension:
    \begin{equation}
      \el(\eta): \el(\mu_{\C} \circ \nu^{\boxtimes}) \to \el(\nu^{\otimes} \circ \mu_{\act^{\rop} \times \act})
    \end{equation}
    \item Finally, there is a functor $\el(\nu^{\otimes} \circ \mu_{\act^{\rop} \times \act}) \to \el(\nu^{\otimes})$ which sends the pointings of the form~\eqref{expanded} to pointings of the form~\eqref{collapsed}.
    \begin{align}
      1_{\C} & \to \rho^{\otimes}(\mu_{\act^{\rop} \times \act}(\bigboxtimes_i(A_i,B_i))) \label{expanded} \\
      1_{\C} & \to \rho^{\otimes}({\textstyle \bigotimes_i} A_i, {\textstyle \bigotimes_i} B_i) \label{collapsed}
    \end{align}
  \end{enumerate}
  The desired functor is given by composing these three functors.
\end{const}

\begin{ex}
  Suppose $\nu = \nu_{\rCospan}$ is the bimodule of basic cospans.
  Then $\mu(c_1 \boxtimes \ldots \boxtimes c_n)$ is the disjoint union of those basic cospans.
\end{ex}

As the notation suggests, $\mu$ behaves somewhat like a restricted monoidal product.
This allows us to extend a basic element representation to a whole element representation using a construction similar to horizontal extension.

\begin{definition}
  \label{horizontal-extension-elements}
  Consider a basic element representation $D: \el(\nu) \to \C$.
  Define the \defn{(element-wise) horizontal extension} $D^\otimes$ as the following left Kan extension:
  \begin{equation}
    \begin{tikzcd}
      \el(\nu)^\boxtimes
      \arrow[d, "\mu"']
      \arrow[r, "D^\boxtimes"]
      & \C^{\boxtimes}
      \arrow[ld, Rightarrow, shorten >=0.3cm,shorten <=0.4cm, "\eta"']
      \arrow[d, "\mu_{\C}"] \\
      \el(\nu^\otimes)
      \arrow[r, dashed, "D^\otimes"']
      & \C
    \end{tikzcd}
  \end{equation}
\end{definition}

We now have two notions of extension: one on bimodules and one on element representations.
Since bimodules and element representations are connected by the chi-construction, one would hope that the operations $(-)^\otimes$ and $\chi_{(-)}$ commute.
The following lemma shows that is indeed the case.

\begin{lemma}
  \label{chi-otimes-commute}
  Suppose $\C$ is a monoidal category such that $\otimes$ commutes with colimits.
  Then for any basic element representation $D: \el(\nu) \to \C$, we have a natural isomorphism
  \begin{equation}
    \chi_{D}^{\otimes} \simTo \chi_{D^\otimes}
  \end{equation}
\end{lemma}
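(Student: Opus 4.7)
The plan is to reduce both sides to a common iterated left Kan extension, then isolate a single key identification that will follow from the hypothesis on $\otimes$.

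The starting observation is that the functor $\mu$ of Definition~\ref{horizontal-extension-elements} fits into a commutative square
\[
\mu_{\act^{op} \times \act} \circ \Sigma_{\nu}^{\boxtimes} \;=\; \Sigma_{\nu^{\otimes}} \circ \mu,
\]
where $\Sigma_{\nu}^{\boxtimes}: \el(\nu)^{\boxtimes} \to (\act^{op}\times\act)^{\boxtimes}$ is the free symmetric monoidal lift of $\Sigma_{\nu}$: both paths send a word $x_1 \boxtimes \cdots \boxtimes x_n$ with $x_i$ over $(A_i,B_i)$ to $(\bigotimes_i A_i,\, \bigotimes_i B_i)$. Unpacking definitions,
\[
\chi_D^{\otimes} \;=\; \Lan_{\mu_{\act^{op}\times\act}}\bigl(\mu_{\C}\circ \chi_D^{\boxtimes}\bigr),\qquad
\chi_{D^{\otimes}} \;=\; \Lan_{\Sigma_{\nu^{\otimes}}}\bigl(\Lan_{\mu}(\mu_{\C}\circ D^{\boxtimes})\bigr).
\]
Applying the composition rule for left Kan extensions together with the commuting square,
\[
\chi_{D^{\otimes}} \;\cong\; \Lan_{\mu_{\act^{op}\times\act}}\bigl(\Lan_{\Sigma_{\nu}^{\boxtimes}}(\mu_{\C}\circ D^{\boxtimes})\bigr).
\]
The whole problem therefore reduces to producing a natural isomorphism
\[
\Lan_{\Sigma_{\nu}^{\boxtimes}}(\mu_{\C}\circ D^{\boxtimes}) \;\cong\; \mu_{\C}\circ \chi_D^{\boxtimes},
\]
since substituting this into the previous display immediately yields $\chi_D^{\otimes} \cong \chi_{D^{\otimes}}$.

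For the remaining identification I would evaluate both sides at a generic object $\boxtimes_i (A_i,B_i)$ of $(\act^{op}\times\act)^{\boxtimes}$. The right-hand side is $\bigotimes_i \chi_D(A_i,B_i)$, which is a tensor of colimits computing $\Lan_{\Sigma_{\nu}} D$ at each $(A_i,B_i)$; by the standing hypothesis that $\otimes$ commutes with colimits, this collapses into the single colimit of $\bigotimes_i D(x_i)$ indexed by $\prod_i(\Sigma_{\nu} \downarrow (A_i,B_i))$. The left-hand side is, by the pointwise formula for $\Lan_{\Sigma_{\nu}^{\boxtimes}}$, the colimit of $\mu_{\C}\circ D^{\boxtimes}$ over the comma category $(\Sigma_{\nu}^{\boxtimes} \downarrow \boxtimes_i (A_i,B_i))$. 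The task is to show that this comma category is a symmetrized version of that product and that its $S_n$-symmetry is exactly absorbed by the symmetry constraints built into $\mu_{\C}$, so the two colimits agree canonically.

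The main obstacle is this symmetric bookkeeping: morphisms in a free symmetric monoidal category preserve length and decompose uniquely as a permutation together with componentwise morphisms in the base, and one must track this decomposition carefully through the colimit to verify that the contributions indexed by different $\sigma \in S_n$ coincide after applying the symmetry constraints of $\mu_{\C}$. Once this equivalence of indexing diagrams is established, naturality in $\boxtimes_i (A_i,B_i)$ is automatic, yielding the desired natural isomorphism $\chi_D^{\otimes} \Rightarrow \chi_{D^{\otimes}}$.
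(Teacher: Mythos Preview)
Your approach is essentially the same as the paper's: both exploit the commutative square $\Sigma_{\nu^{\otimes}}\circ\mu = \mu_{\act^{op}\times\act}\circ\Sigma_{\nu}^{\boxtimes}$ and the composition rule for iterated left Kan extensions to identify $\chi_{D^{\otimes}}$ and $\chi_D^{\otimes}$ as left Kan extensions of $\mu_{\C}\circ D^{\boxtimes}$ along the same composite. The paper draws both Kan diagrams side by side and simply declares that once the square commutes the two extensions agree, whereas you isolate the one nontrivial step, namely $\Lan_{\Sigma_{\nu}^{\boxtimes}}(\mu_{\C}\circ D^{\boxtimes}) \cong \mu_{\C}\circ\chi_D^{\boxtimes}$, and explain that this is exactly where the hypothesis that $\otimes$ commutes with colimits is used. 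Your version is more explicit about the load-bearing step; the paper leaves it implicit. The symmetric bookkeeping you flag as the main obstacle is real but routine, so the plan is sound.
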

\begin{proof}
  Recall that both constructions are iterated left Kan extensions:
  \begin{equation}
    \begin{tikzcd}
      {\el(\nu)^{\boxtimes}} & \C^\boxtimes & {\el(\nu)^{\boxtimes}} \\
      {\el(\nu^\otimes)} & \C & {(\act^{\rop} \times \act)^\boxtimes} & {\C^{\boxtimes}} \\
      \act^{\rop} \times \act && {\act^{\rop} \times \act} & \C
      \arrow["\mu"', from=1-1, to=2-1]
      \arrow["\Sigma"', from=2-1, to=3-1]
      \arrow["{D^\boxtimes}", from=1-1, to=1-2]
      \arrow["{\mu_\C}", from=1-2, to=2-2]
      \arrow["{\chi_{D^\otimes}}"', dashed, from=3-1, to=2-2]
      \arrow["{D^\otimes}", dashed, from=2-1, to=2-2]
      \arrow["{\Sigma^\boxtimes}"', from=1-3, to=2-3]
      \arrow["{\mu_{\act^{\rop} \times \act}}"', from=2-3, to=3-3]
      \arrow["{(\chi_D)^\boxtimes}"', dashed, from=2-3, to=2-4]
      \arrow["{D^\boxtimes}", from=1-3, to=2-4]
      \arrow["{\mu_\C}", from=2-4, to=3-4]
      \arrow["{(\chi_D)^\otimes}"', dashed, from=3-3, to=3-4]
    \end{tikzcd}
  \end{equation}
  Hence, both $\chi_{D^\otimes}$ and $(\chi_D)^\otimes$ are left Kan extensions of the functor $\mu_\C \circ D^\boxtimes$.
  It suffices to verify that $\Sigma \circ \mu = \mu_{\act^{\rop} \times \act} \circ \Sigma^\boxtimes$, which is straight forward.
\end{proof}

\subsubsection{Basic element plethysm}

We will start by considering what we called the \emph{compatible tensor product} $D_1^\otimes \hat \otimes D_2^\otimes$ which is the source of the Kan extensions $D_1^\otimes \epl D_2^\otimes$.
The following lemma says that this compatible tensor product is determined by its basic part.

\begin{lemma}
  The horizontal extension of the following restriction is isomorphic to $D_1^\otimes \hat \otimes D_2^\otimes$.
  \begin{equation}
    \begin{tikzcd}
      {\el(\beta)} & {\el(\rho \pl \rho)} & \E
      \arrow["{\el(\iota)}", from=1-1, to=1-2]
      \arrow["{D_1^\otimes \hat \otimes D_2^\otimes}", from=1-2, to=1-3]
    \end{tikzcd}
  \end{equation}
\end{lemma}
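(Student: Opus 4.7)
The plan is to verify that $D_1^\otimes \hat\otimes D_2^\otimes$ itself already satisfies the universal property of the horizontal extension of its restriction along $\el(\iota)$; uniqueness of left Kan extensions then yields the claimed isomorphism. The key structural input is Lemma~\ref{hereditary-tensor}: since $\rho \simeq \nu^\otimes$, the bimodule $\rho \pl \rho$ inherits the unique factorization property with basic bimodule $\beta$, giving an identification $\el(\beta^\otimes) \simeq \el(\rho \pl \rho)$ so that the target of the horizontal extension lands on the right category.

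Concretely, I would rewrite $D_1^\otimes \hat\otimes D_2^\otimes$ as an iterated left Kan extension. By Definition~\ref{horizontal-extension-elements}, each $D_i^\otimes = \Lan_{\mu_\nu}(\mu_\C \circ D_i^\boxtimes)$ with $\mu_\nu: \el(\nu)^\boxtimes \to \el(\rho)$. The external tensor $(-)\hat\otimes(-)$ is itself a left Kan extension, namely along the ``composable pair'' functor $\pi: \el(\rho) \times \el(\rho) \to \el(\rho \pl \rho)$ whose fibers are the categories $Pair(z)$ of Section~\ref{sec:element-plethysm}. Using Fubini for coends, the pasting principle of Lemma~\ref{kan-lemma}, and the compatibility of $\otimes$ with colimits (Assumption~\ref{commutes-with-coproduct}), I would collapse this to a single left Kan extension along the composite $\pi \circ (\mu_\nu \times \mu_\nu)$. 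The last step is to factor $\pi \circ (\mu_\nu \times \mu_\nu)$ through $\mu_\beta: \el(\beta)^\boxtimes \to \el(\beta^\otimes) \simeq \el(\rho \pl \rho)$. Given composable words $(\boxtimes_i u_i, \boxtimes_j v_j)$ with $\bigotimes u_i$ and $\bigotimes v_j$ composable in $\rho$, the hereditary property of $\gamma$ forces the middle-variable factorizations to admit a common refinement, after which the data reorganizes into a word $\boxtimes_k [u_k, v_k]$ of basic composable pairs in $\el(\beta)^\boxtimes$. A final application of Lemma~\ref{kan-lemma} then presents $D_1^\otimes \hat\otimes D_2^\otimes$ as $\Lan_{\mu_\beta}$ of its own restriction along $\el(\iota)$, closing the argument.

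The main obstacle will be verifying this last factorization cleanly: one must track the coend relations built into both $\hat\otimes$ and $\mu_\nu$, together with the permutation equivariances of the free symmetric monoidal construction, so that the indexing categories for the two expressions agree not merely up to a bijection on objects but as data supporting the relevant Kan extensions. The underlying coend rearrangement is parallel to the one performed in the proof of Lemma~\ref{hereditary-tensor}, which provides a template; the hereditary hypothesis is precisely what ensures that every basic factorization of $z \in \rho \pl \rho$ lifts to a matching pair of basic factorizations of any representative composable pair, and this correspondence is what makes the two Kan extensions agree.
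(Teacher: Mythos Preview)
Your approach is correct in outline and ultimately rests on the same computation as the paper's proof, but the presentation differs in level of abstraction. The paper argues directly at the level of co--wedges: given $[x,y] \in \el(\rho \pl \rho)$, it factors $[x,y]$ into basic composable pairs $[x_i,y_i] \in \el(\beta)$ using unique factorization, then uses that $D_1^\otimes$ and $D_2^\otimes$ are themselves horizontal extensions to rearrange the tensor factors and obtain a natural isomorphism
\[
  (D_1^\otimes \hat\otimes D_2^\otimes)[x,y] \;\cong\; \bigotimes_i (D_1^\otimes \hat\otimes D_2^\otimes)[x_i,y_i],
\]
from which the claim follows immediately. Your Kan--extension pasting is a repackaging of exactly this rearrangement: the ``common refinement'' step you describe is the paper's factorization of $[x,y]$, and the coend shuffle you defer to Lemma~\ref{hereditary-tensor} is what the paper carries out by hand on the co--wedge. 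So the content is the same; the paper's route is shorter because it skips the formal identification of $\hat\otimes$ as a Kan extension and works pointwise.

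Two technical points to tighten in your version. First, the functor you call $\pi: \el(\rho) \times \el(\rho) \to \el(\rho \pl \rho)$ is only defined on the full subcategory of \emph{composable} pairs (those with matching middle object), not on the whole product; you should restrict the domain accordingly. Second, your final sentence asserts that Lemma~\ref{kan-lemma} presents $D_1^\otimes \hat\otimes D_2^\otimes$ as $\Lan_{\mu_\beta}$ of its restriction along $\el(\iota)$, but $\mu_\beta$ has domain $\el(\beta)^\boxtimes$ while the restriction lives on $\el(\beta)$; you still owe the identification $\Lan_q(\cdots) \cong \mu_\C \circ \bigl((D_1^\otimes \hat\otimes D_2^\otimes)|_{\el(\beta)}\bigr)^\boxtimes$. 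This is exactly the pointwise isomorphism the paper proves directly, so you do not avoid it---you have only relocated it.
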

\begin{proof}
  Recall that $(D_1^\otimes \hat \otimes D_2^\otimes)[x, y]$ was defined to be the coend for the following co--wedge:
  \begin{equation}
    \begin{tikzcd}
      D_1^\otimes(x) \otimes D_2^\otimes(y')
      \arrow[d, "\id \otimes D_2^\otimes(\sigma \biact \id)"']
      \arrow[rr, "D_1^\otimes(\id \biact \sigma) \otimes \id"]
      & & D_1^\otimes(x') \otimes D_2^\otimes(y')
      \arrow[d, dashed] \\
      D_1^\otimes(x) \otimes D_2^\otimes(y)
      \arrow[rr, dashed]
      & & {(D_1^\otimes \hat \otimes D_2^\otimes)[x,y]}
    \end{tikzcd}
  \end{equation}
  Assuming the monoid has the unique factorization property, any composable pair $[x,y] \in (\rho \pl \rho)(A, B)$ can be factored into a collection of elements $[x_i, y_i] \in \beta(A_i, B_i)$.
  Since $D_1^\otimes$ and $D_2^\otimes$ are defined as horizontal extensions of basic element representations, we may rearrange the components of the co--wedge:
  \begin{equation}
    \begin{tikzcd}
      D_1^\otimes(x_1) \otimes D_2^\otimes(y'_1) \otimes \ldots
      \arrow[d, "\id \otimes D_2^\otimes(\sigma_1 \biact \id)"']
      \arrow[rr, "D_1^\otimes(\id \biact \sigma_1) \otimes \id"]
      & & D_1^\otimes(x'_1) \otimes D_2^\otimes(y'_1) \otimes \ldots \\
      D_1^\otimes(x_1) \otimes D_2^\otimes(y_1) \otimes \ldots
      & &
    \end{tikzcd}
  \end{equation}
  This rearrangement process is a natural isomorphism, hence this will carry over when we take the coend giving us the following natural isomorphism:
  \begin{equation*}
    (D_1^\otimes \hat \otimes D_2^\otimes)[x,y]
    \cong \bigotimes_i (D_1^\otimes \hat \otimes D_2^\otimes)[x_i,y_i]
  \end{equation*}
  Therefore the horizontal extension of this restriction is indeed naturally isomorphic to the original $D_1^\otimes \hat \otimes D_2^\otimes$.
\end{proof}

\begin{definition}
  Define the \defn{basic element plethysm} of two basic element representations $D_1, D_2: \el(\nu) \to \C$ as the following left Kan extension:
  \begin{equation}
    \begin{tikzcd}
      {\el(\beta)} & {\el(\rho \pl \rho)} & \C \\
      & {\el(\nu)}
      \arrow[tail, from=1-1, to=1-2]
      \arrow["{D_1^\otimes \hat \otimes D_2^\otimes}", from=1-2, to=1-3]
      \arrow["{\el(\gamma_0)}"', from=1-1, to=2-2]
      \arrow["{D_1 \bepl D_2}"', dashed, from=2-2, to=1-3]
    \end{tikzcd}
  \end{equation}
\end{definition}

\begin{prop}
  \label{diamond-uf-property}
  The element representation $D_1^\otimes \epl D_2^\otimes: \el(\rho) \to \E$ has the unique factorization property with respect to $D_1 \bepl D_2: \el(\nu) \to \E$.
\end{prop}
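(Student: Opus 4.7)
The plan is to show that both $(D_1 \bepl D_2)^\otimes$ and $D_1^\otimes \epl D_2^\otimes$ arise as left Kan extensions of the same functor along the same composite map $\el(\beta)^\boxtimes \to \el(\rho)$, factored in two different ways. Pasting via Lemma~\ref{kan-lemma} will then identify them. Throughout, $\bar\mu : \el(\beta)^\boxtimes \to \el(\rho \pl \rho)$ denotes the structural join-functor (defined as in Definition~\ref{horizontal-extension-elements} but for $\beta$), and $\gamma_0 : \beta \Rightarrow \nu$ denotes the restriction of the multiplication $\gamma$ to basic pairs (which lands in $\nu$ because $\beta$ is the pullback of $\nu$ along $\gamma$, and this pullback extends to $\rho \pl \rho \to \rho$ by heredity).

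First, I would identify $D_1^\otimes \epl D_2^\otimes$ as a single large Kan extension. By the preceding lemma, $D_1^\otimes \hat\otimes D_2^\otimes$ is itself the horizontal extension of the restriction $R := (D_1^\otimes \hat\otimes D_2^\otimes) \circ \el(\iota) : \el(\beta) \to \C$, i.e.\ it is the left Kan extension of $\mu_\C \circ R^\boxtimes$ along $\bar\mu$. Pasting this with the defining Kan extension $D_1^\otimes \epl D_2^\otimes = \Lan_{\el(\gamma)}(D_1^\otimes \hat\otimes D_2^\otimes)$ and applying Lemma~\ref{kan-lemma} in reverse gives
\[
  D_1^\otimes \epl D_2^\otimes \;\cong\; \Lan_{\el(\gamma)\circ\bar\mu}\bigl(\mu_\C \circ R^\boxtimes\bigr).
\]

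Second, I would re-factor the map $\el(\gamma)\circ\bar\mu$ as $\mu \circ \el(\gamma_0)^\boxtimes$, where $\mu : \el(\nu)^\boxtimes \to \el(\nu^\otimes) = \el(\rho)$ is the join functor of Definition~\ref{horizontal-extension-elements} and $\el(\gamma_0)^\boxtimes$ is the free symmetric-monoidal power of $\el(\gamma_0) : \el(\beta) \to \el(\nu)$. This compatibility square packages the statement that multiplying basic pairs and then joining equals joining and then composing in $\rho$, which is exactly the content of heredity for $\gamma$. Then I would compute $\Lan_{\mu \circ \el(\gamma_0)^\boxtimes}(\mu_\C \circ R^\boxtimes)$ in two stages. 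The inner Kan extension along $\el(\gamma_0)^\boxtimes$ distributes over $\boxtimes$ because $\otimes$ commutes with colimits (standing Assumption~\ref{commutes-with-coproduct}), by the same argument as in the proof of Lemma~\ref{chi-otimes-commute}; this yields $\mu_\C \circ (D_1 \bepl D_2)^\boxtimes$ since by definition $D_1 \bepl D_2 = \Lan_{\el(\gamma_0)}R$. The outer Kan extension along $\mu$ is then, by Definition~\ref{horizontal-extension-elements}, precisely the horizontal extension $(D_1 \bepl D_2)^\otimes$. A final application of Lemma~\ref{kan-lemma} assembles these two stages into the required isomorphism $(D_1 \bepl D_2)^\otimes \cong D_1^\otimes \epl D_2^\otimes$.

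The main obstacle is the distributivity step $\Lan_{\el(\gamma_0)^\boxtimes}(\mu_\C \circ R^\boxtimes) \cong \mu_\C \circ (\Lan_{\el(\gamma_0)}R)^\boxtimes$, and verifying the compatibility square $\el(\gamma)\circ\bar\mu = \mu \circ \el(\gamma_0)^\boxtimes$. The former is a standard coend/Fubini manipulation allowed by the preservation of colimits by $\otimes$ in each variable, parallel to Lemma~\ref{chi-otimes-commute}. The latter is where the hypotheses on the monoid having the unique factorization property and being hereditary genuinely enter: heredity ensures that $\beta$ is exactly the pullback $\gamma^{-1}(\nu)$ so that $\gamma_0$ is well-defined on the basic level, and the pullback extends under horizontal extension, which is precisely the identity of functors needed to align the two factorizations of $\el(\gamma)\circ\bar\mu$.
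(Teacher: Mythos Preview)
Your proposal is correct and follows a genuinely different route from the paper. The paper argues by horizontally extending the defining Kan-extension triangle for $D_1 \bepl D_2$, identifying the extended top leg with $D_1^\otimes \hat\otimes D_2^\otimes$ via the preceding lemma and the extended left leg with $\el(\gamma)$ via heredity, and then verifying \emph{directly} that the extended triangle is still a left Kan extension by testing against an arbitrary $A: \el(\rho) \to \C$ and using the factorization $[x,y] \mapsto \{[x_i,y_i]\}$ to shuffle tensor factors. You instead express both $(D_1 \bepl D_2)^\otimes$ and $D_1^\otimes \epl D_2^\otimes$ as iterated left Kan extensions of the common source $\mu_\C \circ R^\boxtimes$ along two factorizations of the same composite $\el(\beta)^\boxtimes \to \el(\rho)$, and paste via Lemma~\ref{kan-lemma} (and its companion composition law).

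Your argument is more systematic and makes the role of Kan-extension pasting explicit; the price is the distributivity step $\Lan_{\el(\gamma_0)^\boxtimes}(\mu_\C \circ R^\boxtimes) \cong \mu_\C \circ (\Lan_{\el(\gamma_0)}R)^\boxtimes$, which is correct under Assumption~\ref{commutes-with-coproduct} but is not stated as a lemma in the paper and needs its own justification. The paper's approach avoids this by doing the shuffling by hand inside the universal-property check, which is more elementary but less transparent about why the result should hold. Both approaches rely on heredity in exactly the way you describe: to identify $\el(\gamma)\circ\bar\mu$ with $\mu\circ\el(\gamma_0)^\boxtimes$ (your language) or equivalently to ensure the extension of $\el(\gamma_0)$ is $\el(\gamma)$ (the paper's language). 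One minor remark: what you call ``Lemma~\ref{kan-lemma} in reverse'' is really the standard composition law for left Kan extensions (upper plus lower gives outer), not the cancellation direction stated in that lemma; it is true, but you might want to cite it as such rather than as an inversion of Lemma~\ref{kan-lemma}.
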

\begin{proof}
  We start by considering horizontally extending the defining diagram for $D_1 \bepl D_2$:
  \begin{equation}
    \label{eq:extend-kan}
    \begin{tikzcd}[column sep = small]
      {\el(\beta)} && \E & \leadsto & {\el(\rho \pl \rho)} && \E \\
      & {\el(\nu)} && \leadsto && {\el(\rho)}
      \arrow[""{name=0, anchor=center, inner sep=0}, from=1-1, to=1-3]
      \arrow[from=1-1, to=2-2]
      \arrow[from=1-5, to=2-6]
      \arrow[""{name=1, anchor=center, inner sep=0}, from=1-5, to=1-7]
      \arrow[dashed, from=2-2, to=1-3]
      \arrow[dashed, from=2-6, to=1-7]
      \arrow[shorten <=3pt, shorten >=3pt, Rightarrow, from=0, to=2-2]
      \arrow[shorten <=3pt, shorten >=3pt, Rightarrow, from=1, to=2-6]
    \end{tikzcd}
  \end{equation}
  By the previous lemma, the functor $\el(\rho \pl \rho) \to \E$ in diagram~\eqref{eq:extend-kan} is isomorphic to $D_1^\otimes \hat \otimes D_2^\otimes$.
  Since the monoid $(\rho, \gamma)$ has the unique factorization property, the functor $\el(\rho \pl \rho) \to \el(\rho)$  in diagram~\eqref{eq:extend-kan} is isomorphic to $\el(\gamma)$.
  It remains to show that the extended element representation $\el(\rho) \to \E$ and the corresponding natural transformation define a left Kan extension.

  Consider an arbitrary element representation $A: \el(\rho) \to \E$ and a natural transformation $D_1^\otimes \hat \otimes D_2^\otimes \To A \circ \el(\gamma)$.
  Abbreviating $D = (D_1^\otimes \hat \otimes D_2^\otimes)$ and abusing notation for $A$, this is a family of morphisms $D[x,y] \to A[x,y]$.
  Factor $[x,y]$ into basic composable pairs $\{[x_i, y_i]\}_{i=1}^n$, then use this factorization to shuffle terms on the level of coends:
  \begin{equation}
    \bigotimes_i D_1^\otimes(x_i) \otimes D_2^\otimes(y_i) \to A[x,y]
  \end{equation}
  It is then clear that this factors uniquely through the extension defined above.
\end{proof}

\subsubsection{Monoidal categories and monoids}

In this section, we will show that basic element representations constitute a monoidal category with $\bepl$ as a monoidal product.

\begin{definition}
  Define $V_{\eta}: \el(\nu) \to \C$ as the left Kan extension given by extending the trivial functor $T: \el(\tilde\nu_{\act}) \to \C$ along the unit $\el(\eta_0): \el(\tilde\nu_\act) \to \el(\nu)$:
  \begin{equation}
    \begin{tikzcd}
      \el(\tilde\nu_{\act}) \arrow[rd, "\el(\eta_0)"'] \arrow[rr, "T"] & & \C \\
      & \el(\nu) \arrow[ru, "V_{\eta} := \Lan_{\el(\eta_0)}(T)"', dashed] &
    \end{tikzcd}
  \end{equation}
\end{definition}

\begin{prop}
  \label{elem-element-rep-mon-cat}
  Let $\rho: \act^{\rop} \times \act \to \C$ be a bimodule with the unique factorization property with respect to $\nu: \act^{\rop} \times \act \to \C$.
  Then the basic element representations form a monoidal category with $\bepl$ as a monoidal product and $V_\eta$ as a monoidal unit.
\end{prop}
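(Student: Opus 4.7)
The plan is to transport the known monoidal structure on ordinary element representations (Proposition~\ref{prop:diamondproduct}) along the horizontal extension functor $(-)^\otimes : [\el(\nu),\C] \to [\el(\rho),\C]$ of Definition~\ref{horizontal-extension-elements}, using Proposition~\ref{diamond-uf-property} as the crucial bridge. Concretely, I would proceed in three stages: compatibility of $\bepl$ with $\epl$ under $(-)^\otimes$, compatibility of units, then transport of the coherence data.

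First I would establish the structural isomorphism $(D_1 \bepl D_2)^\otimes \cong D_1^\otimes \epl D_2^\otimes$. This is essentially content-free once Proposition~\ref{diamond-uf-property} is invoked: the right side has the unique factorization property with basic part $D_1 \bepl D_2$, and the horizontal extension of the basic part recovers the whole object up to canonical isomorphism. By the universal property of the Kan extension defining $(-)^\otimes$ this isomorphism is natural in $D_1$ and $D_2$. Next I would show $V_\eta^\otimes \cong U_\eta$: indeed both are defined by left Kan extending the trivial functor $T$ along parallel unit maps, and horizontal extension commutes with left Kan extension (this is a two-step Kan extension argument analogous to Lemma~\ref{chi-otimes-commute}, composing the defining squares of $V_\eta$ and $(-)^\otimes$ into the single square defining $U_\eta$ up to the identification $\tilde\nu_\act^\otimes \cong \tilde\rho_\act$ from Lemma~\ref{unique-fac-for-rho-G}).

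With these two compatibilities in hand, I would define the associativity constraint $a^\flat_{D_1 D_2 D_3} : (D_1 \bepl D_2) \bepl D_3 \Rightarrow D_1 \bepl (D_2 \bepl D_3)$ by taking the associativity constraint $a_{D_1^\otimes D_2^\otimes D_3^\otimes}$ for $\epl$, observing via the first isomorphism that both sides are of the form $X^\otimes$ for uniquely factorizable $X$, and reading off the induced map on basic parts. Similarly the unit constraints $L^\flat : V_\eta \bepl D \overset{\sim}{\Rightarrow} D$ and $R^\flat : D \bepl V_\eta \overset{\sim}{\Rightarrow} D$ are read off from $L$ and $R$ of Lemma~\ref{epl-unit} applied to $D^\otimes$, using $V_\eta^\otimes \cong U_\eta$.

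The main obstacle --- the only genuinely delicate point --- is verifying that the pentagon and triangle axioms hold for these transported constraints, and more basically that ``reading off the basic part'' is a well-defined functorial operation. For this I would argue that the assignment $D \mapsto D^\otimes$ is fully faithful on hereditary morphisms (which is implicit in the Kan extension setup, since a hereditary morphism between horizontal extensions is determined by its restriction to basic parts, as in Definition~\ref{def:hereditary} and Lemma~\ref{hereditary-tensor}). Because $a^\flat$ and $L^\flat, R^\flat$ are defined precisely as the basic restrictions of $a$ and $L, R$, any coherence diagram among them extends via $(-)^\otimes$ to a coherence diagram among $a, L, R$, which commutes by Proposition~\ref{prop:diamondproduct}. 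Faithfulness of $(-)^\otimes$ on hereditary morphisms then pulls the commutativity back to the basic level, completing the proof.
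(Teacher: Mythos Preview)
Your approach is correct and broadly aligned with the paper's, but with a genuine difference in how the unit constraints are handled. For associativity, both you and the paper transport the $\epl$-associator through the compatibility $(D_1 \bepl D_2)^\otimes \cong D_1^\otimes \epl D_2^\otimes$ of Proposition~\ref{diamond-uf-property} (the paper's reference to Lemma~\ref{pl-epl-iso} appears to be a slip for this proposition). For the unit constraints, however, the paper does \emph{not} transport: it simply reruns the Kan-extension argument of Lemma~\ref{epl-unit} directly in the basic setting, with $\el(\tilde\nu_\act^\otimes \pl \nu)$, $\el(\beta)$, $\gamma_0$, and $V_\eta$ playing the roles that $\el(\tilde\rho_\act \pl \rho)$, $\el(\rho \pl \rho)$, $\gamma$, and $U_\eta$ played before. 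Your route --- establishing $V_\eta^\otimes \cong U_\eta$ and then reading off $L^\flat, R^\flat$ from $L, R$ --- is more systematic and keeps everything under the single umbrella of ``transport along $(-)^\otimes$'', at the cost of needing the auxiliary isomorphism $V_\eta^\otimes \cong U_\eta$ and the faithfulness argument. The paper's direct rerun avoids both of those, but is less conceptually uniform.

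One point worth flagging: your argument for the pentagon and triangle axioms via faithfulness of $(-)^\otimes$ on hereditary morphisms is a detail the paper omits entirely --- it does not address coherence beyond defining the constraints. Your treatment is therefore more complete on this point, though you should be prepared to justify the faithfulness claim more carefully than ``implicit in the Kan extension setup'', since the paper does not state it explicitly.
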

\begin{proof}
  The associativity constraint for $\bepl$ comes from repeatedly applying Lemma~\ref{pl-epl-iso} and the associativity constraint of $\epl$.
  The unit constraint is defined essentially the same way it was defined in Proposition~\ref{epl-unit}.
  However some care is required when defining the morphisms.
  Instead of obscuring what is ultimately a simple proof, we will abuse notation somewhat and set up with the following diagram:
  \begin{equation}
    \begin{tikzcd}
      \el(\tilde\nu_{\act}^\otimes \pl \nu) \arrow[d, tail] \arrow[dd, "\sim"', bend right] \arrow[rrd, "T^{\otimes} \hat \otimes D"] & & \\
      \el(\beta) \arrow[d, "\gamma_0"] \arrow[rr, "V_{\eta}^\otimes \hat \otimes D" description] & & \C \\
      \el(\nu) \arrow[rru, "D"'] & &
    \end{tikzcd}
  \end{equation}
  Then proceed in the same way we did in Proposition~\ref{epl-unit}.
\end{proof}

\subsection{Basic relative bimodules}

\subsubsection{Properties}

Fix a bimodule monoid $(\rho, \gamma)$ such that $\rho$ has the unique factorization property with respect to $\nu$.

\begin{definition}
  A \defn{basic relative bimodule} $\xi \Rightarrow \nu$ is a bimodule over the basic bimodule $\nu$.
\end{definition}

\begin{definition}
  Define the \defn{basic-relative-product}\/ $\pl_{(\nu)}$ of two basic relative bimodules $\xi_1 \overset{\pi_1}{\To} \nu$ and $\xi_2 \overset{\pi_2}{\To} \nu$ as the following pullback:
  \begin{equation}
    \begin{tikzcd}
      & {\xi_1 \pl_{(\nu)} \xi_2} & \beta & \nu \\
      {} & {\xi_1^\otimes \pl \xi_2^\otimes} & {\rho \pl \rho} & \rho
      \arrow[tail, from=1-4, to=2-4]
      \arrow["{\pi_1^\otimes \pl \pi_2^\otimes}"', from=2-2, to=2-3]
      \arrow["\gamma"', from=2-3, to=2-4]
      \arrow[tail, from=1-3, to=2-3]
      \arrow["{\gamma_0}", from=1-3, to=1-4]
      \arrow[dashed, from=1-2, to=1-3]
      \arrow[dashed, tail, from=1-2, to=2-2]
      \arrow["\lrcorner"{anchor=center, pos=0.125}, draw=none, from=1-3, to=2-4]
      \arrow["\lrcorner"{anchor=center, pos=0.125}, draw=none, from=1-2, to=2-3]
    \end{tikzcd}
  \end{equation}
\end{definition}

\begin{prop}
\label{prop:basic-elt-mon}
  If there is a groupoid $\V$ such that $\act \cong \V^{\boxtimes}$ and the unit of the base bimodule has the unique factorization property, then basic relative bimodules over $\nu: \act^{\rop} \times \act \to \C$ form a monoidal category with $\pl_{(\nu)}$ as a monoidal product.
\end{prop}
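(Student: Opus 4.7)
The plan is to transport the monoidal structure on relative bimodules over $\rho$ established in Proposition \ref{prop:relative-plethysm} to basic relative bimodules via horizontal extension. The key observation is that the pullback definition of $\pl_{(\nu)}$ is tailor-made so that the horizontal extension functor $(-)^\otimes$ intertwines $\pl_{(\nu)}$ and $\pl$; together with the assumption $\act\cong\V^{\boxtimes}$, Lemma \ref{unique-fac-for-rho-G} (which gives $\tilde\rho_\act^\C\cong\tilde\nu_\act^\otimes$), and Lemma \ref{hereditary-tensor} (which guarantees that plethysm preserves unique factorization and is hereditary), this will let us borrow associativity and unitality from the established relative situation.

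First I would fix the unit. The hypothesis that the unit has the unique factorization property provides $\eta_0:\tilde\nu_\act\Rightarrow\nu$ with $\eta_0^\otimes\cong\eta$, and we take $(\tilde\nu_\act,\eta_0)$ as the candidate unit in basic relative bimodules. Next I would prove the compatibility
\[
(\xi_1\pl_{(\nu)}\xi_2)^\otimes\;\cong\;\xi_1^\otimes\pl\xi_2^\otimes
\]
as relative bimodules over $\rho$. Unpacking the pullback definition, $\xi_1\pl_{(\nu)}\xi_2$ is carved out of $\beta\times_{\rho\pl\rho}(\xi_1^\otimes\pl\xi_2^\otimes)$, and applying $(-)^\otimes$ commutes with this pullback by Lemma \ref{hereditary-tensor}(2) together with pullback stability in the target and the fact that $\beta^\otimes$ recovers the component of $\rho\pl\rho$ hit by $\gamma$. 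With this compatibility in hand, the associativity constraint for $\pl_{(\nu)}$ is induced by that of $\pl$: both $(\xi_1\pl_{(\nu)}\xi_2)\pl_{(\nu)}\xi_3$ and $\xi_1\pl_{(\nu)}(\xi_2\pl_{(\nu)}\xi_3)$ map into $\beta\times_\rho\beta$ in matching ways, and their horizontal extensions agree with the two associated ways of bracketing $\xi_1^\otimes\pl\xi_2^\otimes\pl\xi_3^\otimes$. The pentagon axiom descends from the pentagon for $\pl$ by uniqueness of lifts through the pullback.

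For the unit constraints, I would show $\xi\pl_{(\nu)}\tilde\nu_\act\cong\xi\cong\tilde\nu_\act\pl_{(\nu)}\xi$. The right hand side unfolds as the pullback of $\xi^\otimes\pl\tilde\nu_\act^\otimes\cong\xi^\otimes\pl\tilde\rho_\act^\C\cong\xi^\otimes$ against $\beta\to\rho$, which, because the image of $\tilde\rho_\act$ in $\rho$ via $\eta$ picks out exactly the identity factorizations in $\beta$, reduces to $\xi$ itself. The triangle axiom then follows from the triangle axiom for $\pl_\rho$ by the same descent argument. The main obstacle is the horizontal-extension-preserves-pullback step: one must verify carefully that the horizontal extension, computed by the coend and coequalizer formulas of Constructions \ref{coend-formula} and \ref{coeq-formula}, does not destroy the pullback square, which is precisely where the hypothesis $\act\cong\V^{\boxtimes}$ and the hereditary behavior from Lemma \ref{hereditary-tensor} are used; all coherence diagrams then reduce to their already-established counterparts in the monoidal category of relative bimodules over $\rho$.
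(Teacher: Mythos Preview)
Your proposal is correct and follows essentially the same route as the paper: both arguments establish associativity by recognizing the two bracketings as pullbacks of isomorphic diagrams (using the compatibility $(\xi_1\pl_{(\nu)}\xi_2)^\otimes\cong\xi_1^\otimes\pl\xi_2^\otimes$ together with associativity of $\pl$), and both take $\eta_0:\tilde\nu_\act\Rightarrow\nu$ as the unit and verify the unit constraints by reducing the pullback defining $\xi\pl_{(\nu)}\tilde\nu_\act$ to the one for which $\pi:\xi\Rightarrow\nu$ itself is already a solution. The paper's write-up is slightly more direct in that it jumps straight to the pullback comparison rather than first framing $(-)^\otimes$ as a product-intertwiner, but the content is the same.
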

\begin{proof}
  To check associativity, consider the following triple of relative bimodules:
  \begin{equation}
    \xi_1 \overset{\pi_1}{\To} \nu \quad \xi_2 \overset{\pi_2}{\To} \nu \quad \xi_3 \overset{\pi_3}{\To} \nu
  \end{equation}
  To keep the notation simple, let $\xi_{12} \overset{\pi_{12}}{\To} \nu$ denote the basic relative plethysm of the left pair and let $\xi_{23} \overset{\pi_{23}}{\To} \nu$ denote the basic relative plethysm of the right pair.
  Consider the following diagram:
  \begin{equation}
    \begin{tikzcd}
      \xi_{12} \pl_{(\nu)} \xi_3
      \arrow[d, dashed, tail]
      \arrow[r, dashed]
      & \beta
      \arrow[d, dashed, tail]
      \arrow[r, dashed]
      & \nu
      \arrow[d, tail] \\
      \xi_{12}^{\otimes} \pl \xi_{3}^{\otimes}
      \arrow[r, "\pi_{12} \pl \pi_3"']
      \arrow[d, "\sim"']
      & \rho \pl \rho
      \arrow[r, "\gamma"']
      \arrow[d, equal]
      & \rho \arrow[d, equal] \\
      \xi_1^\otimes \pl \xi_{23}^{\otimes}
      \arrow[r, "\pi_1 \pl \pi_{23}"']
      & \rho \pl \rho
      \arrow[r, "\gamma"']
      & \rho
    \end{tikzcd}
  \end{equation}
  The lower left square commutes by properties of $\pl$ and the lower right commutes tautologically.
  Since the outer square is still a pullback, the two relative bimodules associated with $\xi_{12} \pl_{(\nu)} \xi_3$ and $\xi_{1} \pl_{(\nu)} \xi_{23}$ are naturally isomorphic.

  We claim that $\eta_0: \tilde\nu_{\act} \To \nu$ serves as the unit basic relative bimodule.
  To show this, let $\xi \overset{\pi}{\To} \nu$ be an arbitrary basic relative bimodule and consider the following pullback:
  \begin{equation}
    \begin{tikzcd}
      \xi \pl_{(\nu)} \tilde\nu_{\act}
      \arrow[d, dashed, tail]
      \arrow[r, dashed]
      & \beta
      \arrow[d, dashed, tail]
      \arrow[r, dashed]
      & \nu
      \arrow[d, tail] \\
      \xi^{\otimes} \pl \tilde\nu_{\act}^{\otimes}
      \arrow[r, "\pi^\otimes \pl \eta_0^\otimes"']
      & \rho \pl \rho
      \arrow[r, "\gamma"']
      & \rho
    \end{tikzcd}
  \end{equation}
  Since $\eta_0^\otimes: \tilde\nu_{\act}^\otimes \To \nu^\otimes$ is isomorphic to the unit $\eta: \tilde\rho_\act \To \rho$ of (ordinary) relative bimodules, we are reduced to the following:
  \begin{equation}
    \begin{tikzcd}
      \xi \pl_{(\nu)} \tilde\nu_{\act}
      \arrow[d, dashed, tail]
      \arrow[r, dashed]
      & \nu
      \arrow[d, tail] \\
      \xi^{\otimes}
      \arrow[r,"\pi"]
      & \rho = \nu^{\otimes}
    \end{tikzcd}
  \end{equation}
  Setting the morphism to $\pi: \xi \To \nu$ would also make this a pullback.
  Since pullbacks are unique up to isomorphism, we have $\xi \pl_{(\nu)} \tilde\nu_{\act} \cong \xi$ and $\pi \pl_{(\nu)} \eta$ is equivalent to $\pi$.
\end{proof}

\subsubsection{Examples}

\begin{ex}\label{basic-rel-monoid-properad}
  Consider the bimodule $\rho_{\rCospan}$ of cospans which has the unique factorization property with respect to the bimodule $\nu_{\rCospan}$ of basic cospans.
  A relative bimodule structure $\xi \overset{\pi}{\To} \nu_{\rCospan}$ associates every element $x \in \xi(S, T)$ to a ``corolla'' $\pi(x) \in \nu_{\rCospan}(S,T)$.
  This means a bimodule relative to $\nu_{\rCospan}$ is a $\Iso(\FinSet) \mddash \Iso(\FinSet)$ bimodule.
  We can think of $\pi$ as assigning a ``shape'' to an element $x \in \xi(S, T)$.
  A multiplication $\gamma_{\xi}: \xi \pl_{(\nu)} \xi \to \xi$ takes a collection of elements which form a ``connected shape'' and gives another element.
  In other words, the monoids are properads.
\end{ex}

\begin{ex}
  Let $\rho_{surj}: \SS \times \SS \to \Set$ be the bimodule of surjections.
  Then $\rho_{surj}$ has the unique factorization property with respect to a bimodule $\nu_{surj}$.
  Consider a basic relative bimodule $\xi \overset{\pi}{\To} \nu_{\surj}$.
  The projection $\pi$ maps $\xi(\underline{n}, \underline{1})$ to the unique element in $\nu_{\surj}(\underline{n}, \underline{1})$.
  On the other hand, $\xi(\underline{n}, \underline{m})$ is necessarily empty if $m \neq 1$.
  This means that a basic relative bimodule is the same data as an $\SS$-module.
  A multiplication $\gamma: \xi \pl_{(\nu)} \xi \to \xi$ allows us to combine $\SS$-modules in a manner coherent with the base bimodule of finite surjections, so this is the same data as a (non-unital) operad.
  If a unit $\eta: \tilde\nu_{\SS} \to \xi$ is given, it selects an element of the $\SS$-module of type $1 \to 1$ which satisfies the appropriate unitality conditions.
\end{ex}

\section{Decorations}\label{sec:decorations}

The element construction can be repeated for both level 2 and level 3 structures.
To distinguish this second application of the element construction, it will be called a decoration. This is in keeping with the terminology of \cite{decorated}. We will then establish a compatibility between this notion of a decoration and the chi-construction.

\subsection{Decorations for element representations}\label{sec:decorated-el-rep}

For this section, fix a bimodule $\rho: \act^{\rop} \times \act \to \Set$ with a monoid structure $\gamma: \rho \pl \rho \To \rho$ and a $\Set$-valued element representation $D: \el(\rho) \to \Set$.

\subsubsection{Element representation}

\begin{definition}
   Define the \defn{decorated element category} to be $\el(D)$. Explicitly,
  \begin{enumerate}
    \item The objects are tuples $(A,B; x; u)$ such that $u \in D(x)$ and $x \in \rho(A,B)$.
    \item The morphisms $(A,B; x; u) \to (A',B'; x'; u')$ are given by morphisms $(\sigma \biact \tau): (A,B) \to (A', B')$ in the category $\act^{\rop} \times \act$ such that
      \begin{align}
        x &\;\;\overset{\mathclap{(\sigma \biact \tau)}}{\to}\;\; x' \\
        u &\;\;\overset{\mathclap{D(\sigma \biact \tau)}}{\to}\;\; u'
      \end{align}
  \end{enumerate}
\end{definition}

\begin{definition}
  Define a \defn{decorated\, element\, representation} to be a functor $E: \el(D) \to\! \C$ such that $D: \el(\rho) \to \Set$ is an element representation and $\rho: \act^{\rop} \times \act \to \Set$ is a $\Set$-valued bimodule.
\end{definition}

\subsubsection{Plethysm}

Suppose the element representation $D: \el(\rho) \to \Set$ has a monoid structure $\gamma_D: D \epl D \to D$.
We can then replicate the element plethysm construction for decorated element representations.

\begin{definition}
  Let $z$ be an element of $\el(\rho)$, then elements $w$ in $(D \epl D)(z)$ are represented by elements $u \in D(x)$ and $v \in D(y)$ such that $z = \gamma[x,y]$.
  If $u' \in D(x')$ and $v' \in D(y')$ is another pair such that $z = \gamma[x',y']$, then they are equivalent if only if there exists a morphism $\sigma$ in the groupoid $\act$ such that:
  \begin{align}
    x \overset{(\id \biact \sigma)}{\to} x'
    &\text{ and }
      y' \overset{(\sigma \biact \id)}{\to} y
      \text{ in } \el(\rho) \\
    u \overset{(\id \biact \sigma)}{\to} u'
    &\text{ and }
      v' \overset{(\sigma \biact \id)}{\to} v
      \text{ in } \el(D)
  \end{align}
  Like before, we use the notation $[u,v]$ to represent this equivalence class.
\end{definition}

\begin{definition}
  Recall that we used the fibration structure of $\el(\rho)$ to define the $\hat \otimes$ objects.
  The same thing holds for $\el(D)$, so the construction carries over.
  In this case, given decorated element representations $E_1, E_2: \el(D) \to \C$, we define the object $(E_1 \hat \otimes E_2)[u,v]$ as a co--wedge:
  \begin{equation}
    \begin{tikzcd}
      E_1(u) \otimes E_2(v')
      \arrow[d, "\id \otimes E_2(\sigma \biact \id)"']
      \arrow[rr, "E_1(\id \biact \sigma) \otimes \id"]
      & & E_1(u') \otimes E_2(v')
      \arrow[d, dashed] \\
      E_1(u) \otimes E_2(v)
      \arrow[rr, dashed]
      & & {(E_1 \hat \otimes E_2)[u,v]}
    \end{tikzcd}
  \end{equation}
\end{definition}

\begin{definition}
  Given an element representation $D: \el(\rho) \to \Set$ with an monoid structure and two decorated element representations $E_1, E_2: \el(D) \to \C$, define the \defn{decorated element plethysm} $E_1 \epl_D E_2$ as the following left Kan extension:
  \begin{equation}
    \begin{tikzcd}
      \el(D \epl D)
      \arrow[rd, "\el(\gamma_D)"']
      \arrow[rr, "E_1 \hat \otimes E_2"]
      & & \C \\
      & \el(D)
      \arrow[ru, "E_1 \epl_D E_2 := \Lan_{\el(\gamma_D)}(E_1 \hat \otimes E_2)"', dashed] &
    \end{tikzcd}
  \end{equation}
\end{definition}

\subsection{Decorations and Enrichments}\label{sec:dec-enrich}
We will establish an equivalence between each of their monoid structures.

\begin{lemma}
  Given an element representation $D: \el(\rho) \to \Set$, there is a functor $\Lambda: \el(D) \to \el( \chi_D)$ which induces an isomorphism of categories.
\end{lemma}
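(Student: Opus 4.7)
The plan is to construct $\Lambda$ from the universal coprojections into the colimit defining $\chi_D$, then verify it is bijective on both objects and hom-sets, exploiting the fact that, since $\el(D)$ is defined assuming $\act^{op}\times\act$ is a groupoid, the comma category $(\Sigma\downarrow(A,B))$ collapses onto a discrete fibre.

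Recall that the left Kan extension defining $\chi_D$ comes with the component $\lambda:D\Rightarrow\chi_D\circ\Sigma$, so for each object $(A,B;x)$ of $\el(\rho)$ there is a function $\lambda_{(A,B;x)}:D(x)\to\chi_D(A,B)$ realising the universal coprojection. I would define $\Lambda$ on objects by
\[
(A,B;x;u)\ \longmapsto\ \bigl(A,B;\lambda_{(A,B;x)}(u)\bigr),
\]
and, on a morphism $(\sigma\biact\tau):(A,B;x;u)\to(A',B';x';u')$, by sending it to the morphism $(\sigma\biact\tau)$ now viewed in $\el(\chi_D)$. Well-definedness on morphisms is the naturality square of $\lambda$ evaluated at $u$: one obtains $\chi_D(\sigma\biact\tau)(\lambda_{(A,B;x)}(u))=\lambda_{(A',B';x')}(D(\sigma\biact\tau)(u))=\lambda_{(A',B';x')}(u')$, which is exactly the condition required for a morphism in $\el(\chi_D)$. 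Functoriality of $\Lambda$ is then immediate.

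For bijectivity on objects, I would argue that $\chi_D(A,B)$ decomposes canonically as $\coprod_{x\in\rho(A,B)}D(x)$ with the $\lambda_{(A,B;x)}$'s as coprojections. To this end one shows, as in the reasoning of Corollary~\ref{cor:chi-is-index-enrich}, that the subcategory $\{((A,B;x),\id):x\in\rho(A,B)\}$ of $(\Sigma\downarrow(A,B))$ is cofinal and discrete: for any object $((A',B';x'),(\sigma\biact\tau))$, the invertibility of $(\sigma\biact\tau)$ provides a unique morphism to $((A,B;(\sigma\biact\tau)\cdot x'),\id)$, while the constraint $\id\circ(\alpha\biact\beta)=\id$ forces only trivial endo- and inter-morphisms on the fibre. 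The colimit therefore reduces to the coproduct, so every $w\in\chi_D(A,B)$ has a unique presentation as $\lambda_{(A,B;x)}(u)$, giving the object-level bijection.

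For fullness and faithfulness, a morphism $(\sigma\biact\tau)$ in $\el(\chi_D)$ from $\Lambda(A,B;x;u)$ to $\Lambda(A',B';x';u')$ must send $\lambda_{(A,B;x)}(u)$ to $\lambda_{(A',B';x')}(u')$ under $\chi_D(\sigma\biact\tau)$; the coproduct indexing then forces $(\sigma\biact\tau)\cdot x=x'$, and the same naturality square as above forces $D(\sigma\biact\tau)(u)=u'$, which are precisely the conditions defining a morphism in $\el(D)$. The main technical point in the whole argument is the cofinality statement giving the coproduct decomposition of $\chi_D(A,B)$; once that is set up, everything else reduces to bookkeeping with $\lambda$ and the induced bimodule action of $\chi_D$.
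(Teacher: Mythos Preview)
Your proof is correct and follows essentially the same line as the paper's: both define $\Lambda$ via the component $\lambda$ of the Kan extension and rely on the fact that $\chi_D(A,B)\cong\coprod_{x\in\rho(A,B)}D(x)$ so that every element of $\chi_D(A,B)$ has a unique preimage $(x,u)$. The paper is terser---it simply asserts this unique factorisation ``since we are using the standard chi-construction and $\chi_D$ is $\Set$-valued'' and then writes down an inverse functor---whereas you make the coproduct decomposition explicit via the cofinality argument and then verify bijectivity on objects and hom-sets directly; this is a welcome elaboration rather than a different strategy. One small remark: your cofinality step does not actually need invertibility of $(\sigma\biact\tau)$, since for any $((A',B';x'),(\sigma\biact\tau))$ the comma category to the identity-fibre is already a singleton by the equation $\id\circ(\alpha\biact\beta)=(\sigma\biact\tau)$, so the argument goes through for general $\act$ even though the surrounding section is phrased for groupoids.
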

\begin{proof}
  Define the functor $\Lambda: \el(D) \to \el( \chi_D)$ by sending an element $u: pt \to D(x)$ in $\el(D)$ to the composition $pt \overset{u}{\to} D(x) \overset{\lambda_x}{\to}   \chi_D(A,B)$.
  By the universal property of the coproduct, a morphism $(u;x;A,B) \to (u';x';A',B')$ induces a morphism $(\lambda_x \circ u;A,B) \to (\lambda_{x'} \circ u';A',B')$ which defines the functor on morphisms.

  Conversely, any element $\tilde u: pt \to  \chi_D(A,B)$ uniquely factors through some element $u: pt \to D(x)$ since we are using the standard chi-construction and $\chi_D$ is $Set$-valued.
  We can then define the functor $\el( \chi_D) \to \el(D)$ on objects by sending $\tilde u$ to $u$.
  The two functors are inverses, so $\el(D)$ and $\el( \chi_D)$ are isomorphic.
\end{proof}

\begin{thm}\label{thm:pluselement}
  Given the monoid structure $\gamma_D: D \epl D \to D$, the following monoidal categories are equivalent:
  \begin{enumerate}
  \item The monoidal category of decorated element representations $E: \el(D) \to \C$ with monoidal product $\epl_D$ with $\gamma_D$ as the underlying monoid structure.
  \item The monoidal category of (ordinary) element representations $F: \el(\chi_D) \to \C$ with $\epl$ as a monoidal product and with \eqref{eq:chi-gamma} as the underlying monoid structure.
    \begin{equation}
      \label{eq:chi-gamma}
      \gamma: \chi_D \pl \chi_D \To \chi_{D \epl D} \To \chi_D
    \end{equation}
  \end{enumerate}
\end{thm}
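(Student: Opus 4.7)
The plan is to promote the isomorphism of categories $\Lambda \colon \el(D) \to \el(\chi_D)$ from the preceding lemma to a strong monoidal equivalence of the associated functor categories. Precomposition with $\Lambda$ clearly gives an isomorphism $\Lambda^* \colon [\el(\chi_D), \C] \to [\el(D), \C]$ of the underlying functor categories, with inverse induced by $\Lambda^{-1}$. What must be checked is that this transports the monoidal product $\epl$ to $\epl_D$, the unit to the unit, and the coherence constraints.

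For the product, I would first build a matching isomorphism on categories of composable pairs. Applying the preceding lemma to the $\Set$-valued element representation $D \epl D$ gives an isomorphism $\el(D \epl D) \cong \el(\chi_{D \epl D})$, and combining with $\el$ applied to the natural isomorphism $\mu \colon \chi_D \pl \chi_D \overset{\sim}{\Rightarrow} \chi_{D \epl D}$ of Lemma~\ref{pl-epl-iso} produces
\begin{equation*}
\Lambda_2 \colon \el(D \epl D) \overset{\sim}{\longrightarrow} \el(\chi_D \pl \chi_D).
\end{equation*}
The crucial compatibility is that $\Lambda$ and $\Lambda_2$ intertwine the two multiplication functors, namely $\el(\gamma) \circ \Lambda_2 = \Lambda \circ \el(\gamma_D)$. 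This follows from the definition of $\gamma$ in \eqref{eq:chi-gamma} as the composite $\chi_{\gamma_D} \circ \mu^{-1}$, after applying $\el$ and unpacking.

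With this diagram in hand, I would verify that the external tensor products $\hat \otimes$ on the two sides are compatible, i.e., for $F_1, F_2 \colon \el(\chi_D) \to \C$ there is a natural isomorphism $(F_1 \hat \otimes F_2) \circ \Lambda_2 \cong (F_1 \circ \Lambda) \hat \otimes (F_2 \circ \Lambda)$. Because $\chi_D$ is a coproduct of the $D(x)$ over elements $x$ in the $\Set$-valued setting (Corollary~\ref{cor:chi-is-index-enrich} in the discrete case, with the $\act^{op} \times \act$-action threading through $\Lambda$ in general), the $Pair$-subcategories used to define $\hat \otimes$ correspond term-by-term under $\Lambda_2$, so the defining co--wedges match. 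Pasting these two compatibilities into the left Kan extensions that define $\epl_D$ and $\epl$, uniqueness of Kan extensions yields the natural isomorphism $(F_1 \epl F_2) \circ \Lambda \cong (F_1 \circ \Lambda) \epl_D (F_2 \circ \Lambda)$, which is the coherence for the monoidal product.

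The unit and constraint checks proceed by the same template: the respective units are obtained as left Kan extensions of the trivial functor along the corresponding unit maps, and these maps are matched by $\Lambda$; the associativity and unit constraints then transfer automatically by naturality of the defining Kan extensions. The main obstacle I anticipate is verifying the compatibility of the external tensor products $\hat \otimes$, because the defining co--wedge couples the bimodule action of $\act^{op} \times \act$ with the element structure of $D$, and one must check carefully that $\Lambda_2$ preserves all of the diagonal invariance relations underlying the coend.
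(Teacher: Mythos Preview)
Your proposal is correct and follows essentially the same approach as the paper: both promote the isomorphism $\Lambda$ to the functor categories via precomposition, build the companion isomorphism $\Lambda_2 \colon \el(D \epl D) \overset{\sim}{\to} \el(\chi_D \pl \chi_D)$ through $\el(\chi_{D \epl D})$ using $\mu$, observe that the two multiplication functors fit into a commuting ladder, and then invoke uniqueness of left Kan extensions to obtain the monoidal coherence isomorphism. The paper's proof is in fact more terse than yours---it presents only the ladder diagram and omits the unit and constraint checks you outline---so your more careful treatment of the $\hat\otimes$ compatibility and of the unit is a welcome elaboration rather than a deviation.
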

\begin{proof}
  The isomorphism of the previous lemma induces an isomorphism
  \begin{equation}
    \Lambda^*: \Fun(\el( \chi_D), \C) \to \Fun(\el(D), \C)
  \end{equation}
  It remains to establish the following isomorphism for any pair $F_1, F_2: \el(\chi_D) \to \C$:
  \begin{equation}
    \Lambda^* F_1 \epl_D \Lambda^* F_2 \To \Lambda^*(F_1 \epl F_2)
  \end{equation}
  This follows immediately from the uniqueness of left Kan extensions:
  \begin{equation}
    \begin{tikzcd}
      \el(D \epl D) \arrow[r, "\sim"] \arrow[d, "\gamma_D"'] & \el(\chi_{D \epl D}) \arrow[r, "\sim"] \arrow[d, "\chi_{\gamma_D}"] & \el(\chi_D \pl \chi_D) \arrow[r, "F_1 \hat \otimes F_n"] \arrow[d, "\tilde \gamma"'] & \C \\
      \el(D) \arrow[r, "\Lambda"'] & \el(\chi_D) \arrow[r, equal] & \el(\chi_D) \arrow[ru, "F_1 \epl F_2"'] &
    \end{tikzcd}
  \end{equation}
\end{proof}

\subsection{Decorated basic element representations}

For non-Cartesian decorations, it is necessary to decorate by decomposed elements.
The same thing is true in the bimodule context: we must work with basic representations $\el(\nu) \to \C$ for a $\Set$-valued bimodule $\rho: \act^{\rop} \times \act \to \Set$ with the unique factorization property with respect to $\nu$.

\begin{definition}
  For an element representation $D: \el(\nu) \to \Set$, define a \defn{basic decorated element representation} to be a functor $E: \el(D) \to \C$.
\end{definition}

\begin{example}
  Let $\nu: \Iso(\FinSet)^{\rop} \times \Iso(\FinSet) \to \Set$ be the basic bimodule for finite sets.
  Define $\P: \el(\nu) \to \Set$ so that for a corolla $x \in \nu(A, \{b\})$, we define $\P(x)$ to be the set of orders on $A$.
  Now we can picture the objects of the element category $\el(\P)$ as corollas with an ordered set of in-tails $A$ and a single out-tail $b$.
\end{example}

\subsubsection{Decorated basic plethysm}

\begin{definition}
  Suppose there is a monoid structure $D \bepl D \to D$ and two decorated element representations $E_1, E_2: \el(D) \to \C$, define the \defn{decorated element plethysm} $E_1 \bepl_D E_2$ as the following left Kan extension:
  \begin{equation}
    \begin{tikzcd}
      \el(D \bepl D)
      \arrow[rd, "\el(\gamma_D)"']
      \arrow[rr, "E_1 \hat \otimes E_2"]
      & & \C \\
      & \el(D)
      \arrow[ru, "E_1 \bepl_D E_2 := \Lan_{\el(\gamma_D)}(E_1 \hat \otimes E_2)"', dashed] &
    \end{tikzcd}
  \end{equation}
\end{definition}

\begin{thm}
\label{thm:pluseltbasic}
  Given the monoid structure $\gamma_D: D \bepl D \to D$, the following monoidal categories are equivalent:
  \begin{enumerate}
  \item The monoidal category of decorated basic element representations $E: \el(D) \to \C$ with monoidal product $\bepl_D$ with $\gamma_D$ as the underlying monoid structure.
  \item The monoidal category of (ordinary) element representations $F: \el(\chi_D) \to \C$ with $\bepl$ as a monoidal product.
  \end{enumerate}
\end{thm}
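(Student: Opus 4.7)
The plan is to mirror the argument for Theorem~\ref{thm:pluselement}, adapted so that the ``basic'' horizontal structure is tracked throughout. The key inputs are (i) the category isomorphism $\Lambda:\el(D)\to\el(\chi_D)$ already used in Section~\ref{sec:dec-enrich}, and (ii) the compatibility results Lemma~\ref{chi-otimes-commute} and Proposition~\ref{diamond-uf-property} which say that $\chi$ intertwines $(-)^{\otimes}$ and converts $\bepl$ of basic element representations into the plethysm on their horizontal extensions.

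First I would argue that the target monoidal structure makes sense, i.e.\ that $\chi_D$ inherits a ``basic'' structure allowing $\bepl$ to be applied to functors out of $\el(\chi_D)$. By Lemma~\ref{chi-otimes-commute} we have $\chi_D^{\otimes}\simeq \chi_{D^{\otimes}}$, so $\chi_D$ has a basic bimodule obtained from $D$ itself, and the monoid $\gamma_D:D\bepl D\Rightarrow D$ is transported along $\chi$ to a monoid on $\chi_D$ whose composition factor $D\bepl D\Rightarrow D$ corresponds, via Proposition~\ref{diamond-uf-property} and the left Kan extension property, to the basic monoid structure on $\chi_D$. This is exactly what is needed for $\bepl$ to be defined on element representations of $\chi_D$.

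Second, pullback along $\Lambda$ gives an isomorphism of functor categories
\[
\Lambda^*:\Fun(\el(\chi_D),\C)\xrightarrow{\ \sim\ } \Fun(\el(D),\C).
\]
To upgrade this to a monoidal equivalence one must produce a natural isomorphism
\[
\Lambda^* F_1\ \bepl_D\ \Lambda^* F_2\ \xRightarrow{\sim}\ \Lambda^*(F_1\bepl F_2)
\]
and check the associator/unit coherences. For the comparison isomorphism I would set up the diagram
\[
\begin{tikzcd}
\el(D\bepl D) \arrow[r,"\sim"] \arrow[d,"\el(\gamma_D)"'] & \el(\chi_{D\bepl D}) \arrow[r,"\sim"] \arrow[d,"\chi_{\gamma_D}"] & \el(\chi_D\bepl\chi_D) \arrow[r,"F_1\hat\otimes F_2"] \arrow[d] & \C \\
\el(D) \arrow[r,"\Lambda"'] & \el(\chi_D) \arrow[r,equal] & \el(\chi_D) \arrow[ru,"F_1\bepl F_2"'] &
\end{tikzcd}
\]
where the top-row isomorphisms come from the version of $\Lambda$ applied to $D\bepl D$ together with Lemma~\ref{chi-otimes-commute} (which provides $\chi_{D\bepl D}\simeq \chi_D\bepl\chi_D$ under the appropriate unique-factorization identification). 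Then the desired natural isomorphism falls out of the uniqueness of left Kan extensions, exactly as in Theorem~\ref{thm:pluselement}.

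The hard part, and where I expect the main obstacle, is the bookkeeping of the basic versus full horizontal structure along this diagram: one must verify that the left Kan extension defining $\bepl_D$ (which is over $\el(\gamma_D):\el(D\bepl D)\to\el(D)$) is carried by $\Lambda$ and the $\chi$-compatibilities to the left Kan extension defining $\bepl$ on the $\chi_D$-side, and that the basic pairs bimodule and the monoid $\chi_{\gamma_D}$ line up correctly. Once this identification is in place, the coherence axioms for associator and unit transport across the equivalence formally, so that $\Lambda^*$ together with the constructed natural isomorphism yields the claimed monoidal equivalence.
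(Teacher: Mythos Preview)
Your approach is essentially the same as the paper's: mirror the proof of Theorem~\ref{thm:pluselement} via the isomorphism $\Lambda$ and a Kan-extension diagram, then conclude by uniqueness of left Kan extensions. The paper's own proof is in fact terser than yours---it simply says the argument of Theorem~\ref{thm:pluselement} ``goes through with routine modifications'' and displays the analogous diagram.

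One correction: in your diagram the node $\el(\chi_D\bepl\chi_D)$ is ill-formed, since $\bepl$ is an operation on basic element representations, not on bimodules. The paper puts $\el(\beta_D)$ there, where $\beta_D$ is the basic-pairs bimodule for $\chi_D$ coming from the hereditary condition; this is exactly the domain over which $F_1\bepl F_2$ is defined as a left Kan extension. Your invocation of Lemma~\ref{chi-otimes-commute} to get ``$\chi_{D\bepl D}\simeq \chi_D\bepl\chi_D$'' is a slight overreach---what that lemma gives is $\chi_D^{\otimes}\simeq\chi_{D^{\otimes}}$, which is the input needed to identify $\chi_D$ as the basic bimodule; the identification $\el(\chi_{D\bepl D})\simeq\el(\beta_D)$ then comes from the hereditary structure, as you yourself flag in your ``hard part'' paragraph. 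With that node replaced by $\el(\beta_D)$ your argument matches the paper's.
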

\begin{proof}
  The proof of Theorem~\ref{thm:pluselement} goes through with routine modifications.
  This time, the Kan extension diagram looks like this:
  \begin{equation}
    \begin{tikzcd}
      \el(D \bepl D) \arrow[r, "\sim"] \arrow[d, "\gamma_D"'] & \el(\chi_{D \bepl D}) \arrow[r, "\sim"] \arrow[d, "\chi_{\gamma_D}"] & \el(\beta_D) \arrow[r, "F_1 \hat \otimes F_n"] \arrow[d, "\tilde \gamma"'] & \C \\
      \el(D) \arrow[r, "\Lambda"'] & \el(\chi_D) \arrow[r, equal] & \el(\chi_D) \arrow[ru, "F_1 \epl F_2"'] &
    \end{tikzcd}
  \end{equation}
  where $\beta_D$ is basic bimodule coming from the hereditary condition.
\end{proof}

\section{Algebras}
\label{sec:algebras}

There are two notions of an algebra which generalize the classical notions, those given by natural transformations to the Hom operad \cite{MSS,feynman} and those given by
the functors out of an indexed enrichment such as those described in \cite[Section~4.1]{feynman} and \cite[Section~2.3--2.5]{KMoManin} which are available for relative bimodules.
We will show that both of these notions can be described in the language of bimodules and show that they are equivalent.

\subsection{Algebras of element representations}\label{sec:algebra-el-rep}

Representations of monoidal bimodules are most naturally viewed as natural transformations to a reference functor.
This specializes to the fact that algebras of general operad--like structures are defined relative to some other structure, namely a Hom functor, see Example~\ref{ex:referenceindex}.
For element representations there are two versions for algebras, which we prove to be equivalent.
The second one corresponds to that of a functor out of an indexed enrichment, see Example~\ref{ex:algindexenrich}.

\subsubsection{Algebra structure}
 
\begin{definition}\label{reference}
   A \defn{reference bimodule} is a chosen bimodule $\alpha: \act^{\rop} \times \act \to \C$.
   For a fixed structure bimodule $\rho: \act^{\rop} \times \act \to \Set$, 
   a \defn{$\rho$-algebra} is a natural transformation $\rho \To \alpha$.
 
   The bimodule $\alpha$ also determines the \defn{$\alpha$-reference element representation} $E_{\alpha}$ in $\C$ as the following composition:
   \begin{equation}
     \el(\rho) \overset{\Sigma}{\to} \act^{\rop} \times \act \overset{\alpha}{\to} \C
   \end{equation}
\end{definition}

\begin{definition}\label{O-reference}
  It is common for the reference bimodule $\alpha$ to consist of a $\Gpd$-module structure $\O: \Gpd \to \Iso(\C)$ for a groupoid $\Gpd$ combined with the canonical bimodule of a category.
  Because of this, we will single out the following special case:
  \begin{enumerate}
  \item The \defn{$\O$-reference bimodule} $\alpha_\O$ in a category $\C$ is the following composition:
    \begin{equation}
      \Gpd^{\rop} \times \Gpd \overset{\O^{\rop} \times \O}{\to} \Iso(\C)^{\rop} \times \Iso(\C) \overset{\rho_\C}{\to} \Set
    \end{equation}
  \item The \defn{$\O$-reference element representation} $E_{\O}$ in $\C$ is the following composition:
    \begin{equation}
      \el(\rho) \overset{\Sigma}{\to} \Gpd^{\rop} \times \Gpd \overset{\alpha_{\O}}{\to} \Set
    \end{equation}
  \end{enumerate}
\end{definition}

\begin{example}
  \label{ex:cospan-algebra}
  Fix the groupoid $\act = \SS$ and consider the bimodule $\rho: \SS^{\rop} \times \SS \to \Set$ defined so that $\rho(n,m)$ is the set of equivalence classes of cospans $\{1, \ldots n\} \rightarrow V \leftarrow \{1, \ldots, m\}$.
  Assuming $\C$ is a monoidal category, define a functor $\O: \SS \to \C$ so that $\O(n) := A^{\otimes n}$ for a chosen object $A$ in $\C$ and give $\O(n)$ the natural $\SS_n$-action.
  In this case, the $\O$-reference element representation assigns each cospan $x \in \rho(n,m)$ to an object $E_\O(x) = \rho_{\C}(A^{\otimes n}, A^{\otimes m})$.
  Hence $E_{\O}$ can be understood as the analog of the endomorphism PROP in $\C$.
\end{example}

\begin{definition}
  Fix an element representation $D: \el(\rho) \to \E$.
  \begin{enumerate}
  \item Define a \defn{$D$-algebra (as an element representation)} to be a morphism of element representations $D \To E_\alpha$.
  \item Define a \defn{$D$-algebra (as a bimodule)} to be a morphism of bimodules $\chi_D \To \alpha$.
  \end{enumerate}
\end{definition}

\begin{example}
\label{ex:referenceindex}
  Consider these definitions in the special case of Definition~\ref{O-reference}:
  \begin{enumerate}
  \item A choice of morphism $D \To E_{\O}$ is similar to the usual definition of a $\P$-algebra as an operad morphism $\P \to \mathcal{E}nd$ where $\mathcal{E}nd$ is the endomorphism operad.
  \item Unpacking the definition, a morphism $\chi_D \To \rho_{\O}$ is a family of maps $\chi_D(A, B) \to \rho_{\C}(\O A, \O B)$.
    This is similar to the definition of an algebra in terms of an indexed enrichment with the element category serving as the analog of the plus construction and $\chi$ serving as the analog of an indexed enrichment.
    In particular, if $\chi_D$ is a monoid,
    this data determines a functor of $\arbcat(\chi_D,\gamma)\to \C$. On the underlying groupoid of objects it is given by $\O$ and on the morphisms by the natural transformation.
  \end{enumerate}

\end{example}

\begin{thm}
  \label{thm:algebras-el-rep}
  There is a one-to-one correspondence between $D$-algebras as element representations and $D$-algebras as bimodules.
\end{thm}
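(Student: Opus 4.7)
The plan is to exhibit this bijection as an immediate instance of the universal property of the left Kan extension that defines $\chi_D$. By construction, $\chi_D = \Lan_\Sigma D$, and $E_\alpha = \alpha \circ \Sigma$ by Definition~\ref{reference}. Therefore what we want to show, namely
\begin{equation*}
  \Hom_{\Fun(\el(\rho),\C)}(D, E_\alpha) \;\cong\; \Hom_{\Fun(\act^{op}\times\act,\C)}(\chi_D, \alpha),
\end{equation*}
is exactly the adjunction isomorphism $\Hom(\Lan_\Sigma D, \alpha) \cong \Hom(D, \alpha \circ \Sigma)$ provided by the universal property.

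Concretely, I would spell out the correspondence in both directions so that it is transparent. Given a $D$-algebra as element representation $\kappa: D \Rightarrow E_\alpha = \alpha \circ \Sigma$, the universal property produces a unique bimodule morphism $\tilde\kappa: \chi_D \Rightarrow \alpha$ such that $\tilde\kappa \circ \Sigma$ precomposed with the component $\lambda: D \Rightarrow \chi_D \circ \Sigma$ recovers $\kappa$, i.e.\ the diagram
\begin{equation*}
  \begin{tikzcd}
    D \arrow[r,"\lambda",Rightarrow] \arrow[rd,"\kappa"',Rightarrow]
    & \chi_D \circ \Sigma \arrow[d,"\tilde\kappa \circ \Sigma",Rightarrow] \\
    & \alpha \circ \Sigma
  \end{tikzcd}
\end{equation*}
commutes. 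Conversely, any bimodule morphism $\tilde\kappa: \chi_D \Rightarrow \alpha$ gives back an element representation morphism via $\kappa := (\tilde\kappa \circ \Sigma) \circ \lambda$. These two assignments are mutually inverse by the uniqueness clause of the universal property.

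I expect no serious obstacle: the statement is, in essence, a repackaging of the defining property of $\chi_D$ as a left Kan extension. The only substantive thing to check is that the identification is natural in the data; this is immediate because the unit $\lambda$ of the Kan extension is natural, and the adjunction $\Lan_\Sigma \dashv \Sigma^*$ on functor categories supplies the naturality automatically. Should one prefer to avoid invoking the Kan-extension adjunction abstractly, one can verify both triangle identities by hand: starting from $\kappa$, building $\tilde\kappa$, and precomposing with $\lambda$ gives back $\kappa$ by the defining triangle of $\Lan_\Sigma$; starting from $\tilde\kappa$, forming $(\tilde\kappa \circ \Sigma) \circ \lambda$, and then extending by the universal property returns $\tilde\kappa$ by uniqueness.
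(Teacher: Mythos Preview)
Your proposal is correct and takes essentially the same approach as the paper: both invoke the universal property of the left Kan extension defining $\chi_D$ to obtain the adjunction $\Hom(\chi_D,\alpha) \cong \Hom(D,\alpha\circ\Sigma) = \Hom(D,E_\alpha)$. The paper's proof is simply a terser statement of the same argument, while you have spelled out the unit $\lambda$ and the inverse constructions explicitly.
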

\begin{proof}
  Since $\chi_D$ is a left Kan extension and $E_\alpha = \alpha \circ \Sigma$ by definition, we have the following adjunction between the two types of structure morphisms:
  \begin{align*}
    D &\To E_{\alpha} \\
    \chi_D &\To \alpha \qedhere
  \end{align*}
  \end{proof}

\subsubsection{Compatibility of monoid structures}

Now assume that the structure bimodule $\rho$ and the reference bimodule $\alpha$ have a monoid structure.
An algebra for an element plethysm monoid $(D, \gamma_D)$ is an element representation algebra which is compatible with the monoid structure of the reference bimodule.
We will again show that there are two equivalent ways of formulating this.

\begin{lemma}\label{E-plethysm-compatibility}
  For a given $\alpha$, we have $E_{\alpha \pl \alpha} \cong E_{\alpha} \epl E_\alpha$.
  Hence a multiplication $\gamma: \alpha \pl \alpha \To \alpha$ induces a morphism $E_{\gamma}: E_{\alpha} \epl E_{\alpha} \To E_{\alpha}$.
\end{lemma}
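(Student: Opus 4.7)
The plan is to verify the isomorphism by combining the universal property of the left Kan extension defining $E_\alpha\epl E_\alpha$ with a pointwise coend computation, and then to deduce the ``hence'' clause as a formal consequence.

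The starting observation is that $\Sigma_\rho\circ\el(\gamma)=\Sigma_{\rho\pl\rho}$ as functors $\el(\rho\pl\rho)\to\act^{op}\times\act$: both send a composable pair $[x,y]$ with $x\in\rho(A,B)$ and $y\in\rho(B,C)$ to the outer pair $(A,C)$. Therefore $E_{\alpha\pl\alpha}\circ\el(\gamma)=(\alpha\pl\alpha)\circ\Sigma_{\rho\pl\rho}$, which at $[x,y]$ gives the coend $\int^{B'}\alpha(A,B')\otimes\alpha(B',C)$. The universal wedge component $\alpha(A,B)\otimes\alpha(B,C)\to(\alpha\pl\alpha)(A,C)$ is compatible with the co-wedge that generates $(E_\alpha\hat\otimes E_\alpha)[x,y]$, because both systems of identifications are indexed by the same $\act$-morphisms $\sigma\colon B\to B'$ and act via the bimodule structure of $\alpha$. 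This produces a natural transformation $\theta\colon E_\alpha\hat\otimes E_\alpha\Rightarrow E_{\alpha\pl\alpha}\circ\el(\gamma)$, which by the universal property of the Kan extension factors uniquely through a natural transformation $\hat\theta\colon E_\alpha\epl E_\alpha\Rightarrow E_{\alpha\pl\alpha}$.

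To check that $\hat\theta$ is a pointwise isomorphism I would use the pointwise coend formula
\[
(E_\alpha\epl E_\alpha)(z)=\int^{[x,y]\in\el(\rho\pl\rho)}\Hom_{\el(\rho)}(\el(\gamma)[x,y],z)\odot(E_\alpha\hat\otimes E_\alpha)[x,y]
\]
for $z\in\rho(A,C)$. Because $\Sigma_\rho$ is a discrete opfibration, the $\Hom_{\el(\rho)}$-set reduces to those $(\sigma\biact\tau)$ in $\act^{op}\times\act$ whose $\rho$-action carries $\gamma[x,y]$ to $z$. Substituting the description of $\hat\otimes$ as a colimit over $\mathrm{Pair}([x,y])$ and applying Fubini lets me interchange the coends so that the resulting iterated colimit is organized by the middle object $B$ alone. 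After this rearrangement the redundancy coming from summing over all representatives of each composable class collapses via the co-wedge identifications and the unique opfibration lifts, leaving $\int^{B}\alpha(A,B)\otimes\alpha(B,C)=(\alpha\pl\alpha)(A,C)=E_{\alpha\pl\alpha}(z)$.

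The main obstacle is this final rearrangement: aligning the iterated coend (weighted by $\el(\rho)$-hom sets) with the single coend defining $\alpha\pl\alpha$, and verifying that no extra identifications survive beyond those already present in the wedge for $\alpha\pl\alpha$. Once $\hat\theta$ is proven to be an isomorphism, the second sentence of the lemma is automatic: any multiplication $\gamma\colon\alpha\pl\alpha\Rightarrow\alpha$ whiskers with $\Sigma_\rho$ to give $\gamma\Sigma_\rho\colon E_{\alpha\pl\alpha}\Rightarrow E_\alpha$, and precomposing with $\hat\theta$ yields the desired morphism $E_\gamma\colon E_\alpha\epl E_\alpha\Rightarrow E_\alpha$.
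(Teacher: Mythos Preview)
Your proposal is correct, and your first step—constructing the comparison transformation $\theta\colon E_\alpha\hat\otimes E_\alpha\Rightarrow E_{\alpha\pl\alpha}\circ\el(\gamma)$ from the universal co-wedge of $\alpha\pl\alpha$—is exactly the map $p$ that the paper uses. The divergence is in how the isomorphism is established. You invoke the universal property of the Kan extension to produce $\hat\theta$, and then plan to verify it is an isomorphism by expanding $(E_\alpha\epl E_\alpha)(z)$ via the pointwise coend formula and performing a Fubini rearrangement to collapse it to $\int^{B}\alpha(A,B)\otimes\alpha(B,C)$. The paper instead shows directly that $(\alpha\pl\alpha)\Sigma$, together with $p$, \emph{is} the left Kan extension: given any $D\colon\el(\rho)\to\C$ and any natural transformation $(\alpha\Sigma)\hat\otimes(\alpha\Sigma)\Rightarrow D\circ\el(\gamma)$, one unpacks it as a family $\alpha(A,B)\otimes\alpha(B,C)\to D(z)$ indexed by factorizations $\gamma[x,y]=z$, which assembles into a unique $(\alpha\pl\alpha)\Sigma\Rightarrow D$ factoring through $p$. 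This sidesteps the pointwise computation entirely—the step you flag as ``the main obstacle'' simply does not arise. Your route works and makes the isomorphism explicit at each $z$, but the paper's direct verification of the universal property is shorter and avoids the bookkeeping of matching the iterated colimit against the single coend.
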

\begin{proof}
  For a pair of elements $x,y$ such that $\Sigma(x) = (A,B)$ and $\Sigma(y) = (B,C)$, define $p_{x,y}$ below using the definition of the plethysm product.
  \begin{equation}
    p_{x,y}: \alpha(A,B) \otimes \alpha(B,C) \to (\alpha \pl \alpha)(A,C)
  \end{equation}
  This can be adapted to a family $p_{[x,y]}$ that runs over composable pairs $[x,y] \in \el(\rho \pl \rho)$.
  Moreover, these assemble into a natural transformation $p: (\alpha\Sigma) \hat\otimes (\alpha\Sigma) \To (\alpha \pl \alpha)\Sigma \circ \el(\gamma)$.
  Since $E_{\alpha \pl \alpha} = (\alpha \pl \alpha)\Sigma$ by definition, it suffices to show that Diagram~\eqref{eq:E-plethysm-compatibility} is a left Kan extension.
  \begin{equation}
    \label{eq:E-plethysm-compatibility}
    \begin{tikzcd}
      \el(\rho \pl \rho)
      \arrow[rd, "\el(\gamma)"']
      \arrow[rr, "(\alpha\Sigma) \hat\otimes (\alpha\Sigma)"]
      & & \C \\
      & \el(\rho)
      \arrow[ru, "(\alpha \pl \alpha)\Sigma"'] &
    \end{tikzcd}
  \end{equation}

  Consider an element representation $D: \el(\rho) \to \C$ together with a natural transformation $(\alpha\Sigma) \hat\otimes (\alpha\Sigma) \To D \circ \el(\gamma)$.
  Similar to before, this corresponds to a family of morphisms $\alpha(A,B) \otimes \alpha(B,C) \to D(z)$ running over $\gamma[x,y] = z$.
  This canonically determines a morphism $(\alpha \pl \alpha)\Sigma \To D$ of element representations that factors through $p$.
\end{proof}

\begin{definition}
  Fix an element representation $D: \el(\rho) \to \E$ with a multiplication.
  \begin{enumerate}
  \item Using Lemma~\ref{E-plethysm-compatibility}, define a \defn{$D$-algebra (as an element representation monoid)} to be a morphism of element representations $\kappa: D \To E_\alpha$ making the following diagram commute:
    \begin{equation}
      \begin{tikzcd}
        D \epl D \arrow[r] \arrow[d, "\kappa \pl \kappa"']
        & D \arrow[d, "\kappa"] \\
        E_\alpha \epl E_\alpha \arrow[r]
        & E_{\alpha}
      \end{tikzcd}
    \end{equation}
  \item Define a \defn{$D$-algebra (as a bimodule monoid)} to be a morphism of bimodules $\kappa: \chi_D \To \alpha$ making the following diagram commute:
    \begin{equation}
      \begin{tikzcd}
        \chi_D \pl \chi_D
        \arrow[r]
        \arrow[d, "\kappa \pl \kappa"']
        & \chi_D \arrow[d, "\kappa"] \\
        \alpha \pl \alpha \arrow[r]
        & \alpha
      \end{tikzcd}
    \end{equation}
  \end{enumerate}
\end{definition}

\begin{thm}
  \label{thm:algebras-monoids}
  There is a one-to-one correspondence between $D$-algebras as element representation monoids and $D$-algebras as bimodule monoids.
\end{thm}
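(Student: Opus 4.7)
The plan is to upgrade Theorem~\ref{thm:algebras-el-rep}, which provides a bijection $\kappa \leftrightarrow \tilde\kappa$ between natural transformations $\kappa : D \Rightarrow E_\alpha$ and $\tilde\kappa : \chi_D \Rightarrow \alpha$, to a bijection of \emph{monoid} morphisms by tracking how the two multiplicative structures correspond under the Kan extension adjunction that identifies $\chi_D = \Lan_\Sigma D$ with $E_\alpha = \alpha \circ \Sigma$. Since that underlying bijection is already in place, the work is entirely in checking that the $\epl$-compatibility square for $\kappa$ translates into the $\pl$-compatibility square for $\tilde\kappa$ and vice versa.

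First I would make explicit how the two monoid structures are linked. Lemma~\ref{pl-epl-iso} gives a natural isomorphism $\mu : \chi_D \pl \chi_D \overset{\sim}{\Rightarrow} \chi_{D \epl D}$, so the element-representation multiplication $\gamma_D : D \epl D \Rightarrow D$ transports to the bimodule multiplication $\chi_{\gamma_D} \circ \mu : \chi_D \pl \chi_D \Rightarrow \chi_D$. Dually, Lemma~\ref{E-plethysm-compatibility} produces $E_\alpha \epl E_\alpha \cong E_{\alpha \pl \alpha}$, through which $\gamma : \alpha \pl \alpha \Rightarrow \alpha$ induces $E_\gamma : E_\alpha \epl E_\alpha \Rightarrow E_\alpha$. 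The coherence I would establish next is that these two isomorphisms are conjugate under the Kan adjunction: both arise from universal properties of left Kan extensions through $\el(\rho \pl \rho)$, so uniqueness of Kan extensions guarantees that the adjoint transpose of $E_\gamma$ is precisely the composite $\chi_{\gamma_D} \circ \mu$.

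Given this coherence, the forward direction is a diagram chase. Starting from $\kappa$ satisfying the $\epl$-square, whisker the square with $\Sigma$ and take the left Kan extension along $\Sigma$; applying naturality of $\mu$ on the top edge and Lemma~\ref{E-plethysm-compatibility} on the bottom edge yields a commutative square showing that $\tilde\kappa$ intertwines $\chi_{\gamma_D} \circ \mu$ with $\gamma$, which is exactly the $\pl$-compatibility condition for a bimodule-monoid morphism. The reverse direction is the mirror argument: given $\tilde\kappa$ satisfying the $\pl$-square, precompose with the Kan extension unit $D \Rightarrow \chi_D \circ \Sigma$ to recover $\kappa$, then whisker the $\pl$-square with $\Sigma$ and invoke the same two lemmas in the opposite direction to produce the $\epl$-square. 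Since Theorem~\ref{thm:algebras-el-rep} already guarantees $\kappa \mapsto \tilde\kappa$ and $\tilde\kappa \mapsto \kappa$ are mutually inverse at the level of underlying natural transformations, the added compatibility condition is simply transported along that bijection.

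The main obstacle will be the coherence check linking the three structural isomorphisms: the $\mu$ of Lemma~\ref{pl-epl-iso}, the $E_\alpha \epl E_\alpha \cong E_{\alpha \pl \alpha}$ of Lemma~\ref{E-plethysm-compatibility}, and the Kan adjunction unit/counit. The diagram chase itself is routine once this is in hand, but making precise how the $\epl$-square rewrites into the $\pl$-square requires expanding both sides as coends over $\el(\rho \pl \rho)$ and comparing the universal co--wedges factor by factor through $\el(\gamma)$. Unitality of the correspondence, when a unit is present, is then immediate by the same adjunction argument applied to the unit diagrams of Section~\ref{sec:bimodule-monoids}.
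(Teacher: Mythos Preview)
Your proposal is correct and uses essentially the same ingredients as the paper: Theorem~\ref{thm:algebras-el-rep} for the underlying bijection, Lemma~\ref{pl-epl-iso} for $\mu$, Lemma~\ref{E-plethysm-compatibility} for $E_\alpha \epl E_\alpha \cong E_{\alpha\pl\alpha}$, and the Kan adjunction. The paper organizes the compatibility check more cleanly as a linear chain of four equivalent commuting squares---one passes from the $\epl$-square to a square with $E_{\alpha\pl\alpha}$ via Lemma~\ref{E-plethysm-compatibility}, then to a square with $\chi_{D\epl D}$ via the Kan adjunction, then to the $\pl$-square via Lemma~\ref{pl-epl-iso}---which sidesteps the separate coherence verification you flag as the main obstacle; each step is a direct application of one lemma, so no extra coend comparison is needed.
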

\begin{proof}
  By Theorem~\ref{thm:algebras-el-rep}, there is a one-to-one correspondence between the two types of structure maps.
  For the compatibility relations, consider the following commutative diagrams:
  \begin{figure}[h]
    \centering
    \begin{subfigure}{0.22\textwidth}
      \centering
      \begin{tikzcd}
        D \epl D \arrow[r] \arrow[d]
        & D \arrow[d] \\
        E_\alpha \epl E_\alpha \arrow[r]
        & E_{\alpha}
      \end{tikzcd}
      \caption{}
    \end{subfigure}
    \begin{subfigure}{0.22\textwidth}
      \centering
      \begin{tikzcd}
        D \epl D \arrow[r] \arrow[d]
        & D \arrow[d] \\
        E_{\alpha\pl\alpha} \arrow[r]
        & E_{\alpha} 
      \end{tikzcd}
      \caption{}
    \end{subfigure}
    \begin{subfigure}{0.22\textwidth}
      \centering
      \begin{tikzcd}
        \chi_{D \epl D} \arrow[r] \arrow[d]
        & \chi_D \arrow[d] \\
        \alpha\pl\alpha \arrow[r]
        & \alpha
      \end{tikzcd}
      \caption{}
    \end{subfigure}
    \begin{subfigure}{0.22\textwidth}
      \centering
      \begin{tikzcd}
        \chi_{D} \pl \chi_{D} \arrow[r] \arrow[d]
        & \chi_D \arrow[d] \\
        \alpha\pl\alpha \arrow[r]
        & \alpha
      \end{tikzcd}
      \caption{}
    \end{subfigure}
  \end{figure}
  
  Lemma~\ref{E-plethysm-compatibility} allows us to go between (A) and (B).
  The adjunction given by the left Kan extension allows us to go between (B) and (C).
  Lemma~\ref{pl-epl-iso} allows us to go between (C) and (D).
  \end{proof}
  
 \begin{ex}
 \label{ex:algindexenrich}
 In the situation of Example~\ref{ex:referenceindex}, if $\C$ is monoidal, this is equivalent to the algebraic notion of a module, \cite[Section 2.2.3]{KMoManin}. That is there is an action 
 \begin{equation}
   A:  \O\pl\rho\to \O
 \end{equation}
 which sends an element $x\in \rho_{\C}(\unit,\O(X))$ and an element $\phi\in \rho(X,Y)$
 to $\gamma_{\rho_\C}(x,N(\phi))\in \O(Y)$.

 \end{ex} 
  Thus the two notions of an algebra generalize the classical notions, in particular those of algebras over operads using plethysm or natural transformations to the Hom operad \cite{MSS,feynman}. If $\C$ is monoidal, this is then equivalent to the algebraic notion of a module, \cite[Section~2.2.3]{KMoManin}.
    This also identifies the  reference functors of  \cite[Section~6.1]{DDecDennis} as special reference bimodules.

\subsection{Basic algebras}\label{sec:basic-algebras}

For a bimodule $\rho: \act^{\rop} \times \act \to \Set$ with the unique factorization property with respect to $\nu: \act^{\rop} \times \act \to \Set$, there is also a notion of an algebra of a basic element representation $D: \el(\nu) \to \C$.

\subsubsection{Algebra structure}

\begin{definition}
\label{def:strongalgebra}
  A \defn{(strong) algebra} of a basic element representation $D: \el(\nu) \to \C$ can be defined as either:
  \begin{enumerate}
  \item A morphism $\chi_{D} \To \alpha$ of bimodules.
  \item A morphism $D \To e_{\alpha}$ of basic element representations where $e_{\alpha}$ is the composition $\el(\nu) \overset{\iota}{\to} \el(\rho) \overset{E_{\alpha}}{\to} \C$.
  \end{enumerate}
\end{definition}

\begin{prop}\label{prop:strongequiv}
  The two definitions of strong algebras are equivalent.
\end{prop}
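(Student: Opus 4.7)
The plan is to mirror the adjunction argument of Theorem~\ref{thm:algebras-el-rep}, replacing the fibration $\Sigma_\rho$ of the full element category by its basic analog. The only genuinely new ingredient is the compatibility of the two fibrations under the canonical inclusion $\iota:\el(\nu)\hookrightarrow\el(\rho)$, which makes the universal property of a single left Kan extension do all the work.

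First I would unpack the two sides. Following the construction of Section~\ref{sec:basic-el-rep}, the characteristic bimodule of a basic element representation $D:\el(\nu)\to\C$ is the left Kan extension $\chi_D=\Lan_{\Sigma_\nu}(D)$, where $\Sigma_\nu:\el(\nu)\to\act^{op}\times\act$ sends a pointing $x:1_\C\to\nu(A,B)$ to the pair $(A,B)$. On the other side, by Definition~\ref{def:strongalgebra}, the basic reference is
\[
e_\alpha \;=\; E_\alpha\circ\iota \;=\; \alpha\circ\Sigma_\rho\circ\iota.
\]

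The key observation is that $\Sigma_\rho\circ\iota=\Sigma_\nu$: an object of $\el(\nu)$ is a pointing $x:1_\C\to\nu(A,B)$, its image under $\iota$ is the corresponding pointing inside $\el(\rho)$, and applying $\Sigma_\rho$ recovers $(A,B)$. Hence $e_\alpha=\alpha\circ\Sigma_\nu$, and the defining universal property of the Kan extension $\chi_D=\Lan_{\Sigma_\nu}(D)$ yields natural bijections
\[
\Hom(\chi_D,\alpha)\;\cong\;\Hom(D,\alpha\circ\Sigma_\nu)\;=\;\Hom(D,e_\alpha),
\]
which is precisely the correspondence between the two formulations of a strong algebra. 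Naturality in $\alpha$ is automatic from the adjunction, so the two notions are equivalent as objects of the respective slice categories.

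The only point requiring genuine attention is to be clear about which $\chi_D$ is meant in Definition~\ref{def:strongalgebra}: it is the Kan extension along $\Sigma_\nu$ directly, not the horizontally extended bimodule $\chi_{D^\otimes}\cong(\chi_D)^\otimes$ of Lemma~\ref{chi-otimes-commute} that appears in the lax algebra of Definition~\ref{def:laxalgebra}. Once this distinction is fixed, no compatibility with a monoid structure needs to be verified at this stage (the statement concerns underlying morphisms only, with monoidal compatibility treated separately), and the argument terminates with the one adjunction above. I do not expect any substantive obstacle.
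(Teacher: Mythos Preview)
Your argument is correct and is essentially the same as the paper's own proof: the paper observes that $\Sigma:\el(\nu)\to\act^{op}\times\act$ factors as $\el(\nu)\overset{\iota}{\to}\el(\rho)\overset{\Sigma}{\to}\act^{op}\times\act$ and declares the rest routine, and you have simply spelled out that routine adjunction.
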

\begin{proof}
  Observe that $\Sigma_{\nu}: \el(\nu) \to \act^{\rop} \times \act$ can be factored as $\el(\nu) \overset{\iota}{\to} \el(\rho) \overset{\Sigma_{\rho}}{\to} \act^{\rop} \times \act$.
  The rest is now routine.
\end{proof}

\begin{definition}
  Recall the special case described in Definition~\ref{O-reference} where the reference bimodule $\alpha$ is built out of a $\Gpd$-module structure $\O: \Gpd \to \Iso(\C)$ and the canonical bimodule of a category.
  If $\Gpd \cong \V^{\boxtimes}$, then we can consider $\O$ to be a strong monoidal functor.
  In this case, it suffices to specify a functor $\V \to \Iso(\C)$.
\end{definition}

\begin{example}\label{ex:strong-algebra}
  Let $\rho: \SS^{\rop} \times \SS \to \Set$ be the bimodule where $\rho(n,m)$ is the set of maps of the form $\{1, \ldots, n\} \to \{1, \ldots, m\}$ and let $\nu$ be its basic bimodule.
  Here $\V = 1$ and a functor $1 \to \Iso(\C)$ is just an object $A$ in $\C$.
  This leads to the reference functor $\O(n) := A^{\otimes n}$.

  Now consider an element representation $D: \el(\nu) \to \C$.
  \begin{enumerate}
  \item Using indexed enrichments, an algebra can be described as a strong monoidal functor $\F_D \to \C$.
    Since the functor is strong monoidal, its image is determined up to isomorphism by the image of its irreducible part.
    This datum is more or less the same thing as a bimodule morphism $\chi_D \To \alpha_{\O}$.
  \item The unique element $c_n \in \rho(n,1)$ is assigned to an object $e_\O(c_n) = \rho_{\C}(A^{\otimes n}, A)$.
    Hence an algebra in the element representation form is a family of morphism $D(c_n) \to \rho_{\C}(A^{\otimes n}, A)$, just like the classical definition of an operad.
  \end{enumerate}
\end{example}

\subsubsection{Compatibility and Mergers}\label{sec:compatiblity-mergers}

Just like (ordinary) element representations, there is also a notion of compatibility of monoid structures for basic element representations.
In this case, we need to consider reference bimodules $\alpha$ with an additional structure which we call a merger.

\begin{definition}
  The natural transformation $\boxminus: \alpha^\otimes \To \alpha$ is a \defn{merger} if it is an algebra of the monad $(-)^\otimes$.
  We will defer the discussion of the monad structure of $(-)^\otimes$ until Section~\ref{sec:horizontal-extension-monad}.
\end{definition}

\begin{lemma}\label{lemma:lax-monoidal}
  Let $F$ and $G$ be two functors in $\C^{\M}$ where $\C$ and $\M$ are monoidal, then
  \begin{enumerate}
  \item There is a one-to-one correspondence between natural transformations $F^\otimes \To G$ and families of maps of the following form:
    \begin{equation}
      F(m_1) \otimes \ldots \otimes F(m_n)
      \to
      G(m_1 \otimes \ldots \otimes m_n)
    \end{equation}
  \item A natural transformation $F^{\otimes} \To F$ is precisely the data of a lax monoidal structure on the functor $F$.
  \end{enumerate}
\end{lemma}
\begin{proof}
  Recall that for a functor $F: \M \to \C$, the functor $F^{\otimes}$ is defined as a left Kan extension:
  \begin{equation}
    \Lan_{\mu_{\M}}(\mu_\C \circ F^{\boxtimes})
  \end{equation}
  The left Kan extension construction is a left adjoint, so we have the following correspondence:
  \begin{align}
    \C^{\M}(F^{\otimes}, G)
    &= \C^{\M}(Lan_{\mu_{\M}}(\mu_\C \circ F^{\boxtimes}), G) \\
    &\cong \C^{\M^{\boxtimes}}(\mu_\C \circ F^{\boxtimes}, G \circ \mu_{\M})
  \end{align}
  The resulting bijection gives us our desired correspondence.
  The second statement follows immediately from the first.
\end{proof}

\begin{ex}[Corollas]
  Recall the trivial bimodule $\tau_\SS: \SS^{\rop} \times \SS \to \Set$ defined by $\tau_\SS(n,m) = pt$.
  In Example~\ref{symmetric-ext-example}, we showed that $\nu := \tau_\SS$ could be understood as the bimodule of basic corollas and $\rho := \tau_{\SS}^\otimes$ could be understood as the bimodule of aggregated corollas.
  There is a canonical natural transformation $\boxminus: \rho \To \nu$ sending aggregates in $\rho(n,m)$ to the only element in $\nu(n,m)$ such that the pair $(\nu, \rho \To \nu)$ satisfies the conditions for a $(-)^\otimes$-algebra.
\end{ex}

\begin{ex}[Monoidal category]
  Consider the canonical bimodule $\rho_{\C}: \Iso(\C)^{\rop} \times \Iso(\C) \to \Set$ for a monoidal category $\C$.
  The monoidal structure of $\C$ induces morphisms of the following form:
  \begin{equation}
    \rho_{\C}(A_1, B_1) \times \ldots \times \rho_{\C}(A_n, B_n) \to \rho_{\C}\left(\bigotimes_i A_i, \bigotimes_i B_i\right)
  \end{equation}
  By Lemma~\ref{lemma:lax-monoidal}, this can be packaged as a merger $\rho_{\C}^\otimes \To \rho_{\C}$.
\end{ex}

\begin{lemma}\label{lemma:basic-to-ordinary-ref-op}
  For any bimodule $\alpha$, there is a canonical morphism $e_\alpha^\otimes \To E_{\alpha^\otimes}$ of element representations.
\end{lemma}
\begin{proof}
  Observe that $\Sigma_\rho \cong \Sigma_\nu^\otimes$.
  Putting the horizontal extension diagrams of $\Sigma_\nu^\otimes$ and $\alpha^\otimes$ together in Diagram~\eqref{eq:otimes-Sigma-alpha}, we obtain a natural transformation $\mu_\C \circ (\alpha \circ \Sigma_\nu)^\boxtimes \To \alpha^\otimes \circ \Sigma_\rho \circ \mu$.
  \begin{equation}
    \label{eq:otimes-Sigma-alpha}
    \begin{tikzcd}
      \el(\nu)^\boxtimes
      \arrow[d, "\mu"]
      \arrow[r, "\Sigma_\nu^\boxtimes"]
      & (\act^{\rop} \times \act)^\boxtimes
      \arrow[d, "\mu_{\act}"]
      \arrow[r, "\alpha^\boxtimes"]
      & \C^\boxtimes
      \arrow[d, "\mu_{\C}"] \\
      \el(\rho) \arrow[r, "\Sigma_\rho"']
      & \act^{\rop} \times \act
      \arrow[r, "\alpha^\otimes"']
      & \C
      \arrow[Rightarrow, shorten >=0.3cm,shorten <=0.4cm, from=1-2, to=2-1]
      \arrow[Rightarrow, shorten >=0.3cm,shorten <=0.4cm, from=1-3, to=2-2]
    \end{tikzcd}
  \end{equation}
  We can rewrite this as $\mu_\C \circ e_\alpha^\boxtimes \To E_{\alpha^\otimes} \circ \mu$.
  Then the definition of horizontal extension gives our desired morphism $e_\alpha^\otimes \To E_{\alpha^\otimes}$.
\end{proof}

\begin{lemma}
  \label{lemma:basic-endomorphism}
  If $\alpha$ has a monoid and merger structure, then there is a canonical morphism $e_\alpha \bepl e_\alpha \To e_\alpha$ of basic element representations.
\end{lemma}
\begin{proof}
  Using Lemma~\ref{lemma:basic-to-ordinary-ref-op} and the merger structure, there is a natural transformation $e_\alpha^\otimes \To E_{\alpha^\otimes} \To E_\alpha$.
  Now consider the following diagram:
  \begin{equation}
    \begin{tikzcd}
      {\el(\beta)} & {\el(\rho \pl \rho)} && \C \\
      & {\el(\nu)} & {\el(\rho)}
      \arrow[from=1-1, to=2-2]
      \arrow[from=1-1, to=1-2]
      \arrow[from=2-2, to=2-3]
      \arrow[""{name=0, anchor=center, inner sep=0}, "{e_\alpha^\otimes \hat \otimes e_\alpha^\otimes}", bend left = 10, from=1-2, to=1-4]
      \arrow[from=1-2, to=2-3]
      \arrow["{E_\alpha}"', from=2-3, to=1-4]
      \arrow[""{name=1, anchor=center, inner sep=0}, "{E_\alpha \hat\otimes E_\alpha}"', bend right = 10, from=1-2, to=1-4]
      \arrow[shorten <=2pt, shorten >=2pt, Rightarrow, from=0, to=1]
    \end{tikzcd}
  \end{equation}
  Using the definition of $\bepl$ as a left Kan extension and $e_\alpha = E_\alpha \circ \iota$, we obtain a morphism\linebreak $e_\alpha \bepl e_\alpha \To e_\alpha$.
\end{proof}

\begin{definition}\label{def:strongalgebra-as-monoid}
  Let $D: \el(\nu) \to \C$ be a basic element representation equipped with a monoid structure $D \bepl D \To D$.
  A \defn{$D$-algebra (as a basic element representation monoid)} is a morphism $\kappa: D \To e_{\alpha}$ of basic element representations making the following diagram commute:
  \begin{equation}
    \label{eq:basic-algebra}
    \begin{tikzcd}
      D \bepl D \arrow[r] \arrow[d, "\kappa \bepl \kappa"']
      & D \arrow[d, "\kappa"] \\
      e_\alpha \bepl e_\alpha \arrow[r]
      & e_{\alpha}
    \end{tikzcd}
  \end{equation}
\end{definition}

\begin{example}
  Consider the situation described in Example~\ref{ex:strong-algebra} where $\rho: \SS^{\rop} \times \SS \to \Set$ is the factorizable bimodule defined so that $\rho(n,m)$ is the set of maps $\{1, \ldots, n\} \to \{1, \ldots, m\}$.
  Let $\O: \SS \to \C$ be strong monoidal and consider the associated reference bimodule $\alpha_{\O}$.
  This has a canonical merger structure $\alpha_{\O}^\otimes \To \alpha_{\O}$ determined by $\O$ being a strong monoidal functor and $\C$ being a monoidal category.
  If we think of $e_{\O}$ as rooted corollas decorated with the morphisms of $\Hom(A^{\otimes n}, A)$, then the element representation $e_{\O} \bepl e_{\O}$ can be pictured as connected 2-level trees whose vertices are decorated with the morphisms of $\Hom(A^{\otimes n}, A)$.
  Under this interpretation, the morphism $e_\O \bepl e_\O \To e_\O$ described in Lemma~\ref{lemma:basic-endomorphism} merges so that the second level is sent to an element of $\Hom(A^{\otimes(n_1 + \ldots + n_k)}, A^{\otimes k})$ which is then composed with the element $\Hom(A^{\otimes k}, A)$ decorating the first level.
  This is precisely the operadic composition of the endomorphism operad.
  
  Now consider a basic element representation $D: \el(\nu) \to \C$ equipped with a multiplication $\gamma: D \bepl D \To D$.
  We saw in Example~\ref{ex:strong-algebra} that a morphism $\kappa: D \To e_{\O}$ is analogous to the structure maps for an algebra over an operad.
  The commutativity of Diagram~\eqref{eq:basic-algebra} implies that the morphism $\kappa: D \To e_{\O}$ is compatible with the two operadic compositions.
\end{example}

\subsection{Lax algebras}\label{sec:lax-algebras}

Naturally, Definition~\ref{def:strongalgebra-as-monoid} can be expressed as a bimodule morphism.
Consider the following diagram where the top square is the horizontal extension of \eqref{eq:basic-algebra} and the bottom rectangle commutes by the definition of $e_\alpha \bepl e_\alpha \To e_\alpha$.
\begin{equation}\label{eq:defwmerger}
  \begin{tikzcd}[row sep = small]
    D^\otimes \epl D^\otimes \arrow[r] \arrow[d] & D^\otimes \arrow[d] \\
    e_\alpha^\otimes \epl e_\alpha^\otimes \arrow[r] \arrow[d] & e_{\alpha}^\otimes \arrow[d] \\
    E_{\alpha^\otimes} \epl E_{\alpha^\otimes} \arrow[d]       & E_{\alpha^\otimes} \arrow[d] \\
    E_\alpha \epl E_\alpha \arrow[r] & E_\alpha
  \end{tikzcd}
\end{equation}
By Theorem~\ref{thm:algebras-monoids}, the composition $D^\otimes \To E_\alpha$ of the vertical morphism in \eqref{eq:defwmerger} determines a morphism $\chi_D^\otimes \To \alpha$ by adjunction.
Applying this to all of \eqref{eq:defwmerger} gives us the following diagram:
\begin{equation}
  \label{eq:lax-diagram}
  \begin{tikzcd}
    \chi_D^\otimes \pl \chi_D^\otimes \arrow[r] \arrow[d]
    & \chi_D^\otimes \arrow[d] \\
    \alpha \pl \alpha \arrow[r]
    & \alpha
  \end{tikzcd}
\end{equation}

The morphisms $\chi_D^\otimes \To \alpha$ and the algebra structure described in Diagram~\eqref{eq:lax-diagram} satisfy some fairly strong conditions tied to the merger structure on $\alpha$.
If we consider arbitrary morphisms $\chi_D^\otimes \To \alpha$, then we obtain a weaker notion that we call a lax algebra.

\goodbreak
\begin{definition}\label{def:laxalgebra}
  A \defn{lax algebra} of a basic element representation $D: \el(\nu) \to \C$ can be defined as either
  \begin{enumerate}
  \item A morphism $\chi_{D^\otimes} \To \alpha$ of bimodules
  \item A morphism $D^{\otimes} \To E_{\alpha}$ of element representations.
  \end{enumerate}
\end{definition}

This natural categorical construction allows several generalizations, e.g.\ change the merger structure of $\alpha$, the monoidal structure of $\alpha$, or use an $\alpha$ that does not have a merger structure. The utility is evidenced by the following characterization.

\begin{cor}
    Unraveling the definitions for a morphism $D^{\otimes} \To E_{\alpha}$ in a similar manner as Lemma~\ref{lemma:lax-monoidal}, this is a natural family of morphisms
    \begin{equation}
      D(x_1) \otimes \ldots \otimes D(x_n) \to E_{\alpha}(x_1 \otimes \ldots \otimes x_n)
    \end{equation}
\end{cor}

\begin{definition}
  The compatibility conditions for lax algebras are analogous to the ones for (ordinary) algebras.
  \begin{enumerate}
  \item A \defn{lax $D$-algebra (as an element representation monoid)} is a morphism $\kappa: D^\otimes \To E_\alpha$ making the following diagram commute:
    \begin{equation}
      \begin{tikzcd}
        D^\otimes \epl D^\otimes \arrow[r] \arrow[d, "\kappa \pl \kappa"']
        & D^\otimes \arrow[d, "\kappa"] \\
        E_\alpha \epl E_\alpha \arrow[r]
        & E_{\alpha}
      \end{tikzcd}
    \end{equation}
  \item A \defn{lax $D$-algebra (as a bimodule monoid)} is a morphism $\kappa: \chi_{D^\otimes} \To \alpha$ making the following diagram commute:
    \begin{equation}
      \begin{tikzcd}
        \chi_{D^\otimes} \pl \chi_{D^\otimes}
        \arrow[r]
        \arrow[d, "\kappa \pl \kappa"']
        & \chi_{D^\otimes} \arrow[d, "\kappa"] \\
        \alpha \pl \alpha \arrow[r]
        & \alpha
      \end{tikzcd}
    \end{equation}
  \end{enumerate}
\end{definition}

\begin{prop}\label{prop:laxequiv}
  The two definitions of a lax algebra are equivalent and the two notions of compatibility are also equivalent.
\end{prop}
\begin{proof}
  This is a special case of Theorem~\ref{thm:algebras-el-rep} and Theorem~\ref{thm:algebras-monoids}.
\end{proof}
\begin{rmk}
    We expect that this will play a role in the study of $B_+$ operators and twists like in \cite[Section~2.2]{hoch2}. These connections will be studied in the future.
\end{rmk}
\subsection{Horizontal extension as a monad}
\label{sec:horizontal-extension-monad}

In Section~\ref{sec:compatiblity-mergers}, we defined a merger $\boxminus: \alpha^\otimes \To \alpha$ to be an algebra of the natural monad structure for horizontal extension.
In this section, we will formally establish this monad structure by making extensive use of the coend formula \ref{coend-formula} and the technique of coend calculus.

\begin{as}
  Monoidal products of the target category commute with colimits.
\end{as}

\begin{lemma}
If $F^{\otimes}$ can be computed point-wise, then there is a canonical natural transformation:
  \begin{equation}
    \eta: (-) \To (-)^\otimes
  \end{equation}
\end{lemma}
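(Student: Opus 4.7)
The plan is to construct the component $\eta_F: F \Rightarrow F^\otimes$ for each functor $F: \M \to \C$ by restricting the defining unit of the Kan extension to the inclusion of one-letter words, and then to observe that this assignment is natural in $F$ via the $2$-functoriality of left Kan extensions.

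First I would introduce the ``singleton word'' functor $j: \M \to \M^{\boxtimes}$ sending an object $x$ to the one-letter word $(x)$, and extending in the obvious way on morphisms. Using the unit constraints of the free (symmetric) monoidal category, there are canonical identifications $\mu_\M \circ j \cong \id_\M$ and $\mu_\C \circ F^\boxtimes \circ j \cong F$, the latter natural in $F$. Whiskering the unit $\eta$ of the Kan extension displayed in diagram~\eqref{eq:hor-ext-def} with $j$ then yields
\[
F \;\cong\; \mu_\C \circ F^\boxtimes \circ j \;\Longrightarrow\; F^\otimes \circ \mu_\M \circ j \;\cong\; F^\otimes,
\]
which I take to be the component $\eta_F$ of the lemma.

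For concreteness and as a sanity check, using the pointwise hypothesis and the coend formula of Construction~\ref{coend-formula}, this morphism can also be exhibited as the coend structure map into $F^\otimes(x)$ coming from the summand indexed by the one-letter word $(x)$ together with $\id_x \in \Hom(x, x)$; under the copower adjunction of Definition~\ref{copower} this produces a canonical morphism $F(x) \to F^\otimes(x)$, and a short coend calculation confirms it is natural in $x$ and agrees with the whiskered version above.

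Finally, naturality in $F$ — that a transformation $\alpha: F \Rightarrow G$ induces a commuting square $\eta_G \circ \alpha = \alpha^\otimes \circ \eta_F$ — follows from the $2$-functoriality of left Kan extension along the fixed functor $\mu_\M$ applied to $\mu_\C \circ \alpha^\boxtimes$, together with naturality of the identification $\mu_\C \circ F^\boxtimes \circ j \cong F$ in $F$. The step I expect to be the main obstacle is the bookkeeping of unit and associativity coherences inherent in the identifications $\mu_\M \circ j \cong \id_\M$ and $\mu_\C \circ F^\boxtimes \circ j \cong F$; the pointwise hypothesis is precisely what lets me reduce this bookkeeping to the explicit coend summand above, sidestepping a more delicate direct $2$-categorical coherence argument.
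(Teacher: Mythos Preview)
Your proposal is correct. Your primary route---whiskering the Kan extension unit from diagram~\eqref{eq:hor-ext-def} by the singleton-word inclusion $j:\M\to\M^\boxtimes$---is more abstract than the paper's, and in fact does not require the pointwise hypothesis at all: the unit of a left Kan extension exists whenever the extension does, and whiskering is always available. The paper instead argues pointwise via the coend formula: it invokes the density formula $F\cong\int^{x}\Hom(x,-)\odot F(x)$, then decomposes $F^\otimes(a)$ as a coproduct over word lengths $n$ and takes the inclusion of the $n=1$ summand. Your ``sanity check'' description via the coend structure map at the one-letter word $(x)$ with $\id_x$ is exactly this $n=1$ inclusion, so the two descriptions agree. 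Your approach buys cleaner naturality in $F$ (via $2$-functoriality of $\Lan_{\mu_\M}$), which the paper does not address explicitly; the paper's approach buys an explicit component-level formula that feeds directly into the subsequent monad computations in Section~\ref{sec:horizontal-extension-monad}.
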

\begin{proof}
  By the density formula, we have the following natural isomorphism:
  \begin{equation}
    F(-) \simTo \int^{x : \M} \Hom(x,-) \odot F(x)
  \end{equation}

  Let $\M^{\boxtimes n}$ denote the full subcategory of $\M^{\boxtimes}$ consisting of length $n$ words.
  Observe that the category $\M^\boxtimes$ is then a disjoint union of the subcategories $\{ \M^{\boxtimes n}\}_{n \in \N}$.
  This allows us to write the coend formula in components:
  \begin{align*}
    F^\otimes(a)
    &= \int^{x_1 \boxtimes \ldots \boxtimes x_n : \M^{\boxtimes}}
      \Hom(x_1 \otimes \ldots \otimes x_n, a)
      \odot (F(x_1) \otimes \ldots \otimes F(x_n)) \\
    &\cong F^\otimes(a) \cong
      \coprod_{n \in \N}
      \int^{x_1 \boxtimes \ldots \boxtimes x_n : \M^{\boxtimes n}}
      \Hom(x_1 \otimes \ldots \otimes x_n, a)
      \odot (F(x_1) \otimes \ldots \otimes F(x_n))
  \end{align*}
  Hence, we have the following component morphism:
  \begin{equation}
    \int^{x : \M} \Hom(x,-) \odot F(x)
    \To
    \coprod_{n \in \N}
    \int^{x_1 \boxtimes \ldots \boxtimes x_n : \M^{\boxtimes n}}
    \Hom(x_1 \otimes \ldots \otimes x_n, -)
    \odot (F(x_1) \otimes \ldots \otimes F(x_n))
  \end{equation}
  Composing this component with the previous natural transformation gives the desired natural transformation $\eta: F \To F^{\otimes}$.
\end{proof}

\begin{example}
  For the other half of the monad structure, consider the bimodule $\tau_{\SS}^{\otimes}$ of corollas and the bimodule $(\tau_{\SS}^{\otimes})^{\otimes}$ of boxed corollas described in Example~\ref{boxed-example} as motivating examples.
  One would expect there to be a natural transformation $(\tau_{\SS}^{\otimes})^{\otimes} \To \tau_{\SS}^{\otimes}$ which ``unboxes'' the corollas.
  This unboxing natural transformation is a special case of a more general construction for horizontal extensions described by the following lemma.
\end{example}

\begin{lemma}
The monoidal category structure of $\M$ induces a canonical natural transformation  
  \begin{equation}
    \mu: ((-)^{\otimes})^\otimes \To (-)^{\otimes}
  \end{equation}
\end{lemma}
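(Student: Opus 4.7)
The plan is to apply the universal property of the left Kan extension. Since by definition $(F^\otimes)^\otimes = \Lan_{\mu_\M}(\mu_\C \circ (F^\otimes)^\boxtimes)$, specifying a natural transformation $\mu_F \colon (F^\otimes)^\otimes \Rightarrow F^\otimes$ is equivalent, via the defining adjunction of the Kan extension, to specifying a natural transformation
\begin{equation}
  \widetilde\mu_F \colon \mu_\C \circ (F^\otimes)^\boxtimes \Rightarrow F^\otimes \circ \mu_\M
\end{equation}
of functors $\M^\boxtimes \to \C$. Thus it suffices to produce, naturally in a word $x_1 \boxtimes \cdots \boxtimes x_n$ in $\M^\boxtimes$, a morphism
$F^\otimes(x_1) \otimes \cdots \otimes F^\otimes(x_n) \to F^\otimes(x_1 \otimes \cdots \otimes x_n)$.

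To build this map I would use the coend formula of Construction~\ref{coend-formula}. Expanding each factor $F^\otimes(x_i)$ as the coend over decompositions $\boxtimes_j z_{ij}$ of $x_i$, and applying the standing assumption that $\otimes$ commutes with colimits, the domain rewrites as a single coend whose component at a family of decompositions $(\boxtimes_j z_{ij})_{i=1}^n$ is
\begin{equation}
  \Bigl(\prod_i \Hom\bigl({\textstyle\bigotimes}_j z_{ij},\, x_i\bigr)\Bigr) \odot {\textstyle\bigotimes}_{i,j} F(z_{ij}).
\end{equation}
Concatenating the local decompositions produces a decomposition $\boxtimes_{i,j} z_{ij}$ of $x_1 \otimes \cdots \otimes x_n$, while tensoring the local morphisms $\bigotimes_j z_{ij} \to x_i$ and composing with the appropriate coherence isomorphisms yields a morphism $\bigotimes_{i,j} z_{ij} \to \bigotimes_i x_i$. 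This data sends the component displayed above into the corresponding component of the coend defining $F^\otimes(x_1 \otimes \cdots \otimes x_n)$; passing to coends gives $\widetilde\mu_F$.

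Conceptually this reflects the fact that the free symmetric monoidal construction $(-)^\boxtimes$ on $\cat$ is a monad, whose multiplication is a concatenation functor $c \colon (\M^\boxtimes)^\boxtimes \to \M^\boxtimes$ satisfying $\mu_\M \circ c = \mu_\M \circ \mu_\M^\boxtimes$; the transformation $\mu$ is then the induced multiplication on the horizontal-extension construction transported along $\mu_\M$ regarded as an algebra for this monad. Naturality in $F$ follows from the functoriality of the coend and of the interchange of $\otimes$ with colimits, and the composition and unit axioms (making $(-)^\otimes$ into a monad together with the $\eta$ of the previous lemma) reduce to the corresponding monad axioms for $(-)^\boxtimes$.

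The main obstacle is bookkeeping the symmetric-monoidal coherence: I must verify that the componentwise assignment above respects the co--wedge relations imposed by the associativity and commutativity constraints of $\M^\boxtimes$, so that it descends to a well-defined map of coends. This amounts to checking coherence of the concatenation $c$ with the associators and symmetries, which is the standard statement that $\mu_\M \colon \M^\boxtimes \to \M$ is a strong symmetric monoidal algebra for the free symmetric monoidal monad. Once this coherence is in hand, naturality in the word $\boxtimes_i x_i$ and in $F$ is routine, and the result follows.
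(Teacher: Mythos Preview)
Your proposal is correct and follows essentially the same route as the paper: expand the inner factors $F^\otimes(x_i)$ via the coend formula, use that $\otimes$ commutes with colimits to pull the coends out, and then use the monoidal product of $\M$ to tensor the local morphisms $\bigotimes_j z_{ij} \to x_i$ into a single morphism $\bigotimes_{i,j} z_{ij} \to \bigotimes_i x_i$. The paper carries out exactly this computation, writing the tensoring step as a map $\bigotimes_i \Hom(\bigotimes_j y_i^j, x_i) \to \Hom(\bigotimes_{i,j} y_i^j, \bigotimes_i x_i)$.

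The one genuine difference is organizational. You invoke the Kan extension adjunction at the outset, so that you only need to build maps $F^\otimes(x_1)\otimes\cdots\otimes F^\otimes(x_n)\to F^\otimes(\bigotimes_i x_i)$; the paper instead expands $(F^\otimes)^\otimes(x)$ directly as a double coend, applies Fubini, and then needs an additional composition step $\Hom(\bigotimes_i x_i, x)\otimes\Hom(\bigotimes_{i,j} y_i^j,\bigotimes_i x_i)\to\Hom(\bigotimes_{i,j} y_i^j,x)$ to trace out the outer variable. Your framing avoids this extra layer and is slightly cleaner; the paper's version is more explicit and makes the dependence on both the monoidal product and the composition of $\M$ visible on the page. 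Your added remark about the free symmetric monoidal monad $(-)^\boxtimes$ is a nice conceptual gloss that the paper does not make, and it anticipates the monad theorem that follows.
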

\begin{proof}
  Recall the coend formulation of $F^\otimes(x)$:
  \begin{equation}
    F^\otimes(x_1) \cong
    \int^{\boxtimes y_1^j : \M^{\boxtimes}}
    \Hom\left(\bigotimes y_1^j, x_1\right)
    \odot \left( \bigotimes_j F(y_1^j) \right)
  \end{equation}
  Using the assumption that the monoidal product commutes with colimits we get:
  \begin{equation}
    \label{many-F-otimes}
    F^\otimes(x_1) \otimes \ldots \otimes F^\otimes(x_k)
    \cong
    \int^{\boxtimes y_1^j : \M^{\boxtimes}}
    \cdots
    \int^{\boxtimes y_k^j : \M^{\boxtimes}}
    \left(
      \bigotimes_{i}
      \Hom(\bigotimes_j y_i^j, x_i)
    \right)
    \odot
    \left(
      \bigotimes_{i}
      \left( \bigotimes_j F(y_i^j) \right)
    \right)
  \end{equation}
  Take the coend formula for $(F^{\otimes})^\otimes$ and plug in Equation~\eqref{many-F-otimes} to get the second line.
    \begin{align*}
    (F^{\otimes})^\otimes
    & \cong \int^{\bigboxtimes_i x_i : \M^\boxtimes} \Hom\left(\bigotimes_i x_i, x\right) \odot \left( \bigotimes_i F^\otimes(x_i) \right) \\
    & \cong
      \int^{\bigboxtimes_i x_i : \M^\boxtimes} \Hom\left(\bigotimes_i x_i, x\right) \odot
      \int^{\boxtimes y_1^j : \M^{\boxtimes}}
      \cdots
      \int^{\boxtimes y_k^j : \M^{\boxtimes}}
      \left( \bigotimes_{i} \Hom\left(\bigotimes_j y_i^j,x_i\right) \right)
      \odot
      \left( \bigotimes_{i,j} F(y_i^j) \right)
  \end{align*}

  Now we will rearrange this.
  First move all the coends to the outside and consolidate the two copowers into one copower.
  \begin{equation}
    (F^{\otimes})^\otimes \cong
      \int^{\bigboxtimes_i x_i : \M^\boxtimes}\!\!
      \int^{\boxtimes y_1^j : \M^{\boxtimes}}\!\!
      \cdots
      \int^{\boxtimes y_k^j : \M^{\boxtimes}}\!
      \left(
      \Hom\left(\bigotimes_i x_i, x\right)
      \otimes
      \bigotimes_{i}
      \Hom\left(\bigotimes_j y_i^j, x_i\right)
      \right)
      \odot
      \left( \bigotimes_{i,j} F(y_i^j) \right)
  \end{equation}
  Now apply Fubini so that the coend involving the $\bigboxtimes_i x_i$ is the innermost coend and the $y_i^j$ become a single coend.
  \begin{equation}
    \label{eq:almost-reduced}
    (F^{\otimes})^\otimes(x)
    \cong
    \int^{\boxtimes y_i^j : \M^{\boxtimes}}
    \int^{\bigboxtimes_i x_i : \M^\boxtimes}
    \left(
      \Hom\left(\bigotimes_i x_i, x\right)
      \otimes
      \bigotimes_{i}
      \Hom\left(\bigotimes_j y_i^j, x_i\right)
    \right)
    \odot
    \left(
      \bigotimes_{i,j} F(y_i^j)
    \right)
  \end{equation}

  So far, we have taken this iterated horizontal extension and turned it into something that resembles a single horizontal extension but with a more complex copower.
  We will use the structure of $\M$ to reduce this to a true horizontal extension.
  First, the monoidal product of $\M$ provides a natural morphism \eqref{mu-reduction}.
  \begin{equation}
  \label{mu-reduction}
  \bigotimes_i \Hom\left(\bigotimes_j y_i^j, x_i\right) \to \Hom\left(\bigotimes_{i,j} y_i^j, \bigotimes_i x_i\right)
  \end{equation}
  Second, the composition structure of $\M$ provides a natural morphism \eqref{gamma-reduction}.
  \begin{equation}
    \label{gamma-reduction}
    \int^{\bigboxtimes_i x_i : \M^\boxtimes}
    \Hom\left(\bigotimes_i x_i, x\right)
    \otimes
    \Hom\left(\bigotimes_{i,j} y_i^j, \bigotimes_i x_i\right)
    \to
    \Hom\left(\bigotimes_{i,j} y_i^j, x\right)
  \end{equation}
  Taking the copower of \eqref{eq:almost-reduced} and applying \eqref{mu-reduction} then \eqref{gamma-reduction}, we get our desired natural morphism $\mu_x: (F^\otimes)^\otimes(x) \to F^\otimes(x)$ for each object $x$.
\end{proof}

\begin{thm}\label{thm:mergermonad}
  Considering the special case of bimodules such that $\M = \act^{op} \times \act$,
  the functor $(-)^\otimes$ forms a monad with $\mu: ((-)^\otimes)^\otimes \To (-)^\otimes$ as the multiplication and $\eta: (-) \To (-)^\otimes$ as the unit when  the action category factorizes as $\act \cong \V^\otimes$.
\end{thm}
\begin{proof}
  Associativity follows from Fubini's theorem and the associativity of the composition in the source category $\M$.
  To verify unitality, we need to check that the following diagram commutes for an arbitrary functor $F: \M \to \C$:
  \begin{equation}
    \begin{tikzcd}
      F^{\otimes}
      \arrow[rd, "Id"', Rightarrow]
      \arrow[r, "\eta_{F^\otimes}", Rightarrow]
      & (F^\otimes)^{\otimes}
      \arrow[d, "\mu", Rightarrow]
      & F^\otimes
      \arrow[ld, "Id", Rightarrow]
      \arrow[l, "\eta_F^\otimes"', Rightarrow] \\
      & F^\otimes &
    \end{tikzcd}
  \end{equation}
  The left side is straight-forward.
  For the right side, the transformation $\eta_{F}^\otimes$ involves sending factors $F(x_1) \otimes \ldots \otimes F(x_k)$ to factors $F^\otimes(x_1) \otimes \ldots \otimes F^\otimes(x_k)$.
  In this degenerate case, the $\int^{\bigboxtimes_j y_i^j :\M^\boxtimes}$ coends can be reduced to $\int^{y_i:\M}$ coends.
  Hence, we can consider the following version of \eqref{eq:almost-reduced} using this reduction:
    \begin{equation}
    \int^{\bigboxtimes_i y_i : \M^\boxtimes}
    \int^{\bigboxtimes_i x_i : \M^\boxtimes}
    \left(
      \Hom\left(\bigotimes_i x_i, x\right)
      \otimes
      \bigotimes_{i}
      \Hom(y_i, x_i)
    \right)
    \odot
    \left(
      \bigotimes_{i} F(y_i)
    \right)
  \end{equation}
  Using the assumption that $\M = \act^{op} \times \act$, write $x = (A, B)$, $x_i = (A_i, B_i)$, and $y_i = (A'_i, B'_i)$.
  Rewriting this using the notation described in \ref{nota-hereditary-tensor} and used in Lemma~\ref{hereditary-tensor}, we get
  \begin{equation}
    \int^{\bigboxtimes_i (A'_i, B'_i) : \M^\boxtimes}
    \int^{\bigboxtimes_i (A_i, B_i) : \M^\boxtimes}
    \left(
      H^A_{(A_i)}
      H^{(B_i)}_B
      \prod H^{A_i}_{A'_i}
      \prod H^{B'_i}_{B_i}
    \right)
    \odot
    \left(
      \bigotimes_{i} F(A'_i, B'_i)
    \right)
  \end{equation}
  Applying the appropriate versions of \eqref{mu-reduction} and \eqref{gamma-reduction} to this yields the following
  \begin{equation}
    \int^{\boxtimes_i (A'_i, B'_i) : \M^\boxtimes}
    H^A_{A'_i}
    H^{B'_i}_B
    \odot
    \left(
      \bigotimes_{i} F(A'_i, B'_i)
    \right)
  \end{equation}
  Because of the factorization assumption $\act \cong \V^\otimes$, this agrees with the type of reduction one gets from applying the density formula on both sides.
  Thus the net result is that we obtain the same thing that we started with, so the right side also commutes.
\end{proof}

\section{Plethysms and plus constructions}\label{sec:plethysms-plus-const}

In this section, we connect the theory of factorizable bimodules to the classical cases of operads and operad--like structures.

\subsection{Recollection on Feynman categories and UFCs}\label{sec:recollection-FC-UFC}

To make contact with the classical theory, we will use Feynman categories and the related notion of a unique factorization category.
We give a brief account here and refer to \cite{feynman, KMoManin} for the details.

\begin{definition}[\cite{KMoManin}]\label{def:UFC}
  A \defn{unique factorization category} (UFC) is a symmetric monoidal category $\M$ along with a tuple $(\V, \imath, \Indec, \jmath)$ such that:
  \begin{enumerate}
  \item $\V$ is a groupoid and $\imath: \V \to \Iso(\M)$ is a functor such that the composition $\V^{\boxtimes} \overset{\imath^{\boxtimes}}{\to} \Iso(\M)^{\boxtimes} \overset{\mu}{\to} \Iso(\M)$ is an equivalence of categories.
  \item $\Indec$ is a groupoid and $\jmath: \Indec \to \Iso(\M\downarrow \M)$ is a functor such that the composition $\Indec^{\boxtimes} \overset{\jmath^{\boxtimes}}{\to} \Iso(\M \downarrow \M)^{\boxtimes} \overset{\mu}{\to} \Iso(\M \downarrow \M)$ is an equivalence of categories.
  \item The slice categories of $\M$ are essentially small.
  \end{enumerate}
\end{definition}

\begin{rmk}
  In \cite{KMoManin}, we introduced the notion of a \emph{prehereditary} UFC.
  There are several characterizations of this and we refer the reader to \cite[Section~6]{KMoManin} for a detailed discussion.
  In Proposition~\ref{prop:UFC}, we will show how this is related to the notion of heredity described in Section~\ref{sec:heredity}.
\end{rmk}

\begin{example}[Finite Sets]
  The category $\FinSet$ of finite sets is a unique factorization category whose basic objects are singletons and whose basic morphisms are of the form $X \to \{y\}$.
\end{example}

\begin{example}[Cospans]
  Let $\text{nd-}\Cospan$ be the subcategory of cospans with all objects and where both legs are surjective.
  In particular, this means that $\varnothing \rightarrow \varnothing \leftarrow \varnothing$ is the only element of $\Hom(\varnothing, \varnothing)$.
  This forms a hereditary UFC with cospans $S \rightarrow \{v\} \leftarrow T$ as the basic morphisms.
\end{example}

In terms of UFCs, one can define the Feynman categories of \cite{feynman} as follows:
\label{def:FC}
\begin{definition}
  A \defn{Feynman category} is a unique factorization category $\F$ such that $\Indec = \Iso(\F \downarrow \V)$.
\end{definition}

\begin{example}[Modular operads]
  There are many examples of Feynman categories indexed over Borisov--Manin graphs, see \cite[Sections 2.1--2.3]{feynman}.
  In particular, there is a Feynman category $\modular$ whose basic objects are genus-marked corollas and whose basic morphisms are connected graphs.
  This is the Feynman category that corepresents modular operads.
\end{example}

\begin{proposition}[\cite{KMoManin}]
  Feynman categories are prehereditary unique factorization categories.
\end{proposition}

\subsection{UFCs and bimodules}
\label{sec:ufcs-bimodules}

Recall that in Section~\ref{sec:cat-unital-monoids} we established a correspondence between unital monoids and categories.
We will see that UFCs are the categorical analogs of bimodules with unique factorization properties.
There are some subtleties involving $\rho(1_{\Gpd}, 1_{\Gpd})$ which we address in two remarks before proving that a UFC canonically determines a bimodule.

\begin{rmk}
  Let $\M$ be a UFC with $\Gpd$ as its underlying groupoid.
  Strictly speaking, the canonical bimodule $\rho_{\M}$ can not be factorized because of the element $\id_{1_{\M}} \in \rho_{\M}(1_{\Gpd},1_{\Gpd})$.
  If $\id_{1_{\M}}$ is absent from the basic bimodule, then it can't be in the horizontal extension either.
  On the other hand, if $\id_{1_{\M}}$ is present in the basic bimodule, then the horizontal extension will have strings of $\id_{1_{\M}}$ of different lengths.
  These strings represent distinct elements in the set $\nu^{\otimes}(1_{\Gpd},1_{\Gpd})$, which is not what we want.
  This means it is necessary to work with a reduced bimodule $\bar \rho_\M$ without the element $\id_{1_{\M}} \in \rho_{\M}(1_{\Gpd},1_{\Gpd})$.
\end{rmk}

\begin{rmk}
    \label{rmk:zero-to-zero}
    In general, $\rho(1_{\Gpd}, 1_{\Gpd})$ can be nonempty even for a factorizable bimodule.
    To see this, consider the coend formula for horizontal extension in this case:
    \begin{equation}
        \nu^\otimes(1_{\Gpd}, 1_{\Gpd}) = \int^{\boxtimes_i (A_i, B_i)} \Hom_{\Gpd^{op} \times \Gpd}(\otimes_i (A_i, B_i), (1_{\Gpd}, 1_{\Gpd})) \odot ( \nu(A_i, B_i) \otimes \ldots)
    \end{equation}
    If we assume that $\Gpd$ is strictly factorizable for the sake of demonstration, then the hom sets will be a singleton if the $A_i$ and $B_i$ are all units and empty otherwise.
    This means the only interesting cowedges look like Diagram~\eqref{zero-zero-coend} where $C$ is a commutativity constraint:
    \begin{equation}
        \label{zero-zero-coend}
        \begin{tikzcd}
        {\Hom(\ldots) \odot ( \nu(1_{\Gpd}, 1_{\Gpd}) \otimes \ldots )}
        \arrow[r, "C_*"]
        \arrow[d, "C^*"']
        & \Hom(\ldots) \odot ( \nu(1_{\Gpd}, 1_{\Gpd}) \otimes \ldots) \\
        {\Hom(\ldots) \odot ( \nu(1_{\Gpd}, 1_{\Gpd}) \otimes \ldots )}
        & 
        \end{tikzcd}
    \end{equation}
    The action $C^*$ on the left side of $\odot$ is the identity and the action $C_*$ on the right side of $\odot$ rearranges the factors.
    Hence representations of the elements in $\nu^\otimes(1_{\Gpd}, 1_{\Gpd})$ are equivalent if they differ by a commutativity constraint.
\end{rmk}

\begin{example}
    The bimodule $\rho_{Cospan}$ is an example of this where $\nu_{\Cospan}(\varnothing, \varnothing)$ is a singleton containing the cospan $\varnothing \rightarrow \{*\} \leftarrow \varnothing$.
\end{example}

\begin{prop}
  \label{prop:UFC}
  For a strict prehereditary UFC $\M$ with an underlying groupoid $\Gpd = \Iso(\M)$, let $\bar \rho_\M$ be the canonical $\Gpd \mdash \Gpd$ bimodule monoid without the element $\id_{1_{\M}} \in \rho_{\M}(1_{\M},1_{\M})$.
  Then $\bar \rho_{\M}$ has the unique factorization property and both the canonical multiplication $\gamma_{\M}$ and the canonical unit are hereditary morphism.
\end{prop}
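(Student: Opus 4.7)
The plan is to extract the basic bimodule from the UFC structure, then verify the three claims in turn, leaning on the horizontal extension coend formulas of Construction~\ref{coend-formula} and Construction~\ref{coeq-formula}.

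First, I would define $\nu_\M: \Gpd^{op}\times\Gpd \to \Set$ to be the sub-bimodule of $\bar\rho_\M$ whose value on $(X,Y)$ consists of morphisms $\phi: X \to Y$ in $\M$ which are ``basic'' (irreducible, i.e.\ indecomposable under the monoidal product) in the sense of the UFC $\M$. By the definition of a strict UFC as in \cite{KMoManin}, every non-identity morphism $\phi: X\to Y$ of $\M$ factors as $\mu_\M(\boxtimes_i \phi_i)$ for a finite word of basic morphisms $\phi_i \in \nu_\M(X_i,Y_i)$ with $\boxtimes_i X_i \cong X$ and $\boxtimes_i Y_i \cong Y$, and such a factorization is unique up to permutation of the factors and isomorphisms on each factor.

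Next, to prove $\bar\rho_\M \cong \nu_\M^\otimes$, I would write out the right hand side using the coequalizer formula:
\begin{equation}
\nu_\M^\otimes(A,B) = \coprod_{\boxtimes_i(A_i,B_i)} \Hom_{\Gpd^{op}\times\Gpd}\!\bigl(\textstyle\sum_i (A_i,B_i),(A,B)\bigr) \odot \bigotimes_i \nu_\M(A_i,B_i)
\end{equation}
modulo the co--wedges coming from automorphisms of individual factors and from the commutativity constraints of the free symmetric monoidal category. The unique factorization property of the UFC gives a surjection from this set to $\bar\rho_\M(A,B)$ by applying $\mu_\M$; uniqueness up to reordering and factor-wise isomorphisms says exactly that two representatives coincide iff they differ by such a permutation and by an automorphism on each basic slot, which are precisely the co--wedge relations. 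Excluding $\id_{1_\M}$ from $\bar\rho_\M$ matches with $\nu_\M$ containing no ``empty word'' factor on $(1_\Gpd,1_\Gpd)$, so no ambiguity arises from arbitrary-length strings of units.

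For the hereditarity of $\gamma_\M: \bar\rho_\M \pl \bar\rho_\M \Rightarrow \bar\rho_\M$, I would form the pullback $\beta$ of $\nu_\M \hookrightarrow \bar\rho_\M$ along $\gamma_\M$. Unpacking, $\beta(A,C)$ is the set of classes $[x,y]$ of composable pairs whose composite $\mu_\M$-image lies in $\nu_\M$; using hereditariness of the UFC $\M$ (which says that if the composite is basic then both factors are basic and the factorization is compatible with $\boxtimes$), $\beta$ is identified with the coend $\int^B \nu_\M(A,B)\otimes \nu_\M(B,C)$. Then I would verify Definition~\ref{def:hereditary} by applying horizontal extension to the square $\beta \hookrightarrow \bar\rho_\M \pl \bar\rho_\M$ over $\nu_\M \hookrightarrow \bar\rho_\M$ and using Lemma~\ref{hereditary-tensor}(1) together with the isomorphism $\bar\rho_\M \cong \nu_\M^\otimes$ proved above to see the extended square reproduces $\gamma_\M$. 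The unit case is analogous, but easier: one takes the basic bimodule $\tilde\nu_\Gpd$ of Definition~\ref{def:nu-G} and observes that $\eta: \tilde\rho_\Gpd \Rightarrow \bar\rho_\M$ restricts to an inclusion $\tilde\nu_\Gpd \hookrightarrow \nu_\M$ (the identity morphisms on basic objects are basic), and Lemma~\ref{unique-fac-for-rho-G} delivers the pullback square.

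The main obstacle I anticipate is bookkeeping around the $(1_\Gpd,1_\Gpd)$ slot: identifying exactly which elements must be removed from $\bar\rho_\M$ to make the coend for $\nu_\M^\otimes$ land bijectively on $\bar\rho_\M$, and verifying that this removal is compatible with the pullback diagrams defining hereditarity (so that no spurious elements over $(1_\Gpd,1_\Gpd)$ obstruct the square from being a pullback). This will require a careful case analysis separating the unit component from the rest, using that the hereditary UFC hypothesis prevents basic morphisms from composing to identities outside the diagonal of $\Iso(\M)$.
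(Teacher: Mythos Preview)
Your argument for the unique factorization of $\bar\rho_\M$ and for the heredity of the unit are essentially the same as the paper's and are fine (modulo minor phrasing issues --- e.g.\ what you need for the unit is that isomorphisms between \emph{irreducible} objects land in $\nu_\M$, which is what the UFC axioms give you).

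The genuine gap is in your treatment of the heredity of $\gamma_\M$. You write that ``hereditariness of the UFC $\M$ \ldots says that if the composite is basic then both factors are basic'', and then identify the pullback $\beta$ with $\int^B \nu_\M(A,B)\otimes \nu_\M(B,C)$. This is false: a composable pair with basic composite need not consist of basic morphisms. In $\Cospan$, take $\phi_0 = (\{1,2\}\to\{a,b\}\leftarrow\{3,4\})$ with $1,3\mapsto a$ and $2,4\mapsto b$ (two disjoint corollas, so not basic) and $\phi_1 = (\{3,4\}\to\{c\}\leftarrow\{5\})$. The pushout identifies $a\sim c\sim b$, so $\phi_1\circ\phi_0 = (\{1,2\}\to\{*\}\leftarrow\{5\})$ is a single corolla. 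Thus $[\phi_0,\phi_1]\in\beta$ but $\phi_0\notin\nu_\M$.

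The correct statement, which is what the paper uses, is that $\beta_\M$ is exactly the sub-bimodule of ``connected'' composable pairs (those whose composite is basic), and the hereditary axiom of a UFC says that \emph{any} composable pair $[\phi_0,\phi_1]\in\bar\rho_\M\pl\bar\rho_\M$ decomposes uniquely (up to the usual equivalences) as a $\boxtimes$-word of elements of $\beta_\M$. This gives $\beta_\M^{\otimes}\cong\bar\rho_\M\pl\bar\rho_\M$ directly, which is the extension condition in Definition~\ref{def:hereditary}. In other words, the UFC hereditary axiom identifies the basic bimodule of the plethysm with the pullback $\beta_\M$, not with a coend of copies of $\nu_\M$. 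Your appeal to Lemma~\ref{hereditary-tensor}(1) does not rescue this: that lemma produces \emph{some} basic bimodule for $\bar\rho_\M\pl\bar\rho_\M$, but you still have to match it with the pullback $\beta_\M$, and that matching is precisely the content of the UFC hereditary condition.
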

\begin{proof}
  Since $\M$ is a strict UFC, there is a category $\Indec$ such that $\Iso(\M \downarrow \M) = \Indec^\boxtimes$.
  Then, for any pair of objects $A$ and $B$, define $\nu_{\M}(A,B)$ to be the subset of elements $x \in \rho_\M(A,B)$ such that $x$ belongs in the image of $\Indec$.
  The collection $\nu_{\M}$ is a bimodule because $\Indec$ is a subcategory of $\Iso(\M \downarrow \M)$.
  The isomorphism $\bar \rho_{\M} \cong \nu_{\M}^\otimes$ follows from unique factorization of morphisms.

  Let $\beta_{\M}$ be the pullback of $\nu_{\M}$ along $\gamma_{\M}$.
  The prehereditary property implies that the connected components must be precisely the elements of $\beta_{\M}$.
  Since $\beta_{\M}$ is the bimodule of connected components, we have $\beta_{\M}^{\otimes} \cong \bar \rho_{\M} \pl \bar \rho_{\M}$.
  It follows that $\gamma_{\M}$ is a hereditary morphism.
  The isomorphism property of a UFC guarantees that the counter-clockwise composition factors into the clockwise composition.
  \begin{equation}
    \begin{tikzcd}
      \nu_{\Gpd} \arrow[d, tail] \arrow[r, dashed]
      & \nu_{\M} \arrow[d, tail] \\
      \bar \rho_{\Gpd} \arrow[r, "\eta_{\M}"']
      & \bar \rho_{\M}
    \end{tikzcd}
  \end{equation}
  On the other hand, there is not a larger sub-bimodule with this property since a non-basic element would necessarily go to another non-basic element.
  Hence this diagram is a pullback.
  Since $\bar \rho_{\Gpd} \cong \bar \nu_{\Gpd}^{\otimes}$ by construction, this remains a pullback after horizontal extension.
\end{proof}

\subsection{Relation to the plus construction}

Perhaps the most important property of prehereditary unique factorization categories is the following proposition:

\begin{proposition}[\cite{KMoManin}]
  The plus construction of a prehereditary unique factorization category is a Feynman category.
\end{proposition}

Many Feynman categories are plus constructions of some UFC.
We will elaborate on some examples of this in the next section.
For further examples see \cite{KMoManin, monaco2022calculations}.
In this section, we will give an important structural consequence of this by showing that if a Feynman category $\F$ can be expressed as the plus construction of a UFC, then the $\F\dashops$ can be expressed as plethysm monoids of an element representation.

\begin{lemma}\label{monoid-to-op}
  Let $\M$ be a prehereditary UFC.
  Then a basic element representation $D: \el(\nu_\M) \to \C$ together with a multiplication $\gamma: D \bepl D \to D$ determines a strong monoidal functor $\O_\gamma: \M^+ \to \C$.
\end{lemma}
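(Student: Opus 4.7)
The plan is to exploit the universal characterization of the plus construction $\M^+$ from \cite{KMoManin}, under which a strong monoidal functor $\M^+ \to \C$ should be determined by its values on the ``basic'' generating objects (corresponding to elements of $\nu_\M$) together with coherent composition data. First I would define $\O_\gamma$ on generators, extend by monoidality, and then verify functoriality using the $\bepl$-monoid structure~$\gamma$.

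On basic objects of $\M^+$ (which are in bijection with objects of $\el(\nu_\M)$) I would set $\O_\gamma(x) := D(x)$. A general object of $\M^+$ is a formal tensor product $x_1 \boxtimes \cdots \boxtimes x_n$ of basic ones, so I define $\O_\gamma(x_1 \boxtimes \cdots \boxtimes x_n) := D(x_1) \otimes \cdots \otimes D(x_n)$ using the monoidal structure of $\C$. Morphisms of $\M^+$ come in two flavors: isomorphism-type morphisms coming from the $\act$-action on $\el(\nu_\M)$, and composition-type morphisms recording how a basic collection $[x_1,\ldots,x_k]$ composes in $\M$ to yield a basic element $y$. For the first kind I would use the functoriality of $D:\el(\nu_\M)\to\C$; for the second kind I would invoke the multiplication $\gamma: D \bepl D \Rightarrow D$, appealing to Proposition~\ref{diamond-uf-property}, whose proof exhibits the coend presentation of $D^\otimes \epl D^\otimes$ in terms of basic composable pairs. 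The required structure map $D(x_1)\otimes\cdots\otimes D(x_k)\to D(y)$ is then read off from $\gamma$ via the defining co--wedge of the relevant Kan extension.

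Functoriality of $\O_\gamma$ would then reduce to two verifications: associativity of composition in $\M^+$, which translates exactly into the associativity of $\gamma$ as a $\bepl$-monoid; and equivariance with respect to the groupoid action, which follows from naturality of $\gamma$ together with functoriality of $D$. Strong monoidality is essentially tautological from the tensor-product definition on objects, with the monoidal constraints inherited from the associativity and symmetry constraints of $\otimes$ in $\C$.

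The main obstacle I expect is the hereditary bookkeeping: when a composite in $\M^+$ can be decomposed in multiple ways as an iterated composition, the hereditary condition on $\M$ (Proposition~\ref{prop:UFC}) and the unique factorization property for $D^\otimes \epl D^\otimes$ must conspire so that the two routes yield equal morphisms under $\O_\gamma$. The cleanest route, I expect, is to reduce morphisms of $\M^+$ to a normal form consisting of an isomorphism followed by a single contraction-type morphism, use the pullback defining the bimodule of basic composable pairs from Section~\ref{sec:basic-el-rep} to match such normal forms with elements of the corresponding element category, and then invoke the associativity of $\gamma$ to reconcile the resulting objects across different presentations.
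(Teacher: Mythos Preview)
Your proposal is correct and follows essentially the same approach as the paper. The paper packages your ``define on generators and extend by monoidality'' step as $\O_\gamma(\phi):=D^\otimes(\phi)$ via the horizontal extension of Definition~\ref{horizontal-extension-elements}, and for a generating morphism $\G:\phi_1\otimes\phi_2\to\psi$ with $\psi$ basic it writes out exactly your structure map as the composite $D^\otimes(\phi)\to(D^\otimes\hat\otimes D^\otimes)[\phi_1,\phi_2]\to(D\bepl D)(\psi)\xrightarrow{\gamma}D(\psi)$; your discussion of associativity and hereditary bookkeeping is in fact more careful than the paper's, which dispatches functoriality with a single sentence of induction.
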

\begin{proof}
  Recall that the basic element representation $D: \el(\nu_\M) \to \C$ induces an element representation $D^{\otimes}: \el(\rho_\M) \to \C$.
  For an object $\phi \in \M^+$, we define $\O_{\gamma}(\phi) := D^{\otimes}(\phi)$.

  Now we address the morphisms.
  Recall that the morphisms of $\M^+$ are generated by two-level graphs.
  Moreover, we only need to consider the connected case, since $\O_{\gamma}$ is intended to be strong monoidal.
  With this in mind, let $\G: \phi \to \psi$ be a morphism in $\M^+$ such that $\phi = \phi_1 \otimes \phi_2$ and the composition $\phi_1 \circ \phi_2 = \psi$ is a basic object.

  Because $\psi$ is basic, we have $[\phi_1, \phi_2] \in \el(\beta)$ by definition.
  Consider the quotient $q: D^{\otimes}(\phi) \to (D^{\otimes} \hat \otimes D^{\otimes})[\phi_1, \phi_2]$.
  Now define $\O_{\gamma}(\G): \O_{\gamma}(\phi) \to \O_{\gamma}(\psi)$ as the following composition:
  \begin{equation}
    \begin{tikzcd}[column sep = small]
      \O_{\gamma}(\phi)
      \arrow[rrr, dashed, "\O_{\gamma}(\G)"]
      \arrow[d, equal]
      & & & \O_{\gamma}(\psi) \\
      D^{\otimes}(\phi)
      \arrow[r, "q"']
      & {(D^{\otimes} \hat \otimes D^{\otimes})[\phi_1, \phi_2]}
      \arrow[r]
      & (D \bepl D)(\psi)
      \arrow[r, "\gamma"']
      & D(\psi)
      \arrow[u, equal]
    \end{tikzcd}
  \end{equation}
  By inducting, this is gives a strong monoidal functor defined on any morphism.
  Hence $\O_{\gamma}$ is an \(\plus{\M}\)-op.
\end{proof}

\goodbreak
\begin{lemma}\label{op-to-monoid}
  Conversely, if $\O$ is an $\M^+ \mddash \mathcal{O}p$ for a prehereditary UFC $\M$, then there is a canonical monoid structure consisting of:
  \begin{enumerate}
    \item A basic element representation $D_\O: \el(\nu_\M) \to \C$ obtained from the objects and basic isomorphisms in the plus construction.
    \item A multiplication $\gamma_\O: D_\O \bepl D_\O \To D_\O$ obtained from the ``gamma-morphisms'' of the plus construction.
  \end{enumerate}
\end{lemma}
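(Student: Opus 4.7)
The plan is to reverse-engineer the construction of Lemma~\ref{monoid-to-op} by reading off $D_\O$ and $\gamma_\O$ from the values of $\O$ on the generating data of $\M^+$. Recall that in the plus construction $\M^+$, the objects are the morphisms of $\M$ (which by unique factorization decompose into basic morphisms, i.e.\ elements of $\nu_\M$), and the morphisms are generated by two kinds of data: the vertical basic isomorphisms coming from $\Iso(\M\downarrow\M)$, and the ``gamma-morphisms'' implementing compositions in $\M$. Strong monoidality of $\O$ means that its value on a general object of $\M^+$ is determined, up to coherent isomorphism, by its values on basic objects, i.e.\ on elements of $\nu_\M$.

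For step (1), I would define $D_\O$ on objects of $\el(\nu_\M)$ by sending an element $x\in\nu_\M(A,B)$ to $\O$ evaluated at the corresponding basic object of $\M^+$. For a morphism $(\sigma\biact\tau)\colon x\to x'$ in $\el(\nu_\M)$, the action data is precisely a basic isomorphism in $\M^+$ between the two basic objects, and I set $D_\O(\sigma\biact\tau):=\O$ applied to that isomorphism. Functoriality of $D_\O$ is immediate from functoriality of $\O$ together with the fact that $\el(\nu_\M)$ is precisely the category of elements carved out of these basic isomorphisms. The horizontal extension $D_\O^\otimes\colon\el(\rho_\M)\to\C$ then agrees with $\O$ on all objects of $\M^+$ (and on all vertical isomorphisms) by strong monoidality, which is exactly the compatibility that was used in Lemma~\ref{monoid-to-op} in the opposite direction.

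For step (2), I would construct $\gamma_\O\colon D_\O\bepl D_\O\Rightarrow D_\O$ by invoking the universal property of the basic element plethysm from Section~\ref{sec:basic-el-rep}. By the definition of $\bepl$ as the left Kan extension along $\el(\gamma_0)\colon\el(\beta)\to\el(\nu_\M)$, it suffices to produce a natural transformation $D_\O^\otimes\hat\otimes D_\O^\otimes\Rightarrow D_\O\circ\el(\gamma_0)$. For each class $[\phi_1,\phi_2]\in\el(\beta)$ of composable basic pairs whose composition $\psi=\phi_1\circ\phi_2$ is again basic, the plus construction supplies a gamma-morphism $\G_{\phi_1,\phi_2}\colon\phi_1\otimes\phi_2\to\psi$ in $\M^+$, and I define the component of $\gamma_\O$ at $[\phi_1,\phi_2]$ to be $\O(\G_{\phi_1,\phi_2})$, composed with the strong monoidal comparison $D_\O^\otimes(\phi_1)\otimes D_\O^\otimes(\phi_2)\overset{\sim}{\to}\O(\phi_1\otimes\phi_2)$. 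Equivariance with respect to the middle-variable $\act$-action is ensured by $\O$ being functorial on the basic isomorphisms that link equivalent representatives of $[\phi_1,\phi_2]$, so these components descend through the coend-wedges defining $\hat\otimes$ and thus induce the required natural transformation.

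The main obstacle I anticipate is checking associativity of $\gamma_\O$, i.e.\ that $(\gamma_\O\bepl\id)\circ a = \gamma_\O\circ(\id\bepl\gamma_\O)$ as natural transformations $D_\O\bepl D_\O\bepl D_\O\Rightarrow D_\O$. I would reduce this to a pointwise statement on triple composable basic pairs $[\phi_1,\phi_2,\phi_3]$ with basic composite, and then invoke the two associativity-type relations that hold in $\M^+$: the associativity of composition in $\M$ itself (which makes the two bracketings of gamma-morphisms agree up to the associator), and the coherence of $\O$ as a strong monoidal functor (which matches the monoidal associators on the $D_\O^\otimes$ side). Finally, to complete the statement as a clean correspondence, I would verify that the assignments $\O\mapsto(D_\O,\gamma_\O)$ from this lemma and $(D,\gamma)\mapsto\O_\gamma$ from Lemma~\ref{monoid-to-op} are mutually inverse up to canonical isomorphism; both directions follow from the fact that each assignment is determined on basic data and extended by strong monoidality, i.e.\ by the universal property of horizontal extension and of $\bepl$.
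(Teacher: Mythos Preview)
Your proposal is correct and follows essentially the same approach as the paper: define $D_\O$ by restricting $\O$ to the basic objects and basic isomorphisms of $\M^+$, then use the universal property of $\bepl$ as a left Kan extension along $\el(\gamma_0)$ to obtain $\gamma_\O$ from the gamma-morphisms, with equivariance ensuring the required factorization through $\hat\otimes$. The paper's proof stops after constructing $\gamma_\O$ and leaves associativity implicit, and it defers the mutual-inverse verification to Theorem~\ref{main-thm}; your inclusion of these checks is more thorough than strictly needed for the lemma as stated, but entirely in the right spirit.
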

\begin{proof}
  By identifying $\Iso(\M^+)$ and $\el(\rho_\M)$, we can define a basic element representation $D_\O: \el(\nu_\M) \to \C$ by $D_\O(\phi) := \O(\phi)$ and $D_\O(\sigma \biact \sigma') := \O(\sigma \biact \sigma')$.
  For the multiplication structure, observe that for every basic morphism $\G: \phi_1 \otimes \phi_2 \to \phi_1 \circ \phi_2$ in $\M^+$, the corresponding morphism $\O(\G): \O(\phi_1) \otimes \O(\phi_2) \to \O(\psi)$ in $\C$ factors through $(\O \hat \otimes \O)[\phi_1, \phi_2] \to \O(\psi)$ because of the equivariance between $\phi_1$ and $\phi_2$.
  Writing $\O(\psi) = \O(\gamma[\phi_1, \phi_2])$, we see that these morphisms assemble into a natural transformation $\bar \gamma$:
  \begin{equation}
    \begin{tikzcd}
      \el(\beta)
      \arrow[rd, "\el(\gamma_\M)"']
      \arrow[rr, "D_\O^{\otimes} \hat \otimes D_\O^{\otimes}"]
      & {}
      \arrow[d, "\bar \gamma", shorten >=0.1cm, shorten <=0.1cm, Rightarrow]
      & \C \\
      & \el(\nu_\M)
      \arrow[ru, "D_\O"']
      &
    \end{tikzcd}
  \end{equation}
  Because $D \bepl D$ is a left Kan extension, this induces a unique natural transformation $\gamma_\O: D \bepl D \To D$ such that $\bar \gamma$ is equal to $\eta$ followed by a whiskering of $\gamma_{\O}$.
\end{proof}

\begin{thm}\label{main-thm}
  If a Feynman category $\F$ can be expressed as a plus construction $\F = \M^+$ for a UFC $\M$, then the constructions of Lemmas \ref{monoid-to-op} and \ref{op-to-monoid} provide a one-to-one correspondence between non-unital $\bepl$-monoids in the category of basic element representations of the form $D: \el(\nu_\M) \to \C$ and $\F \mddash \ops$.
\end{thm}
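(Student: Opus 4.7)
The plan is to assemble the one-to-one correspondence directly from the two preceding lemmas, which already provide the two constructions that we claim are mutually inverse. Concretely, Lemma \ref{monoid-to-op} defines a map
\[
\Phi:\{(D,\gamma)\mid D\colon\el(\nu_\M)\to\C,\ \gamma\colon D\bepl D\Rightarrow D\}\longrightarrow\F\mddash\ops,
\qquad (D,\gamma)\mapsto\O_\gamma,
\]
and Lemma \ref{op-to-monoid} defines a map in the opposite direction
\[
\Psi\colon \F\mddash\ops\longrightarrow\{(D,\gamma)\},\qquad \O\mapsto(D_\O,\gamma_\O).
\]
I would first verify $\Psi\circ\Phi\cong\id$ and then $\Phi\circ\Psi\cong\id$, using throughout the identification $\Iso(\M^+)\cong\el(\rho_\M)$ together with the factorization $\rho_\M\simeq\nu_\M^{\otimes}$ from Proposition \ref{prop:UFC}.

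For the direction $\Psi\circ\Phi$, start from a pair $(D,\gamma)$ and build $\O_\gamma$ as in Lemma \ref{monoid-to-op}. On objects $\phi\in\Iso(\M^+)$, by construction $\O_\gamma(\phi)=D^{\otimes}(\phi)$; restricting $D^{\otimes}$ to the basic elements $\el(\nu_\M)\hookrightarrow\el(\rho_\M)$ recovers $D$ on the nose, so $D_{\O_\gamma}=D$. To recover the multiplication, note that the definition of $\O_\gamma$ on a connected two-level graph $\G\colon\phi_1\otimes\phi_2\to\phi_1\circ\phi_2$ factors through the co--wedge component $(D^{\otimes}\hat\otimes D^{\otimes})[\phi_1,\phi_2]\to(D\bepl D)(\psi)\xrightarrow{\gamma}D(\psi)$. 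By the universal property of the left Kan extension defining $D\bepl D$, the natural transformation $\bar\gamma_{\O_\gamma}$ produced in Lemma \ref{op-to-monoid} factors through the same $\gamma$, and uniqueness of the factorization yields $\gamma_{\O_\gamma}=\gamma$.

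For the direction $\Phi\circ\Psi$, start with $\O\in\F\mddash\ops$, form $(D_\O,\gamma_\O)$, and then $\O_{\gamma_\O}$. On objects, $\O_{\gamma_\O}(\phi)=D_\O^{\otimes}(\phi)=\bigotimes_i\O(\phi_i)$ for any factorization $\phi\cong\bigotimes_i\phi_i$ into basic objects; but $\O$ is strong monoidal, so this canonically agrees with $\O(\phi)$. On morphisms, it suffices by generation to check connected two-level graphs $\G$, and on these $\O_{\gamma_\O}(\G)$ is defined as the composite through $\gamma_\O$, which by Lemma \ref{op-to-monoid} was precisely extracted from $\O(\G)$. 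Because $\M$ is hereditary, the pullback/heredity condition of Proposition \ref{prop:UFC} guarantees that every connected composition in $\M^+$ arises from a basic composable pair in $\beta$, so these generating morphisms suffice to determine $\O_{\gamma_\O}=\O$.

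The main obstacle is keeping careful track of the identity $\id_{1_\M}$ and the reduced bimodule $\bar\rho_\M$: the plus construction $\M^+$ includes $1_\M$ as the monoidal unit, but the horizontal extension $\nu_\M^{\otimes}$ must avoid generating arbitrary-length strings of $\id_{1_\M}$, as discussed in the remarks before Proposition \ref{prop:UFC}. I would address this by working throughout with the non-unital monoid convention (hence the \emph{non-unital} hypothesis in the statement), matching the strong monoidal functors $\O$ under the convention $\O(1_\M)=1_\C$ supplied by the strong monoidal structure rather than by the monoid datum. The second subtlety is the equivariance in the middle variable that makes $\bar\gamma_\O$ well-defined on $\el(\beta)$: this is exactly the content of Remark \ref{cocone-remark} applied to the groupoid $\Iso(\M)$, and it is what the hereditary hypothesis on $\M$ ensures is available for every connected generator of $\M^+$. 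With both subtleties settled, naturality of both $\Phi$ and $\Psi$ in morphisms of $(D,\gamma)$ and in natural transformations of ops is formal, completing the equivalence.
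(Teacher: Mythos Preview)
Your proposal is correct and follows essentially the same approach as the paper: both use Lemma~\ref{monoid-to-op} and Lemma~\ref{op-to-monoid} as the two directions $\Phi$ and $\Psi$, then argue that the round-trips recover the original data. The paper's proof is in fact considerably terser than yours---it simply asserts that neither composite ``loses information'' because the equivariance is encoded in $\M^+$ on one side and in $\bepl$ on the other---whereas you spell out the object-level and morphism-level checks, invoke the universal property of the Kan extension defining $D\bepl D$, and explicitly flag the $\id_{1_\M}$ and hereditary subtleties; these additions are sound and make the argument more transparent.
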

\begin{proof}
  Both $\M^+ \mddash \ops$ and $\bepl$-monoids have equivariance properties.
  For $\M^+ \mddash \ops$, this is encoded in the category $\M^+$.
  For $\bepl$-monoids, it is encoded in the plethysm $\bepl$.
  With this in mind, it is clear that starting with a monoid $\gamma$, then sending it to an op $\O_\gamma$, then sending it to a monoid $\gamma_{\O_\gamma}$ does not lose any information.
  Likewise, starting with an op $\O$, then sending it to a monoid $\gamma_{\O}$, then back to an op $\O_{\gamma_\O}$ also does not lose any information.
  This establishes the desired equivalence.
\end{proof}

\begin{cor}
  There is a one-to-one correspondence between unital $\bepl$-monoids in the category of basic element representations of the form $D: \el(\nu_\M) \to \C$ and $\plusgcp{\M} \mddash \ops$.
\end{cor}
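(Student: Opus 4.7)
The plan is to leverage Theorem~\ref{main-thm} and augment the correspondence with unit data on both sides. Recall from Proposition~\ref{elem-element-rep-mon-cat} that the $\bepl$-monoidal unit is $V_\eta = \mathrm{Lan}_{\el(\eta_0)}(T)$, so a unital structure on a $\bepl$-monoid $(D,\gamma)$ is a natural transformation $\tilde\eta : V_\eta \Rightarrow D$ satisfying the usual unitality squares. By the universal property of the Kan extension, such a $\tilde\eta$ is the same data as a natural family of pointings $1_\C \to D(\eta_0(\sigma))$ parameterized by $\sigma \in \tilde\nu_\act$, compatible with the $\act$-bi-action. On the other side, the annotation ``$+gcp$'' on the plus construction is (by context) the variant that internalizes unit-handling generators, so a functor $\M^{+gcp} \to \ops_\C$ is a strong monoidal functor out of $\M^+$ together with a coherent assignment of images to those extra generators.

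First I would set up the forward direction: given a unital $\bepl$-monoid $(D,\gamma,\tilde\eta)$, apply Theorem~\ref{main-thm} to produce the op $\O_\gamma : \M^+ \to \C$ from $(D,\gamma)$, and then extend to $\M^{+gcp}$ by using the components of $\tilde\eta$ to define the images of the unit-encoding generators. Concretely, for each basic morphism in $\M$ whose source lies in the image of $\imath : \V \to \M$, the corresponding generator of $\M^{+gcp}$ is sent to the appropriate component of $\tilde\eta$, and the coherence of $\tilde\eta$ with the $\act$-action ensures this extension is functorial.

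Second, I would do the reverse direction: given $\O : \M^{+gcp} \to \C$, restrict along the canonical inclusion $\M^+ \hookrightarrow \M^{+gcp}$ and apply Lemma~\ref{op-to-monoid} to obtain $(D_\O, \gamma_\O)$. The images of the extra generators in $\M^{+gcp}$ then assemble via Lemma~\ref{unique-fac-for-rho-G} (which gives the unique factorization $\tilde\rho_\act^\C \cong (\tilde\nu_\act^\C)^\otimes$) into a natural transformation $\tilde\eta_\O : V_\eta \Rightarrow D_\O$. The two constructions are mutually inverse on the underlying $\M^+$ and $\bepl$-monoid data by Theorem~\ref{main-thm}, and on the unit data by the universal property of $V_\eta$ as a Kan extension.

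The main obstacle is verifying that the unitality squares for $\tilde\eta$ correspond exactly to the relations among the unit-encoding generators of $\M^{+gcp}$. This requires tracing, through the formula of Lemma~\ref{op-to-monoid}, how the basic analog of the isomorphisms $L : V_\eta \bepl D \overset{\sim}{\Rightarrow} D$ and $R : D \bepl V_\eta \overset{\sim}{\Rightarrow} D$ from Lemma~\ref{epl-unit} interact with the multiplication $\gamma$ built from two-level graphs in $\M^+$. Using the hereditary property of $\M$ (Proposition~\ref{prop:UFC}) together with the compatibility lemma preceding Theorem~\ref{thm:strongmonoidal}, one reduces the check to a diagram chase on basic composable pairs, where both sides become the identity after composing with a unit generator. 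I expect this to be routine once the dictionary between unit generators in $\M^{+gcp}$ and components of $\tilde\eta$ is fixed, but the bookkeeping is the delicate part of the argument.
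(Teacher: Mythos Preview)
Your proposal is correct and follows essentially the same approach as the paper: use Theorem~\ref{main-thm} for the non-unital part, and then identify the unit datum $\tilde\eta : V_\eta \Rightarrow D$ via the Kan-extension universal property with a natural family of pointings $1_\C \to D(\eta_0(\sigma))$, which is precisely the extra structure encoded by the ``gcp'' generators. The paper's proof is considerably more terse than yours---it dispatches the compatibility check you flag as the ``main obstacle'' in a single sentence (``The other compatibility conditions follow from the left and right unit constraints'')---but the underlying argument is the same.
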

\begin{proof}
  By the definition of $V_\eta$ as a left Kan extension, a unit $H: V_\eta \To D$ is the same thing has a natural transformation $\tilde H: T \To D \circ \el(\eta_0)$.
  In other words, it is a family of pointings $u_\sigma: 1_{\C} \to D(\eta_0(\sigma))$ running over all basic isomorphisms.
  The naturality of $H$ and $\tilde H$ is the same thing as the groupoid compatibility of $u_\sigma$.
  The other compatibility conditions follow from the left and right unit constraints.
\end{proof}

\begin{cor}
  There is a one-to-one correspondence between $\epl$-monoids in the category of element representations of the form $D: \el(\rho_\arbcat) \to \C$ and $\catplus{\arbcat} \mddash \ops$.
\end{cor}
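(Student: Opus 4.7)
The plan is to derive this corollary from Theorem~\ref{main-thm} (or its unital version) by specializing to the UFC $\M = \arbcat^{\boxtimes}$. First I would verify that $\arbcat^{\boxtimes}$ satisfies the hypotheses of a hereditary UFC: its underlying groupoid is $\Iso(\arbcat^{\boxtimes}) = \Iso(\arbcat)^{\boxtimes}$, and its basic morphisms are precisely morphisms of $\arbcat$ tensored with identities on a single object, so the category $\Indec$ of indecomposables coincides with $\arbcat$ itself. By Proposition~\ref{prop:UFC} this gives $\bar\rho_{\arbcat^{\boxtimes}} \cong \nu_{\arbcat^{\boxtimes}}^{\otimes}$ with $\nu_{\arbcat^{\boxtimes}} \cong \rho_{\arbcat}$ as bimodules over $\Iso(\arbcat^{\boxtimes})$.

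Next, I would invoke Theorem~\ref{main-thm} with $\F = \arbcat^{\boxtimes+} = \M^+$, which yields a bijection between $\arbcat^{\boxtimes+} \mddash \ops$ and non-unital $\bepl$-monoids among basic element representations $D: \el(\nu_{\arbcat^{\boxtimes}}) \to \C$. Under the identification $\nu_{\arbcat^{\boxtimes}} \cong \rho_{\arbcat}$, these basic element representations are precisely the functors $D: \el(\rho_{\arbcat}) \to \C$ of the statement. The main content of the corollary then reduces to showing that, under this identification, the basic element plethysm $\bepl$ coincides with the ordinary element plethysm $\epl$.

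This last identification is the crux. I would argue it by comparing the defining left Kan extensions: $D_1 \bepl D_2$ is defined by extending the restriction $D_1^{\otimes} \hat\otimes D_2^{\otimes}|_{\el(\beta)}$ along $\el(\gamma_0): \el(\beta) \to \el(\nu)$, while $D_1 \epl D_2$ is defined by extending $D_1 \hat\otimes D_2$ along $\el(\gamma): \el(\rho \pl \rho) \to \el(\rho)$. When $\rho = \nu_{\arbcat^{\boxtimes}} = \rho_{\arbcat}$, the ``base'' bimodule already plays the role of the basic bimodule, so $\beta$ coincides with $\rho \pl \rho$ and $\gamma_0$ coincides with $\gamma$; moreover the horizontal extensions in the basic formula reduce to the identity in the relevant slots since the action category is taken to be $\Iso(\arbcat)$ (not its free symmetric monoidal envelope). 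Thus the two defining diagrams agree, and the monoid structures correspond.

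The main obstacle I expect is book-keeping around the two natural ``monoidal structures'' in play: the free symmetric monoidal structure on $\arbcat^{\boxtimes}$ used to form the plus construction, versus the trivial (non-monoidal) structure of $\arbcat$ itself governing $\epl$. One must be careful that passing from $\arbcat$ to $\arbcat^{\boxtimes}$ does not inadvertently introduce extra coend identifications in the definition of $\bepl$ beyond those already present in $\epl$. Once the identification $\nu_{\arbcat^{\boxtimes}} \cong \rho_\arbcat$ is made compatibly with the fibrations to $\Iso(\arbcat)^{op} \times \Iso(\arbcat)$, this should follow formally from the uniqueness of Kan extensions, in the same manner as the proof of Theorem~\ref{thm:pluselement}. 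The result for unital monoids then follows from applying the unital version of the main theorem to the groupoid-colored prop setting, exactly as in the previous corollary.
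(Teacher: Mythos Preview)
Your approach—specializing Theorem~\ref{main-thm} to $\M = \arbcat^{\boxtimes}$—is different from the paper's. The paper does not specialize the main theorem; instead it reruns the arguments of Lemmas~\ref{monoid-to-op} and~\ref{op-to-monoid} directly in the simpler setting where one works with arbitrary morphisms of $\arbcat$ rather than basic morphisms of a UFC, with $\epl$ in place of $\bepl$ throughout. In that route there is no horizontal extension or factorization to track: one sets $\O_\gamma(\phi) := D(\phi)$ on objects of $\arbcat^{\boxtimes+}$, extends strong monoidally, and reads off the composition morphisms from the monoid structure $\gamma_D: D \epl D \Rightarrow D$, exactly parallel to the basic case but never passing through $(-)^{\otimes}$.

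Your route is legitimate and more in the spirit of a genuine corollary, but it costs you the extra identification $\bepl \cong \epl$ under $\el(\nu_{\arbcat^{\boxtimes}}) \cong \el(\rho_\arbcat)$. The spot to be careful is that the action category for $\nu_{\arbcat^{\boxtimes}}$ is $\Iso(\arbcat)^{\boxtimes}$, not $\Iso(\arbcat)$; the identification works because $\nu_{\arbcat^{\boxtimes}}$ is supported on length-one words, so $\el(\nu_{\arbcat^{\boxtimes}})$ only sees the $\Iso(\arbcat)$ part of the action, and likewise $\beta$ for $\arbcat^{\boxtimes}$ consists only of composable pairs of single morphisms, matching $\el(\rho_\arbcat \pl \rho_\arbcat)$. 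Once that is checked, $D_i^{\otimes}$ restricted to such pairs agrees with $D_i$, and the two Kan-extension diagrams defining $\bepl$ and $\epl$ coincide. The paper's rerun-the-proof approach sidesteps this bookkeeping entirely, at the price of repeating an argument; your approach buys a cleaner logical dependence on the main theorem.
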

\begin{proof}
  This follows from a simpler version of the arguments in Lemma~\ref{monoid-to-op} and Lemma~\ref{op-to-monoid}, and Theorem~\ref{main-thm} that uses arbitrary morphisms instead of basic morphisms.
\end{proof}

\subsection{Examples of plethysm products}

We will work out the consequences of the main theorem in three cases: operads, properads, and hyper modular operads.
In each case, we will unpack the definitions and compare them to existing definitions.

\subsubsection{Operads}\label{sec:operads-special-case}
The Feynman category for operads can be expressed as the following plus construction:
\begin{equation}
  \operads \cong \FinSet^+
\end{equation}

\begin{specialcase}[\bf Element representation]
  By definition, a functor $\O: \el(\nu_{\FinSet}) \to \mathcal C$ assigns an object in $\C$ to each morphism $X \to \{y\}$.
  Since the groupoid of singletons has exactly one morphism between any of its objects, there is no harm in assuming that $\O(X \to \{y\})$ is equal to the same object in $\C$ for any choice of singleton $\{y\}$.
  In that case, this is the same data as a species in the sense of Joyal~\cite{species}.
\end{specialcase}

\begin{specialcase}[\bf Plethysm]
  Let $\O$ and $\P$ be two basic element representations of the form $\el(\nu_{\FinSet}) \to \mathcal C$.
  Then the plethysm product evaluated at $t_S: S \to pt$ is the following colimit:
  \begin{equation}
    (\O \bepl \P)(t_S)
    \cong \ucolim_{\gamma[f, g] \downarrow t_S} \O^{\otimes}(f) \hat{\otimes} \P(g)
  \end{equation}
  
  Just by definition, this colimit has a family of morphisms $\O(f) \hat \otimes \P(g) \to (\O \bepl \P)(S)$ for each equivalence class $[f,g]$.
  If we fix $f$ in the equivalence class $[f,g]$, then $g$ is uniquely determined:
  \begin{equation}
    S \overset{f}{\to} T \overset{!\exists t_T}{\to} pt 
  \end{equation}
  We can think of the pair $(f,g)$ as a ``$T$-partition'' $S = \bigcup_{t \in T} f^{-1}(t)$ of $S$ while the class $[f,g]$ represents an unindexed partition of $S$.
  Note that we \emph{do allow} empty sets in a partition.

  It is often more useful to work with an indexed partition even if the particular choice of indexing does not matter.
  For instance, set $T = \{1, \ldots, n\}$ and consider a partition $S = \bigcup_{i=1}^n S_i$.
  Using ``species notation'' $\O(S) = \O(S \to pt)$, we have the following component:
  \begin{equation}
    (\O(S_1) \otimes \ldots \otimes \O(S_n)) \otimes \P(\{S_1, \ldots, S_n\}) \to (\O \bepl \P)(S)
  \end{equation}
  By equivariance, such a component is invariant under commutativity constraints:
  \begin{equation}
    \begin{tikzcd}[sep = small]
      {(\O(S_1) \otimes \ldots \otimes \O(S_n)) \otimes \P(\{S_1, \ldots, S_n\})} \arrow[dd, "C \otimes \id"'] \arrow[rd] & \\
      & (\O \bepl \P)(S) \\
      {(\O(S_{\sigma(1)}) \otimes \ldots \otimes \O(S_{\sigma(n)})) \otimes \P(\{S_1, \ldots, S_n\})} \arrow[ru] &
    \end{tikzcd}
  \end{equation}
\end{specialcase}

\begin{specialcase}[\bf Unit]
  Consider basic element representations $\el(\nu_{\FinSet}) \to \C$.
  Here, the bimodule $\nu_{\Gpd} = \nu_{\Iso(\FinSet)}$ is defined so that $\nu_{\Iso(\FinSet)}(A,B) = \rho_{\FinSet}(A,B)$ if both $A$ and $B$ are singleton and is empty otherwise.
  Taking the left Kan extension, we get the element representation $V_{\eta}: \el(\rho_{\FinSet}) \to \C$ such that $V_{\eta}(f) = 1_{\mathcal C}$ when $f$ is a map between two singletons and $V_{\eta}(f) = 0_{\mathcal C}$ otherwise.
  If we view these as species, we get
  \begin{equation}
    V_{\eta}(S) =
    \begin{cases}
      1_{\C}, & S \text{ is a singleton} \\
      0_{\C}, & \text{otherwise}
    \end{cases}
  \end{equation}
\end{specialcase}

\subsubsection{Reduced Properads}\label{sec:reduced-properads-special-case}

If we let $\properads_{nd}$ be the Feynman category of directed corollas with at least one in-tail and one out-tail, then we can use \cite{KMoManin} to express it as a plus construction:
\begin{equation}
  \properads_{nd} = \text{nd-}\Cospan^+
\end{equation}
We will show how this leads to a monoid definition of a reduced properad.

\begin{rmk}
\label{rmk:reduced}
The assumption of being reduced is technical and can be resolved in several ways which will be further addressed in future work.
The need to restrict comes from the fact that in a strict monoidal category $\C$, the ring $\Hom(1_\C, 1_\C)$ is always commutative by the Eckmann-Hilton argument.
We thank Jan Steinebrunner for pointing this out. 

In the language of bi--modules, the
canonical bimodule $\rho_{\Cospan}$ is factorizable though and in general there is no issue with factoring the ``zero-to-zero'' part of a bimodule as we saw in Remark~\ref{rmk:zero-to-zero}.
The difference becomes visible in the plus construction where in the definition of properads as a functor from the Feynman category $\properads$ corepresenting properads, the corollas which do not have any legs are not commutative, but have iso-- and automorphisms among them.
If the commutativity of the operations $\mathcal{P}(\unit,\unit)$ is forced, then this passes to a quotient of the FC.
The story for PROPs is similar.
The relevant category of graphs is not commutative on corollas without flags, but the strong monoidal functors out of the FC $\props$ corepresenting PROPs take as values commutative algebras on the no--flag corollas. cf.\ \cite[Sections~2.2.2, 2.2.4, 2.9.1]{feynman}.
This is analogous to the fact that the strong monoidal functors from $(\FinSet,\amalg)$ are commutative algebras, see e.g.\ \cite{feynmanrep}.
\end{rmk}

\begin{specialcase}[\bf Element representation]
The groupoid $\el(\nu_{\text{nd-}\Cospan})$ is essentially the same thing as a category of individual corollas that are directed and labeled.
The morphism are relabelings which are contravariant on the in-flags and covariant on the out-flags.
Hence, a basic element representation $\O: \el(\nu_{\text{nd-}\Cospan}) \to \C$ is the same thing as an $\Iso(\FinSet)$-$\Iso(\FinSet)$--bimodule in the target category $\C$.
\end{specialcase}

\begin{specialcase}[\bf Plethysm]
Consider the plethysm product for two basic element representations $\O, \P: \el(\nu_{\text{nd-}\Cospan}) \to \C$ in terms of components.
Using the definition of the basic element plethysm as a left Kan extension and the definition of $\hat \otimes$, there is a morphism $\O^{\otimes}(c_0) \otimes \P^{\otimes}(c_1) \to (\O \bepl \P)(c)$ for each ``connected'' composition of cospans $(c_0,c_1)$ in the class $[c_0,c_1] \in \beta(A,C)$ that evaluates to $c$.
These morphisms are equivariant with respect to the action in the middle.
\begin{equation}
  \begin{tikzcd}[row sep = small]
    \O^{\ot}(c_0) \otimes \P^{\ot}(c_1) \arrow[dd, "\sim"'] \arrow[rd] & \\
    & (\O \bepl \P)(c) \\
    \O^{\ot}((\id \Downarrow \sigma) \cdot c_0) \otimes \P^{\ot}((\sigma^{-1} \Downarrow \id) \cdot c_1) \arrow[ru] & 
  \end{tikzcd}
\end{equation}
These can be understood as components for Vallette's definition of the plethysm product given in \cite{Vallette}.
\end{specialcase}

\begin{specialcase}[\bf Unit]
  Similar to the operad case, taking the left Kan extension yields an element representation $V_{\eta}: \el(\rho_{\FinSet}) \to \C$ such that for a cospan $c = (S \rightarrow V \leftarrow T)$ we have $V_{\eta}(c) = 1_{\mathcal C}$ when $S$, $V$, and $T$ are singletons and $V_{\eta}(c) = 0_{\mathcal C}$ otherwise.
\end{specialcase}

\subsubsection{Hyper modular operads}\label{sec:hyper-modular-special-case}

In this section, we will describe hyper (modular) operad in the sense of Getzler and Kapranov~\cite{GKmodular}.
More or less by definition, the Feynman category of hyper modular operads is $\modular^+$ where $\modular$ is the Feynman category of modular operads.

\begin{specialcase}[\bf Element representation]
The groupoid $\Gpd_{\modular}$ has genus-labeled aggregates as objects and the symmetries of these corollas as the morphisms.
An element $x \in \rho_{\modular}(A, B)$ is a genus-labeled graph built out of the aggregate $A$ and whose connected components form the aggregate $B$.
In particular, the elements of $\nu_{\modular}$ are the connected genus-labeled graphs, so a functor $\O: \el(\nu_{\modular}) \to \C$ would be a hyper modular operad in $\C$.
\end{specialcase}

\begin{specialcase}[\bf Plethysm]
  First note that a composable pair $\rho_{\modular} \pl \rho_{\modular}$ can be pictured as something that looks like this:
  \begin{equation}
    \begin{tikzpicture}[baseline={(current bounding box.center)}]
      \node [style=White] (0) at (1.5, 1) {};
      \node [style=White] (1) at (1, 1.5) {};
      \node [style=White] (2) at (1, 0.5) {};
      \node [style=White] (3) at (2, 1) {};
      \node [style=White] (4) at (-2, 1.5) {};
      \node [style=White] (5) at (-1, 1.25) {};
      \node [style=White] (6) at (-1.75, 0.5) {};
      \node [style=White] (7) at (-0.5, -1) {};
      \node [style=White] (8) at (-0.25, -2) {};
      \node [style=White] (9) at (0, -1.5) {};
      \node [style=White] (10) at (0.5, -1.25) {};
      \node [style=none] (12) at (-1.5, 2) {};
      \node [style=none] (13) at (-1.5, 0) {};
      \node [style=none] (14) at (1.5, 2) {};
      \node [style=none] (15) at (1.5, 0) {};
      \node [style=none] (16) at (0, -0.5) {};
      \node [style=none] (17) at (0, -2.5) {};
      \node [style=none] (18) at (-2.75, 1.75) {};
      \node [style=none] (19) at (0.5, 2.25) {};
      \node [style=none] (20) at (3, 1.25) {};
      \node [style=none] (21) at (-1.75, -1.75) {};
      \node [style=none] (22) at (1.75, -1.75) {};
      \node [style=none] (23) at (-2.75, 0.25) {};
      \node [style=none] (24) at (-2.25, -0.75) {};
      \draw (1) to (2);
      \draw (1) to (0);
      \draw (2) to (0);
      \draw (0) to (3);
      \draw (4) to (6);
      \draw (5) to (6);
      \draw (8) to (9);
      \draw (9) to (7);
      \draw [in=30, out=-30, loop] (8) to ();
      \draw (9) to (10);
      \draw [dashed, bend right=90, looseness=1.75] (12.center) to (13.center);
      \draw [dashed, bend left=90, looseness=1.75] (12.center) to (13.center);
      \draw [dashed, bend right=90, looseness=1.75] (14.center) to (15.center);
      \draw [dashed, bend left=90, looseness=1.75] (14.center) to (15.center);
      \draw [dashed, bend right=90, looseness=1.75] (16.center) to (17.center);
      \draw [dashed, bend left=90, looseness=1.75] (16.center) to (17.center);
      \draw (2) to (10);
      \draw (6) to (2);
      \draw (5) to (1);
      \draw (6) to (7);
      \draw (19.center) to (1);
      \draw (20.center) to (3);
      \draw (22.center) to (10);
      \draw (21.center) to (7);
      \draw (18.center) to (4);
      \draw (6) to (23.center);
      \draw (6) to (24.center);
      \draw [bend left=255, looseness=0.75] (18.center) to (23.center);
    \end{tikzpicture}
  \end{equation}
  Let $\gamma_0: \beta_{\modular} \To \nu_{\modular}$ denote the basic morphism associated with the multiplication $\gamma: \rho_{\modular} \pl \rho_{\modular} \To \rho_{\modular}$.
  Up to equivalence, an element of $\beta_{\modular}$ consists of a single ``big graph'' $b \in \el(\beta_{\modular})$ together with a collection of ``small graphs'' $s_v \in \el(\nu_{\modular})$ indexed over the vertices of $b$.
  Let $\O, \P: \el(\rho_{\modular}) \to \C$ be two element representations:
  Hence the plethysm product is the following colimit:
  \begin{equation}
    (\O \bepl \P)(x)
    \cong \ucolim_{\gamma\left[ \{s_v\}, b \right] \downarrow x}
    \left( \bigotimes_{v \in \mVert(b)} \O(s_i) \right) \hat{\otimes} \P(b)
  \end{equation}
\end{specialcase}

\begin{specialcase}[\bf Unit]
  There is a canonical bimodule morphism $\eta: \rho_{\Gpd_{\modular}} \To \rho_{\modular}$ which embeds the genus-labeled aggregates into the genus-labeled graphs.
  This is clearly hereditary.
  This determines a basic element representation $V_\eta: \el(\nu_{\M}) \to \C$ such that $V_{\eta}(x) = 1_\C$ if $x \in \nu_{\M}$ is a corolla and is $0_{\C}$ otherwise.
\end{specialcase}

\subsection{Decorations for Feynman categories}

In the language of Feynman categories, we will establish the formula connecting enrichments with decorations $(\F_D)^+ \cong (\F^+)_{\dec D}$ that was announced as \cite[Proposition 4.12]{feynmanrep}.

\begin{thm}
\label{thm:feynmandecplus}
  For an $\F$-op in a Cartesian closed category, we have the following equivalence between decoration and enrichment:
  \begin{equation}
    \label{eq:feynmanrep}
    (\F_D)^+ \cong (\F^+)_{\dec D}
  \end{equation}
\end{thm}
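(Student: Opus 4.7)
The plan is to reduce the theorem to the commutativity of element constructions already established in Theorem~\ref{thm:pluseltbasic}, and then transfer that equivalence from the world of bimodules and element representations to the world of Feynman categories via the corepresentation provided by the Main Theorem~\ref{main-thm}. Writing $\F\simeq \M^+$ for a hereditary UFC $\M$, the decoration $D$ is by Lemma~\ref{op-to-monoid} interchangeable with a unital $\bepl$-monoid structure on a basic element representation $D:\el(\nu_\M)\to\C$, so all the data on both sides of \eqref{eq:feynmanrep} can be transported into the bimodule framework of Sections~\ref{sec:basic-el-rep}--\ref{sec:decorations}.

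First I would identify the two Feynman categories appearing in \eqref{eq:feynmanrep} with the two monoidal categories compared in Theorem~\ref{thm:pluseltbasic}. On the right-hand side, $(\F^+)_{dec\,D}$ is by definition (cf.\ \cite{decorated}) the Feynman category whose ops are strong monoidal functors over $D$; via Main Theorem~\ref{main-thm} applied to $\F=\M^+$ these correspond exactly to decorated basic element representations $E:\el(D)\to\C$ with the monoidal product $\bepl_D$ of Section~\ref{sec:decorated-el-rep}. On the left-hand side, $\F_D$ is the indexed enrichment, which under Proposition~\ref{prop:indexingfunctor} and Corollary~\ref{cor:bimodcat} corresponds to the bimodule monoid $\chi_D$; its plus construction $(\F_D)^+$ is then the Feynman category whose ops, again by Main Theorem, are basic element representations over $\chi_D$, i.e.\ functors $F:\el(\chi_D)\to\C$ with monoidal product $\bepl$.

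With both sides rewritten in this way, Theorem~\ref{thm:pluseltbasic} directly supplies an equivalence of monoidal categories between decorated basic element representations of $D$ and basic element representations of $\chi_D$, the bridging functor being pullback along the isomorphism $\Lambda:\el(D)\to\el(\chi_D)$ built from the component natural transformation $\lambda$. The Cartesian closed assumption on $\C$ is used at this point to invoke Theorem~\ref{thm:elementrelative}, so that basic element representations and basic relative bimodules over $\nu_\M$ (respectively over the chi-construction) can be freely interchanged; it also kills any ambiguity in the weights $\epsilon:D\Rightarrow T$ that would otherwise be needed to pass from element representations to relative bimodules (Corollary~\ref{cor:weighttorelative}).

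The final step is to upgrade this equivalence of monoidal categories of ops into an equivalence of the corepresenting Feynman categories themselves. Since both $(\F_D)^+$ and $(\F^+)_{dec\,D}$ are Feynman categories obtained as plus constructions of hereditary UFCs (the enrichment and the decoration of a plus construction each being of this form by the results of \cite{KMoManin}), and since FCs are determined up to equivalence by their categories of ops through the monadicity/corepresentation mechanism discussed in \cite{feynman,BKW}, the equivalence of categories of basic monoids induced by Theorem~\ref{thm:pluseltbasic} lifts to an equivalence of FCs. The main obstacle I expect is not any conceptual difficulty but the bookkeeping of units: one must verify that the equivalence intertwines the units $V_\eta$ on both sides, which amounts to tracking the behavior of the basic isomorphisms through $\Lambda$ and through the Kan extension defining $V_\eta$, exactly as in the unital Corollary following the Main Theorem. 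Once this unital compatibility is secured, the isomorphism \eqref{eq:feynmanrep} follows.
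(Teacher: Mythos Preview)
Your approach is genuinely different from the paper's, and it has a real gap in the final step.

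The paper gives a short, direct argument at the level of the categories themselves: an object of $(\F_D)^+$ is a morphism of $\F_D$, i.e.\ an element $a\in\Hom_{\F_D}(X,Y)=\coprod_{\phi\in\Hom_\F(X,Y)}D(\phi)$; in a Cartesian closed target every such $a$ comes from a unique $a_\phi\in D(\phi)$ for a unique $\phi$, and $(\phi,a_\phi)$ is exactly an object of $(\F^+)_{dec\,D}$. One then checks this bijection is compatible with the generating (gamma) morphisms and the monoidal structure. No detour through ops or corepresentation is taken; the isomorphism $\Lambda:\el(D)\to\el(\chi_D)$ (which the paper flags in the Remark following the proof) is essentially constructed by hand in the Feynman category setting.

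Your route instead passes through categories of ops: you use Theorem~\ref{thm:pluseltbasic} to get an equivalence of monoidal categories of (decorated) basic element representations, and then you want to ``upgrade'' this to an equivalence of the corepresenting Feynman categories. That last step is where the argument breaks. Knowing that two FCs have equivalent categories of ops for a particular target $\C$, or even for all $\C$, does not by itself give an equivalence of the FCs; one needs the equivalence to be induced by an actual functor between them (a Yoneda-style naturality argument would require this as input, not produce it as output). The monadicity machinery you invoke identifies ops with algebras over a monad, but it does not say that two monads with equivalent algebra categories are themselves equivalent. You also build in the hypothesis $\F\simeq\M^+$ in order to invoke the Main Theorem, whereas the paper's direct proof does not need this.

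The irony is that you already hold the right object: the isomorphism $\Lambda:\el(D)\to\el(\chi_D)$ you cite as the ``bridging functor'' \emph{is} the equivalence of categories one wants, once translated into the Feynman category language. If you had argued directly that $\Lambda$ (or rather its FC analogue) is a monoidal equivalence between $(\F^+)_{dec\,D}$ and $(\F_D)^+$, you would have arrived at the paper's proof. Routing through ops and then trying to come back is both longer and where the gap opens up.
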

\begin{proof}
  On the left hand side, an object is an element $a \in \Hom_{\F_{D}}(X, Y)$ of a set which is defined to be the coproduct $\coprod_{\phi \in \Hom_{\F}(X,Y)} D(\phi)$.
  Since the target category is Cartesian closed, there exists an element $a_{\phi} \in D(\phi)$ such that $a$ is the image of $a_{\phi}$ under the component $D(\phi) \to \Hom_{\F_{D}}(X, Y)$.
  Now define a correspondence between $a$ on the left and $(\phi, a_{\phi})$ on the right.
  Conversely, send the pair $(\phi, a_{\phi})$ with $\phi \in \Hom_{\F}(X, Y)$ to the image of $a_{\phi}$ under $D(\phi) \to \Hom_{\F_D}(X, Y)$.
  This establishes a bijection between the two objects.

  This correspondence extends to the morphisms in the obvious way.
  For example, consider the gamma-morphisms:
  \begin{equation}
    \begin{tikzcd}[row sep = tiny]
      {a \otimes b} && {(\phi, a_{\phi}) \otimes (\psi, b_{\psi})} \\
      & {\text{corresponds to}} \\
      {a \circ b} && {(\phi \circ \psi, (a \circ b)_{\phi \circ \psi})}
      \arrow["\gamma"', from=1-1, to=3-1]
      \arrow["\gamma", from=1-3, to=3-3]
    \end{tikzcd}
  \end{equation}
  Moreover, it is clear that their monoidal structures agree under this bijection.
  Hence the two monoidal categories are equivalent.
\end{proof}

\begin{rmk}
  In the bimodule language, this result is analogous to Theorem~\ref{thm:pluseltbasic} with $\el(\chi_D)$ as the analog of the left hand side is and $\el(D)$ the analog of the right hand side.
\end{rmk}

\bibliography{Plethysm}
\bibliographystyle{halpha}

\end{document}